\documentclass[11pt]{amsart}

\usepackage[top=1in, bottom=1in, left=1in, right=1in]{geometry}

\usepackage{graphicx}
\usepackage[dvipsnames]{xcolor}
\usepackage{amsfonts,amsmath,amssymb,amsthm,bbm,centernot}
\usepackage{mathrsfs}
\usepackage{enumerate}
\usepackage{accsupp}
\usepackage{hyperref}
\newcommand{\urltilde}{\BeginAccSupp{method=hex,ActualText=7E}\ensuremath{\mathord{\sim}}\EndAccSupp{}}
\usepackage{mathtools}
\usepackage[scr=dutchcal]{mathalpha}
\usepackage{multirow}

\newtheorem{theorem}{Theorem}[section]
\newtheorem{proposition}[theorem]{Proposition}
\newtheorem{lemma}[theorem]{Lemma}
\newtheorem{corollary}[theorem]{Corollary}

\newtheorem{asm}{Assumption}

\theoremstyle{remark}
\newtheorem{remark}[theorem]{Remark}

\theoremstyle{definition}
\newtheorem{definition}{Definition}[section]

\newcommand{\N}{\mathbb N}

\newcommand{\R}{\mathbb R}
\newcommand{\Z}{\mathbb Z}

\newcommand{\cal}{\mathcal}
\renewcommand{\epsilon}{\varepsilon}

\renewcommand{\d}{\delta}
\newcommand{\e}{\epsilon}

\newcommand{\B}{\mathfrak{B}}
\newcommand{\D}{\mathfrak{D}}

\newcommand{\fl}[1]{\lfloor #1 \rfloor}

\newcommand{\ind}[1]{ \mathbbm{1}_{\{ #1 \}} }

\renewcommand{\dim}{\mathfrak{d}}

\DeclareMathOperator{\Var}{Var}

\let\emptyset\varnothing

\title[Self-interacting Random Walks]{Functional limit theorems for self-interacting random walks}

\author{Elena Kosygina}
\address{Elena Kosygina\\One Bernard Baruch Way \\ Department of Mathematics, Box B6-230 \\ Baruch College \\ New York, NY 10010 \\ USA}
\email{elena.kosygina@baruch.cuny.edu}
\urladdr{http://www.baruch.cuny.edu/math/elenak/}
\thanks{E.\,Kosygina was partially supported by the Simons Foundation through a Collaboration Grant for Mathematicians \#523625.}

\author{Laure Mar\^ech\'e}
\address{Laure Mar\^ech\'e\\Institut de Recherche Mathématique Avancée,
UMR 7501 Université de Strasbourg et CNRS,
7 rue René Descartes, 67000 Strasbourg, France}
\email{laure.mareche@math.unistra.fr}
\urladdr{https://irma.math.unistra.fr/\urltilde mareche/}

\author{Thomas Mountford}
\address{Thomas Mountford\\\'Ecole Polytechnique F\'ed\'erale de Lausanne\\Department of Mathematics\\EPFL SB MATH PRST\\
MA B1 517 (B\^atiment MA)
Station 8
CH-1015 Lausanne\\
Switzerland}
\email{thomas.mountford@epfl.ch}
\urladdr{http://people.epfl.ch/thomas.mountford}

\author{Jonathon Peterson}
\address{Jonathon Peterson\\Purdue University\\Department of Mathematics\\150 N University Street\\West Lafayette, IN  47907\\USA}
\email{peterson@purdue.edu}
\urladdr{https://www.math.purdue.edu/\urltilde peterson/}
\thanks{J.\,Peterson was partially supported by the Simons Foundation through a Collaboration Grant for Mathematicians \#635064. Part of this work was completed while J.\,Peterson was a David Parkin Visiting Professor at the University of Bath. Support of the University of Bath is gratefully acknowledged.}

\subjclass[2020]{Primary 60K35; Secondary 60F17, 60J55}

\keywords{Self-interacting random walks, functional limit theorem, joint Ray-Knight theorems, ``true'' self-avoiding walk, polynomially self-repelling walk}

\begin{document}

\begin{abstract}
  We consider a wide class of nearest neighbor self-interacting random
  walks on $\Z$ and show that, under mild conditions, their scaling
  limits at the process level can be uniquely identified and
  constructed directly from the augmented joint Ray-Knight theorem for
  their local times, i.e., the joint convergence of rescaled local
  time ``curves'' of the walks stopped at certain stopping times
  together with the merge and absorption times of these ``curves'' to
  a concatenation of coalescing reflected-absorbed diffusions and
  their merge and absorption times. We apply our results to
  polynomially self-repelling walks introduced and studied in the
  mid-90s by B.\,T\'oth, \cite{tGRK}, and to ``true'' self-avoiding
  walks with bond repulsion, \cite{tTSAW}. The first application
  solves a problem that has been open since T\'oth's work: it gives a
  construction and first properties of a new family of processes,
  polynomially self-repelling motions, as scaling limits of
  polynomially self-repelling walks. These processes were
  simultaneously and independently constructed from flows of
  correlated powers of Bessel processes by Y.\,Yu, \cite{Y25+}. The
  second application strengthens an earlier result, \cite{KP25+}, on
  convergence of rescaled ``true'' self-avoiding walks to the ``true''
  self-repelling motion of B.\,T\'oth and W.\,Werner, \cite{TW98}, and
  streamlines its proof. Our results can be thought of as an
  ``inversion'' of the augmented joint Ray-Knight theorem to get a
  functional limit theorem for self-interacting random walks.
\end{abstract}

\maketitle

\section{Introduction}\label{intro}

In the 1980s, motivated by the study of polymer chains with
  excluded-volume interactions, several self-interacting non-Markovian
  random processes were introduced and studied in the physics
  literature (see, for instance, \cite{APP83, PP87}). Mathematically
  tractable models on $\Z$ were proposed and investigated by T\'oth,
  \cite{tTSAW,tTSAWGBR,tGRK,tLTWRRW}. These serve as the main examples
  and motivation for the present work, even though the scope of our
  general results is broader and not restricted to these specific
  models. Our main interest is in establishing functional limit
  theorems for self-interacting nearest-neighbor discrete time
  processes $(X_n)_{n\ge 0}$ on $\Z$ which are recurrent, in the sense
  that with probability one every $x\in\Z$ is visited infinitely many
  times, and have site or edge local times satisfying certain limit
  theorems. Before giving precise assumptions and statements of our
  results we describe several classes of T\'oth's walks as well as known results
  and approaches to constructing scaling limits for these walks.

\subsection{The self-interacting random walks of T\'oth}\label{sec:SIRWfamily}
 The walks which are the motivating examples for our results are nearest-neighbor motions on
$\Z$ defined via a fixed monotone function $w: \Z_+ \to (0,\infty)$
which we will refer to as the weight function.  The walk
$(X_n)_{n\geq 0}$ starts with $X_0=0$ and has
$X_n-X_{n-1} \in \{-1,1\}$ chosen so that the probability the random
walk crosses an edge is proportional to the weight function $w$
evaluated at the number of times that edge has been crossed so
far. That is,
\begin{align}
 &P\left(X_{n+1} = X_n + 1 \mid \mathcal{F}_n \right) 
 = 1-  P\left(X_{n+1} = X_n - 1 \mid \mathcal{F}_n \right)
= \frac{w\left( \ell_n(X_n) \right)}{w\left( \ell_n(X_n) \right) + w\left( \ell_n(X_n-1)\right)}
, \label{sirw}
\end{align}
where $\mathcal{F}_n = \sigma(X_i, \, i\leq n)$ is the natural filtration,
and
$\ell_n(k) = \sum_{j=0}^{n-1} \ind{\{X_j, X_{j+1}\} = \{k,k+1\}}$ is the number of crossings of the undirected edge $\{k,k+1\}$ by time $n$.

Note that the monotonicity of $w$ entails that the random walks $(X_n)_{n\geq 0}$ constructed in this way are either self-attracting or self-repelling depending on whether the weight function is non-decreasing or non-increasing.

T\'oth studied the following families of random walks from this
general class, and, in particular, identified the appropriate
scaling exponents in each case. 
\begin{enumerate}[--]
 \item ``True'' self-avoiding walk (TSAW): $w(j) = e^{-\beta j}$ for some $\beta>0$.  The sequence  $(n^{-2/3}X_n)_{n\ge 1}$ is tight, \cite{tTSAW}.
 \item ``True'' self-avoiding walk with generalized bond repulsion: $w(j) = e^{-\beta j^\kappa}$ for some $\beta>0$ and $\kappa \in (0,1)$. The sequence  $(n^{-(\kappa+1)/(\kappa+2)}X_n)_{n\ge 1}$ is tight, \cite{tTSAWGBR}.
 \item Walks defined by weight functions of the form 
\begin{equation}
  \label{w}
  \frac{1}{w(j)}=j^\alpha\left(1+\frac{2B}{j}+O\left(\frac{1}{j^{2}}\right)\right)\quad\text{as $j\to\infty$ for some $\alpha>-1$ and $B \in \R$},
\end{equation}
 which he classified further into the following three cases, \cite{tGRK}: 
 \begin{enumerate}[(i)]
  \item $\alpha > 0$: polynomially self-repelling walk (PSRW); 
  \item $\alpha=0$: asymptotically free walk; 
  \item $\alpha \in (-1,0)$: weakly reinforced walk.  
 \end{enumerate}
\end{enumerate}
In cases (i) and (ii) the correct scaling was diffusive, while case
(iii) required scaling by $n^{(1+\alpha)/(2+\alpha)}$. We note that
$\alpha=-1$ is a boundary case for recurrence, \cite[Theorem 3.2]{Dav90}. A
well-studied representative of this boundary case is a linearly
reinforced random walk: $w(j)=w(0)+j$, $w(0)>0$, see \cite{Pem07}.

T\'oth analyzed these processes via the directed  edge local times. That is, if
\begin{equation}
  \label{upcr}
    {\mathcal E}^+(n,k)=\sum_{j=0}^{n-1}\ind{X_j=k, X_{j+1}=k+1} ;\quad{\mathcal E}^-(n,k)=\sum_{j=0}^{n-1} \ind{X_j=k, X_{j+1}=k-1},
\end{equation}
are the number of times the walk has stepped to the right or left,
respectively, from $k$ by time $n$, then for certain inverse local
times $\tau$ the family of directed edge local times
$(\mathcal{E}^\pm(\tau,k))_{k\in\Z}$ has a Markovian structure that
can be used to analyze the random walk (see Appendix \ref{sec:Polya}).
The key to T\'oth's method was to prove scaling limits for the
associated Markov chains of directed edge local times. 
These scaling limits may be called
``generalized Ray-Knight theorems'' because they
described the limiting distribution of the (rescaled) local time
profiles of the walk when stopped at certain inverse local times. This
is similar to how the classical Ray-Knight theorems describe the
distribution of the local time profile of a Brownian motion stopped at
inverse local times.  Using these generalized Ray-Knight theorems,
T\'oth obtained a number of asymptotic
results for the position $X_n$ of the walk.  As mentioned above,
T\'oth was able to identify
an explicit scaling exponent $a \in (0,1)$ such that the sequence
  $(n^{-a}X_n)_{n\ge 0}$ is tight. In addition, in most cases he also showed that there
was a limiting law for $n^{-a}X_{\fl{\theta n}}$ where
$\theta \sim \text{Exp}(\lambda)$ and so characterized the limiting
distribution of $n^{-a}X_n$, if it exists.  These results
naturally raise the question whether the suitably renormalized
  processes satisfy functional limit theorems.  The papers also
suggested the possibility that the limiting
process could be identified via the generalized Ray-Knight theorems
(or, going further, that these theorems with minimal auxiliary
hypotheses could be used to prove functional limit laws for the walk).
In some sense, this is asking if one can ``invert'' the generalized
Ray-Knight theorems.

We now discuss two recent works which are partial responses to these
questions.  The first, \cite{KMP23}, considered both
asymptotically free and polynomially self-repelling processes
mentioned above.  T\'oth had shown that the limits in generalized Ray-Knight theorems \cite[Theorems 1A and 1B]{tGRK}
were the same as in known Ray-Knight
theorems for (multiples of) Brownian motions perturbed at
extrema (BMPEs) \cite{cpyBetaPBM}. Recall that for any $p,q\in (-\infty,1)$, a
$(p,q)$-BMPE is a process $W^{p,q}(\cdot)$ that is the pathwise unique
solution \cite{cdPUPBM,pwPBM} to the functional equation
\begin{equation}\label{BMPE-def}
 W^{p,q}(t) = B(t) + p \sup_{s\leq t} W^{p,q}(s)  + q \inf_{s\leq t} W^{p,q}(s),\quad t\ge 0,
\end{equation}
where $B(\cdot)$ is a standard Brownian motion.

The results of \cite{KMP23} showed that while asymptotically free
  and polynomially self-repelling walks have similar generalized
  Ray-Knight theorems, the distribution of the rescaled paths of the
  walk must be very different.  In particular, it was argued that asymptotically free walks
approximately satisfy a discrete version of \eqref{BMPE-def}: the walk
is equal to a martingale, plus a linear combination of its running
minimum and running maximum, plus an error term that vanishes in the
scaling limit.  This approach could not be extended to the PSR case
though and, somewhat surprisingly, \cite[Theorem 1.5]{KMP23}
demonstrated that PSRWs cannot converge to a
multiple of any BMPE.  This negative result revealed
that generalized Ray-Knight theorems do not
characterize the scaling limit of the path of a walk, and this
became a starting point for one of the questions addressed here.

The second recent work motivating our results, \cite{KP25+}, showed
that the TSAW converges to the ``true'' self-repelling motion (TSRM)
of T\'oth and Werner, \cite{TW98}. Convergence to (a stationary version of) the TSRM had been
proved by \cite{NR06} for a stationary version of the related discrete
motion introduced in \cite{TW98} but not for the TSAW of \cite{tTSAW}.
The proof in \cite{KP25+} is similar to the results in this paper
  as it uses ``augmented joint Ray-Knight theorems'' to prove a
  functional limit law for the walk.  However, the proof of the
functional limit law in \cite{KP25+} used in several crucial ways that
the candidate limiting process had already been constructed in
\cite{TW98} and that its various properties were known.  In contrast,
the main result in our paper applies even in cases where there is no
known candidate limiting process.

Taking the lead from these papers, we establish a more general
framework for the convergence of processes which includes many of
  the random walks in the above families (see Theorem
\ref{thm:main_X}).
We show that while generalized Ray-Knight theorems do not suffice to
establish convergence at the process level, a form of joint
convergence of the local time paths, which we call an augmented joint
Ray-Knight theorem (see Definition \ref{def:JRK}), does suffice, under
mild conditions, for both tightness and convergence of finite
dimensional distributions. This gives at once the existence and a
construction of the limiting process. Our approach builds on that of
\cite{KP25+} but represents a clear advance in that it applies to a
much wider class; in particular, it covers walks for which no
candidate limit has yet been identified.  Moreover, the current paper
gives stronger results on the local time process. The main result in
\cite{KP25+} gave the joint convergence of the rescaled path and the
local time at the current location of the walk, whereas in this paper
we show joint convergence of the rescaled path of the walk together
with the entire local time profile (see Theorem
\ref{thm:main_XL}). Moreover, in Theorem ~\ref{thm:main_JRK} we prove
that the local times of the limiting process have the appropriate
joint Ray-Knight theorem.

In addition to streamlining and strengthening the results in
\cite{KP25+}, we treat the PSRW directly and prove its convergence.
Our work also fully resolves the counterexample given in \cite{KMP23}.
That article only showed that for PSRWs, under the natural nontrivial
renormalization, the renormalized walks do not converge to the
``natural'' process  suggested by the
  generalized Ray-Knight theorems in \cite{tGRK}; it did not show
that the renormalized walks converge at all, nor that a limit, if it
existed, would have well-behaved local times satisfying the joint
Ray-Knight theorem.  This paper, in showing all three of these
results, gives us a natural example of two distinct (in law) processes
(see Theorems \ref{thm:main_JRK} and \ref{main_psr}) which have the
same Ray-Knight theorems and the same one dimensional distributions;
see Remark \ref{rem:PSRMprop} for a more precise statement.

In addition, the limit process of PSRWs, a polynomially
self-repelling motion (PSRM), is of interest in its own right\footnote{We adopt the name \emph{polynomially self-repelling motion} from \cite{Y25+} where this process is constructed as the analog of the T\'{o}th--Werner TSRM \cite{TW98} in the
polynomial case.}.  As
heuristically discussed in Appendix~\ref{psr_sde} it has a driving
mechanism similar to the one T\'oth and Werner described for the
  TSRM, \cite{TW98}.  This heuristic equation shares common features
with the processes introduced by \cite{LST21} and would be a rich area
of future study.

A further advantage of our approach is that, if one is primarily
interested in the construction of a limit process,
it bypasses many of the technical issues
that arise  when one attempts to first define a
candidate limiting process by mimicking the construction of TSRM in
\cite{TW98}.\footnote{For a modern approach and extensions see
  \cite{SSS17, Mareche2026}.}  The analysis that T\'oth had done in
proving the generalized Ray-Knight theorem for the TSAW suggested a
particular structure for the joint Ray-Knight theorem of the
  limiting process, namely that of coalescing, reflected/absorbed Brownian
motions. T\'oth and Werner first constructed a family of such
coalescing reflected/absorbed Brownian motions starting from every
point in the upper half plane (closely related to what is now called the Brownian web), and then used this
family of curves to construct the TSRM.
    The scaling limit for the TSAW in \cite{KP25+} was completed by
    first proving the appropriate joint generalized Ray-Knight theorem
    for the TSAW and then, via a combination of general arguments and
    specific known properties of the TSRM, using the joint generalized
    Ray-Knight theorem to prove a functional limit law.  This
    suggests the following approach to proving scaling limits for
    other self-interacting random walks such as the polynomially
    self-repelling walks.
\begin{description}
\item[Step 1] Prove joint generalized Ray-Knight theorems for the
  random walk.
\item[Step 2] Construct a family of merging, reflected, absorbed
  curves (generalizing the Brownian web) that are consistent with the
  joint Ray-Knight theorems for the walk. Use this web of curves to
  construct a candidate limiting process.
 \item[Step 3] Adapt the arguments in \cite{KP25+} to prove a functional limit law. 
\end{description}
Our main result shows that the second step, the construction of a
  candidate limiting process through a Brownian-web-like family of
  curves, can be avoided altogether, and that the third step can be
  modified so that it does not require an a priori candidate. This
  simplifies the proof of scaling limits not only for the PSRWs but
  for any walk with no known candidate limit having the correct joint
  Ray-Knight theorems. For the PSRW, steps 1 and 2 have been completed
  in recent work \cite{Y25+}, but we will only use the joint Ray-Knight theorems  from \cite{Y25+} in our proof of the functional limit law for PSRW.

\subsection{Organization of the paper}

The paper unfolds as follows.  In the next section we give necessary
definitions and notations for the article and introduce the
assumptions needed for our random walks.  In particular we introduce
``augmented joint Ray-Knight theorems", the multi-curve extension of
``generalized Ray-Knight theorems'' of T\'oth needed to characterize
limits.  We then state the main results of our
work. Section~\ref{tightness} demonstrates that under our assumptions,
augmented joint Ray-Knight theorems automatically ensure tightness.
This is exploited in Section~\ref{sec:fdd} to establish our general
results which give functional limit theorems for the walk as a consequence of the
augmented joint Ray-Knight theorems,
Theorems~\ref{thm:main_X}--\ref{thm:main_JRK}.  The final part (before
the Appendices) is devoted to establishing Theorem \ref{main_psr}, the
functional limit theorem for the PSRW jointly with its local time
processes.  This resolves questions raised in \cite{KMP23} by showing
that the limit process for the PSRW has a continuous local time that
satisfies exactly the same generalized Ray-Knight theorems as a multiple of a BMPE
but is nonetheless distinct from it.

\section{Main results}\label{main}
In this section we give precise statements of our results and a
  more detailed discussion of their novelty. Before presenting our
  main theorems, we define what we mean by augmented Ray-Knight
  theorems, joint Ray-Knight theorems, and augmented joint Ray-Knight
  theorems for a random walk. 

The classical Ray-Knight theorems for Brownian motion describe the
distribution of the local time profile of a Brownian motion when
stopped at certain inverse local times.  While our results are
motivated by self-interacting random walks they apply more generally,
and we will refer to any nearest-neighbor stochastic process
$(X_n)_{n\geq 0}$ on $\Z$ that visits every site infinitely many
times, a.s., as a recurrent random walk.  By a Ray-Knight theorem for
a random walk, we mean a limiting distribution for the local time
profile of a random walk stopped at inverse local times. To be more
precise, let
\begin{equation}
  \label{ltaukm}
   L(n,k) = \sum_{i=0}^n \ind{X_i=k},\quad\tau_{k,m} = \inf\{n\ge 0: L(n,k) > m \}, \quad k\in \Z,\ \  m,n \in \Z_+,
\end{equation}
be the local time of the walk at location $k$ and time $n$ and its
inverse local time at $k$ respectively.  We will be interested
in limiting distributions for the rescaled local time profile at these
inverse local times.  With this in mind, for some increasing to
infinity scaling function $g:\N\to (0,\infty)$ which is model
dependent, we define
  \begin{align}
  \label{taunxh}
    \tau_{x,h}^n &= \tau_{\fl{xn},\fl{hg(n)}}, \quad x \in \R, \, h\geq 0, \, n\in \N;\\ \label{lanxh} \Lambda^n_{x,h}(y) &= (g(n))^{-1}L(\tau_{x,h}^n,\fl{yn}), \qquad y \in \R. 
\end{align}

We will also need to control the endpoints of the support of these curves as well as the locations where these curves first meet going to the right or left from their starting points.
To this end, for any compactly supported functions $f_1,f_2:\R\to\R_+$ and any $x,x' \in \R$ let 
\begin{equation}\label{merge-def}
 \begin{split}
  m_{x,x'}^-(f_1,f_2) &= \sup\{y \leq x\wedge x': f_1(y)=f_2(y) \} \\
 \text{and}\quad 
 m_{x,x'}^+(f_1,f_2) &= \inf\{y \geq x\vee x': f_1(y)=f_2(y)\}, 
 \end{split}
\end{equation}
so that $m_{x,x'}^-(f_1,f_2)$ and $m_{x,x'}^+(f_1,f_2)$ are the first points to the left and right, respectively, of the interval between $x$ and $x'$ where the curves $f_1$ and $f_2$ meet. 
Let $\mathbf{0}$ be the constant zero function. Then, with the above notation,
\[
 \mu_{x,h}^{n,-} := m_{x,0}^-(\Lambda_{x,h}^n,\mathbf{0})
 \quad \text{and}\quad
 \mu_{x,h}^{n,+} := m_{x,0}^+(\Lambda_{x,h}^n,\mathbf{0}),
\]
are the endpoints of the support of the curve $\Lambda_{x,h}^n(\cdot)$, and 
\[
 \mu_{(x,h),(x',h')}^{n,-} := m^-_{x,x'}(\Lambda_{x,h}^n, \Lambda_{x',h'}^n)
 \quad\text{and}\quad 
 \mu_{(x,h),(x',h')}^{n,+} := m^+_{x,x'}(\Lambda_{x,h}^n, \Lambda_{x',h'}^n),
\]
are the locations where the curves $\Lambda_{x,h}^n(\cdot)$ and $\Lambda_{x',h'}^n(\cdot)$ merge to the left and the right of the interval between $x$ and $x'$. 
(Note that it follows easily from the definition of these curves as rescaled local time profiles that the curves $\Lambda_{x,h}^n$ and $\Lambda_{x',h'}^n$ merge to the left/right of the interval between $x$ and $x'$ at these points in the sense that $\Lambda_{x,h}^n(y) = \Lambda_{x',h'}^n(y)$ for all $y \notin [\mu_{(x,h),(x',h')}^{n,-}, \mu_{(x,h),(x',h')}^{n,+}]$.)

\begin{definition}\label{def:RK}
  A recurrent random walk $(X_n)_{n\geq 0}$ satisfies a \textbf{Ray-Knight
    theorem (RKT)} 
     if there is a scaling function $g:\N\to(0,\infty)$,
    $g(n)\to \infty$ as $n\to\infty$ such that for every
    $(x,h) \in \R\times \R_+$ the process $\Lambda_{x,h}^n(\cdot)$
    converges weakly in the Skorokhod $J_1$ topology on $D(\R)$ to
    some process $\Lambda_{x,h}(\cdot)$.  The walk satisfies an
  \textbf{augmented Ray-Knight theorem} if additionally one has joint
  convergence of the curves $\Lambda_{x,h}^n$ together with the right
  and left endpoints of these curves. That is,
  $(\Lambda_{x,h}^n(\cdot), \mu_{x,h}^{n,-}, \mu_{x,h}^{n,+})
  \underset{n\to\infty}{\Longrightarrow} (\Lambda_{x,h}(\cdot),
  \mu_{x,h}^{-}, \mu_{x,h}^{+})$, in the natural topology on
  $D(\R)\times \R^2$, where
  $\mu_{x,h}^{-} = m_{x,0}^-(\Lambda_{x,h},\mathbf{0})$ and
  $\mu_{x,h}^{+} = m_{x,0}^+(\Lambda_{x,h},\mathbf{0})$.
\end{definition}

\begin{definition}\label{def:JRK}
  A recurrent random walk $(X_n)_{n\geq 0}$ satisfies a \textbf{joint Ray-Knight
    theorem} if there is a scaling function $g:\N\to(0,\infty)$,
    $g(n)\to \infty$ as $n\to\infty$ such that for every $N \in \N$
    and every set of points
    $(x_1,h_1),\ldots,(x_N,h_N) \in \R\times \R_+$ the $N$-dimensional
    process $( \Lambda_{x_i,h_i}^n(\cdot))_{1\leq i \leq N}$ converges
    weakly in the product Skorokhod $J_1$ topology on $(D(\R))^N$ to
    some process
    $\left(\Lambda_{x_i,h_i}(\cdot) \right)_{1\leq i \leq N}$.  The
  walk satisfies an \textbf{augmented joint Ray-Knight theorem} if
  additionally one also has joint convergence of the curves together
  with the endpoints of the curves and the points where the curves
  merge. More specifically, let
  $\mu^\pm_{(x_i,h_i),(x_j,h_j)} = m_{x_i,x_j}^\pm(\Lambda_{x_i,h_i},
  \, \Lambda_{x_j,h_j})$ denote the left/right merging points of the
  curves $\Lambda_{x_i,h_i}$ and $\Lambda_{x_j,h_j}$. We say that an
  augmented joint Ray-Knight theorem is satisfied if for every
  $N \in \N$ and every finite set of points
  $(x_1,h_1),\ldots,(x_N,h_N) \in \R\times \R_+$ the $N$-dimensional
  process $(\Lambda_{x_i,h_i}^n(\cdot))_{i\leq N}$ jointly with the
  endpoints $(\mu_{x_i,h_i}^{n,*})_{i\leq N, * \in \{+,-\}}$ and merge
  points
  $(\mu_{(x_i,h_i),(x_j,h_j)}^{n,*})_{1\leq i <j\leq N, * \in
    \{+,-\}}$ converges weakly in the natural product topology to
the process  $(\Lambda_{x_i,h_i}(\cdot))_{i\leq N}$, its endpoints
  $(\mu_{x_i,h_i}^{*})_{i\leq N, * \in \{+,-\}}$, and merge points
  $(\mu_{(x_i,h_i),(x_j,h_j)}^{*})_{1\leq i <j\leq N, * \in \{+,-\}}$.
\end{definition}

The main result of the present paper is that augmented joint
Ray-Knight theorems with the scaling function of the form
  $g(n)=a_0n^\gamma$ for some $a_0,\gamma>0$ can be inverted to give a
  functional limit theorem for a random walk. The limiting process is
  then constructed as a scaling limit of the walks. Many properties of
  the limiting process can be studied via this construction, in
  particular, its local times and RKTs.

  From now on we set
\begin{equation}
  \label{g(n)}
  g(n)=a_0n^\gamma,\quad a_0,\gamma>0.
\end{equation}
We will need a few mild assumptions on the joint distributions of the
limiting curves $(\Lambda_{x_i,h_i}(\cdot))_{i\leq N}$.  Before
stating these assumptions, we first note that if an augmented joint
Ray-Knight theorem holds, then the curves
$(\Lambda_{x_i,h_i}(\cdot))_{i\leq N}$ will necessarily inherit
non-crossing, coalescence, absorption properties from the curves
$(\Lambda_{x_i,h_i}^n(\cdot))_{i\leq N}$. That is, the following
statements hold almost surely for all $N\in \mathbb{N}$ and
$1\le i,j\le N$.
\begin{enumerate}[--]
 \item {\em Non-crossing:}
 \begin{equation}\label{non-cross}
 (\Lambda_{x_i,h_i}(y)-\Lambda_{x_j,h_j}(y))(\Lambda_{x_i,h_i}(y')-\Lambda_{x_j,h_j}(y'))\ge 0,\quad \forall\, y,y' \in \R.
 \end{equation}
 \item {\em Coalescence to the right and left:}
 \[
  \Lambda_{x_i,h_i}(y) = \Lambda_{x_j,h_j}(y), 
  \quad \forall\, y \notin \left[ \mu_{(x_i,h_i),(x_j,h_j)}^-, \mu_{(x_i,h_i),(x_j,h_j)}^+ \right).
  \]
\item {\em Absorption to the right and left of the origin:}
 \[
  \Lambda_{x_i,h_i}(y)=0, \quad \forall\, y \notin \left[ \mu_{x_i,h_i}^-, \mu_{x_i,h_i}^+ \right).
 \] 
\end{enumerate}

Note that by choosing a scaling function too large, one may obtain a trivial augmented joint Ray-Knight theorem where the limiting curves are $\Lambda_{x,h}(y) \equiv h$ for all $x,y\in \R$ and $h\geq 0$.
Thus, it is clear that we need some additional assumptions on the limiting curves in the joint Ray-Knight theorems to obtain non-trivial results on the walk, 
and we will now state the additional properties that are needed for our main results.

Our first assumption concerns the distribution of a single curve
$\Lambda_{x,h}(\cdot)$ in the RKTs for the random walk. Since in a
wide class of random walk models (see Remark \ref{rem:Markov}) the directed edge local times of the
walk at the stopping times $\tau_{k,m}$ are concatenations of Markov
chains (the Markov chains considered typically change when passing the
point $x$ or the origin) then it is natural to expect that the curve
$\Lambda_{x,h}(\cdot)$ is a concatenation of diffusion
processes. Moreover, if a RKT holds with scaling function
$g(n) = a_0 n^\gamma$ then it is natural to expect that the limiting
curves have the scaling relation
\begin{equation}\label{scaling:marginal}
 (\Lambda_{cx,c^\gamma h}(cy) )_{y \in \R} \overset{\text{Law}}{=} ( c^\gamma \Lambda_{x,h}(y) )_{y \in \R}, \qquad \forall c>0. 
\end{equation}
However, if a diffusion process satisfies such a scaling relation then
it should be (away from the boundary) a solution to an SDE  of the form
\[
 d\Lambda(y) = A_1 (\Lambda(y))^{1-\frac{1}{\gamma}} \, dt + A_2(\Lambda(y))^{1-\frac{1}{2\gamma}} \, dB(y), 
\]
for some constants $A_1\in\R,\, A_2>0$ and a standard Brownian motion
$B(\cdot)$. One can easily check that such a process is equal in law
to $C Y(\cdot)^{\gamma}$ where
$C = \left( \frac{A_2}{2\gamma} \right)^{2\gamma}$ and
$Y(\cdot)$ is a squared Bessel process of dimension
$2(1-\gamma)+\frac{4A_1\gamma}{A_2^2}$. This leads us to the following
assumption. Throughout the paper we shall write
$Y\in\rm{BESQ}^\dim$ if $Y$ is a squared Bessel process of
dimension $\dim$ (defined in \eqref{besq}).

\begin{asm}[$\Lambda_{x,h}$ is a concatenation of powers of
    BESQ]\label{asm:Bessel}
  For each $(x,h) \in \R\times \R_+$, $\Lambda_{x,h}(\cdot)$ is a
  continuous curve and the curves to the right
  $(\Lambda_{x,h}(x+s))_{s\geq 0}$ and left
  $(\Lambda_{x,h}(x-s))_{s\geq 0}$ of $x$ are independent Markov
  processes started with initial value $\Lambda_{x,h}(x)=h$. Moreover,
  the curves to the right and left are multiples of {\rm BESQ}
  processes or their concatenations raised to the power $\gamma$,
  where $\gamma$ is the exponent in the scaling function
  $g(n) = a_0 n^\gamma$ in the RKTs.  More
  precisely, there are constants $\d_{r,\pm} > 0$, $\d_{a,\pm} < 2$, and
  $C_{r,\pm}, C_{a,\pm}>0$ such that
  \begin{enumerate}[--]
   \item If $x<0$ then $(\Lambda_{x,h}(x+s))_{0\leq s\leq -x}\overset{\emph{Law}}{=}(C_{r,+}Y(s)^\gamma)_{0\le s\le -x}$, where $Y\in {\rm BESQ}^{\d_{r,+}}$ with reflecting boundary condition at $0$. 
   \item If $x>0$ then $(\Lambda_{x,h}(x-s))_{0\leq s\leq x}\overset{\emph{Law}}{=}(C_{r,-}Y(s)^\gamma)_{0\le s\le x}$, where $Y\in {\rm BESQ}^{\d_{r,-}}$ with reflecting boundary condition at $0$. 
   \item $(\Lambda_{x,h}(x+s))_{s\geq \max \{0,-x\}}\overset{\emph{Law}}{=}(C_{a,+}Y(s)^\gamma)_{s\ge \max\{0,-x\}}$, where $Y\in {\rm BESQ}^{\d_{a,+}}$ with absorbing boundary condition at $0$. 
   \item $(\Lambda_{x,h}(x-s))_{s\geq \max\{0,x\}}\overset{\emph{Law}}{=}(C_{a,-}Y(s)^\gamma)_{s\ge \max\{0,x\}}$, where $Y\in {\rm BESQ}^{\d_{a,-}}$ with absorbing boundary condition at $0$. 
  \end{enumerate}
 \end{asm}

\begin{remark}
  The stated boundary conditions for the processes in
  Assumption~\ref{asm:Bessel} are natural for any Ray-Knight
  theorem, and the dimensions of the
  \rm{BESQ} processes reflect the known properties of these
    processes, i.e., \rm{BESQ} processes of
  dimension $\delta<2$ will eventually hit zero and \rm{BESQ}
  processes of dimension $\delta > 0$ are instantaneously
  reflected at $0$ (unless we specify otherwise).
\end{remark}

\begin{asm}[Joint Markovian structure]\label{asm:JMarkov}
 Let $\{(x_i,h_i)\}_{i\leq N} \subset \R\times \R_+$,
 be such that $x_1\leq x_2\leq \ldots \leq x_N$. Then the paths to the right have the following Markovian structure. 
 For any non-empty subset $I = \{i_1,i_2,\ldots,i_k\} \subset \{1,2,\ldots,N\}$,
 \[
  \mathcal{Z}_I(s) = \left( \Lambda_{x_{i_1},h_{i_1}}(x_{N}+s), \Lambda_{x_{i_2},h_{i_2}}(x_{N}+s), \ldots, \Lambda_{x_{i_k},h_{i_k}}(x_{N}+s) \right),\qquad s\geq 0
 \]
is a (non-homogeneous) Markov process on $\R^{k}$. 
Moreover, this Markov process depends on the curves $\Lambda_{x_i,h_i}(\cdot)$ to the left of $x_N$ only through the values $(\Lambda_{x_i,h_i}(x_N))_{i\in I}$ in the sense that 
\[
 P\left( (\mathcal{Z}_I(s))_{s\geq 0} \in \cdot \mid (\Lambda_{x_i,h_i}(x_i+t))_{0\leq t \leq x_N-x_i, \, i \leq N} \right) = 
 P\left( (\mathcal{Z}_I(s))_{s\geq 0} \in \cdot \mid (\Lambda_{x_i,h_i}(x_N))_{i \in I} \right).
\]
Similarly, the paths to the left have an analogous Markovian structure. 
\end{asm}

\begin{remark}\label{rem:Markov}
  Assumption \ref{asm:JMarkov} is a natural assumption when
  considering scaling limits of self-interacting random walks where
  the transition probabilities of the walk depend only on the past
  behavior of the walk at the present location; e.g., the
  self-interacting random walks defined by \eqref{sirw}, excited random
  walks (see \cite{kzERWsurvey,kpERWMCS,Tra18} and references therein), and locally Markov walks such as \cite{HLSH18}. 
  For such random walks, the directed edge local times have a
  Markovian structure analogous to the one in Assumption
  \ref{asm:JMarkov}. Since directed edge local times are roughly half
  of local times of sites, then it is natural that this Markovian
  structure is preserved in the limit of the joint RKT.
\end{remark}

Our final assumption is a non-degeneracy condition on the joint distribution of the curves. 

\begin{asm}[Non-degeneracy condition]\label{asm:ND}
The probability that the curves $\Lambda_{k,\ell}(\cdot)$ and $\Lambda_{k,\ell+1}(\cdot)$ merge to the right/left within distance $1/2$ of $k$ is strictly positive, uniformly in $(k,\ell) \in \Z\times \Z_+$. Equivalently, 
there exists a constant $c_0>0$ such that
\begin{equation}\label{nomergeasm}
 \sup_{k\in \Z, \ell \in \Z_+} \max\left\{ P\left( \mu_{(k,\ell),(k,\ell+1)}^+ \geq k + \frac{1}{2} \right), 
 P\left( \mu_{(k,\ell),(k,\ell+1)}^- \leq k - \frac{1}{2} \right) \right\} \leq 1-c_0.
\end{equation}
\end{asm}

\begin{remark}\label{rem:notight}
  Assumption \ref{asm:ND} is not satisfied for the RKTs
  of weakly reinforced random walks
  \cite{tLTWRRW} or self-repelling random walks with directed edges
  \cite{TV08} where the limiting curves of the RKTs are
  \emph{deterministic} curves of the form 
 \[
  \Lambda_{x,h}(y) = \left\{ h^\beta + B(|x|-|y|) \right\}_+^{1/\beta}, 
 \]
 for some $B>0$ and $\beta \in (0,1]$ (the RKTs in \cite{tLTWRRW,TV08} were actually proven for stopping times other than $\tau_{x,h}^n$, but they are expected to hold also for $\Lambda_{x,h}^n$). It was shown in
   \cite{Mareche_et_al2024} that under the natural scaling the paths
   of self-repelling random walks with directed edges are not tight;
   instead, a different kind of functional limit theorem holds ``in
   the bulk''. The proof of tightness in the current paper depends
   crucially on Assumption \ref{asm:ND}.
\end{remark}

Now we are ready to state our main results.  Following T\'oth
\cite{tTSAW,tTSAWGBR,tGRK}, one sees that a RKT with scaling function
$g(n)=a_0 n^\gamma $ suggests a limiting distribution for the walk
when time is scaled by $n$ and space by $n^{1/(1+\gamma)}$.  For our
proof, however, it will be more convenient to
scale time by $n g(n)=a_0 n^{1+\gamma}$ and space by $n$ so that the rescaled path of the walk is given by
\begin{equation}
  \label{xnt}
  \mathcal{X}_n(t) = n^{-1}X_{\fl{t n g(n)}}, \quad t\geq 0. 
\end{equation}
Throughout the paper, weak convergence (in the appropriate space) will
be denoted by $\Longrightarrow$.
It is not hard to see that Theorem
\ref{thm:main_X} implies convergence stated in the traditional form,
that is,
$\left(\frac{X_{\fl{tn}}}{c n^{1/(1+\gamma)}} \right)_{t\geq 0}
\Longrightarrow (\mathfrak{X}(t))_{t\geq 0}$, where
$c = a_0^{-1/(1+\gamma)}$ (see the explanation after Theorem 2.1 in
\cite{KP25+} for more details).

\begin{theorem}\label{thm:main_X}
 Suppose that a recurrent random walk $(X_n)_{n\geq 0}$ satisfies an augmented joint RKT, and that the limiting curves of the joint Ray-Knight theorem satisfy Assumptions \ref{asm:Bessel}--\ref{asm:ND}. 
  Then, there is a continuous process $(\mathfrak{X}(t))_{t\geq 0}$ such that 
  $\mathcal{X}_n  \underset{n\to\infty}{\Longrightarrow} \mathfrak{X}$ with respect to the Skorokhod $J_1$ topology on $D([0,\infty))$. 
\end{theorem}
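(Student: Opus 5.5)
The plan has two parts: tightness of $(\mathcal{X}_n)$ in $D([0,\infty))$, with all subsequential limits continuous, and then uniqueness of the law of any subsequential limit. Both parts rest on one duality. The occupation time up to an inverse local time is the integral of the local time profile:
\[
 \tau^n_{x,h} = \sum_{k\in\Z} L(\tau^n_{x,h},k) \approx n g(n) \int_\R \Lambda^n_{x,h}(y)\,dy .
\]
Hence the rescaled time $T^n_{x,h}:=\tau^n_{x,h}/(ng(n))$ is asymptotically a continuous functional of the curve $\Lambda^n_{x,h}$, given uniform integrability coming from the support endpoints $\mu^{n,\pm}_{x,h}$. In addition, for $t$ up to that time the range of $\mathcal{X}_n$ is exactly $[\mu^{n,-}_{x,h},\mu^{n,+}_{x,h}]$, up to $O(1/n)$. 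The augmented joint RKT therefore gives joint convergence in law of the finite family
\[
\bigl(T^n_{x_i,h_i},\ \text{the } \mu^{n,\pm}, \text{ and the merge points}\bigr),
\]
and the work is to turn this into statements about the path itself.

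\textbf{Tightness.} Fix $K,T$. I would show that with high probability the path cannot travel macroscopic distance $\epsilon$ in time $\eta$ before time $T$. I would use the grid points $(k\epsilon', \ell \epsilon'^{\gamma})$ with $k,\ell$ integers, and use scaling \eqref{scaling:marginal} to reduce Assumption~\ref{asm:ND} to that scale. Suppose the walk is at site $x=k\epsilon'$ with local time between $\ell\epsilon'^\gamma$ and $(\ell+1)\epsilon'^\gamma$. Then it remains inside the interval between the merge points of $\Lambda_{x,\ell}$ and $\Lambda_{x,\ell+1}$ until its local time at $x$ exceeds the next level. This holds because any excursion beyond both merge points would add local time there, contradicting the coalescence of the two curves.

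Assumption~\ref{asm:ND} says each such interval has length less than $\epsilon'/2$ on each side with probability at least $c_0$. Applying the Markov structure of Assumption~\ref{asm:JMarkov} successively over the independent level increments, a displacement of size $\epsilon$ requires crossing on the order of $\epsilon/\epsilon'$ such blocks. Meanwhile the time spent is controlled from below by the areas between consecutive curves, which are strictly positive by the BESQ description in Assumption~\ref{asm:Bessel}. A geometric bound of the form $(1-c_0)^{\#}$ then gives a modulus-of-continuity estimate. The remaining ingredients are a bound $\sup_{t\le T}|\mathcal{X}_n(t)|\le K$ with high probability, obtained from the fact that $T_{0,h}\to\infty$ as $h\to\infty$ while the support of $\Lambda_{0,h}$ stays finite, and a union bound over the finitely many grid points in $[-K,K]\times[0,H]$. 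Together these give $C$-tightness.

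\textbf{Identification of the limit.} For any subsequential limit $\mathfrak{X}$, I would show that its finite-dimensional laws are determined by the law of the joint RKT family. The key observation is that $\mathfrak{X}(t)\le y$ is essentially the event that some level at $y$ has not yet been reached. More precisely, for $y>0$ the event $\{\sup_{s\le t}\mathfrak{X}(s)<y\}$ coincides, up to null sets, with the event that the time $T_{x,h}$ at which the path first exits to the right of $y$ exceeds $t$, i.e. with $\mu^+_{0,h}<y$ for suitable $h$. To get $\mathfrak{X}(t)$ itself, I would use
\[
 \{\mathfrak{X}(t) > y\} = \bigcup_{h \in \mathbb{Q}_+} \bigl\{ T_{y,h} \le t <\text{next return below } y \bigr\}.
\]
This event can be approximated by events of the form ``$T_{y,h}<t<T_{y,h'}$ and the curves $\Lambda_{y,h},\Lambda_{y,h'}$ merge to the left at a point within $\delta$ of $y$''. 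During the time interval between the two inverse local times, the walk stays on the side where the curves differ. Finite intersections of such events, taken over several times $t_j$ and points $y_j$, are expressed through finitely many curves, their integrals and their merge points. Their probabilities are limits of the corresponding quantities for the walk, by the augmented joint RKT combined with the convergence of $\mathcal{X}_n$ along the subsequence. I would use continuity of $\mathfrak{X}$ and the fact that $P(T_{y,h}=t)=0$, which follows since the curves are nondegenerate BESQ powers, to avoid boundary effects. Hence all subsequential limits have the same finite-dimensional distributions, and the full sequence converges.

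The main obstacle is the tightness step. There one must convert Assumption~\ref{asm:ND}, a statement about a single pair of curves, into a uniform modulus of continuity, which requires independence across many blocks. Assumption~\ref{asm:JMarkov} gives conditional independence only for curves ordered in space, so I would first try to chain blocks left to right at a fixed level. I would also need a lower bound on the elapsed time between levels, for which I would try using the positivity of BESQ areas combined with scaling.
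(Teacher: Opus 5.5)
Your tightness part follows the paper's route. It uses the grid $Q_\delta$, the ``snake'' of bracketing pairs $\Lambda_{k\delta,\ell\delta^\gamma},\Lambda_{k\delta,(\ell+1)\delta^\gamma}$ that are forced not to merge within distance $\delta$, Assumption~\ref{asm:ND} via scaling, and a positive gap between \emph{all} grid inverse local times in $Q^{(K)}_\delta$ (not just consecutive levels at one site), which comes from Lemma~\ref{lem_int_diff}. One correction: the chaining is not at a fixed level. At site $k\delta$ the relevant pair is the one bracketing $\Lambda_{x,h}(k\delta)$, so one conditions on the curves up to $k\delta$ via Assumption~\ref{asm:JMarkov} and uses the supremum over $\ell$ in \eqref{nomergeasm}.

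The genuine gap is in the identification of the limit. Your displayed identity for $\{\mathfrak{X}(t)>y\}$ as a union over rational $h$ is not correct. Just after $T_{y,h}$ the local time at $y$ keeps increasing, so no fixed-level inverse local time starts the excursion above $y$ that straddles $t$; that excursion starts at the random level $\mathfrak{L}(t,y)$. More seriously, your approximating events give only one inclusion. It is true that $\{T_{y,h}<t<T_{y,h'},\ \mu^-_{(y,h),(y,h')}\ge y-\delta\}\subset\{\mathfrak{X}(t)\ge y-\delta\}$. To pin down $P(\mathfrak{X}(t)>y)$, however, you also need the converse: when the path is above $y+\delta$ at time $t$, the level bracket $[T_{y,\ell\eta},T_{y,(\ell+1)\eta}]$ containing $t$ should have its left merge point within $\delta$ of $y$, up to an error vanishing as $\eta\to 0$ uniformly in $n$.

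Nothing in the proposal provides this, and it is not routine. The bracket is selected by $t$, hence random, and it can contain a macroscopic excursion below $y-\delta$ before or after $t$. The number of brackets at $y$ containing such an excursion does not go to $0$ as $\eta\to 0$, so a union bound over the $\asymp 1/\eta$ brackets leaves an $O(1)$ error. You would need a separate estimate showing that a bracket carrying macroscopic excursions on \emph{both} sides of $y$ is rarely the one containing $t$. That requires joint control of the forward and backward families at a common site, which Assumptions~\ref{asm:Bessel}--\ref{asm:ND} do not immediately give.

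The paper avoids this entirely by reusing the tightness estimate. Let $N_n(t)$ be the index of the last grid time $\tau^n_{x_m,h_m}$, $(x_m,h_m)\in Q^{(K)}_\delta$, before $t\,ng(n)$. The bound \eqref{PAxh} (no long excursion between consecutive grid times) gives $|\mathcal{X}_n(t)-x_{N_n(t)}|<\varepsilon$ with high probability. Meanwhile $(x_{N_n(t_i)})_{i\le j}$ is a function of $(\tau^n_{x_m,h_m}/(ng(n)))_m$ alone. Its law converges by Lemma~\ref{lem:txhn-conv}, with no atoms at the $t_i$ and no ties by Proposition~\ref{prop_int_no_atoms} and Lemma~\ref{lem_int_diff}. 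A Cauchy argument in the Prokhorov metric then gives convergence of the finite-dimensional distributions, with no merge points at $y$ and no excursion analysis.
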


While the statement of Theorem \ref{thm:main_X} doesn't characterize the limiting process $\mathfrak{X}$, the proof of Theorem \ref{thm:main_X} will show that the finite dimensional distributions of $\mathfrak{X}$ are uniquely determined by the augmented joint RKT.

One expects that the limiting process $\mathfrak{X}$ should also have the same joint RKT as the random walk. The next two theorems show that this is indeed the case. 
First we will show the joint convergence of the path of the walk and the rescaled local time profile of the walk
\begin{equation}
  \label{lntx}
  \mathcal{L}_n(t,x) = (g(n))^{-1}L(\fl{t n g(n)}, \fl{xn}), \quad t \geq 0, \, x \in \R. 
\end{equation}
 
Note that for a fixed $t\geq 0$, $\mathcal{L}_n(t,\cdot)$ is an element of $D(\R)$ and so we will view 
$\mathcal{L}_n$ as an element of the Skorokhod space $D([0,\infty); D(\R))$ (see Section \ref{sec:notation} for a precise description of the topologies used for this and other spaces).

\begin{theorem}\label{thm:main_XL}
 Under the same assumptions as in Theorem \ref{thm:main_X}, as $n\to \infty$, the sequence of processes
$(\mathcal{X}_n(t), {\cal L}_n(t,\cdot))_{t\ge 0}$ converges weakly in
$D([0,\infty)) \times D([0,\infty);D(\R))$ to a process
$(\mathfrak{X}(t), \mathfrak{L}(t,\cdot))_{t\ge 0}$ where, almost surely,
$(\mathfrak{X}, \mathfrak{L})\in C([0,\infty))\times
C([0,\infty);C_0(\R))$ and $\mathfrak{L}$ is the local
time\footnote{We stress that in general the fact that
  $\mathcal{X}_n \Rightarrow \mathfrak{X} $ where each $\mathcal{X}_n$
  has a local time does not imply that the limit process has a local
  time nor that the local times of the $\mathcal{X}_n$ converge.} for
$\mathfrak{X}$; i.e., almost surely, 
\begin{equation}
  \label{otf}
  \int_a^b \mathfrak{L}(t,x) \, dx = \int_0^t \mathbbm{1}_{[a,b]} (\mathfrak{X}(s))ds,
  \qquad \forall\, a<b, \, t\geq 0.
\end{equation}
\end{theorem}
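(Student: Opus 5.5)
The plan is to take Theorem~\ref{thm:main_X} as given: $\mathcal{X}_n \Rightarrow \mathfrak{X}$ with $\mathfrak{X}$ continuous, and with the finite dimensional distributions of $\mathfrak{X}$ determined through the augmented joint RKT. On top of this we add the local time. The first step is tightness of $\mathcal{L}_n$ in $D([0,\infty);D(\R))$, together with the claim that every subsequential limit lies in $C([0,\infty);C_0(\R))$.

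\textbf{Continuity in $x$.} We sandwich $\mathcal{L}_n(t,\cdot)$ between curves $\Lambda^n_{k/M,\ell/M}$ on a grid. The key is the inverse-local-time identity: if $\tau^n_{x,h}\le \fl{tng(n)} < \tau^n_{x,h+\epsilon}$, then
\[
\Lambda^n_{x,h}(\cdot)\le \mathcal{L}_n(t,\cdot)\le \Lambda^n_{x,h+\epsilon}(\cdot).
\]
We then argue as follows.
\begin{enumerate}
\item Assumption~\ref{asm:Bessel} makes each limiting curve continuous with BESQ-power moduli of continuity.
\item The joint convergence, together with the Markov structure (Assumption~\ref{asm:JMarkov}) and uniform continuity of the law of $\Lambda_{x,h}$ in $h$, shows that $\Lambda_{x,h}$ and $\Lambda_{x,h+\epsilon}$ are uniformly close with high probability.
\item Hence the $D(\R)$-modulus of $\mathcal{L}_n(t,\cdot)$ is controlled uniformly over $t\le T$.
\end{enumerate}

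\textbf{Compact support and continuity in $t$.} Compact support in $x$ comes from tightness of the range of $\mathcal{X}_n$ on $[0,T]$. Continuity in $t$ follows from monotonicity in $t$ combined with the occupation identity
\[
\int_a^b \mathcal{L}_n(t,x)\,dx \approx \int_0^t \mathbbm{1}_{[a,b]}(\mathcal{X}_n(s))\,ds,
\]
which holds exactly up to discretization errors of order $1/n$. This identity bounds increments in $t$ of the integrated local time. Combining it with the uniform-in-$x$ equicontinuity gives sup-norm control: a monotone family of functions that are equicontinuous in $x$ and have small integrated increments must have small sup-norm increments.

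\textbf{Passing to the limit.} Take a joint subsequential limit $(\mathfrak{X},\mathfrak{L})$. By Skorokhod representation, and because the limits are continuous, the convergence is locally uniform. Passing to the limit in the occupation identity above then yields \eqref{otf}, for each fixed $a<b$ and $t$, and by continuity for all of them simultaneously. Since $\mathfrak{X}$ is continuous, the set of times it spends exactly at the endpoints $a,b$ has Lebesgue measure zero, so the indicator passes to the limit without boundary issues.

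\textbf{Uniqueness.} Equation \eqref{otf} determines $\mathfrak{L}(t,x)$ pathwise as $\lim_{\epsilon\to0}\epsilon^{-1}\int_0^t\mathbbm{1}_{[x,x+\epsilon]}(\mathfrak{X}(s))\,ds$, using the continuity of $\mathfrak{L}$ in $x$. So $\mathfrak{L}$ is a measurable functional of $\mathfrak{X}$. The joint law of any subsequential limit is therefore determined by the law of $\mathfrak{X}$, which is unique by Theorem~\ref{thm:main_X}. Tightness plus uniqueness of the limit gives full convergence.

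\textbf{Main obstacle.} I expect the hardest step to be the uniform-in-$t$ tightness of $\mathcal{L}_n(t,\cdot)$ in $D(\R)$. Here one needs a union over a random, unbounded set of times, handled by a finite grid of $(x,h)$ pairs with $h\le H$. Two controls are needed.
\begin{itemize}
\item Large heights: we need $\sup_x \mathcal{L}_n(T,x)\le H$ with high probability. This should come from the occupation identity and the fact that the curves $\Lambda_{0,h}$ have support of length growing with $h$ by scaling \eqref{scaling:marginal}, so that large local times force large elapsed time.
\item Adjacent curves: we need adjacent grid curves $\Lambda_{x,h}$ and $\Lambda_{x,h+\epsilon}$ to be close uniformly in $y$. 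I would first try to get this from scaling \eqref{scaling:marginal} plus the non-crossing property, comparing $\Lambda_{x,h}$ with $\Lambda_{x,h+\epsilon}$ through the BESQ transition densities.
\end{itemize}
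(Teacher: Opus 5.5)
Your overall architecture matches the paper's. You prove tightness of $\mathcal{L}_n$ with every limit point in $C([0,\infty);C_0(\R))$, pass to the limit in the occupation identity under a Skorokhod coupling, and get uniqueness because \eqref{otf} determines $\mathfrak{L}$ from $\mathfrak{X}$.

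The gap is in the step you yourself flag, spatial tightness. The bracketing pair in your sandwich depends on $t$, and it ranges over all heights up to $\mathcal{L}_n(T,x)$, which is of order one. So control uniform in $t\le T$ needs your closeness statement for \emph{all} adjacent grid pairs at once. The statement you propose, that $\Lambda_{x,h}$ and $\Lambda_{x,h+\epsilon}$ are close uniformly in $y$, fails in that form. Already for the PSR walk, while the lower curve is positive, the gap $\Lambda_{x,h+\epsilon}-\Lambda_{x,h}$ is a time-changed BESQ$^0$ started at $\epsilon$ (Lemma \ref{lem:BESQ-tc} plus scaling). This is a nonnegative martingale that reaches $\eta$ before merging with probability of order $\epsilon/\eta$. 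With $\asymp H/\epsilon$ heights, the expected number of pairs that separate by $\eta$ is of order $H/\eta$, however small $\epsilon$ is. Indeed, because the curves coalesce, the spacing between surviving curves at a fixed distance from the base point does not shrink as the grid is refined. Continuity of the law in $h$, the Markov property, or transition densities of a single pair give closeness of one pair in probability but no rate. Nothing of that kind survives a union bound over a refining grid.

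The fix, which is the paper's Lemma \ref{lem:Lntight-space}, is to use the sandwich only locally and quantitatively.
\begin{enumerate}
\item Take a grid point $k\delta$ within $2\delta$ of $x,x'$ with $\mathcal{L}_n(s,k\delta)>0$, and bracket $s$ between $\tau^n_{k\delta,\ell\delta^\gamma}$ and $\tau^n_{k\delta,(\ell+1)\delta^\gamma}$.
\item An oscillation of $\mathcal{L}_n(s,\cdot)$ larger than $\epsilon/4$ on that window then forces one of the two bracketing curves to move by $\epsilon/8$ from its own starting height within distance $2\delta$. No comparison between the two curves and no merging is needed.
\item By \eqref{Lxh-rescale} and the BESQ tail bounds (Proposition \ref{prop:BESQdev}, Corollary \ref{cor:Lxh-dev}), each such event has probability at most $Ce^{-c_{\epsilon,K}/\delta}$. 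This beats the $O(K^{2+\gamma}\delta^{-1-\gamma})$ grid points.
\end{enumerate}
The height bound is also simpler than your heuristic. By Lemma \ref{lem:RKn}, $\mathcal{L}_n(T,\cdot)$ is dominated by $\Lambda^n_{K,0}$ (or $\Lambda^n_{-K,0}$) on $\{\tau_{\pm Kn,0}>Tng(n)\}$.

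Given spatial equicontinuity uniform in $s\le T$, your derivation of time-continuity is correct. It is genuinely different from, and shorter than, the paper's Lemma \ref{lem:Lntight-time}, which uses the modulus of $\mathcal{X}_n$ and \eqref{Lgap}. The increment $\mathcal{L}_n(s',\cdot)-\mathcal{L}_n(s,\cdot)$ is nonnegative, has total integral at most $s'-s+(ng(n))^{-1}$, and has controlled spatial modulus, so its supremum is small.

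One smaller flaw: continuity of $\mathfrak{X}$ alone does not imply that it spends zero Lebesgue time at $a$ or $b$. Obtain this from the limit instead. One option is to pass to the limit first with $f\in C_B(\R)$, as the paper does. Another is to use $\int_0^t\mathbbm{1}_{(a,b)}(\mathfrak{X}(s))\,ds\le\int_a^b\mathfrak{L}(t,x)\,dx\le\int_0^t\mathbbm{1}_{[a,b]}(\mathfrak{X}(s))\,ds$ and shrink an interval around the point, using continuity of $\mathfrak{L}$.
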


\begin{theorem}\label{thm:main_JRK}
 Let the same assumptions as in Theorem \ref{thm:main_X} hold and $(\mathfrak{X}, \mathfrak{L})$ be the limiting process from Theorem \ref{thm:main_XL}. Then the process $\mathfrak{X}$ has the same joint Ray-Knight theorems as the random walk. More precisely, 
 if
 \[
  \mathfrak{t}_{x,h} = \inf\{t\geq 0: \mathfrak{L}(t,x) > h \}, \quad x\in \R,\, h\geq 0,
 \]
 are the inverse local times for the process $\mathfrak{X}$, then for any finite set of points $(x_1,h_1),\ldots,(x_N,h_N) \in \R \times \R_+$, 
the joint distribution of the local time profiles $
 \left( \mathfrak{L}(\mathfrak{t}_{x_i,h_i},\cdot) \right)_{i\leq N}$ 
 is the same as that of the curves $\left( \Lambda_{x_i,h_i}(\cdot) \right)_{i\leq N}$ from the joint Ray-Knight theorems for the random walk. 
 Moreover, $\mathfrak{X}$ is the unique continuous process with these joint Ray-Knight theorems in the sense that if $\tilde{\mathfrak{X}}$ is a continuous process with  local time $(\tilde{\mathfrak{L}}(t,x))_{t\geq 0, x\in \R}$ that is non-decreasing in $t$ and such that
 $(\tilde{\mathfrak{X}},\tilde{\mathfrak{L}})$ has the same joint Ray-Knight theorems as $\mathfrak{X}$, then
 $\tilde{\mathfrak{X}}$ and $\mathfrak{X}$ have the same distribution.
 \end{theorem}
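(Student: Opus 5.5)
We prove the two assertions of Theorem~\ref{thm:main_JRK} separately, using Theorem~\ref{thm:main_XL} together with a Skorokhod coupling in which $(\mathcal{X}_n,\mathcal{L}_n)\to(\mathfrak{X},\mathfrak{L})$ almost surely. The limit is locally uniform, since it lies in $C([0,\infty))\times C([0,\infty);C_0(\R))$.

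\textbf{Step 1: the Ray-Knight theorem for $\mathfrak{X}$.} By definition, $\Lambda^n_{x,h}(\cdot)=\mathcal{L}_n(T^n_{x,h},\cdot)$ up to a negligible correction, where $T^n_{x,h}=\tau^n_{x,h}/(ng(n))$ is the rescaled inverse local time. Since $\mathcal{L}_n(t,x)$ is non-decreasing in $t$, $T^n_{x,h}$ is the first time $\mathcal{L}_n(\cdot,x)$ exceeds $h$ (up to lattice errors). We want $T^n_{x,h}\to\mathfrak{t}_{x,h}$ almost surely. This holds provided $t\mapsto\mathfrak{L}(t,x)$ does not stay flat at level $h$, i.e.\ provided $\mathfrak{t}_{x,h}=\mathfrak{t}_{x,h+}$ almost surely for each fixed $(x,h)$.

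To establish this, note that $h\mapsto\mathfrak{t}_{x,h}$ is monotone, so it has at most countably many jumps. Hence for Lebesgue-a.e.\ $h$ there is almost surely no jump at $h$. For such $h$, continuity of $\mathfrak{L}$ gives $\mathcal{L}_n(T^n_{x,h},\cdot)\to\mathfrak{L}(\mathfrak{t}_{x,h},\cdot)$ uniformly on compacts, jointly over finitely many points. Therefore the finite-dimensional laws of $(\mathfrak{L}(\mathfrak{t}_{x_i,h_i},\cdot))_i$ coincide with those of $(\Lambda_{x_i,h_i})_i$ for a.e.\ choice of the $h_i$. To pass to all $h$, we use right-continuity of $h\mapsto\mathfrak{t}_{x,h}$, which gives right-continuity of $h\mapsto\mathfrak{L}(\mathfrak{t}_{x,h},\cdot)$. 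We also need continuity in probability of $h\mapsto\Lambda_{x,h}$. This follows from Assumption~\ref{asm:JMarkov} and the non-crossing property: $\Lambda_{x,h}\le\Lambda_{x,h'}$ for $h<h'$, and both marginals converge as $h'\downarrow h$ by Assumption~\ref{asm:Bessel}, since BESQ laws are continuous in the initial value. Sandwiching then yields the equality in law for every $h$.

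\textbf{Step 2: uniqueness.} The key point is that the fdd of a continuous process with non-decreasing local time are measurable functions of its joint Ray-Knight family. Take $\tilde{\mathfrak X}$ as in the statement. The occupation formula gives
\[
\tilde{\mathfrak t}_{x,h}=\int_\R\tilde{\mathfrak L}(\tilde{\mathfrak t}_{x,h},y)\,dy .
\]
Moreover, for $t$ fixed, $\{\tilde{\mathfrak X}(t)\le x\}$ can be recovered, up to null sets, from the events
\[
\{\tilde{\mathfrak t}_{x',h}\le t<\tilde{\mathfrak t}_{x',h'}\},
\]
ranging over rational $x'$ near $x$ and $h<h'$. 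Indeed, the process sits near $x'$ at time $t$ exactly when $t$ lies in a window where the local time at $x'$ increases; continuity makes the approximation exact as the mesh goes to $0$. This mirrors the argument the paper uses in Section~\ref{sec:fdd} to identify the fdd of $\mathfrak{X}$ from the augmented joint RKT. Concretely, $P(\tilde{\mathfrak X}(t_1)\in A_1,\dots,\tilde{\mathfrak X}(t_k)\in A_k)$ is a limit of probabilities of events expressed through finitely many integrals $\int\tilde{\mathfrak L}(\tilde{\mathfrak t}_{x_i,h_i},y)\,dy$. By hypothesis these have the same joint law as the corresponding quantities for $\mathfrak{X}$, so the fdd agree, and continuity gives equality in law on path space.

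\textbf{Main obstacle.} The delicate point is the event reconstruction in Step 2. Expressing $\{\tilde{\mathfrak X}(t)\in A\}$ through inverse local times requires knowing that, for a.e.\ $t$, the process at time $t$ is in the support of the local-time increase near its position. Equivalently, we must rule out time spent at places where the local time does not grow. The occupation formula handles this in integrated form: the set of times when $\tilde{\mathfrak X}$ lies in a region of zero local-time growth has measure zero. Using this, we would first approximate the point evaluation $\tilde{\mathfrak X}(t)$ by time averages $\epsilon^{-1}\int_t^{t+\epsilon}f(\tilde{\mathfrak X}(s))\,ds$. We then rewrite these averages via the occupation formula as functionals of $\tilde{\mathfrak L}(t,\cdot)$ and $\tilde{\mathfrak L}(t+\epsilon,\cdot)$, which in turn are determined by the curves at suitable inverse local times.
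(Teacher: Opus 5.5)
Your Step~1 is a valid alternative to the paper's argument. The paper takes a subsequential joint limit $\theta_{x,h}$ of $\tau^n_{x,h}/(ng(n))$, gets $\mathfrak{L}(\theta_{x,h},\cdot)\overset{\text{Law}}{=}\Lambda_{x,h}$, and sandwiches $\theta_{x,h}\le\mathfrak{t}_{x,h}\le\theta_{x,h+\delta}$ using continuity of $h\mapsto E[\Lambda_{x,h}(y)]$. You instead use an a.s.\ coupling, a Fubini argument for a.e.\ $h$, and right-continuity. Three corrections:
\begin{enumerate}[(i)]
\item The no-flat condition is $\mathfrak{t}_{x,h-}=\mathfrak{t}_{x,h}$; the equality $\mathfrak{t}_{x,h+}=\mathfrak{t}_{x,h}$ you wrote holds automatically by right-continuity.
\item You must also check that $\mathfrak{t}_{x,h}<\infty$ a.s.
\item Continuity in $h$ comes from non-crossing together with Assumption~\ref{asm:Bessel}, not from Assumption~\ref{asm:JMarkov}.
\end{enumerate}

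The genuine gap is in Step~2. The event $\{\tilde{\mathfrak t}_{x',h}\le t<\tilde{\mathfrak t}_{x',h'}\}$ only says that $\tilde{\mathfrak L}(t,x')$ lies roughly in $[h,h']$. It says nothing about where $\tilde{\mathfrak X}$ is at time $t$, because during $[\tilde{\mathfrak t}_{x',h},\tilde{\mathfrak t}_{x',h'})$ the process can make long excursions away from $x'$. Indeed, for every $x'$ with $\tilde{\mathfrak L}(t,x')>0$ one can find rationals $h<h'$, arbitrarily close, whose window contains $t$.

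The paper's argument, which you say you are mirroring, works differently. It compares the inverse local times of \emph{all} grid points $(x,h)\in Q_\delta$, and sets $\tilde{\mathfrak X}_\delta(t)$ to be the grid point whose inverse local time is the last one before $t$. The law of $\tilde{\mathfrak X}_\delta$ is then fixed by the law of $(\int_\R\Lambda_{x,h})_{(x,h)\in Q_\delta}$. The paper must still show that the process has not moved more than $\epsilon$ since that time. That displacement bound, $P(\tilde{A}_{x,h,\epsilon,\delta})\le 2(1-c_0)^{\lfloor \epsilon/\delta\rfloor}$, together with the localization event $\tilde{R}_{K,T}$, is derived from the joint RKT of $\tilde{\mathfrak X}$ through the Markov structure and Assumption~\ref{asm:ND}. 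Nothing in your proposal supplies this bound, and ``continuity makes the approximation exact'' does not replace it.

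Your fallback via time averages is a workable idea, but its crux is exactly the unproved clause that $\tilde{\mathfrak L}(t,\cdot)$ at a \emph{deterministic} time $t$ is determined in law by the Ray--Knight data. That data describes profiles only at the random times $\tilde{\mathfrak t}_{x,h}$. The difficulty you name, time spent where the local time does not grow, is not the real one; the occupation formula disposes of it. To close the gap you would need, for instance, the following steps.
\begin{enumerate}[(a)]
\item For fixed $(t,x)$, show $\tilde{\mathfrak L}(t,x)=\sup\{h\in\mathbb{Q}_+:\tilde{\mathfrak t}_{x,h}<t\}$ a.s. This uses monotonicity in $t$ and the fact that $t$ is not an atom of $\tilde{\mathfrak t}_{x,h}\overset{\text{Law}}{=}\int_\R\Lambda_{x,h}$ (Proposition~\ref{prop_int_no_atoms}).
\item Identify the joint law of the countable family $(\tilde{\mathfrak t}_{x,h})$ with that of $(\int_\R\Lambda_{x,h})$.
\item Since no spatial regularity of $\tilde{\mathfrak L}(t,\cdot)$ is assumed, give a Tonelli/mixed-moment argument showing that the bounded nonnegative variables $\int f_k(y)\bigl(\tilde{\mathfrak L}(t_k+\epsilon,y)-\tilde{\mathfrak L}(t_k,y)\bigr)\,dy$ have a joint law determined by these pointwise quantities.
\end{enumerate}
With these steps the route would be genuinely different from the paper's, and it would not use Assumption~\ref{asm:ND}. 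As written, however, Step~2 does not prove uniqueness.
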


\begin{remark}
  We highlight here that the uniqueness mentioned in Theorem
  \ref{thm:main_JRK} contrasts with the fact that the RKTs for a
    single curve ($N=1$) do not uniquely characterize the process (as
  shown for polynomially self-repelling walks in \cite{KMP23}).
\end{remark}
 
As a first application of the above results, we note that Theorems
\ref{thm:main_X} and \ref{thm:main_XL} combined with the augmented
joint RKT for TSAW \cite[Theorem 1.1]{KP25+} and the recurrence of the
TSAW that can be deduced from \cite{tTSAW} give not only a new proof
of the convergence of the path of a TSAW that is independent of the a
priori construction of the TSRM, but also joint convergence of the
path of the walk and its local time profile.  While this proof
strategy is independent of the construction of TSRM in \cite{TW98},
Theorem \ref{thm:main_JRK} together with the joint Ray-Knight theorems
for the TSRM \cite[Theorem 4.3]{TW98} can then be used to identify the
scaling limit as the TSRM after the fact.  We summarize these results
in the following corollary.

\begin{corollary}\label{cor:TSAW}
 Let $(X_n)_{n\geq 0}$ be a ``true'' self-avoiding walk corresponding to a weight function $w(n) = e^{-\beta n}$ for some $\beta>0$, and let 
 $\sigma_\beta^2 = \frac{\sum_{x\in\Z} x^2 e^{-\beta x^2}}{\sum_{x\in\Z} e^{-\beta x^2} }$. 
If $\mathcal{X}_n$ and $\mathcal{L}_n$ are defined as in \eqref{xnt} and \eqref{lntx} with scaling function $g(n) = 2\sigma_\beta \sqrt{n}$, then
$(\mathcal{X}_n, \mathcal{L}_n) \underset{n\to\infty}{\Longrightarrow} (\mathfrak{X}, \mathfrak{L})$ in $D([0,\infty)) \times D([0,\infty); D(\R))$ where $\mathfrak{X}$ is a ``true'' self-repelling motion (TSRM)  and $\mathfrak{L}$ is its local time process. 
\end{corollary}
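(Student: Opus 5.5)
The corollary follows by checking the hypotheses of Theorems \ref{thm:main_X}--\ref{thm:main_JRK} for the TSAW and then identifying the limit through uniqueness. The plan has three steps, carried out in order.

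\textbf{Step 1: recurrence and the augmented joint RKT.}
Recurrence of the TSAW follows from \cite{tTSAW}. T\'oth's generalized Ray-Knight analysis shows that the local time profile stopped at $\tau_{k,m}$ is a.s.\ finitely supported. This holds for every $k,m$, so every site is visited infinitely often. The augmented joint RKT with scaling $g(n)=2\sigma_\beta\sqrt n$ is exactly \cite[Theorem 1.1]{KP25+}. I take the limiting curves to be the coalescing reflected/absorbed Brownian motions of T\'oth--Werner, after normalization. Concretely:
\begin{itemize}
\item each single curve $\Lambda_{x,h}$ is a Brownian motion started at $h$;
\item it is reflected at $0$ between $x$ and the origin;
\item it is absorbed at $0$ outside that interval;
\item distinct curves move independently until they meet, and coalesce afterwards.
\end{itemize}

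\textbf{Step 2: Assumptions \ref{asm:Bessel}--\ref{asm:ND}.}
Here $\gamma=1/2$. For Assumption \ref{asm:Bessel}, note that a reflected Brownian motion is $|B|=(\mathrm{BESQ}^1)^{1/2}$. An absorbed Brownian motion is $(\mathrm{BESQ}^1$ stopped at $0)^{1/2}$. Since $1\in(0,2)$, we may take $\delta_{r,\pm}=\delta_{a,\pm}=1$ and $C_{r,\pm}=C_{a,\pm}=1$.

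For Assumption \ref{asm:JMarkov}, consider the curves to the right of $x_N$. They form a system of coalescing Brownian motions with the appropriate boundary behavior at $0$, which is a Markov process. Its law depends only on the values at $x_N$, because it is driven by Brownian increments independent of the past. This is precisely the construction in \cite{TW98} and \cite{KP25+}.

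For Assumption \ref{asm:ND}, the curves $\Lambda_{k,\ell}$ and $\Lambda_{k,\ell+1}$ start at distance $1$. Their difference, until merging, is a Brownian motion with variance $2$. Reflection at $0$ can only help them meet, since the lower curve is pushed up toward the upper one; equally, absorption of the lower curve followed by absorption of the upper one also counts as merging. So the probability of merging within distance $1/2$ is bounded below by the probability that a Brownian motion started at $1$, with variance $2$, hits $0$ before time $1/2$. This bound is positive and does not depend on $(k,\ell)$. I will need to check the reflection and absorption cases carefully with a monotone coupling.

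\textbf{Step 3: identifying the limit.}
Theorems \ref{thm:main_X} and \ref{thm:main_XL} now give $(\mathcal X_n,\mathcal L_n)\Rightarrow(\mathfrak X,\mathfrak L)$. By Theorem \ref{thm:main_JRK}, $\mathfrak X$ has the same joint RKTs as the walk, and it is the unique continuous process with these joint RKTs whose local time is non-decreasing in $t$. The TSRM of \cite{TW98} is continuous and has a continuous local time that is non-decreasing in $t$. By \cite[Theorem 4.3]{TW98}, its joint RKTs are the same coalescing Brownian family. The uniqueness part of Theorem \ref{thm:main_JRK} therefore gives $\mathfrak X\overset{\text{Law}}{=}$ TSRM.

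The main obstacle is matching normalizations. The TSRM's RKT curves in \cite{TW98} use a specific time and space scaling. I have to verify that $g(n)=2\sigma_\beta\sqrt n$ produces standard Brownian curves, rather than a rescaled version, in the same convention that \cite{TW98} uses for local time. If the constants differ, I will use the Brownian scaling of the TSRM ($\mathfrak X(c^{3/2}t)\overset{\text{Law}}{=} c\,\mathfrak X(t)$) to absorb the discrepancy.
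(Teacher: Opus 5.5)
Your proposal follows the paper's route: you feed the augmented joint RKT of \cite[Theorem 1.1]{KP25+} and the recurrence from \cite{tTSAW} into Theorems \ref{thm:main_X}--\ref{thm:main_XL}, then identify the limit as the TSRM via the uniqueness part of Theorem \ref{thm:main_JRK} together with \cite[Theorem 4.3]{TW98}. The paper leaves the verification of Assumptions \ref{asm:Bessel}--\ref{asm:ND} implicit, so your Step 2 adds detail rather than changing the argument, but one claim there needs fixing. For $k\geq 0$, once the lower curve is absorbed the difference is just the upper curve, with quadratic-variation rate $1$ instead of $2$ (for example, always when $\ell=0$). So your comparison with a variance-$2$ Brownian motion fails in that case. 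Instead, note that the difference is a continuous martingale with rate in $[1,2]$ until it hits $0$. Comparing with a standard Brownian motion started at $1$ hitting $0$ before time $1/2$ then still gives a positive lower bound, uniform in $(k,\ell)$.
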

As noted above, Corollary \ref{cor:TSAW} is a stronger result than
what was proved for the TSAW in \cite{KP25+}.  The main result in
\cite{KP25+} was the joint convergence of the (rescaled) position and
local time at the current location for the TSAW. The following
corollary shows that this follows from the stronger statement in
Theorem \ref{thm:main_XL}.

\begin{corollary}\label{xh}
  Let the same assumptions as in Theorem \ref{thm:main_X} hold
  and let $\mathcal{H}_n(t):={\cal L}_n(t,{\cal X}_n(t))$, $t\ge 0$,
    be the rescaled local time of the walk at its current
    location. Then
  $(\mathcal{X}_n, \mathcal{H}_n)\underset{n\to\infty}{\Longrightarrow} (\mathfrak{X}, \mathfrak{H})$  in $D([0,\infty))\times D([0,\infty))$, where
  $\mathfrak{H}(t) = \mathfrak{L}(t,\mathfrak{X}(t))$.
\end{corollary}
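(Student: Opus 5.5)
The plan is to deduce Corollary~\ref{xh} from Theorem~\ref{thm:main_XL} by the continuous mapping theorem, applied to the evaluation map
\[
\Phi:(\xi,\ell)\mapsto \bigl(\xi,\ (\ell(t,\xi(t)))_{t\ge 0}\bigr),
\]
defined on $D([0,\infty))\times D([0,\infty);D(\R))$ with values in $D([0,\infty))\times D([0,\infty))$. By definition, $(\mathcal{X}_n,\mathcal{H}_n)=\Phi(\mathcal{X}_n,\mathcal{L}_n)$ and $(\mathfrak{X},\mathfrak{H})=\Phi(\mathfrak{X},\mathfrak{L})$. Theorem~\ref{thm:main_XL} gives $(\mathcal{X}_n,\mathcal{L}_n)\Rightarrow(\mathfrak{X},\mathfrak{L})$, and the limit lies almost surely in $C([0,\infty))\times C([0,\infty);C_0(\R))$. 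It therefore suffices to show that $\Phi$ is continuous at every point of this set of continuous pairs.

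The continuity step goes as follows. Let $(\xi_n,\ell_n)\to(\xi,\ell)$ with $\xi$ continuous and $\ell\in C([0,\infty);C_0(\R))$. Because the limit is continuous, Skorokhod convergence upgrades to local uniform convergence:
\begin{itemize}
\item $\xi_n\to\xi$ uniformly on $[0,T]$;
\item $\sup_{t\le T}\|\ell_n(t,\cdot)-\ell(t,\cdot)\|_{D(\R)}\to0$, where the $D(\R)$ metric between a càdlàg function and a continuous compactly supported one reduces to uniform convergence on compacts.
\end{itemize}
Let $K$ be a compact set containing $\{\xi_n(t):t\le T,\ n\ge1\}$; it exists because the $\xi_n$ are uniformly bounded on $[0,T]$. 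Then
\[
|\ell_n(t,\xi_n(t))-\ell(t,\xi(t))|\le \sup_{x\in K}|\ell_n(t,x)-\ell(t,x)|+|\ell(t,\xi_n(t))-\ell(t,\xi(t))|.
\]
The first term tends to zero uniformly in $t\le T$. The second term tends to zero because $\ell$ is jointly uniformly continuous on $[0,T]\times K$ and $\xi_n\to\xi$ uniformly. This gives local uniform convergence of the second coordinate, hence $J_1$ convergence, and the continuous mapping theorem finishes the proof.

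The main point to verify carefully is the topology on $D([0,\infty);D(\R))$ fixed in Section~\ref{sec:notation}. The argument above needs the fact that convergence to a limit in $C([0,\infty);C_0(\R))$ implies uniform-on-compacts convergence in $(t,x)$ jointly. This holds because $J_1$ convergence to a continuous limit removes the time changes in both variables: in $x$, the $D(\R)$ metric to a continuous limit is equivalent to local uniform distance; in $t$, the outer Skorokhod time change is asymptotically the identity. Some care is also needed because $\mathcal{H}_n(t)$ involves the càdlàg function $\ell_n(t,\cdot)$ evaluated at the point $\xi_n(t)$, so its measurability must be checked. This is harmless, since $\mathcal{L}_n(t,\mathcal{X}_n(t))=g(n)^{-1}L(\fl{tng(n)},X_{\fl{tng(n)}})$ is a piecewise constant càdlàg process.
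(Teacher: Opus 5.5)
Your proposal is correct and follows essentially the paper's route: use Theorem \ref{thm:main_XL} (the paper via Skorokhod representation, you via the continuous mapping theorem) to reduce to continuity of the evaluation map at continuous limits $(\xi,\ell)$, where the key input is equicontinuity of the family $\ell(t,\cdot)$, $t\le T$. The paper gets that equicontinuity from monotonicity in $t$, since every $\ell(t,\cdot)$ is supported inside the bounded support of $\ell(T,\cdot)$; you get it from joint uniform continuity of $\ell$ on $[0,T]\times K$, which also makes your upgrade from $D(\R)$-convergence to uniform convergence on $K$ hold uniformly in $t\le T$, so say this explicitly there too.
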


Our main application of Theorems \ref{thm:main_X}--\ref{thm:main_JRK} in the current paper is a functional limit theorem for polynomially self-repelling random walks. 

\begin{theorem}\label{main_psr}
  Let $(X_n)_{n\geq 0}$ be a polynomially self-repelling random walk
  corresponding to a weight function $w(n)$ with asymptotics of the
  form \eqref{w} with $\alpha>0$. Then, the walk satisfies an
  augmented joint RKT with scaling function $g(n) = n$
 and with limiting curves for which Assumptions
    \ref{asm:Bessel}--\ref{asm:ND} hold. Hence, there is a continuous
  process $(\mathfrak{X}_\alpha(t))_{t\geq 0}$ with continuous local
  times $(\mathfrak{L}_\alpha(t,x))_{t\geq 0, \, x \in \R}$ such that
  $(\mathcal{X}_n, \mathcal{L}_n)
  \underset{n\to\infty}{\Longrightarrow} (\mathfrak{X}_\alpha, \mathfrak{L}_\alpha)$
  in $D([0,\infty)) \times D([0,\infty);D(\R))$. Moreover, the
  process $\mathfrak{X}_\alpha$ is the unique continuous process with joint
  Ray-Knight theorems consistent with the joint Ray-Knight theorems
  for the polynomially self-repelling walk. 
\end{theorem}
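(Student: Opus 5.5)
The plan is to verify the hypotheses of Theorems \ref{thm:main_X}--\ref{thm:main_JRK} for the PSRW and then read off the conclusions.

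\textbf{Recurrence.} Recurrence of the PSRW follows from T\'oth's analysis in \cite{tGRK}. Alternatively, it follows from the Markov chain structure of the directed edge local times in Appendix \ref{sec:Polya}: for $\alpha>0$ these chains are mean-reverting toward zero with drift of order $\alpha$, and they reach $0$ in finite time almost surely. That means every inverse local time $\tau_{k,m}$ is finite, which is recurrence.

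\textbf{Augmented joint RKT.} Here I take the joint Ray-Knight theorem of \cite{Y25+} with $g(n)=n$, i.e.\ $\gamma=1$. Under it, the rescaled joint directed-edge local time chains converge to flows of coalescing squared Bessel processes (BESQ), reflected before passing the origin or the starting point and absorbed afterwards. The dimensions are determined by $\alpha$ and $B$, and match T\'oth's single-curve limits \cite{tGRK}, e.g.\ $\delta_a = \frac{1}{\alpha+1}$-type constants below $2$. What \cite{Y25+} does not directly supply is the \emph{augmented} part: convergence of the endpoints $\mu^{n,\pm}$ and merge points $\mu^{n,\pm}_{(x_i,h_i),(x_j,h_j)}$. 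I will prove this separately. The map $m^\pm$ is continuous at limit curves that cross transversally, in the sense that immediately after they become equal they are equal forever. So it suffices to exclude two things:
\begin{enumerate}[(a)]
\item the limit curves touch zero, or touch each other, without being absorbed or coalesced;
\item the discrete curves come within $\varepsilon g(n)$ of each other, or of zero, for a macroscopic distance without merging.
\end{enumerate}
Point (a) holds for BESQ with dimension below $2$, since such a process hits $0$ and a difference of coalescing BESQ curves behaves like one. Point (b) needs a uniform estimate on the discrete chains: once two local time chains are within $\varepsilon n$ of each other, they merge within distance $o(n)$ with probability close to $1$. I would prove it by comparing the difference of the two chains (a coupled generalized P\'olya urn / birth-death process) with a martingale plus a drift bounded by $C\varepsilon$. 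Hitting-time estimates for such processes started at height $\varepsilon n$ then give merging within distance $O(\varepsilon n)$. Absorption at zero is handled the same way. I expect this step to be the main obstacle.

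\textbf{Assumptions.}
\begin{itemize}
\item Assumption \ref{asm:Bessel} is immediate from the identification of the marginal limits as constant multiples of BESQ processes with $\gamma=1$, using T\'oth's Theorems 1A and 1B in \cite{tGRK}. In the $(p,q)$-BMPE correspondence, the reflected dimensions are positive and the absorbed dimensions are below $2$.
\item Assumption \ref{asm:JMarkov} holds because the limiting flow is built as a Markov flow in the space variable, inherited from the Markovian structure of directed edge local times (Remark \ref{rem:Markov}).
\item Assumption \ref{asm:ND}: the curves started at $(k,\ell)$ and $(k,\ell+1)$ begin at distance $1$. Their difference is a nonnegative process absorbed at $0$ whose diffusion coefficient is bounded below by a constant times $\sqrt{\Lambda}$. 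If $\ell$ is large, the fluctuations over distance $1/2$ are of order $\sqrt{\ell}\gg1$, so merging has probability bounded below. If $\ell$ is small, both curves are near zero and are absorbed, or merge, with positive probability. A compactness/scaling argument in $\ell$ then gives a uniform $c_0$.
\end{itemize}

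With the hypotheses verified, Theorems \ref{thm:main_X} and \ref{thm:main_XL} give the joint convergence $(\mathcal{X}_n,\mathcal{L}_n)\Rightarrow(\mathfrak{X}_\alpha,\mathfrak{L}_\alpha)$ with continuous local times. Theorem \ref{thm:main_JRK} gives the uniqueness statement.
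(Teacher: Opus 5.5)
Your argument for Assumption~\ref{asm:ND} fails in the regime $\ell\to\infty$, and that is where the real work lies. The difference $Y=\Lambda_{0,\ell+1}-\Lambda_{0,\ell}$ does \emph{not} have noise of order $\sqrt{\Lambda}\approx\sqrt{\ell}$. Both curves are driven by the same white noise below the lower curve, and from \eqref{jointSDE} one gets \eqref{eq:SDE}: $dY=2\sqrt{Y\,f_\alpha(Y/S)}\,dB$ until $S=\Lambda_{0,\ell}$ hits $0$, with $\frac{1}{2\alpha+1}\le f_\alpha\le 1$. So the noise is of order $\sqrt{Y}$, independent of $\ell$. Started from $Y(0)=1$, the fluctuations over distance $1/2$ are $O(1)$, and in fact $P(\Lambda_{0,\ell}(T)<\Lambda_{0,\ell+1}(T))\to 1-e^{-1/(2T)}\in(0,1)$ (Lemma~\ref{c-base}). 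Your heuristic would instead force merging with probability tending to $1$. The ``compactness/scaling'' fallback does not cover $\ell\to\infty$ either. By \eqref{Lxh-rescale} this regime corresponds to curves started at heights $1$ and $1+1/\ell$ observed over distance $1/(2\ell)$, a degenerate limit about which continuity in law says nothing. The missing idea is the time change $\Gamma(t)=\int_0^t f_\alpha(Y(s)/S(s))\,ds$, which exhibits $Y$ as $Z\circ\Gamma$ for a $\mathrm{BESQ}^0$ process $Z$ with $Z(0)=1$ (Lemma~\ref{lem:BESQ-tc}). Since $\Gamma(T)\ge T/(2\alpha+1)$ on $\{\sigma_0(S)>T\}$ and $Z$ is absorbed at $0$, this gives $P(Y(T)=0)\ge e^{-(2\alpha+1)/(2T)}-P(\sigma_0(S)\le T)$, a bound that is uniform for large $\ell$. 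Together with positivity for each fixed $\ell$ and the reduction to $k=0$ (reflecting the lower curve only speeds up merging), this produces $c_0$.

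For the augmented part, your reduction to (a) and (b) matches the paper's use of the hitting functionals $\sigma_{x,\delta}$, but the discrete estimate you sketch does not close. A small drift for $\Delta=Z'-Z$ says nothing about reaching $0$ without a lower bound on its fluctuations. The paper's key input is Lemma~\ref{D-bounds}(ii): while $\Delta\le\delta_2 Z$, a downward step of size $c_2\sqrt{\Delta}$ occurs with probability at least $c_2$. Combined with the small drift of Lemma~\ref{D-bounds}(i), this makes $\sqrt{\Delta}$ a supermartingale with negative drift of order $\Delta^{-1/2}$ per step. Exits from this regime ($Z$ small, or $\Delta/Z$ not small) are handled by a uniform-in-$k$ absorption estimate for $Z'$, including chains crossing the origin, and by Doob's inequality. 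You also skip that the RKTs of \cite{tGRK} and \cite{Y25+} concern right-directed edge local times at edge stopping times. These are Markov only to the right, so they control only right merge points. You therefore need both the transfer to site local times at $\tau^n_{x,h}$ (Lemma~\ref{lem:Lxh-barLxh}) and a symmetric backward argument, glued together by Proposition~\ref{prop:fJRK-JRK}. Finally, a few stated facts are wrong, though not load-bearing. All dimensions equal $1$, since $(2\alpha+1)\Lambda_{x,h}\in\mathrm{BESQ}^1$ and $\alpha$ enters only the constant. The local time chains are not mean-reverting: the drift in \eqref{WNrep} is $+\frac{1}{2\alpha+1}$, and absorption comes from the dimension being $1<2$. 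The Markov property in Assumption~\ref{asm:JMarkov} is not automatically inherited under weak convergence; it is read off from the white noise representation \eqref{WNrep}.
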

We prove Theorem~\ref{main_psr} in Section~\ref{PSRcase}. The joint
RKT for PSRW has been shown in 
\cite{Y25+}.
We prove
Theorem \ref{main_psr} by extending the joint RKT to an augmented
joint RKT and check the assumptions stated in Theorem
\ref{thm:main_X}.

\begin{remark}\label{rem:PSRMprop}
  The limiting processes $\mathfrak{X}_\alpha$, which we call polynomially self-repelling motions (PSRMs), form a new family
    of continuous self-interacting processes. Since
  $\mathfrak{X}_\alpha$ is the unique continuous process whose joint
  Ray-Knight theorems are consistent with those for the
  $\alpha$-PSRWs, it coincides with the process constructed recently
  in \cite{Y25+} from a flow of correlated squares of Brownian motions
  (``BESQ web''), much as the TSRM emerged from a Brownian web in
  \cite{TW98}. While the constructions here and in \cite{Y25+} do not
  lend much insight into the dynamics of the PSRM, heuristic
  calculations in Appendix \ref{psr_sde} suggest that 
\begin{equation}\label{PSRM-heur}
\text{``}\mathfrak{X}_\alpha(t)=-\frac{\alpha}{2}\int_0^t \left[\frac{\partial}{\partial x} \log \left( \mathfrak{L}_\alpha(s,x)\right) \right]_{x=\mathfrak{X}_\alpha(s)}\,ds+B(t) + (\text{boundary effects})\text{''}, 
\end{equation}
where $B(\cdot)$ is a standard Brownian motion. The ``boundary
effects'' in the above equation refer to the fact that
\eqref{PSRM-heur} should describe the dynamics of
$\mathfrak{X}_\alpha$ only when it is in the interior of its range.
We emphasize that \eqref{PSRM-heur} is only a heuristic equation,
  since the local time $\mathfrak{L}_\alpha$ is not differentiable in
  either of its variables, but that it does reflect the self-repelling
  nature of the process, in that the drift points toward regions where
  the local time is smaller. A similar heuristic equation was given
  for the TSRM in \cite{TW98}. In the special case $\alpha=1$,
  essentially the same driving mechanism as in \eqref{PSRM-heur}
  appears in \cite{LST21} as a description of the scaling limit of a
  different self-interacting random walk, though the relation between
  the PSRM $\mathfrak{X}_1$ and the limiting process there is not
  immediately clear. 
\end{remark}

In conclusion, we would like to point out that
  Theorem~\ref{main_psr} completely resolves the counterexample and the
  open question posed in \cite[Remark 1.7]{KMP23}. Recall that the
results in \cite{tGRK} and \cite{cpyBetaPBM} identified, for each
fixed $t$, the limiting distribution of the rescaled PSRW
$n^{-1/2}X_{\fl{nt}}$, should it exist, as that of
$\sqrt{2\alpha+1}\,W^{1/2,1/2}(t)$.  Therefore, it follows from
Theorem~\ref{main_psr} that $\mathfrak{X}_\alpha$ and
$ \sqrt{2\alpha+1}\,W^{1/2,1/2}$ are continuous processes with the
same marginal distributions and the same RKTs, but different joint
RKT. In \cite{tGRK}, T\'oth noted that it was surprising that the
parameter $\alpha>0$ for PSRW appeared in the RKT only as a scaling
factor and that the limiting curves were always BESQ$^{1}$ for all
values of $\alpha$.  The joint RKTs for PSRW and Theorem \ref{main_psr} provide a
  fuller picture in which this fact fits naturally: the limiting
processes $\mathfrak{X}_\alpha$ are genuinely distinct as $\alpha$
varies, even though their marginal distributions differ only by a
scaling factor depending on $\alpha$.

\subsection{Notation}\label{sec:notation}
This subsection records some notation, definitions of function spaces
and topologies that will be used throughout the paper.
\begin{enumerate}[--]
\item $\N=\{1,2,\ldots\}$, $\Z_+=\{0,1,2,\ldots\}$, $\R_+=[0,\infty)$.

\item Weak convergence (in the appropriate space) will be denoted by $\Longrightarrow$ throughout the paper.

\item  We shall write $Y\in\rm{BESQ}^{\dim}$ to denote a squared Bessel process $Y(\cdot)$ of dimension $\dim \in \R$, i.e., a unique strong solution of the SDE
\begin{equation}
  \label{besq}
   dY(t) = \dim \, dt + 2\sqrt{|Y(t)|} \, dB(t), \quad t\ge 0,\ Y(0)=y_0\ge 0,
\end{equation}
where $B=(B(t))_{t\ge 0}$ is a standard Brownian motion. Note that
$Y$ almost surely never hits zero if $\dim \geq 2$ and $Y(0)>0$ and
hits $0$ in finite time if $\dim<2$. For $\dim=0$ the point $0$ is
absorbing and for $0<\dim<2$ the point $0$ is instantaneously
reflecting, \cite{ryCMBM,gySGBP}. If $\dim<0$ we shall always
assume that the process is absorbed at $0$ at the first hitting time
of $0$.

 \item $C([0,\infty))$ and $C(\R)$ are the spaces
of real-valued continuous functions on $[0,\infty)$ and $\R$
respectively with the metric
\begin{equation}\label{dinf-metric}
d_\infty(f,g)=\sum_{i=1}^\infty 2^{-i}\min\left\{\max_{s\in
      K_i}|f(s)-g(s)|,1\right\}, 
\end{equation}
 where $K_i=[0,i]$ and $K_i=[-i,i]$,
$i\in\mathbb{N}$, respectively. This corresponds to the topology of
the locally uniform convergence. Endow the space
$C([0,\infty);C(\R))$ of functions
 from $[0,\infty)$ to $C(\R)$ with the metric
\[\text{dist}_\infty(f,g)=\sum_{i=1}^\infty
  2^{-i}\min\left\{\max_{s\in[0,i]}d_\infty
    (f(s,\cdot),g(s,\cdot)),1\right\}.\] The spaces $C([0,\infty))$,
$C(\R)$, and $C([0,\infty);C(\R))$ with the above metrics are Polish.
\item We denote by $C_0(\R)$ the space of
    real-valued continuous functions on $\R$ with compact
    support and by $C_B(\R)$ the space of real-valued bounded
    continuous functions on $\R$. On these spaces, we use the
    same metric $d_\infty$ (see \eqref{dinf-metric}) as on
    $C(\R)$.
\item For a closed interval $I\subset \R$, the set $D(I)$ is the space of
  real-valued c\`adl\`ag functions  on $I$.  We will use
  either the Skorokhod $J_1$ topology or the topology of uniform
  convergence on compact sets on these spaces, and we will indicate
  which topology we are using at different times.  In the case where
  $I=\R$ we work with the Skorokhod metric defined in \cite[page 191]{bCOPM}. With this metric
  $D(\R)$ is a Polish space.
\item For a closed interval $I\subset [0,\infty)$ and a Polish space $\mathcal{S}$, $D(I;\mathcal{S})$ will denote the set of c\`adl\`ag functions $f:I\to \mathcal{S}$. When $\mathcal{S}$ is a Polish space other than $\R$ we will always equip $D(I;\mathcal{S})$ with the Skorokhod $J_1$ topology. 

\end{enumerate}

\section{The augmented joint RKT implies tightness}\label{tightness}

In this section we will prove that the augmented joint RKT implies
tightness for the rescaled random walks ${\cal X}_n$ and for the
  local time processes ${\cal L}_n$. We shall need the notation introduced in 
 \eqref{taunxh}, \eqref{lanxh}, \eqref{g(n)}, \eqref{xnt}, and \eqref{lntx}.
Before proceeding with the main results of this section, we note three
properties of the families of curves
$(\Lambda_{x_i,h_i}(\cdot))_{i\leq k}$ which follow from the joint RKTs and Assumptions \ref{asm:Bessel} and
\ref{asm:JMarkov}.
\begin{enumerate}[--]
 \item {\em A shift invariance property:}
 for any $h\geq 0$,   
 \begin{align}
  (\Lambda_{x,h}(x+s))_{s\geq 0} &\overset{\text{Law}}{=}
  (\Lambda_{0,h}(s) )_{s\geq 0}, \quad\text{if } x\geq 0, \label{Lxh-shift} \\
  (\Lambda_{x,h}(x+s))_{0\leq s \leq |x|\wedge|x'|} &\overset{\text{Law}}{=}
  (\Lambda_{x',h}(x'+s) )_{0\leq s\leq |x|\wedge|x'|}, \quad\text{if } x,x' \leq 0 \label{Lxh-shift2}, 
 \end{align}
 and a symmetric property for the ``backward'' curves $\Lambda_{x,h}(x-s)$, $s\geq 0$. 
 
 \item {\em Continuity in law:} for any $k,m \in \N$, the map
 \[
  \left( (x_1,h_1), \ldots, (x_k,h_k); y_1, \ldots, y_m \right) \mapsto \text{Law} \left( \left(\Lambda_{x_i,h_i}(y_j) \right)_{i\leq k, j \leq m} \right)
 \]
 is continuous from $(\R \times \R_+)^k \times \R^m$ to the space of Borel probability measures on $\R^{km}$ with the topology of weak convergence (see Proposition \ref{prop:fdd} in Appendix \ref{app:contprop}).

\item {\em A scaling property:} for any $c>0$ and any choice of $(x_1,h_1),\ldots,(x_k,h_k) \in \R\times \R_+$, 
 \begin{equation}\label{Lxh-rescale}
  \left( (\Lambda_{c x_i, c^\gamma h_i}(c y )_{y\in \R})\right)_{i\leq k} 
  \overset{\text{Law}}{=} \left( (c^\gamma \Lambda_{x_i,h_i}(y))_{y\in \R} \right)_{i\leq k}.
 \end{equation}
Indeed, it is easy to see that \eqref{Lxh-rescale} follows immediately from the joint RKTs if $c>0$ is rational. If $c$ is irrational, we obtain \eqref{Lxh-rescale} using a sequence of rationals tending to $c$ and the continuity in law stated above.
\end{enumerate}

Next we state three lemmas that follow from the augmented joint
RKTs and which will be crucial for both the tightness
argument below and also the identification of the finite dimensional
distributions.  The proofs are similar to analogous statements in
\cite{KP25+}, but we include them here both for completeness and to
account for minor differences in the current more general setting.

\begin{lemma}\label{lem:txhn-conv}
 For any $k\in \N$ and any $(x_1,h_1),(x_2,h_2),\ldots,(x_k,h_k) \in \R \times \R_+$ we have that
 \begin{equation}\label{tauxhn-lim}
 \left( \frac{\tau^n_{x_i,h_i}}{n g(n)} \right)_{i\leq k} \Longrightarrow 
 \left( \int_\R \Lambda_{x_i,h_i}(y) \, dy \right)_{i\leq k}.
\end{equation}
\end{lemma}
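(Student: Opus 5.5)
The starting point is the occupation identity at the stopping time. Since $\tau^n_{x,h}$ is the time index at which $L(\cdot,\fl{xn})$ first exceeds $\fl{hg(n)}$, we have
\[
 \tau^n_{x,h}+1=\sum_{k\in\Z} L(\tau^n_{x,h},k) = \sum_{k\in \Z} g(n)\Lambda^n_{x,h}(k/n) = n g(n)\int_\R \Lambda^n_{x,h}(y)\,dy,
\]
because $\Lambda^n_{x,h}$ is constant on the intervals $[k/n,(k+1)/n)$. Hence
\[
\frac{\tau^n_{x,h}}{n g(n)} = \int_\R \Lambda^n_{x,h}(y)\,dy - \frac{1}{ng(n)},
\]
and the last term is deterministic and tends to zero. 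So it suffices to show that the vector $\bigl(\int_\R \Lambda^n_{x_i,h_i}(y)\,dy\bigr)_{i\le k}$ converges jointly in law to $\bigl(\int_\R\Lambda_{x_i,h_i}(y)\,dy\bigr)_{i\le k}$.

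The integral functional is not continuous on $D(\R)$ in the $J_1$ topology, because the support is unbounded. I will therefore use the augmented joint RKT, which gives joint convergence of the curves together with their endpoints $\mu^{n,\pm}_{x_i,h_i}$. Define, for $M>0$, the map
\[
\Phi_M(f)=\int_{-M}^{M} f(y)\,dy.
\]
This map is continuous at every $f$ that is continuous at $\pm M$, and in particular on $C(\R)$. By Assumption \ref{asm:Bessel} the limits are continuous a.s., so the continuous mapping theorem applied to the joint RKT yields, for each fixed $M$,
\[
\bigl(\Phi_M(\Lambda^n_{x_i,h_i})\bigr)_{i\le k}\Longrightarrow \bigl(\Phi_M(\Lambda_{x_i,h_i})\bigr)_{i\le k}.
\]
Moreover $J_1$ convergence to a continuous limit is locally uniform, and integrals over $[-M,M]$ are continuous under locally uniform convergence.

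Next I remove the truncation with the standard approximation lemma (Billingsley, Theorem 3.2). Three things are needed.
\begin{enumerate}[(a)]
\item $\Phi_M(\Lambda_{x_i,h_i})\to\int_\R\Lambda_{x_i,h_i}$ almost surely as $M\to\infty$. This holds because $\Lambda_{x_i,h_i}$ vanishes outside $[\mu^-_{x_i,h_i},\mu^+_{x_i,h_i}]$, and these endpoints are finite a.s. The finiteness follows from the convergence of $\mu^{n,\pm}$ to real-valued limits, or from Assumption \ref{asm:Bessel}, since absorbed BESQ processes of dimension $\delta_{a,\pm}<2$ hit zero in finite time.
\item The uniform tail estimate
\[
\lim_{M\to\infty}\limsup_{n\to\infty} P\Bigl(\int_{\R\setminus[-M,M]}\Lambda^n_{x_i,h_i}\ne 0\Bigr)=0 .
\]
The event on the left is contained in $\{\mu^{n,+}_{x_i,h_i}>M\}\cup\{\mu^{n,-}_{x_i,h_i}<-M\}$, up to lattice rounding. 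Tightness of these endpoints holds because they converge weakly to finite random variables. This is exactly where the augmented part of the RKT is used.
\item Joint convergence over $i\le k$ is obtained by applying the argument to the vector, combining the tail events with a union bound.
\end{enumerate}

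The main obstacle is (b). Without endpoint convergence, the $J_1$ convergence of the curves gives no control over mass escaping to infinity, and the augmented hypothesis is precisely what supplies it. A secondary technical point is the lattice rounding: one must check that $\Lambda^n_{x,h}(y)=0$ for $y$ beyond $\mu^{n,+}_{x,h}+1/n$. This follows because a nearest-neighbor walk has an interval as its range, so the local time profile vanishes beyond the first zero on each side of $0$ and $x$.
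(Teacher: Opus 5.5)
Your proof is correct and follows the paper's argument. Both use the occupation identity $\tau^n_{x,h}/(ng(n))=\int_\R\Lambda^n_{x,h}(y)\,dy-1/(ng(n))$ and then the endpoint control from the augmented RKT to deal with the unbounded domain of integration. The paper does this in one step: it writes the integral over $[\mu^{n,-}_{x,h},\mu^{n,+}_{x,h}]$ and invokes the augmented joint RKT directly. You spell the same step out via truncation and Billingsley's Theorem 3.2, which needs only tightness of the endpoints; this is a minor difference in packaging.
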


\begin{proof}
 Since $\sum_{i\in\Z} L(n,i) = n+1$, it follows for any $x\in \R$, $h\geq 0$ and $n\geq 1$ that
 \begin{align*}
  \frac{\tau_{x,h}^n}{ng(n)} &= \frac{1}{ng(n)} \sum_{i \in \Z} L(\tau_{x,h}^n,i) - \frac{1}{n g(n)} \\
  &= \sum_{i \in \Z} \Lambda_{x,h}^n(i/n) \frac{1}{n} - \frac{1}{n g(n)} 
  =\int_{\mu_{x,h}^{n,-}}^{\mu_{x,h}^{n,+}}  \Lambda_{x,h}^n(y)\, dy - \frac{1}{n g(n)}. 
 \end{align*}
   $\Lambda_{x_i,h_i}(\cdot)$ are continuous with compact support,
   so the conclusion of the lemma follows from the augmented joint
   Ray-Knight theorem.
\end{proof}

\begin{lemma}\label{lem:RKn}
 For any  $n\geq 1$ and  $K,T\in(0,\infty)$, let $R_{K,T,n}$ be the event 
 \[
  R_{K,T,n} = \left\{ \max_{k \leq T n g(n)} |X_k| \leq K n \text{ and } \max_{k \in \Z} L(\fl{T n g(n)}, k) \leq K^{1+\gamma} g(n) \right\}. 
 \]
Then for every $\eta>0$ and $T\in(0,\infty)$ there exists
$K\geq 1$ such that $P(R_{K,T,n}) > 1-\eta/3$ for all
 $n$ sufficiently large.
\end{lemma}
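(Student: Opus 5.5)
The plan is to control both quantities in $R_{K,T,n}$ through a single stopping time of the form $\tau^n_{0,h}$, combining Lemma~\ref{lem:txhn-conv} with the augmented (single-curve) Ray-Knight theorem. Fix $\eta>0$ and $T$. The key observation is monotonicity. If $\tau^n_{0,h} \geq Tng(n)$, then for all $k\le Tng(n)$ we have $|X_k| \le \max_{j\le \tau^n_{0,h}}|X_j|$. This maximum is controlled by the support of the local time profile at time $\tau^n_{0,h}$: since the walk is nearest-neighbor and starts at $0$, every site it has visited lies in $[\fl{\mu^{n,-}_{0,h} n}, \fl{\mu^{n,+}_{0,h} n}]$ up to an additive $1$. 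Likewise, $L(\fl{Tng(n)},k)\le L(\tau^n_{0,h},k) = g(n)\Lambda^n_{0,h}(k/n)$ for every $k$. So on the event $\{\tau^n_{0,h}\ge Tng(n)\}$, both conditions in $R_{K,T,n}$ follow from bounds on $\max(|\mu^{n,\pm}_{0,h}|)$ and on $\sup_y \Lambda^n_{0,h}(y)$.

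The proof then proceeds in three steps.

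\emph{Step 1: choose $h$.} We take $h$ so large that $P(\tau^n_{0,h} < Tng(n)) < \eta/9$ for all large $n$. By Lemma~\ref{lem:txhn-conv}, $\tau^n_{0,h}/(ng(n)) \Rightarrow \int_\R \Lambda_{0,h}(y)\,dy$. By the scaling relation \eqref{Lxh-rescale} with $x=0$ and $c=h^{1/\gamma}$, this integral has the law of $h^{1+1/\gamma}\int_\R\Lambda_{0,1}(y)\,dy$. The integral $\int\Lambda_{0,1}$ is a.s. strictly positive, because $\Lambda_{0,1}$ is continuous with $\Lambda_{0,1}(0)=1$ (Assumption~\ref{asm:Bessel}). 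Hence $P(\int\Lambda_{0,h} \le T)\to 0$ as $h\to\infty$. The portmanteau theorem applied to the closed set $(-\infty,T]$ then gives $\limsup_n P(\tau^n_{0,h}\le Tng(n)) \le P(\int\Lambda_{0,h}\le T)<\eta/9$.

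\emph{Step 2: choose $K$.} With $h$ fixed, the augmented RKT gives joint convergence $(\Lambda^n_{0,h},\mu^{n,-}_{0,h},\mu^{n,+}_{0,h}) \Rightarrow (\Lambda_{0,h},\mu^-_{0,h},\mu^+_{0,h})$. The limit endpoints are a.s. finite, since the absorbing BESQ of dimension $\d_{a,\pm}<2$ hits zero. The limit curve is continuous with compact support, so $\sup_y\Lambda_{0,h}(y)<\infty$ a.s. The sup functional is continuous under $J_1$ at continuous limit points. We therefore choose $K\ge1$ such that $P(|\mu^\pm_{0,h}|\ge K-1)$ and $P(\sup\Lambda_{0,h}\ge K^{1+\gamma})$ are each below $\eta/18$. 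Portmanteau with closed sets transfers these bounds to large $n$ with total error at most $2\eta/9$.

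\emph{Step 3: combine.} Putting the previous steps together gives $P(R^c_{K,T,n}) < \eta/3$ for large $n$.

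The main care point is the continuity of $\sup$ and the endpoint functionals, and the precise bookkeeping of the support, including the $\pm1/n$ discretization at the endpoints. We absorb the discretization by using $K-1$ in Step~2 together with $K\ge 1$; this suffices since $|X_k|\le n\max|\mu^{n,\pm}_{0,h}|+1$.
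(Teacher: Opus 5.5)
Your proposal is correct, but it follows a different route from the paper.

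The paper does not use an inverse local time at the origin. For $K\in\N$ it uses the first hitting times $\tau_{\pm Kn,0}=\tau^n_{\pm K,0}$. On $\{\tau_{Kn,0}>Tng(n)\}\cap\{\tau_{-Kn,0}>Tng(n)\}$ the walk has not reached $\pm Kn$ by time $Tng(n)$, so the spatial bound holds automatically. The local time bound then comes from $L(\tau_{\pm Kn,0},\cdot)=g(n)\Lambda^n_{\pm K,0}(\cdot/n)$. The scaling relation \eqref{Lxh-rescale} turns both conditions into statements about the fixed curves $\Lambda_{\pm1,0}$: $\int\Lambda_{\pm1,0}>T/K^{1+\gamma}$ and $\sup\Lambda_{\pm1,0}<K$. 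A single limit $K\to\infty$ then finishes the proof; implicitly this uses that $\int\Lambda_{\pm1,0}>0$ a.s., which holds because the reflected part of a curve started at height $0$ becomes positive.

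You instead work with one curve, $\Lambda^n_{0,h}$, and choose two parameters in sequence. First you pick $h$, using scaling, so that $\tau^n_{0,h}\ge Tng(n)$ with high probability. Then you pick $K$, using the a.s. finiteness of the absorption points $\mu^{\pm}_{0,h}$ (since $\d_{a,\pm}<2$) and of $\sup\Lambda_{0,h}$. Your bookkeeping is right: the range up to $\tau^n_{0,h}$ is contained in $[n\mu^{n,-}_{0,h},\,n\mu^{n,+}_{0,h}]$, and $L(\fl{Tng(n)},k)\le g(n)\Lambda^n_{0,h}(k/n)$ on that event.

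Comparing the two: the paper's version is more economical, since it needs one parameter and the spatial control is built into the stopping time. Yours needs explicit convergence of the endpoints $\mu^{n,\pm}_{0,h}$. Both need only the single-curve augmented Ray-Knight theorem.

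One imprecision should be fixed. The functional $f\mapsto\sup_{y\in\R}f(y)$ is not continuous on $D(\R)$, even at continuous limits, because mass can escape to infinity (think of $\ind{[n,n+1)}\to 0$). Write instead
\[
P\Big(\sup_{\R}\Lambda^n_{0,h}\ge K^{1+\gamma}\Big)\le P\big(|\mu^{n,+}_{0,h}|\vee|\mu^{n,-}_{0,h}|\ge K-1\big)+P\Big(\sup_{[-K,K]}\Lambda^n_{0,h}\ge K^{1+\gamma}\Big).
\]
The supremum over the compact interval $[-K,K]$ is continuous at continuous limit points, so the portmanteau step then goes through. The paper's proof relies on the same support control, through the augmented theorem, for both $\int_\R$ and $\sup_\R$.
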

 
\begin{proof}
 First of all, note that if $K\in\mathbb{N}$ then $R_{K,T,n} \supset R_{K,T,n}^+ \cap R_{K,T,n}^-$ where
 \begin{align*}
  R_{K,T,n}^\pm &=  \left\{ \tau_{\pm Kn,0} > T n g(n), \text{ and } \max_{k\in \Z} L(\tau_{\pm Kn,0}, k) < K^{1+\gamma} g(n) \right\} \\
  &= \left\{  \int_\R \Lambda_{\pm K,0}^n(y) \, dy > T + \frac{1}{n g(n)}, \text{ and }  \sup_{y\in\R} \Lambda_{\pm K,0}^n(y) < K^{1+\gamma} \right\}.
 \end{align*}
Then the conclusion of the Lemma follows from the augmented joint Ray-Knight theorem and the observation that the scaling property \eqref{Lxh-rescale} implies that
\begin{align*}
 &P\left( \int_\R \Lambda_{\pm K,0}(y) \, dy > T , \text{ and }  \sup_{y\in\R} \Lambda_{\pm K,0}(y) < K^{1+\gamma} \right) \\
 &\qquad = P\left( \int_\R \Lambda_{\pm 1,0}(y) \, dy > \frac{T}{K^{1+\gamma}} , \text{ and }  \sup_{y\in\R} \Lambda_{\pm 1 ,0}(y) < K \right), 
\end{align*}
since the last probability can be made arbitrarily close to 1 by choosing $K$ sufficiently large. 
\end{proof}

For the next Lemma we need some additional notation.
\begin{enumerate}
 \item For a finite collection of reals $\mathcal{T}= \{ t_i \}_{i\in\mathcal{I}}$, $2\le\text{card}(\mathcal{I})<\infty$,  let
\[
 \text{gap}(\mathcal{T}) = \min\{|t_i-t_j|: \, i \neq j,\ i,j\in\mathcal{I}\}.
\]
 \item For $\d>0$ and $n\geq 1$ let 
 \[
  \mathcal{T}_{\d,n} = 
  \left\{ \frac{\tau_{x,h}^n}{n g(n)} \right\}_{(x,h) \in Q_\delta} ,
  \quad \text{where}\quad 
  Q_\d = (\d \Z) \times (\d^\gamma \Z_+),
 \]
 and similarly for $\d>0$, $n\geq 1$ and $K\geq 1$ let 
 \begin{equation}\label{TdnK-def}
  \mathcal{T}_{\d,n}^{(K)} = 
  \left\{ \frac{\tau_{x,h}^n}{n g(n)}  \right\}_{(x,h) \in  Q_\d^{(K)}} , 
  \quad \text{where}\quad  
 Q_\d^{(K)} =  Q_\d \cap ([-K,K] \times [0,K^{1+\gamma}]).
 \end{equation}

\end{enumerate}

\begin{lemma}\label{lem:gap}
 For $K\geq 1$ and $\eta,\d>0$ fixed, there exists a $\d' = \d'(K,\eta,\d) > 0$ such that for $n$ sufficiently large 
 \[
  P\left( \mathrm{gap}(\mathcal{T}^{(K)}_{\d,n}) > \d' \right) > 1 - \frac{\eta}{3}.
 \]
\end{lemma}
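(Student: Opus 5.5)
Since $Q_\d^{(K)}$ is a finite set, it suffices to show that for every pair of distinct points $(x,h)\neq(x',h')$ in $Q_\d^{(K)}$ the scaled times $\tau^n_{x,h}/(ng(n))$ and $\tau^n_{x',h'}/(ng(n))$ are, with high probability, at distance at least $\d'$. Then one takes a union bound over the at most $\binom{|Q_\d^{(K)}|}{2}$ pairs, with $\eta$ replaced by $\eta/(3|Q_\d^{(K)}|^2)$. By Lemma~\ref{lem:txhn-conv} the pair of scaled times converges jointly in law to $(\int_\R \Lambda_{x,h},\int_\R\Lambda_{x',h'})$. The set $\{|a-b|\le \d'\}$ is closed, so the Portmanteau theorem gives
\[
\limsup_{n\to\infty} P\Big(\Big|\tfrac{\tau^n_{x,h}-\tau^n_{x',h'}}{ng(n)}\Big|\le \d'\Big)\le P\Big(\Big|\int_\R(\Lambda_{x,h}-\Lambda_{x',h'})(y)\,dy\Big|\le\d'\Big).
\]
The right-hand side tends to $P(\int_\R\Lambda_{x,h}=\int_\R\Lambda_{x',h'})$ as $\d'\downarrow0$. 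The whole proof therefore reduces to showing this probability is zero, that is, that the two limiting total masses are almost surely distinct.

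For the key step, I split by cases. If $x=x'$ and $h< h'$, then $\Lambda_{x,h}(x)=h\ne h'=\Lambda_{x',h'}(x)$. If $x\neq x'$, the times $\tau^n$ are distinct for the walk, but ties could in principle survive in the limit, so I still need an argument. By non-crossing \eqref{non-cross}, coalescence and continuity, the curves differ exactly on $(\mu^-,\mu^+)$ with a constant sign there, where $\mu^\pm=\mu^\pm_{(x,h),(x',h')}$. So equality of the integrals is impossible when $x=x'$ (there the difference is strictly signed on a nonempty interval). When $x\ne x'$, the difference is also signed, provided the curves do not coincide at both $x$ and $x'$ simultaneously. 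More precisely, if $\Lambda_{x,h}(x')\neq h'$, then by non-crossing the difference has one strict sign on a nondegenerate interval around $x'$ and the integrals differ. Equality of the integrals can thus only occur on the event $\{\Lambda_{x,h}(x')=h'\}$ (or symmetrically $\{\Lambda_{x',h'}(x)=h\}$).

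It remains to show $P(\Lambda_{x,h}(x')=h')=0$ when $x\ne x'$, which I expect to be the main obstacle. Take $x<x'$ and first $h'>0$. By Assumption~\ref{asm:Bessel}, $\Lambda_{x,h}(x')$ is distributed as $C\,Y(x'-x)^\gamma$ for a BESQ (possibly concatenated at $0$, reflected or absorbed), and such a marginal has no atoms at positive values. For the concatenated case I use the Markov property at $0$ together with the fact that BESQ transition laws from any starting point are absolutely continuous on $(0,\infty)$. If $h'=0$, equality $\Lambda_{x,h}(x')=0$ may have positive probability, for instance through absorption. In that case I argue differently: the curve $\Lambda_{x',0}$ is itself $0$ at $x'$. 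If both curves vanish at $x'$ but are not identical, they still differ with a strict sign on some interval, by non-crossing. They are identical only if both equal the same curve, and if $\Lambda_{x,h}\equiv\Lambda_{x',0}$ the times should be treated as legitimately close. Here I would check, via the Markov structure of Assumption~\ref{asm:JMarkov}, that on $\{\Lambda_{x,h}(x')=0\}$ the difference on the side of $x$ away from $x'$ is strictly positive near $x$ (since $h>0$ there), unless $h=0$ as well. When $h=h'=0$, I would argue that the curve $\Lambda_{x',0}$ a.s. gets positive mass between $x$ and $x'$ while staying above zero at $x$ through the reflecting part, which rules out coincidence almost surely.

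Assembling: each pair gives probability of a near-tie below $\eta/(3|Q_\d^{(K)}|^2)$ for small $\d'$ and large $n$, and the union bound finishes the proof.
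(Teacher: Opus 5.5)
Your reduction is the paper's own. Lemma~\ref{lem:txhn-conv} and the Portmanteau theorem reduce everything to showing that $\int_\R\Lambda_{x,h}\neq\int_\R\Lambda_{x',h'}$ almost surely for distinct grid points. The paper applies Portmanteau to $\mathrm{gap}$ of the limit rather than a union bound, which is equivalent, and then simply cites Lemma~\ref{lem_int_diff} for the distinctness. The trouble is in your own proof of that distinctness.

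Two smaller points first. You cannot regard $\{\Lambda_{x,h}\equiv\Lambda_{x',h'}\}$ as an event on which the times are ``legitimately close''. If it had positive probability, the limiting gap would vanish with positive probability and no $\d'$ would work, so this event has to be shown to be null. In the case $h'=0<h$, $x<x'$, nothing fixes the sign of $h-\Lambda_{x',0}(x)$, and Assumption~\ref{asm:JMarkov} is not the relevant tool. What you need is $\Lambda_{x',0}(x)\neq h$ a.s. This follows from the same atomlessness at positive values that you used for $h'>0$, now applied to the backward curve from $x'$.

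The genuine gap is the case $h=h'=0$. Your claim that $\Lambda_{x',0}(x)>0$ ``through the reflecting part'' holds only when $0\le x<x'$.
\begin{itemize}
\item If $x<x'\le 0$, the backward curve from $x'$ is absorbing from the start, so $\Lambda_{x',0}\equiv 0$ on $(-\infty,x']$. There you should instead use that $\Lambda_{x,0}(x')>0$ a.s.
\item If $x<0<x'$ (e.g.\ the grid points $(-\d,0)$ and $(\d,0)$), each curve enters an absorbing regime after crossing $0$. So each of $\{\Lambda_{x',0}(x)=0\}$ and $\{\Lambda_{x,0}(x')=0\}$ has positive probability, and your argument says nothing about coincidence on these events.
\end{itemize}
The missing ingredient is how the paper proves Lemma~\ref{lem_int_diff}: the absorption location of $(\Lambda_{x,0}(y))_{y\ge 0}$ has a continuous law. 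It is the hitting time of $0$ for a BESQ process of dimension $<2$ started from the a.s.\ positive value $\Lambda_{x,0}(0)$, so it a.s.\ does not equal $x'$. Hence, on $\{\Lambda_{x,0}(x')=0\}$, the curve $\Lambda_{x,0}$ vanishes on a nondegenerate interval $[\tilde x,x']$ with $\tilde x\ge 0$. Meanwhile $\Lambda_{x',0}$ is a reflected process on $[0,x']$ started from $0$, so it is a.s.\ positive at every fixed (say rational) point of $(0,x')$. This gives a point where the two curves differ, and non-crossing plus continuity then separate the integrals.
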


\begin{proof}
 It follows from Lemma \ref{lem:txhn-conv} that 
 $\text{gap}(\mathcal{T}_{\d,n}^{(K)})  \Longrightarrow \text{gap}(\mathfrak{T}_\d^{(K)})$, where 
 \[
  \mathfrak{T}_\d^{(K)} 
  = \left\{  \int_\R \Lambda_{x,h}(y) \, dy \right\}_{(x,h) \in Q_\d^{(K)} } .
 \]
 By Lemma \ref{lem_int_diff}, 
 \[
  P\left( \int_\R \Lambda_{x,h}(y) \, dy \neq \int_\R \Lambda_{x',h'}(y) \, dy \right) = 1, \qquad \forall\, (x,h) \neq (x',h'),
\]
so $\text{gap}(\mathfrak{T}_\d^{(K)}) > 0$, almost
 surely. Therefore, we may choose $\d'>0$ sufficiently small so that
 $\liminf_{n\to\infty} P\left( \text{gap}(\mathcal{T}^{(K)}_{\d,n}) >
   \d' \right) \geq P\left( \text{gap}(\mathfrak{T}_\d^{(K)}) > \d'
 \right) > 1-\frac{\eta}{3}$.
\end{proof}

\begin{remark}\label{rem:RKn+gap}
 Note that for $\d>0$ and $K\geq 1$ fixed and $n$ sufficiently large
 \[
  R_{K,1,n} \cap \left\{ \text{gap}(\mathcal{T}_{\d,n}^{(K)}) > \d' \right\} \subset  \left\{ \text{gap}\left( \mathcal{T}_{\d,n} \cap [0,1] \right) > \d' \right\}. 
 \]
 We will use this fact in the proofs of tightness below. 
\end{remark}

\subsection{Tightness of the walks.}

The main result of this subsection is the tightness of the
  $D([0,\infty))$-valued random variables
  $(\mathcal{X}_n(\cdot))_{n\geq 1}$.  The proof below is a
  generalization of the one for the TSAW, see
  \cite[Proposition 4.5]{KP25+}.
\begin{proposition}\label{prop:Xtight}
 The sequence of $D([0,\infty))$-valued random variables $(\mathcal{X}_n)_{n\geq 1}$ is tight with respect to the Skorokhod $J_1$ topology in $D([0,\infty))$. Moreover, any subsequential limit is concentrated on paths that are continuous. 
\end{proposition}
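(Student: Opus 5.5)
We will verify the standard criterion for tightness in $D([0,\infty))$ with continuous limits. Since $\mathcal{X}_n(0)=0$, it suffices to show that for every $T>0$ and $\epsilon,\eta>0$ there are $\theta>0$ and $n_0$ such that
\[
P\Bigl(\sup_{s,t\le T,\,|s-t|\le\theta}|\mathcal{X}_n(t)-\mathcal{X}_n(s)|>\epsilon\Bigr)<\eta,\qquad n\ge n_0 .
\]
Jumps of $\mathcal{X}_n$ have size $1/n\to 0$, so this modulus bound gives both tightness and continuity of all subsequential limits. By the scaling property \eqref{Lxh-rescale}, and since $R_{K,T,n}$ only rescales, we may take $T=1$.

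\textbf{Step 1: fix the good events.} Fix $\eta$. By Lemma~\ref{lem:RKn} we choose $K$ so that $R_{K,1,n}$ has probability at least $1-\eta/3$. For a small $\d$ (depending on $\epsilon$), Lemma~\ref{lem:gap} and Remark~\ref{rem:RKn+gap} give $\d'$ such that, with probability at least $1-2\eta/3$, the times $\tau^n_{x,h}/(ng(n))$ with $(x,h)\in Q_\d$ lying in $[0,1]$ are pairwise separated by more than $\d'$. We take $\theta<\d'$. Then on this event any time window $[s,t]\subset[0,1]$ with $t-s\le\theta$ contains at most one grid stopping time $\tau^n_{x,h}$, $(x,h)\in Q^{(K)}_\d$.

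\textbf{Step 2: reduce a large displacement to a local non-degeneracy event.} Suppose that during $[s,t]$ the walk moves from $\mathcal{X}_n(s)$ to a point at distance more than $\epsilon$, say to the right. It then crosses a whole block of sites $k\d n$ with $k\in\Z$ of length about $\epsilon/\d$. While crossing a site $x=k\d$, the walk's local time there increases. The key claim is that whenever the walk traverses from $x$ to $x+\d/2$ (in rescaled units) quickly, the local time at $x$ must pass a level in $\d^\gamma\Z_+$. This would produce a grid time $\tau^n_{x,h}$ in the window. To avoid that, the traversal must occur between consecutive levels $h=\ell\d^\gamma$ and $(\ell+1)\d^\gamma$. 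Between $\tau^n_{x,\ell\d^\gamma}$ and $\tau^n_{x,(\ell+1)\d^\gamma}$, the walk reaches $x+\d/2$ only if the curves $\Lambda^n_{x,\ell\d^\gamma}$ and $\Lambda^n_{x,(\ell+1)\d^\gamma}$ differ at $x+\d/2$, that is, only if $\mu^{n,+}\ge x+\d/2$. At most one grid time lies in the window, so the walk crosses roughly $\epsilon/(2\d)$ disjoint blocks $[x,x+\d/2]$, and in all but at most one of them no local-time level at $x$ is crossed. This yields many consecutive sites $x$, with the walk's level index $\ell$ at each one, for which the merge point exceeds $x+\d/2$.

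\textbf{Step 3: bound the probability of many consecutive non-mergers; this is the main obstacle.} By scaling \eqref{Lxh-rescale}, Assumption~\ref{asm:ND} applied at spacing $\d$ shows that each such event has probability at most $1-c_0$ in the limit. Union bounding over starting sites in $[-K,K]$ and levels up to $K^{1+\gamma}$, which is of order $K^{2+\gamma}\d^{-1-\gamma}$ choices, we need the probability of $m\approx\epsilon/(2\d)$ consecutive non-mergers to be at most $(1-c_0)^m$. For this we use the joint Markov property, Assumption~\ref{asm:JMarkov}, passed to the limit via the augmented joint RKT, which gives convergence of the relevant merge points. The difficulty is that the level $\ell$ at successive sites $x+j\d$ is random and determined by the curve through the previous site. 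We handle this by conditioning on the values at $x+j\d$ of the curves emanating from grid points, and applying \eqref{nomergeasm} uniformly in the starting configuration; the supremum over $(k,\ell)$ in Assumption~\ref{asm:ND} is what makes this uniformity available. The resulting bound $C K^{2+\gamma}\d^{-1-\gamma}(1-c_0)^{\epsilon/(2\d)}$ tends to $0$ as $\d\to 0$, so it is less than $\eta/3$ for small $\d$. Left moves are handled symmetrically using $\mu^-$. Making the conditioning rigorous, that is, expressing the walk's crossing of a given block through finitely many grid curves and controlling their joint limit, is the step we expect to require the most care.
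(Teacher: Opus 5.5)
Your outline follows the paper's proof (itself adapted from \cite{KP25+}). It uses the good events from Lemmas~\ref{lem:RKn} and \ref{lem:gap}, and windows of length at most $\delta'$ that contain at most one grid time. A traversal between two consecutive levels forces an adjacent pair of grid curves not to merge, and a geometric bound from Assumptions~\ref{asm:JMarkov}--\ref{asm:ND} beats a union bound of size $K^{2+\gamma}\delta^{-1-\gamma}$. However, the step you flag at the end is a genuine gap in the way you set it up.

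You index the levels by \emph{the walk's} level at each site when it crosses that site, and you union bound over the first site and its level. These levels are functionals of the walk, not of finitely many grid curves, so there is no limiting event to which Assumption~\ref{asm:JMarkov} can be applied. Moreover, the pair $(x,\ell)$ of the first block does not determine the later levels, for two reasons:
\begin{itemize}
\item At the crossing time the profile is only sandwiched between $\Lambda^n_{x,\ell\delta^\gamma}$ and $\Lambda^n_{x,(\ell+1)\delta^\gamma}$, and these two curves may lie in different bands further to the right.
\item Other grid times may have occurred after $\tau^n_{x,\ell\delta^\gamma}$.
\end{itemize}
Conditioning on ``the curves emanating from grid points'' does not help unless the pair selected at site $k\delta$ is measurable with respect to the curves up to $k\delta$.

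The missing idea is to anchor at the last grid time before the excursion. The paper takes a union over $(x,h)\in Q^{(K)}_\delta$ of the events $A^{(K)}_{x,h,\delta,\epsilon,n}$: after $\tau^n_{x,h}$ the walk gets $\epsilon n/4$ away, at some time $j$, before the next grid time.
\begin{itemize}
\item \emph{One curve selects all pairs.} Since no grid time occurs in $(\tau^n_{x,h},j)$, each grid site $k\delta$ crossed satisfies $\tau^n_{k\delta,\ell\delta^\gamma}\le\tau^n_{x,h}<j\le\tau^n_{k\delta,(\ell+1)\delta^\gamma}$, where $\ell$ is the band containing $\Lambda^n_{x,h}(k\delta)$. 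The event is therefore contained in
$\bigcap_k\bigcup_\ell\{\ell\delta^\gamma\le\Lambda^n_{x,h}(k\delta)<(\ell+1)\delta^\gamma,\ \mu^{n,+}_{(k\delta,\ell\delta^\gamma),(k\delta,(\ell+1)\delta^\gamma)}>\lfloor (k+1)\delta n\rfloor/n\}$.
This involves finitely many curves and merge points, so the augmented joint RKT passes it to the limit.
\item \emph{Markov property in the limit.} The selector $\Lambda_{x,h}(k\delta)$ and all earlier non-merge events, which only involve the curves up to $(k-\frac12)\delta$, are determined by the curves up to $k\delta$. The fresh pair started at $k\delta$ has deterministic initial values, so by Assumption~\ref{asm:JMarkov} it is independent of that past. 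Conditioning site by site then gives the factor $1-c_0$ per block.
\item \emph{Using at most one grid time per window.} Rather than discarding a block, you restart the anchor at the in-window grid time. On $\bigcap A^c$ this gives a displacement of at most $3\epsilon n/4$ in any window of length at most $\delta'$.
\end{itemize}

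A minor point: your reduction to $T=1$ via \eqref{Lxh-rescale} is not valid. That identity concerns the limiting curves, and the walk has no exact scaling. No reduction is needed, since the argument works verbatim with $R_{K,T,n}$.
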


\begin{remark}
  The proof of Proposition \ref{prop:Xtight} will follow closely that of
    \cite[Proposition 4.5]{KP25+} for the TSAW.  We will present the
  main ideas here and direct the reader to \cite{KP25+} for certain
  details.
\end{remark}

\begin{proof}
 It is enough to show that for any $\e,\eta,T>0$ there is a $\d'>0$ such that
 \begin{equation}\label{Xtight-cond}
  \limsup_{n\to\infty} 
  P\left( 
  \max_{\substack{k,\ell \leq T n g(n) \\ |k-\ell|\leq \d' n g(n) }} 
  |X_k - X_\ell| > \e n \right) < \eta. 
 \end{equation}
 We will prove this only for $T=1$ as the argument is essentially unchanged for general $T>0$.
 Remark \ref{rem:RKn+gap} allows us on the event $R_{K,1,n} \cap \left\{ \text{gap}(\mathcal{T}_{\d,n}^{(K)}) > \d' \right\}$ to reduce the problem to considering only the distance traveled by the walk between consecutive times in the set $\mathcal{T}_{\d,n}$. Indeed, following the same argument as in the proof of \cite[Proposition 4.5]{KP25+} (with the time scaling $n^{3/2}$ replaced here by $n g(n)$ and the bound on local times $2\sigma K \sqrt{n}$ replaced here by $K^{1+\gamma} g(n)$) we have that for all large enough $n$
 \begin{equation}\label{Xtightdec}
  \left\{ \max_{\substack{k,\ell \leq n g(n) \\ |k-\ell|\leq \d' n g(n) }} 
  |X_k - X_\ell| > \e n \right\}
  \cap R_{K,1,n} \cap \left\{ \text{gap}(\mathcal{T}_{\d,n}^{(K)} ) > \d' \right\} 
  \subset 
  \bigcup_{(x,h) \in Q_\d^{(K)} }
  A_{x,h,\d,\e,n}^{(K)}, 
 \end{equation}
 where
\begin{align*}
 A_{x,h,\d,\e,n}^{(K)} &= 
 \left\{ \exists\, j> \tau_{x,h}^n: \, \left|X_j - X_{\tau_{x,h}^n} \right| > \frac{\e}{4} n  \text{ and } \mathcal{T}_{\d,n} \cap \left( \frac{\tau_{x,h}^n}{n g(n)}, \frac{j}{n g(n)} \right) = \emptyset  \right\}
 \\
 &\qquad\qquad
 \cap \left\{ \max_{i \in \Z} L\left( \tau_{x,h}^n, i \right) \leq K^{1+\gamma} g(n)  \right\}. 
\end{align*}
 We claim that
\begin{equation}\label{PAxh}
  \limsup_{n\to\infty} P\left( A_{x,h,\d,\e,n}^{(K)} \right) \leq 2(1-c_0)^{\fl{\frac{\e}{5\d}}}, \qquad \forall\, x \in \d\Z \cap [-K,K], \, h \in \d^\gamma \Z \cap [0,K^{1+\gamma}],
\end{equation}
where $c_0$ is the constant in \eqref{nomergeasm} from Assumption \ref{asm:ND}.
The event $A_{x,h,\d,\e,n}^{(K)}$ involves the walk making a large excursion between consecutive times in $\mathcal{T}_{\d,n}$, and following the proof in \cite{KP25+} we will consider separately the event that this is a large excursion to the right or left. 
That is, if we define
\begin{align*}
 A_{x,h,\d,\e,n}^{(K),+} &= 
 \left\{ \exists\, j> \tau_{x,h}^n: \, X_j > \fl{xn} + \frac{\e}{4} n  \text{ and } \mathcal{T}_{\d,n} \cap \left( \frac{\tau_{x,h}^n}{n g(n)}, \frac{j}{n g(n)} \right) = \emptyset  \right\}
\\
 &\qquad\qquad
 \cap \left\{ \max_{i \in \Z} L\left( \tau_{x,h}^n, i \right) \leq K^{1+\gamma} g(n)  \right\}, 
\end{align*}
and define $A_{x,h,\d,\e,n}^{(K),-}$ similarly with $X_j>\fl{xn}+\frac{\e}{4} n$ replaced by $X_j<\fl{xn}-\frac{\e}{4} n$, then  \eqref{PAxh} will follow from
\begin{equation}\label{PAxh-pm}
  \limsup_{n\to\infty} P\left( A_{x,h,\d,\e,n}^{(K),\pm} \right) \leq (1-c_0)^{\fl{\frac{\e}{5\d}}}, \qquad \forall\, (x,h) \in Q_\d^{(K)}.
\end{equation}
We will describe the proof of \eqref{PAxh-pm} for  $P\left( A_{x,h,\d,\e,n}^{(K),+} \right)$ only as the proof for $P\left( A_{x,h,\d,\e,n}^{(K),-} \right)$ is similar.

The idea behind the bound in \eqref{PAxh-pm} is that on the event $A_{x,h,\d,\e,n}^{(K),+}$ the walk travels at least $\e n/4$ to the right of $X_{\tau_{x,h}^n} = \fl{xn}$ before any other hitting time of the form $\tau_{x',h'}^n$ with $(x',h') \in Q_\d $. But this requires that the curve $\Lambda_{x,h}^n(\cdot)$ doesn't merge with any of the curves $\Lambda_{x',h'}^n(\cdot)$ above it for at least distance $\e/4$ to the right. This non-merging condition is unlikely because it forces many curves of the form $\Lambda_{k\d,\ell\d^\gamma}^{n}(\cdot)$ and $\Lambda_{k\d,(\ell+1)\d^\gamma}^{n}(\cdot)$ to not meet for distance $\d$ (see Figure \ref{fig:snake} for a visual depiction).
\begin{figure}
  \includegraphics[page=1,scale=0.75]{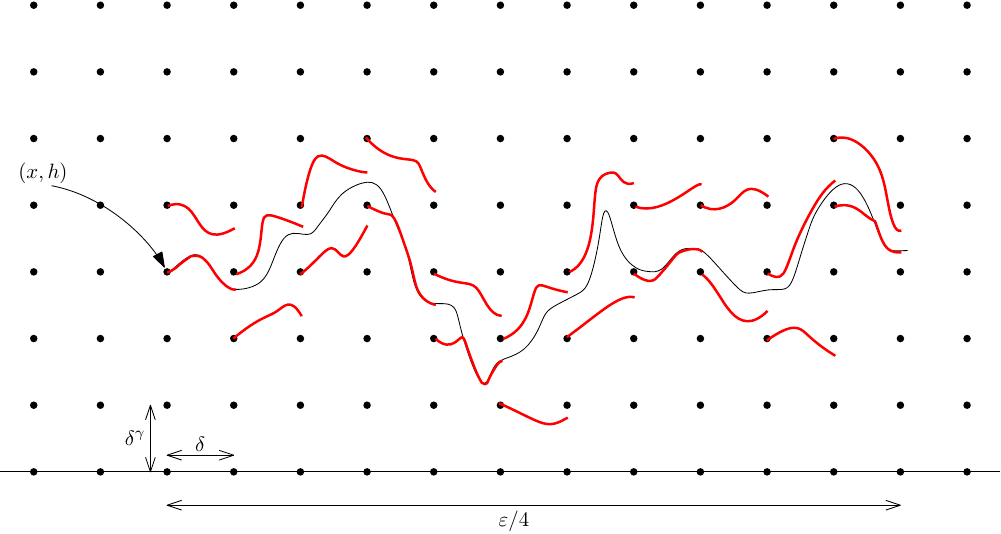}
  \caption{A visual depiction of the event $A^{(K),+}_{x,h,\d,\e,n}$, a long excursion to the right of $\fl{xn}$. The black curve
    in the middle depicts $\Lambda_{x,h}^n(\cdot)$.  The short red
    curves depict pieces of curves
    $\Lambda_{k\d,\ell \d^\gamma}^n(\cdot)$ and
    $\Lambda_{k\d,(\ell+1)\d^\gamma}^n(\cdot)$ which on the event
    $A^{(K),+}_{x,h,\d,\e,n}$ do not meet.
  }\label{fig:snake}
\end{figure}
More precisely,  the event $A_{x,h,\d,\e,n}^{(K),+}$ implies that for all $k\in \Z$  such that $\fl{xn} \leq \fl{k\d n} < \fl{(k+1)\d n} \leq \fl{xn} + \frac{\e n}{4}$,  if
\[
 \frac{\fl{\ell \d^\gamma g(n)}+1}{g(n)} = \Lambda_{k\d, \ell \d^\gamma}^n(k\d) 
 \leq \Lambda_{x,h}^n(k\d) < 
 \Lambda_{k\d, (\ell+1)\d^\gamma}^n(k\d) = \frac{\fl{(\ell+1) \d^\gamma g(n)}+1}{g(n)}, 
\]
then $\Lambda_{k\d,\ell\d^\gamma}^n((k+1)\d) < \Lambda_{k\d,(\ell+1)\d^\gamma}^n((k+1)\d)$, or equivalently
$\mu_{(k\d, \ell \d^\gamma),(k\d,(\ell+1)\d^\gamma)}^{n,+} > \frac{\fl{(k+1)\d n}}{n}$.
Therefore, it follows from the augmented joint RKT, Assumption \ref{asm:Bessel}, and the rescaling property \eqref{Lxh-rescale} that
\begin{align*}
&\limsup_{n\to\infty} P\left( A_{x,h,\d,\e,n}^{(K),+} \right) \\
 &\qquad \leq P\left(\bigcap_{k=\frac{x}{\d}}^{\frac{x}{\d}+ \fl{\frac{\e}{5\d}} -1} \bigcup_{0\leq \ell \leq \frac{K^{1+\gamma}}{\d^\gamma} } \left\{ \ell \d^\gamma \leq \Lambda_{x ,h}(k\d) < (\ell+1)\d^\gamma , \,  \mu_{(k\d, \ell\d^\gamma),(k\d,(\ell+1)\d^\gamma)}^{+} \geq \left(k+\frac{1}{2}\right)\d \right\} \right) \\
 &\qquad = P\left(\bigcap_{k=\frac{x}{\d}}^{\frac{x}{\d}+ \fl{\frac{\e}{5\d}} -1} \bigcup_{0\leq \ell \leq \frac{K^{1+\gamma}}{\d^\gamma} } \left\{ \ell \leq \Lambda_{\frac{x}{\d},\frac{h}{\d^\gamma}}(k) < \ell+1, \,  \mu_{(k, \ell),(k,\ell+1)}^{+} \geq k+\frac{1}{2} \right\} \right).
 \end{align*}
 Finally, using the Markov property from Assumption \ref{asm:JMarkov} and repeating the argument in the proof of equation (4.6) in \cite{KP25+} to bound the last probability above, we get that
 \[
 \limsup_{n\to\infty} P\left( A_{x,h,\d,\e,n}^{(K),+} \right)
 \leq \left\{ \sup_{k\in\Z} \sup_{\ell \in \Z_+} P\left( \mu_{(k,\ell),(k,\ell+1)}^+ \geq k + \frac{1}{2} \right)  \right\}^{\fl{\frac{\e}{5\d}}}
 \leq (1-c_0)^{\fl{\frac{\e}{5\d}}}, 
 \]
where the last inequality follows from Assumption \ref{asm:ND}.  

Having completed the proof of \eqref{PAxh}, we now show how to finish the proof of the proposition. First of all, for any $\e,\d,\d' \in (0,1)$ and $K\geq 1$ we can conclude from \eqref{Xtightdec} and \eqref{PAxh} that 
\begin{align*}
 \limsup_{n\to\infty}\, &P\left(
  \max_{\substack{k,\ell \leq n g(n) \\ |k-\ell|\leq \d' n g(n) }} 
  |X_k - X_\ell| > \e n \right) \\
  &\leq \limsup_{n\to\infty} \left\{ 
  P(R_{K,1,n}^c) 
  + P\left( \text{gap}\left(\mathcal{T}_{\d,n}^{(K)}\right) \leq \d' \right) 
  \right\}
  +  \frac{12 K^{2+\gamma}}{\d^{\gamma+1}} (1-c_0)^{\left\lfloor \frac{\e}{5\d} \right\rfloor}, 
\end{align*}
and so we need only to show that for $\e,\eta>0$ fixed we can choose
the constants $\d,\d'$ and $K$ so that the right
side above is at most $\eta$.  To see this, we first choose $K\geq 1$
(depending only on $\eta$) as in Lemma \ref{lem:RKn}, then choose
$\d\in(0,1)$ sufficiently small so that
$\frac{12 K^{2+\gamma}}{\d^{\gamma+1}} (1-c_0)^{\left\lfloor
    \frac{\e}{5\d} \right\rfloor} < \frac{\eta}{3}$, and finally
choose $\d'\in (0,1)$ (depending on $K$, $\eta$, and $\delta$) as in
Lemma \ref{lem:gap}.
\end{proof}

\subsection{Tightness of the local times.}
The main result of this subsection is the tightness of the rescaled
local time profiles $\mathcal{L}_n$ defined in \eqref{lntx}. 

\begin{proposition}\label{prop:Lntight}
For any fixed $T > 0$ and $\epsilon>0$ there exists $\delta>0$ such that
\[
\limsup_{n \to\infty} P \left(  \sup_{\substack{s,s'\le T,\,|s-s'|<\delta\\x,x'\in\R,\,|x-x'|<\delta}}  \big\vert  {\cal L}_n(s,x) - {\cal L}_n(s', x')  \big\vert  > \epsilon \right) < \epsilon.
\]
\end{proposition}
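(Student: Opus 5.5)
We split the modulus of continuity into a time part and a space part:
\[
|\mathcal L_n(s,x)-\mathcal L_n(s',x')| \le |\mathcal L_n(s,x)-\mathcal L_n(s',x)| + |\mathcal L_n(s',x)-\mathcal L_n(s',x')|,
\]
and control each with the objects already used in the proof of Proposition \ref{prop:Xtight}. These are the events $R_{K,T,n}$ from Lemma \ref{lem:RKn}, the grid of stopping times $\mathcal T_{\d,n}^{(K)}$, and the curves $\Lambda^n_{x,h}$. Throughout we work on $R_{K,T,n}\cap\{\mathrm{gap}(\mathcal T_{\d,n}^{(K)})>\d'\}$ for suitable $K$, $\d$, $\d'$. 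We may also intersect with the event that the modulus of continuity of $\mathcal X_n$ at scale $\d'$ is small, which has high probability by \eqref{Xtight-cond}.

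\emph{Time increments.} For fixed $x$, the quantity $\mathcal L_n(s',x)-\mathcal L_n(s,x)$ is $g(n)^{-1}$ times the number of visits to $\fl{xn}$ during $[sng(n),s'ng(n)]$. It is nonnegative and monotone in time. Any interval of rescaled length $<\d'$ contains at most one point of $\mathcal T_{\d,n}\cap[0,T]$. Suppose the local time at some site $\fl{xn}$ increased by more than $\e/2$ during such an interval. Using the level sets $h\in\d^\gamma\Z_+$ and choosing $\d^\gamma<\e/4$, that site would then pass some level $\ell\d^\gamma$ with $x$ within $\d$ of a grid point $k\d$. So it suffices to show that, with high probability, local times near a grid column $k\d$ cannot rise by $\e/2$ without the walk passing a stopping time $\tau^n_{k\d,\ell\d^\gamma}$ for two different $\ell$. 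That would contradict the gap condition.

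This is really a statement about spatial continuity of local times near grid columns. We therefore reduce the time part to the space part as follows. Between two consecutive stopping times in $\mathcal T_{\d,n}$ at column $k\d$, the local time at $k\d$ rises by at most about $\d^\gamma$. The local time at a nearby $x$ rises by at most that amount plus twice the spatial oscillation of the profile over $[k\d,k\d+\d]$ at those two times.

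\emph{Space increments.} At any time $t$ in the relevant window, the profile $\mathcal L_n(t,\cdot)$ is sandwiched between profiles at stopping times. Let $\tau^n_{k\d,\ell\d^\gamma}$ be the last grid time before $t$ and $\tau^n_{k'\d,\ell'\d^\gamma}$ the first after. Then
\[
\Lambda^n_{k\d,\ell\d^\gamma}(\cdot)\le \mathcal L_n(t,\cdot)\le \Lambda^n_{k'\d,\ell'\d^\gamma}(\cdot),
\]
and $\mathcal L_n(t,\cdot)$ is monotone in $t$. The difference between these two curves is controlled by the total local time accumulated between consecutive grid times. By the gap bound, at most one grid time occurs in a window of length $\d'$. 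So it suffices to show two things. First, every curve $\Lambda^n_{x,h}$ with $(x,h)\in Q^{(K)}_\d$ has small spatial oscillation at scale $\delta''$. Second, two curves whose stopping times are consecutive in $\mathcal T_{\d,n}$ differ uniformly by a small amount.

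The first follows from the joint RKT: there are finitely many curves, the limits are continuous by Assumption \ref{asm:Bessel}, and $J_1$ convergence to a continuous limit is locally uniform. The second is the main obstacle. If consecutive grid times are $\tau^n_{x,h}<\tau^n_{x',h'}$, the walk must not pass any other grid level in between. Arguing as in \eqref{PAxh}, the curves $\Lambda^n_{x,h}$ and $\Lambda^n_{x',h'}$ must then lie between adjacent grid curves $\Lambda^n_{k\d,\ell\d^\gamma}$ and $\Lambda^n_{k\d,(\ell+1)\d^\gamma}$ at each column they both cover.

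We would try to bound $\sup_y(\Lambda^n_{x',h'}-\Lambda^n_{x,h})(y)$ by the maximum over grid columns of the oscillation of these adjacent curves over an interval of length $\d$, plus $\d^\gamma$. At a grid column the two sandwiching curves differ by at most $\d^\gamma$. Between columns we use continuity of the finitely many limiting curves, through the augmented joint RKT, plus the scaling relation \eqref{Lxh-rescale}. This shows that adjacent grid curves stay within $C\d^\gamma$ over distance $\d$ with high probability, uniformly over the $O(K^{2+\gamma}\d^{-1-\gamma})$ pairs.

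The delicate point is that a union bound over that many pairs requires a tail bound for the oscillation of BESQ powers over distance $\d$ that decays faster than a polynomial in $\d$. We expect to get it from Assumption \ref{asm:Bessel}, using Gaussian-type tails for the modulus of BESQ processes, with care near $0$ because $\gamma$ may be less than one. Finally we choose $K$, then $\d$, then $\d'$ and $\delta''$, with the final $\delta=\min(\d',\delta'')$, as in Proposition \ref{prop:Xtight}.
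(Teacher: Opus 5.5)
\textbf{The gap: the threshold $C\delta^\gamma$.} Your architecture is the paper's: split into time and space increments, sandwich the profile between adjacent level curves $\Lambda^n_{k\delta,\ell\delta^\gamma}\le\cdot\le\Lambda^n_{k\delta,(\ell+1)\delta^\gamma}$ at a nearby grid column, and finish with a union bound over $O(K^{2+\gamma}\delta^{-1-\gamma})$ pairs using \eqref{Lxh-rescale} and BESQ tails. The problem is in the step that carries all the weight. You claim that adjacent grid curves stay within $C\delta^\gamma$ of each other over distance $\delta$, with high probability uniformly over all pairs. That claim is false.

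\begin{itemize}
\item Your own method cannot reach it. Bounding the gap by $\delta^\gamma$ plus the oscillations of the two curves fails because a curve at height of order one oscillates by order $\sqrt{\delta}\gg\delta^\gamma$ over distance $\delta$ when $\gamma>1/2$.
\item The gap itself is not that small. For the PSR walk ($\gamma=1$), after rescaling the gap is $\delta$ times $\Lambda_{0,\ell+1}-\Lambda_{0,\ell}$ over unit distance. By Lemma \ref{lem:BESQ-tc} the latter is a time change, at rate at least $1/(2\alpha+1)$, of a $\mathrm{BESQ}^0$ process started at $1$. So for each fixed $C$ the per-pair failure probability is bounded below for large $\ell$, and a union bound over $\asymp\delta^{-2}$ pairs is hopeless.
\item Qualitative continuity of finitely many limiting curves cannot give anything uniform as the number of curves grows.
\end{itemize}

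\textbf{The fix.} The threshold must not shrink with $\delta$. Take $\delta^\gamma<\varepsilon/8$ to absorb the gap at the column. Then require only that each grid curve stay within $\varepsilon/8$ of its starting value within distance $O(\delta)$ of its column.

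By \eqref{Lxh-rescale}, this becomes a deviation $rL$ over distance $O(1)$ of a curve started at height $h\le L$, with $L\asymp K^{1+\gamma}\delta^{-\gamma}$ and $r\asymp \varepsilon K^{-1-\gamma}$. What you need is a tail bound uniform in $h\le L$ with exponent of order $L^{1/\gamma}/T$. That is exactly \eqref{Lxh-dev} (and \eqref{Lgap} for differences) in Corollary \ref{cor:Lxh-dev}, giving $Ce^{-c_{\varepsilon,K}/\delta}$ per pair, which beats the number of pairs.

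Proving this from Assumption \ref{asm:Bessel} takes more than ``Gaussian tails for the modulus''. You need right and left exit bounds for BESQ of every dimension; the left tail in dimension below $1$, and especially in negative dimension, does not follow from comparison with $\mathrm{BESQ}^1$. These bounds must then be transferred to powers $Y^\gamma$, and the gluing at the origin must be handled (Appendix \ref{app:Bessel}).

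\textbf{Comparison with the paper.} With this correction your argument works and differs from the paper's (Lemmas \ref{lem:Lntight-time} and \ref{lem:Lntight-space}) only in bookkeeping.

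\begin{itemize}
\item \emph{Space.} The paper sandwiches $\mathcal L_n(s,\cdot)$ directly between $\Lambda^n_{k\delta,\ell\delta^\gamma}$ and $\Lambda^n_{k\delta,(\ell+1)\delta^\gamma}$ for a visited column near $x$. Monotonicity of local time in $s$ gives this for every $s$. So your detour through consecutive global grid times, the gap event, and a separate modulus $\delta''$ for finitely many curves is unnecessary.
\item \emph{Time, paper's route.} The paper uses \eqref{Xtight-cond} to find a column within $3\eta$ of $x$ that is not visited during $[s,s']$. Then $[s,s']$ lies in a single level interval there, and \eqref{Lgap} applies.
\item \emph{Time, your route.} On the gap event of Lemma \ref{lem:gap}, intersected with $R_{K,T,n}$, the interval $[s,s']$ contains at most one grid time. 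So at the nearest column at most one level is crossed, and the increment at $x$ is at most $\Lambda^n_{k\delta,(\ell+2)\delta^\gamma}(x)-\Lambda^n_{k\delta,\ell\delta^\gamma}(x)$. If the column is unvisited at time $s$, the increment is at most $\Lambda^n_{k\delta,\delta^\gamma}(x)$. This is also valid.
\end{itemize}

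Your time route has a mild advantage: it does not use tightness of $\mathcal X_n$, hence does not use Assumption \ref{asm:ND}.
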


Before giving the proof of Proposition \ref{prop:Lntight}, we note
that this statement implies the following tightness result for the rescaled local times profiles. 
\begin{corollary}\label{cor:Lntight}
  $(\mathcal{L}_n(t,\cdot))_{t\geq 0}$ is tight as a family of $D([0,\infty);D(\R))$-valued processes.
\end{corollary}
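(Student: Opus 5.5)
The corollary should follow from Proposition \ref{prop:Lntight} together with a compact containment statement. We apply the standard tightness criterion for $D([0,\infty);\mathcal S)$ with $\mathcal S=D(\R)$ Polish (e.g.\ Ethier--Kurtz, Theorem 3.7.2, or the Billingsley-type criterion). It suffices to check two things:
\begin{enumerate}[(a)]
\item for each $T>0$ and $\eta>0$ there is a compact set $\Gamma\subset D(\R)$ with $P(\mathcal L_n(t,\cdot)\in\Gamma\ \forall t\le T)\ge 1-\eta$ for all large $n$;
\item the modulus of continuity in time of $t\mapsto\mathcal L_n(t,\cdot)$, measured in the Skorokhod metric on $D(\R)$, is small with high probability.
\end{enumerate}

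For (b), note that the Skorokhod metric on $D(\R)$ from \cite{bCOPM} is dominated by the local uniform distance. Hence
\[
d(\mathcal L_n(s,\cdot),\mathcal L_n(s',\cdot))\le C\sup_{x}|\mathcal L_n(s,x)-\mathcal L_n(s',x)|,
\]
and Proposition \ref{prop:Lntight} (taking $x=x'$) makes the right side at most $\epsilon$ for $|s-s'|<\delta$, $s,s'\le T$, outside an event of probability less than $\epsilon$. This gives the $C$-tightness modulus condition in time, which is stronger than what the $J_1$ criterion needs.

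For (a), we build $\Gamma$ from an Arzelà--Ascoli-type set of functions that are almost continuous. Fix $\eta$ and use Lemma \ref{lem:RKn} to choose $K$ with $P(R_{K,T,n})>1-\eta/3$. On $R_{K,T,n}$, for every $t\le T$ the profile $\mathcal L_n(t,\cdot)$ has values in $[0,K^{1+\gamma}]$ and support in $[-K-1,K+1]$. Proposition \ref{prop:Lntight} applied with a sequence $\epsilon_j=\eta 2^{-j-2}$, with corresponding $\delta_j$, gives simultaneous spatial oscillation control:
\[
\sup_{t\le T}\sup_{|x-x'|<\delta_j}|\mathcal L_n(t,x)-\mathcal L_n(t,x')|\le\epsilon_j\quad\text{for all } j,
\]
outside probability at most $\eta/3$ for large $n$. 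The finitely many small $n$ can be absorbed, since each single $\mathcal L_n$ is a tight random element.

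Let $\Gamma$ be the closure in $D(\R)$ of the set of functions bounded by $K^{1+\gamma}$, supported in $[-K-1,K+1]$, and with oscillation at most $\epsilon_j$ over intervals of length $\delta_j$. The closure of such a set is compact in $D(\R)$: its elements are uniformly bounded, and the $J_1$ modulus $w'$ is controlled by the ordinary modulus, so the Skorokhod characterization of relatively compact sets applies. Limits are in fact continuous. Combining (a) and (b) yields tightness in $D([0,\infty);D(\R))$, and shows that all subsequential limits lie in $C([0,\infty);C_0(\R))$.

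The only mildly delicate point is the uniformity: the $\epsilon_j$-bounds must hold for all $j$ at once and for all $n$, not only in the $\limsup$. This is handled by a union bound over $j$ with summable probabilities, plus a finite adjustment of $\delta_j$ for small $n$. Since each $\mathcal L_n$ is a step function in $x$ with mesh $1/n$, its oscillation over scales below $1/n$ is controlled by its jump sizes, and these are $O(\epsilon_j)$ by the same proposition once $n$ is large.
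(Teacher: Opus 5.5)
Your overall structure is right, and it is what the paper has in mind when it says the corollary follows from Proposition \ref{prop:Lntight}. Compact containment comes from Lemma \ref{lem:RKn} plus spatial equicontinuity. The time modulus comes from the fact that the Skorokhod metric on $D(\R)$ is dominated by the uniform distance. Part (b) is fine.

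\textbf{The gap is in part (a), in your choice of the compact set $\Gamma$.}
\begin{itemize}
\item You define $\Gamma$ through the \emph{ordinary} oscillation: $\sup_{|x-x'|<\delta_j}|f(x)-f(x')|\le\epsilon_j$ for all $j$, with $\epsilon_j\to 0$.
\item Such a family is equicontinuous, and its $J_1$-closure contains only continuous functions.
\item But $\mathcal L_n(t,\cdot)$ is a nonzero step function whose jumps are nonzero multiples of $1/g(n)$. This holds at least at the endpoints of its support, and it is never identically zero since $L(0,0)=1$.
\item Hence $P(\mathcal L_n(t,\cdot)\in\Gamma)=0$ for every $n$ and $t$, and (a) as you state it is false.
\end{itemize}
Your repair in the last paragraph does not work either. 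The claim that jumps are $O(\epsilon_j)$ once $n$ is large holds, for each fixed $j$, only when $n\ge n_j$. So for each fixed $n$ the $j$-th constraint fails for every $j$ with $\epsilon_j<1/g(n)$. Shrinking $\delta_j$ cannot help, because the ordinary oscillation over any interval straddling a jump is at least that jump, however short the interval. The union bound over $j$ therefore contains infinitely many terms equal to $1$ for every fixed $n$.

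\textbf{The fix is to build $\Gamma$ with the Skorokhod modulus $w'$ instead of the ordinary modulus.}
\begin{itemize}
\item Take the closure of $\{f:\ 0\le f\le K^{1+\gamma},\ \mathrm{supp}\, f\subset[-K-1,K+1],\ w'_f(\delta_j)\le\epsilon_j\ \forall j\}$. This set is compact by the characterization of relatively compact subsets of $D(\R)$.
\item For a step function with mesh $1/n$ one has $w'_f(\delta)=0$ whenever $\delta<1/n$: partition at the lattice points $k/n$.
\item In general $w'_f(\delta)\le w_f(2\delta)$.
\item Suppose Proposition \ref{prop:Lntight} with $\epsilon=\epsilon_j$ yields $\delta^{(j)}$ and a threshold $n_j$ beyond which the bad event has probability below $\epsilon_j$. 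Choose $\delta_j\le\min\{\delta^{(j)}/3,\,1/(2n_j)\}$.
\item For $n<n_j$ the $j$-th constraint then holds trivially. For $n\ge n_j$ it fails with probability less than $\epsilon_j$, uniformly in $t\le T$.
\end{itemize}
Your union bound then becomes uniform in $n$, and the rest of your argument goes through.

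Alternatively, you could use the form of compact containment in Ethier--Kurtz, Theorem 3.7.2. It only asks that $\mathcal L_n(t,\cdot)$ lie in an $\eta$-enlargement of a compact set, so you could keep a set of continuous functions and compare $\mathcal L_n(t,\cdot)$ with a continuous interpolation of it.
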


Proposition \ref{prop:Lntight} follows immediately from the
  following two lemmas.
\begin{lemma}\label{lem:Lntight-time}
 For any fixed $T>0$ and $\epsilon>0$ there exists a $\delta>0$ such that 
 \[
 \limsup_{n\to\infty}  P\left( \sup_{\substack{s,s' \leq T,\, |s-s'| < \delta\\x\in \R}} \left| \mathcal{L}_n(s,x) - \mathcal{L}_n(s',x) \right| > \frac{\e}{2} \right) < \frac{\e}{2}. 
 \]
\end{lemma}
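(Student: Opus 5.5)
Since $\mathcal{L}_n(s,x)$ is non-decreasing in $s$, for $s<s'$ we have $\mathcal{L}_n(s',x)-\mathcal{L}_n(s,x)\ge 0$. The plan is to bound this increment on a good event by the increment between consecutive stopping times $\tau^n_{x,h}$ from a grid, and then control those grid increments using the augmented joint RKT and continuity of the limiting curves.

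\textbf{Step 1: reduction to the grid.} Fix $T$ (take $T=1$ for notation) and $\e>0$. Choose $K$ by Lemma \ref{lem:RKn} so that $P(R_{K,T,n})>1-\e/12$. Fix a small $\d>0$ to be chosen later, then $\d'$ by Lemma \ref{lem:gap} so that $P(\mathrm{gap}(\mathcal{T}^{(K)}_{\d,n})>\d')>1-\e/12$, and finally set $\delta=\d'$. On $R_{K,T,n}\cap\{\mathrm{gap}>\d'\}$, any time interval $[s,s']\subset[0,T]$ with $|s-s'|<\d'$ contains at most one point of $\mathcal{T}_{\d,n}\cap[0,T]$ (Remark \ref{rem:RKn+gap}). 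Hence the walk's time interval $[sng(n),s'ng(n)]$ is covered by at most two consecutive intervals between successive grid times.

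\textbf{Step 2: local time gained at a fixed site between grid times.} Let $(x_0,h_0)\in Q_\d^{(K)}$ with $\tau^n_{x_0,h_0}$ the last grid time before $sng(n)$, and let $\sigma$ be the next grid time. For any site $\fl{yn}$ with $|y|\le K$, choose $\ell$ with $\ell\d^\gamma\le \Lambda^n_{x_0,h_0}(\d\fl{y/\d})<(\ell+1)\d^\gamma$. By monotonicity of local times, the walk cannot reach the grid site $\fl{\d\fl{y/\d}n}$ after $\tau^n_{x_0,h_0}$ with local time exceeding $(\ell+1)\d^\gamma g(n)$ before time $\sigma$. Therefore, for $t\le \sigma$,
\[
\mathcal{L}_n(t/(ng(n)),y)\le \Lambda^n_{\d\fl{y/\d},(\ell+1)\d^\gamma}(y),
\qquad
\mathcal{L}_n(s,y)\ge \Lambda^n_{x_0,h_0}(y)\ge \Lambda^n_{\d\fl{y/\d},\ell\d^\gamma}(y),
\]
where the last inequality uses non-crossing of the curves. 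So the increment is bounded by
\[
\max_{k,\ell}\,\sup_{y\in[k\d,(k+1)\d]}\bigl(\Lambda^n_{k\d,(\ell+1)\d^\gamma}(y)-\Lambda^n_{k\d,\ell\d^\gamma}(y)\bigr),
\]
with $|k|\le K/\d$ and $\ell\le K^{1+\gamma}/\d^\gamma$. Over two consecutive intervals the bound doubles. For $y$ outside $[-K,K]$ there is no local time on $R_{K,T,n}$.

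\textbf{Step 3: controlling the difference of neighbouring curves.} By the augmented joint RKT,
\[
\limsup_n P\Bigl(\sup_{y\in[k\d,(k+1)\d]}\bigl(\Lambda^n_{k\d,(\ell+1)\d^\gamma}-\Lambda^n_{k\d,\ell\d^\gamma}\bigr)(y)>\e/4\Bigr)
\]
is bounded by the corresponding probability for the limit curves. By the scaling property \eqref{Lxh-rescale} this equals
\[
P\Bigl(\sup_{y\in[k,k+1]}\bigl(\Lambda_{k,\ell+1}-\Lambda_{k,\ell}\bigr)(y)>\e/(4\d^\gamma)\Bigr).
\]
This is the main obstacle: the union bound runs over about $K^{2+\gamma}\d^{-1-\gamma}$ pairs, so the single-pair probability must decay faster than any power of $\d$. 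Each curve is a power $C\,Y^\gamma$ of a BESQ process over a distance $1$, started from heights $\ell$ and $\ell+1$ (Assumption \ref{asm:Bessel}). Uniformly in $\ell$, $\sup_{[k,k+1]}|\Lambda_{k,\ell}-\ell|$ exceeds $u$ with probability decaying super-polynomially in $u$, via Gaussian-type bounds for BESQ and the scaling $Y^\gamma$. The plan is to use the crude bound
\[
\Lambda_{k,\ell+1}-\Lambda_{k,\ell}\le |\Lambda_{k,\ell+1}-(\ell+1)|+|\Lambda_{k,\ell}-\ell|+1,
\]
which avoids any joint information, with $u=\e/(8\d^\gamma)$.

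If uniform tail estimates for large $\ell$ prove delicate, for instance when $\gamma>1/2$ makes fluctuations grow with $\ell$, the fallback is to restrict to $\ell\le K^{1+\gamma}\d^{-\gamma}$ explicitly. Then, after rescaling back to height $O(K^{1+\gamma})$ and width $\d$, use the modulus of continuity of $C\,Y^\gamma$ on bounded ranges. Its probability of oscillating by $\e/8$ over width $\d$ is $\exp(-c/\d)$, which beats the polynomial count. Finally choose $\d$ small so the total is below $\e/4$, which gives the lemma.
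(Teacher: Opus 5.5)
Your proof works, but it localizes differently from the paper. The paper uses the tightness of the walk (the event $J_{\delta,\eta,T,n}$ from \eqref{Xtight-cond}) to find a grid site $k\eta$ within $3\eta$ to the left of $x$ whose local time is frozen on $[s,s']$. Then $[sng(n),s'ng(n)]$ sits inside a single window $[\tau^n_{k\eta,\ell\eta^\gamma},\tau^n_{k\eta,(\ell+1)\eta^\gamma})$, and one adjacent-curve gap over distance $3\eta$ controls the increment. You instead use Lemma \ref{lem:gap} and Remark \ref{rem:RKn+gap} so that $[s,s']$ meets at most one grid time, and then sandwich the whole profile between consecutive grid times. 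This is legitimate because $\tau^n_{k\delta,(\ell+1)\delta^\gamma}$ is itself a grid time after $\tau^n_{x_0,h_0}$, hence not before $\sigma$. Your route never invokes the tightness of $\mathcal{X}_n$, so for this lemma you do not need Assumption \ref{asm:ND}. The price is a factor $2$ and a time window equal to the non-explicit gap constant $\delta'$ of Lemma \ref{lem:gap}, whereas the paper simply reuses \eqref{Xtight-cond}, which is already available at that point.

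Two corrections are needed.

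\emph{Step 3.} Your primary claim (tails uniform in $\ell$) is false for $\gamma>1/2$. A curve started at height $\ell$ fluctuates over unit distance on the scale $\ell^{1-1/(2\gamma)}$, which is of order $\sqrt{\ell}$ for the PSRW. Hence $P(\sup_{[k,k+1]}|\Lambda_{k,\ell}-\ell|>u)\to 1$ as $\ell\to\infty$ for fixed $u$, and the fallback is not optional. What makes it work is that the threshold $\varepsilon/(8\delta^\gamma)$ is a fixed fraction of the maximal height $K^{1+\gamma}\delta^{-\gamma}$. This is exactly \eqref{Lxh-dev} (or \eqref{Lgap}) of Corollary \ref{cor:Lxh-dev} with $L=K^{1+\gamma}\delta^{-\gamma}$. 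It gives $Ce^{-c_{\varepsilon,K}/\delta}$ uniformly in $k,\ell$, including curves that cross the origin, and it coincides with the paper's final estimate. Your triangle inequality is precisely how \eqref{Lgap} is derived from \eqref{Lxh-dev}.

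\emph{Step 2.} Choosing $\ell$ from the value $\Lambda^n_{x_0,h_0}(k\delta)$ does not ensure $\tau^n_{k\delta,\ell\delta^\gamma}\le\tau^n_{x_0,h_0}$, because of the rounding in $\lfloor \ell\delta^\gamma g(n)\rfloor$. It fails outright when $\lfloor k\delta n\rfloor$ is still unvisited at time $\tau^n_{x_0,h_0}$, since then $\Lambda^n_{k\delta,0}$ can exceed $\Lambda^n_{x_0,h_0}$ near $k\delta$. Instead, define $\ell$ by $\tau^n_{k\delta,\ell\delta^\gamma}\le\tau^n_{x_0,h_0}<\tau^n_{k\delta,(\ell+1)\delta^\gamma}$; the lower bound then follows from monotonicity of $L$ in time, not from non-crossing. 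In the unvisited case, bound the increment by $\Lambda^n_{k\delta,0}(y)$, as the paper does, which \eqref{Lxh-dev} also controls.
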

\begin{lemma}\label{lem:Lntight-space}
 For any fixed $T>0$ and $\epsilon>0$ there exists a $\delta>0$ such that 
 \[
 \limsup_{n\to\infty}  P\left( \sup_{\substack{x,x' \in \R,\,|x-x'| < \d\\s \leq T}} \left| \mathcal{L}_n(s,x) - \mathcal{L}_n(s,x') \right|  > \frac{\e}{2} \right) < \frac{\e}{2}.
 \]
\end{lemma}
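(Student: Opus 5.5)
The plan is to reduce the spatial modulus at arbitrary times $s\le T$ to the spatial modulus of finitely many curves $\Lambda^n_{x,h}$ with $(x,h)$ in a grid $Q_\rho^{(K)}$. For those curves the augmented joint RKT gives what we need: the limits are continuous with compact support by Assumption \ref{asm:Bessel}. The bridge between the two is the monotonicity of $t\mapsto L(t,\cdot)$ together with Lemma \ref{lem:Lntight-time}, which we take as given here.

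First, fix $\e,T$. Choose $K$ by Lemma \ref{lem:RKn} so that $R_{K,T,n}$ has probability at least $1-\e/8$. Choose $\theta>0$ by Lemma \ref{lem:Lntight-time} so that, outside an event of probability at most $\e/8$,
\[
\sup_{|s-s'|<\theta,\ s,s'\le T}\ \sup_x|\mathcal{L}_n(s,x)-\mathcal{L}_n(s',x)|\le \e/8 .
\]
Next we show that, for a suitably small $\rho$, the rescaled grid stopping times $\mathcal{T}^{(K)}_{\rho,n}$ are $\theta$-dense in $[0,T]$ with probability close to one. Take any time window of length $\theta$. By Proposition \ref{prop:Xtight} (tightness of $\mathcal X_n$), the walk moves less than $\rho$ in space during the window, with high probability. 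The total local time added in the window is $\theta n g(n)$, spread over at most about $\rho n$ sites, so some site receives at least $\theta g(n)/(2\rho)$. If $\theta/(2\rho)\gg \rho^\gamma$, that site's local time crosses a level in $\rho^\gamma\Z_+$ during the window.

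That site need not lie in $\rho n\Z$, so we also need the grid sites near it to cross a level. We would handle this with the curve continuity at the grid: the increment of $L$ over the window at the nearest grid site differs from the increment at the heavy site by at most twice the spatial oscillation of the bracketing curves $\Lambda^n_{x,h}$ over distance $\rho$. That oscillation is small uniformly over the finitely many grid curves in $Q_\rho^{(K)}$, by the joint RKT and continuity of the limits. I expect this density step to be the main obstacle, and I would try to make it rigorous along exactly these lines. An alternative is to redo the snake argument of Proposition \ref{prop:Xtight} for local times rather than positions.

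Given density, every $s\le T$ is within $\theta$ of some $\tau^n_{x,h}/(ng(n))$ with $(x,h)\in Q_\rho^{(K)}$. Hence, with $\Lambda^n_{x,h}$ denoting that curve,
\[
|\mathcal{L}_n(s,y)-\mathcal{L}_n(s,y')|\le \e/4+|\Lambda^n_{x,h}(y)-\Lambda^n_{x,h}(y')|,
\]
where the discrepancy between $\Lambda^n_{x,h}$ and $\mathcal{L}_n$ at that time, of order $1/g(n)$, is absorbed for large $n$. Finally, $Q_\rho^{(K)}$ is finite and each limit $\Lambda_{x,h}$ is a.s. continuous with compact support, hence uniformly continuous. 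By weak convergence in $J_1$ to a continuous limit, for $\d$ small enough we get
\[
\limsup_n P\Bigl(\max_{(x,h)\in Q_\rho^{(K)}}\ \sup_{|y-y'|<\d}|\Lambda^n_{x,h}(y)-\Lambda^n_{x,h}(y')|>\e/4\Bigr)<\e/8.
\]
Summing the exceptional probabilities gives the bound $\e/2$ claimed in Lemma \ref{lem:Lntight-space}.
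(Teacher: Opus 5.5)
There is a genuine gap, and it is the density step you flag. A minor point first: $\theta$ is fixed by Lemma~\ref{lem:Lntight-time} before $\rho$ is chosen, so Proposition~\ref{prop:Xtight} cannot confine the walk to width $\rho$ over a window of length $\theta$ as $\rho\downarrow 0$. It only does so over windows of some length $\delta'(\rho)\to 0$, and by scaling one expects $\delta'(\rho)/\rho\ll\rho^\gamma$, the opposite of what you need. That part is repairable: on $R_{K,T,n}$, some site receives local time about $\theta g(n)/(2K)$ during any window of length $\theta$.

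The real problem is passing from that heavy site $y$ to the nearest grid site $k\rho$. The two increments differ by at most the oscillation of $\mathcal{L}_n(t,\cdot)$ over distance $\rho$ at the window endpoints $t$. These are arbitrary times, so this is essentially the statement you are trying to prove. Bounding it through the bracketing curves $\Lambda^n_{k\rho,\ell\rho^\gamma}\le\mathcal{L}_n(t,\cdot)\le\Lambda^n_{k\rho,(\ell+1)\rho^\gamma}$ is the right move. However, your justification (``small uniformly over the finitely many grid curves in $Q^{(K)}_\rho$, by the joint RKT and continuity of the limits'') fails. The oscillation distance equals the grid mesh $\rho$, so you need $\max_{(x,h)\in Q^{(K)}_\rho}\sup_{|y-x|\le\rho}|\Lambda_{x,h}(y)-h|$ to be small while the family has about $K^{2+\gamma}\rho^{-1-\gamma}$ members. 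Continuity of each individual limit curve gives nothing uniform over a family that grows as $\rho\downarrow 0$. You need a tail bound uniform in $(x,h)$ that beats the union bound. This is exactly what \eqref{Lxh-rescale} and Corollary~\ref{cor:Lxh-dev} provide, yielding $CK^{2+\gamma}\rho^{-1-\gamma}e^{-c/\rho}$ with $c$ depending on the threshold and $K$. The snake argument does not substitute: Assumption~\ref{asm:ND} controls how far the walk travels between grid times, not how long it spends between them.

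Once you use the sandwich with these tail bounds, it proves the lemma directly. The detour through Lemma~\ref{lem:Lntight-time}, density, and fixed-$\rho$ curves is then unnecessary, although that reduction and your last step are themselves fine. This is the paper's proof:
\begin{itemize}
\item Suppose $|\mathcal{L}_n(s,x)-\mathcal{L}_n(s,x')|>\varepsilon/2$ with $|x-x'|<\delta$ and $s\le T$. On $R_{K,T,n}$ there is a grid site $k\delta$ with $|k|\le K/\delta+1$, $\mathcal{L}_n(s,k\delta)>0$, and oscillation of $\mathcal{L}_n(s,\cdot)$ exceeding $\varepsilon/4$ within distance $2\delta$.
\item Take $\ell$ with $\tau^n_{k\delta,\ell\delta^\gamma}\le sng(n)<\tau^n_{k\delta,(\ell+1)\delta^\gamma}$. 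Monotonicity of $L$ in time sandwiches $\mathcal{L}_n(s,\cdot)$ between these two curves.
\item Hence, if $\delta^\gamma<\varepsilon/8$, one of the two curves moves by at least $\varepsilon/8$ from its starting level within distance $2\delta$.
\item The joint RKT, the scaling property and \eqref{Lxh-dev} bound the probability of this by $CK^{2+\gamma}\delta^{-1-\gamma}e^{-c_{\varepsilon,K}/\delta}$.
\end{itemize}
Here the grid mesh is tied to the modulus scale $\delta$, and the exponential tails make that harmless. No time-continuity input is needed.
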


The key to our proofs of Lemmas \ref{lem:Lntight-time} and
\ref{lem:Lntight-space} will be showing how control of the local time
profiles at finitely many stopping times $\tau_{x,h}^n$ (i.e.,
finitely many curves $\Lambda^{n}_{x,h}(\cdot)$)
can be used to control changes in the local time profiles at
deterministic times.
We will need the following result, which we will show in Appendix \ref{app:Bessel} follows from Assumption \ref{asm:Bessel} and similar tail bounds for BESQ processes.
\begin{corollary}\label{cor:Lxh-dev}
 For any fixed $r>0$ there exist constants $C, c'_r>0$ where $C$ depends only on the parameters from Assumption \ref{asm:Bessel} and $c'_r$ depends on these parameters and also $r$, such that 
 \begin{align}\label{Lxh-dev}
  &\sup_{x \in \R, \, h \leq L} 
  P\left( \sup_{|y-x|\leq T} \left| \Lambda_{x,h}(y) - h \right| \geq r L \right)
  \leq C e^{-c'_r L^{1/\gamma}/T}, \qquad \forall\, L,T>0, \\
 \shortintertext{and} 
\label{Lgap}
  &\sup_{x \in \R, \, h \leq L} P\left( \sup_{|y-x|\leq T} ( \Lambda_{x,h+1}(y) - \Lambda_{x,h}(y) )  \geq r L \right) \leq C e^{-c'_r L^{1/\gamma}/T}, \quad \forall\, L \geq \frac{3}{r},\ T>0.
 \end{align}
\end{corollary}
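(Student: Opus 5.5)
The plan is to reduce both bounds to tail estimates for BESQ processes, using the explicit description of the curves in Assumption \ref{asm:Bessel}. Two facts will be used throughout. First, by the scaling relation for BESQ (if $Y$ starts at $y_0$, then $cY(\cdot/c)$ is BESQ of the same dimension started at $cy_0$), together with the power $\gamma$, a bound at level $L$ and horizon $T$ reduces to a bound at level $1$ and horizon $T/L^{1/\gamma}$. This is where the exponent $L^{1/\gamma}/T$ comes from. Second, the curve $\Lambda_{x,h}$ on $[x-T,x+T]$ splits into at most two pieces to the right of $x$ and at most two pieces to the left, the split occurring at the origin. Each piece is $C\,Y^\gamma$ with $Y$ a reflected or absorbed BESQ of one of four fixed dimensions. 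The constants therefore depend only on the parameters in Assumption \ref{asm:Bessel}.

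\textbf{Proof of \eqref{Lxh-dev}.} Set $\Lambda = C_\ast Y^\gamma$ with $Y(0)=(h/C_\ast)^{1/\gamma} \le (L/C_\ast)^{1/\gamma}$. The event $|\Lambda(y)-h|\ge rL$ forces $|Y(y)-Y(0)| \ge c_r L^{1/\gamma}$ for some $c_r>0$, by uniform continuity of $u\mapsto u^\gamma$ on $[0,(2L/C_\ast)^{1/\gamma}]$ after rescaling. There is one point to check here: if $Y$ reaches the top of that interval, the deviation has already been achieved. For a BESQ$^{\dim}$ (reflected or absorbed), $Y(t)-Y(0)-\dim t$ is a martingale with quadratic variation $4\int_0^t Y$. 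On the event that $Y$ stays below $M:=c L^{1/\gamma}$ up to time $T$, the exponential martingale inequality gives
\[
P\bigl(\sup_{t\le T}|Y(t)-Y(0)|\ge a\bigr)\le 2\exp\Bigl(-\frac{(a-|\dim| T)^2}{8MT}\Bigr).
\]
We take $a=c_rL^{1/\gamma}$, and we localize by stopping at the exit time from $[0,M]$, so that exiting is itself a deviation of size $\gtrsim L^{1/\gamma}$. This yields $\exp(-c L^{2/\gamma}/(L^{1/\gamma}T)) = \exp(-cL^{1/\gamma}/T)$ whenever $|\dim|T\le a/2$. In the opposite regime $T\gtrsim L^{1/\gamma}$, the right side $Ce^{-c'L^{1/\gamma}/T}$ is bounded below by a constant, and the bound is trivial if $C\ge e^{c'}$. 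For the concatenated pieces, we apply the estimate on each piece using the strong Markov property at the origin crossing, with deviation $rL/2$ on each, and take a union bound. The case $\gamma<1$ versus $\gamma\ge1$ only changes the modulus-of-continuity constant $c_r$.

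\textbf{Proof of \eqref{Lgap}.} A single-curve bound is not enough here, because $\Lambda_{x,h+1}-\Lambda_{x,h}$ involves two curves. The step I expect to be the main obstacle is avoiding any use of joint structure beyond non-crossing, since Assumption \ref{asm:Bessel} only describes marginals. By \eqref{non-cross} and monotonicity, $0\le \Lambda_{x,h+1}-\Lambda_{x,h}$. Then
\[
\Lambda_{x,h+1}(y)-\Lambda_{x,h}(y)\le |\Lambda_{x,h+1}(y)-(h+1)| + 1 + |\Lambda_{x,h}(y)-h|.
\]
When $L\ge 3/r$ we have $1\le rL/3$, so the event in \eqref{Lgap} forces one of the two deviations to be at least $rL/3$. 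We apply \eqref{Lxh-dev} to $\Lambda_{x,h}$ at level $L$ and to $\Lambda_{x,h+1}$ at level $2L\ge h+1$ with $r$ replaced by $r/6$. This gives \eqref{Lgap} after enlarging $C$ and shrinking $c'_r$.
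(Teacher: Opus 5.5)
Your proof is correct, but it reaches \eqref{Lxh-dev} by a different route from the paper.

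\textbf{The paper's route.} It first proves Proposition~\ref{prop:BESQdev}, which gives two-sided exit bounds $Ce^{-c(v-u)^2/(vT)}$ and $Ce^{-c(u-v)^2/(uT)}$ for $\mathrm{BESQ}^{\dim}$. The proof goes case by case in the dimension:
\begin{itemize}
\item reduction to Brownian motion for $\dim=1$;
\item the Ikeda--Watanabe comparison theorem;
\item additivity of BESQ for $\dim>1$;
\item adding an independent $\mathrm{BESQ}^1$ for $\dim\in[0,1)$;
\item an exponential martingale in $Y^{1-\dim/2}$ for $\dim<0$.
\end{itemize}
It then transfers these bounds to $AY^\gamma$ in Lemma~\ref{cor:BESQpowerdev} through an explicit infimum over $u\in[0,1]$, and finally glues the pieces at the origin exactly as you do.

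\textbf{Your route.} You replace all of this with a single Bernstein-type inequality for the martingale part of $Y$. You stop $Y$ when it first exceeds $Y(0)+a$, so its quadratic variation is at most $4(Y(0)+a)T$. The drift is absorbed by working in the regime $|\dim|T\le a/2$, and the complementary regime is trivial. You handle the power $\gamma$ by uniform continuity of $s\mapsto s^\gamma$ on a compact interval after scaling.

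\textbf{Comparison.} Your argument is shorter and treats every dimension and both boundary conditions uniformly. What the paper's version buys is explicit dependence on both the starting point and the target level. That finer form is reused later, in the proof of \eqref{absorption} and for uniform integrability in Lemma~\ref{lem:curvemean}, though your stopped-martingale argument could be adapted to give it as well. For \eqref{Lgap}, your triangle-inequality argument is the paper's Case~II. That argument in fact works for every $h\le L$, so the paper's case split is not needed.

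\textbf{Bookkeeping points to fix.}
\begin{itemize}
\item For absorbed pieces the compensator is $\dim\,(t\wedge\sigma_0)$, not $\dim\,t$. This is harmless, since you only use that the drift is at most $|\dim|T$ in absolute value.
\item In the regime $|\dim|T>a/2$, the statement requires $C$ to be independent of $r$. So shrink $c'_r$ instead of taking $C\ge e^{c'}$. For example, take $c'_r\le c_r/(2|\dim|)$; then $c'_rL^{1/\gamma}/T<1$ there, and $C\ge e$ suffices.
\item When gluing at the origin, the second piece starts from $\Lambda_{x,h}(0)$. On the good event this value is at most $(1+r/2)L$, so apply the one-piece bound at that level with ratio $r/(2+r)$.
\item The inequality $2L\ge h+1$ needs $L\ge 1$, which fails when $r>3$ and $L\in[3/r,1)$. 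Use instead the level $L+1\le (1+r/3)L$ with ratio $r/(3+r)$, as the paper does.
\end{itemize}
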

Now we are ready to give the proofs of Lemmas
\ref{lem:Lntight-time} and \ref{lem:Lntight-space}.

\begin{proof}[Proof of Lemma \ref{lem:Lntight-time}]
For convenience of notation, for any $\d,\eta,T>0$ and $n\geq 1$ let 
\[
 J_{\d,\eta,T,n} = \left\{ \sup_{s,s'\leq T,\,|s-s'|<\d} \left| \mathcal{X}_n(s) - \mathcal{X}_n(s') \right| \leq \eta \right\}. 
\]
Then, for any fixed parameters $\d>0$, $\eta\in(0,1)$ and $K\geq 1$ (to be chosen appropriately later) we have
\begin{multline}
 P\left( \sup_{\substack{s,s' \leq T,\, |s-s'| < \delta\\x\in \R}} \left| \mathcal{L}_n(s,x) - \mathcal{L}_n(s',x) \right|  > \frac{\e}{2} \right) \\
 \leq P\left( R_{K,T,n}^c \right) + P\left( J_{\d,\eta,T,n}^c \right)
 + P\left( \sup_{\substack{s,s' \leq T,\, |s-s'| < \delta\\x\in [-K,K]}} \left| \mathcal{L}_n(s,x) - \mathcal{L}_n(s',x) \right|  > \frac{\e}{2}, \, R_{K,T,n}, \, J_{\d,\eta,T,n} \right). \label{Lntimebound}
\end{multline}
To bound the last probability in the line above, suppose that
$s<s'<s+\d$, $s' \leq T$ and $x\in [-K,K]$ are such that
$\mathcal{L}_n(s',x)-\mathcal{L}_n(s,x) > \frac{\e}{2}$. Then, the
event $J_{\d,\eta,T,n}$ implies that when $n$ is large enough with
respect to $\eta$, there is an integer $k$ with
$k\eta < x \leq (k+3)\eta$ and such that
$\mathcal{L}_n(s,y) = \mathcal{L}_n(s',y)$ for all $y\leq k\eta$ and
since $x\in [-K,K]$ we have $|k| \leq K/\eta +3$. For this $s$ and
$k$, let $\ell \geq 0$ be the integer such that
$\tau_{k\eta,\ell \eta^\gamma}^n \leq s ng(n) < \tau_{k\eta,(\ell+1)
  \eta^\gamma}^n$, if it exists (and note that the event $R_{K,T,n}$
implies that $\ell \leq K^{1+\gamma}/\eta^{\gamma}$).  Since the
condition $\mathcal{L}_n(s,k\eta) = \mathcal{L}_n(s',k\eta)$ implies
furthermore that $s'ng(n) < \tau_{k\eta, (\ell+1)\eta^\gamma}^n$, then
we can conclude that
\[
\Lambda_{k\eta,(\ell+1)\eta^\gamma}^n(x) - \Lambda_{k\eta, \ell\eta^\gamma}^n(x) = \mathcal{L}_n\left( \frac{\tau_{k\eta,(\ell+1)\eta^\gamma}^n}{ng(n)},x \right) 
- \mathcal{L}_n \left(\frac{\tau_{k\eta,\ell\eta^\gamma}^n}{ng(n)},x \right) 
\geq \mathcal{L}_n(s',x)
- \mathcal{L}_n(s,x) > \frac{\e}{2}.
\]
If $\ell$ does not exist, it means $\lfloor k\eta n\rfloor$ was not visited before time $s n g(n)$, and since $\mathcal{L}_n(s,k\eta) = \mathcal{L}_n(s',k\eta)$ we have that $\fl{k \eta n}$ hasn't been visited by time $s' n g(n)$. Hence $s'ng(n) \leq \tau_{k\eta,0}^n$ and we get $\Lambda_{k\eta,0}^n(x) =  \mathcal{L}_n(\frac{\tau_{k\eta,0}^n}{n g(n)}, x) \geq \mathcal{L}_n(s',x)>\frac{\varepsilon}{2}$.
Therefore, it follows from the joint RKTs and the scaling property \eqref{Lxh-rescale} that
\begin{align*}
 &\limsup_{n\to\infty}  P\left( \sup_{\substack{s,s' \leq T,\, |s-s'| < \delta\\x\in [-K,K]}} \left| \mathcal{L}_n(s,x) - \mathcal{L}_n(s',x) \right|  > \frac{\e}{2}, \, R_{K,T,n}, \, J_{\d,\eta,T,n} \right) \\
 &\leq P\left( \max_{\substack{|k|\leq K/\eta+3 \\  \ell\leq K^{1+\gamma}/\eta^\gamma}} \sup_{x \in [k\eta, (k+3)\eta]} ( \Lambda_{k\eta,(\ell+1)\eta^\gamma}(x) - \Lambda_{k\eta,\ell \eta^\gamma}(x) ) \geq \frac{\e}{2}  \right)\\
 &\qquad + P\left(\max_{|k|\leq K/\eta+3}\sup_{x \in [k\eta, (k+3)\eta]} \Lambda_{k\eta,0}(x)\geq \frac{\e}{2}\right) \\
 &\leq \left(\frac{2K}{\eta}+7\right) \left( \frac{K^{1+\gamma}}{\eta^\gamma} + 1 \right) \max_{\substack{|k|\leq K/\eta+3 \\  \ell\leq K^{1+\gamma}/\eta^\gamma}} P\left( \sup_{x \in [k\eta, (k+3)\eta]} ( \Lambda_{k\eta,(\ell+1)\eta^\gamma}(x) - \Lambda_{k\eta,\ell \eta^\gamma}(x) ) \geq \frac{\e}{2}  \right) \\
 &\qquad   +\left(\frac{2K}{\eta}+7\right) \max_{|k|\leq K/\eta+3}P\left(\sup_{x \in [k\eta, (k+3)\eta]} \Lambda_{k\eta,0}(x)\geq \frac{\e}{2}\right) \\
 &= \left(\frac{2K}{\eta}+7\right) \left( \frac{K^{1+\gamma}}{\eta^\gamma} + 1 \right) \max_{\substack{|k|\leq K/\eta+3 \\  \ell\leq K^{1+\gamma}/\eta^\gamma}} P\left( \sup_{x \in [k, k+3]} ( \Lambda_{k,\ell+1}(x) - \Lambda_{k,\ell}(x) ) \geq \frac{\e}{2\eta^\gamma}  \right) \\
 & \qquad  +\left(\frac{2K}{\eta}+7\right) \max_{|k|\leq K/\eta +3 }P\left(\sup_{x \in [k, k+3]} \Lambda_{k,0}(x)\geq \frac{\e}{2\eta^\gamma}\right) \\
 &\leq C \frac{K^{2+\gamma}}{\eta^{1+\gamma}} e^{-c_{\e,K}/\eta},
\end{align*}
where the last inequality follows from Corollary \ref{cor:Lxh-dev} as long as $\frac{\e}{2\eta^\gamma} \geq 3$, and the constant $c_{\e,K}$ in the last line depends on $\e>0$, $K \geq 1$ and the parameters from Assumption \ref{asm:Bessel}, but not on $\eta$.
Recalling \eqref{Lntimebound}, we obtain that 
\begin{align*}
 &\limsup_{n\to\infty} P\left( \sup_{\substack{s, s' \leq T, \, x \in \R\\|s-s'| < \d}} \left| \mathcal{L}_n(s,x) - \mathcal{L}_n(s',x) \right|  > \frac{\e}{2} \right) \\
 &\qquad \leq C \frac{K^{2+\gamma}}{\eta^{1+\gamma}} e^{-c_{\e,K}/\eta} + \limsup_{n\to\infty} \left\{ P\left( R_{K,T,n}^c \right) + P\left( J_{\d,\eta,T,n}^c \right) \right\}.
\end{align*}
Finally, for a fixed $\e,T>0$ we choose the parameters $K,\eta,\d$ as follows. 
\begin{enumerate}[--]
 \item Using Lemma \ref{lem:RKn}, choose $K$ depending on $\e$ and $T$ so that $\limsup_{n\to\infty}P(R_{K,T,n}^c) \leq \frac{\e}{6}$. 
 \item Then choose $\eta\in (0,1)$ depending on $\e$ and $K$ so that $\frac{\e}{2\eta^\gamma} \geq 3$ and $C \frac{K^{2+\gamma}}{\eta^{1+\gamma}} e^{-c_{\e,K}/\eta} < \frac{\e}{6}$.
 \item Finally, using the tightness of $\mathcal{X}_n$ (or more precisely \eqref{Xtight-cond}) we can choose $\d>0$ depending on $\e$, $\eta$, and $T$ so that $\limsup_{n\to\infty} P(J_{\d,\eta,T,n}^c) \leq \frac{\e}{6}$.
\end{enumerate}
\end{proof}

\begin{proof}[Proof of Lemma \ref{lem:Lntight-space}]
Suppose that $s\leq T$ and $x<x'<x+\d$ are such that 
$|\mathcal{L}_n(s,x)-\mathcal{L}_n(s,x')| > \frac{\e}{2}$. We also assume $R_{K,T,n}$ occurs. Since on the event $R_{K,T,n}$ we have that $L(\lfloor sng(n) \rfloor,k)=0$ for all $|k|>Kn$,
it follows that  when $n$ is large enough with respect to $\d$,
there is an integer $k$ with $|k|\leq K/\d+1$  such that 
\begin{equation}\label{Lnspatial1}
 \sup_{|y-k\d|\leq 2\d} |\mathcal{L}_n(s,k\d)-\mathcal{L}_n(s,y)| > \frac{\e}{4} \qquad \text{and}\qquad \mathcal{L}_n(s,k\d)>0. 
\end{equation}
(This can be accomplished for instance by taking $|k|$ maximal with $|k\delta| \leq |x| \wedge |x'|$.)
For this $s\leq T$ and $|k|\leq K/\d + 1$ there is also a non-negative integer $\ell$ with $\tau_{k\d, \ell\d^\gamma}^n \leq s n g(n) < \tau_{k\d, (\ell+1)\d^\gamma}^n$ (and on the event $R_{K,T,n}$ we have that $\ell \leq K^{1+\gamma}/\d^\gamma$), so that
\[
 \Lambda_{k\d, \ell \d^\gamma}^n(y) = \mathcal{L}_n\left( \frac{\tau_{k\d, \ell \d^\gamma}^n}{ng(n)}, y \right) \leq \mathcal{L}_n(s,y)
 \leq \mathcal{L}_n\left( \frac{\tau_{k\d, (\ell+1) \d^\gamma}^n}{ng(n)}, y \right)
 = \Lambda_{k\d, (\ell+1)\d^\gamma}^n(y)
 \qquad \forall\, y \in \R,
\]
where the second inequality is a strict inequality if $y=k\d$; note that letting $y=k\d$ this implies that $\ell \d^\gamma \leq \mathcal{L}_n(s,k\d) \leq (\ell+1)\d^\gamma$. Therefore, \eqref{Lnspatial1}
implies for this $k$ and $\ell$ that 
\begin{equation}\label{Lnspatial2}
 \text{either}
 \quad
 \inf_{|y-k\d|\leq 2\d} \Lambda_{k\d,\ell\d^\gamma}^n(y) < (\ell+1)\d^\gamma - \frac{\e}{4}
 \quad\text{ or }\quad 
 \sup_{|y-k\d|\leq 2\d} \Lambda_{k\d,(\ell+1)\d^\gamma}^n(y) > \ell \d^\gamma + \frac{\e}{4}.
\end{equation}
Moreover, if $\d^\gamma < \frac{\e}{8}$ then we can simplify the bounds in \eqref{Lnspatial2} since $(\ell+1)\d^\gamma - \frac{\e}{4} < \ell \d^\gamma - \frac{\e}{8}$ and $\ell \d^\gamma + \frac{\e}{4} > (\ell+1)\d^\gamma + \frac{\e}{8}$.
Thus, for any $K\geq 1$, $\e,T>0$ and $\d^\gamma < \frac{\e}{8}$ we have from the joint RKTs and the scaling property \eqref{Lxh-rescale} that
\begin{align*}
& \limsup_{n\to\infty} P\left( \sup_{\substack{s \leq T, \, x,x' \in \R\\|x-x'| < \d}} \left| \mathcal{L}_n(s,x) - \mathcal{L}_n(s,x') \right|  > \frac{\e}{2}, \, R_{K,T,n} \right)
\\
&\leq P\left( \max_{\substack{|k|\leq K/\d + 1 \\ \ell \leq K^{1+\gamma}/\d^\gamma + 1}} \sup_{|y-k\d|\leq 2\d} \left|  \Lambda_{k\d,\ell\d^\gamma}(y) - \ell\d^\gamma \right| \geq \frac{\e}{8} \right)  \\
&\leq \left( \frac{2K}{\d}+ 3 \right) \left( \frac{K^{1+\gamma}}{\d^\gamma} + 2 \right) \max_{\substack{|k|\leq K/\d + 1 \\ \ell \leq K^{1+\gamma}/\d^\gamma + 1}} P\left( \sup_{|y-k\d|\leq 2\d}  \left|  \Lambda_{k\d,\ell\d^\gamma}(y) - \ell\d^\gamma \right| \geq \frac{\e}{8} \right)\\
&= \left( \frac{2K}{\d}+ 3 \right) \left( \frac{K^{1+\gamma}}{\d^\gamma} + 2 \right) \max_{\substack{|k|\leq K/\d + 1 \\ \ell \leq K^{1+\gamma}/\d^\gamma + 1}} P\left( \sup_{y\in[k-2,k+2]}  \left|  \Lambda_{k,\ell}(y) - \ell \right| \geq \frac{\e}{8\d^\gamma} \right) \\
&\leq C \frac{K^{2+\gamma}}{\d^{1+\gamma}} e^{-c_{\e,K}/\d},
\end{align*}
where again by Corollary \ref{cor:Lxh-dev} the constant $c_{\e,K}$ in the last line depends on $\e>0$, $K \geq 1$ and the parameters from Assumption \ref{asm:Bessel}, but not on $\d \in(0,1)$.
Finally, we have that 
\[
 \limsup_{n\to\infty} P\left( \sup_{\substack{s \leq T, \, x,x' \in \R\\|x-x'| < \d}} \left| \mathcal{L}_n(s,x) - \mathcal{L}_n(s,x') \right|  > \frac{\e}{2} \right)
 \leq C \frac{K^{2+\gamma}}{\d^{1+\gamma}} e^{-c_{\e,K}/\d} + \limsup_{n\to\infty} P(R_{K,T,n}^c), 
\]
and the proof is completed by first using Lemma \ref{lem:RKn} to choose $K$ large enough so that the limsup on the right is at most $\e/4$ 
and then choosing $\d>0$ sufficiently small so that $\d^\gamma < \e/8$ and the first term on the right is at most $\e/4$.
\end{proof}

\section{Proofs of Theorems \ref{thm:main_X}--\ref{thm:main_JRK} and Corollary \ref{xh}}\label{sec:fdd}

\begin{proof}[Proof of Theorem~\ref{thm:main_X}]
  For a $K\geq 1$ and $\d>0$ enumerate the finitely many points in the grid $Q_\d^{(K)}$ defined in \eqref{TdnK-def} as
  $(x_1,h_1),\ldots ,(x_M,h_M)$ and introduce the following notation for rescaled discrete and continuous inverse local times at each point of the grid: for
    $m\in\{1,\ldots ,M\}$, we set
    $\theta_{m}^n:=(ng(n))^{-1}\tau_{x_{m},h_{m}}^{n}$,
    $n\in\mathbb{N}$, and
    $\theta_{m}:=\int_\R\Lambda_{x_{m},h_{m}}(y)\,dy$.

    For any $n\in\mathbb{N}$, $t \in (0,\infty)$, we define $N_n(t)$
    as ``the random index $m$ such that $\theta_{m}^n$ is the closest to
    $t$ from below''; more rigorously, for $m\in\{1,\ldots,M\}$, we
    have $N_n(t)=m$ when $\theta_{m}^n \leq t$ and for any
    $m'\in\{1,\ldots ,M\}$, $m' \neq m$, we have
    $\theta_{m'}^n\not\in (\theta_{m}^n,t]$. If there are several $m$
    satisfying this condition, we choose the smallest, but when $n$ is
    large enough, the $\theta_{m}^n$ are all different, so this does not
    happen. Similarly, $N(t)$ will be ``the random index $m$ such that
    $\theta_{m}$ is the closest to $t$ from below'': for
    $m\in\{1,\ldots ,M\}$, we have $N(t)=m$ when $\theta_{m} \leq t$ and
    for any $m'\in\{1,\ldots ,M\}$, $m' \neq m$, we have
    $\theta_{m'}\not\in (\theta_{m},t]$ ($m$ is almost surely unique by Lemma \ref{lem_int_diff}).  Note that $N_n(t)$ and $N(t)$
    also depend on $\d$ and $K$, but we will suppress this dependence
    to keep the notation manageable.

\begin{lemma}\label{lem_conv_indexes}
 Fix arbitrary $\delta>0$ and $K\ge 1$. Then for any $j\in\mathbb{N}$ and  $0 < t_1 < \ldots < t_j$,  the vector $(N_n(t_1),\ldots ,N_n(t_j))$ converges in distribution to $(N(t_1),\ldots ,N(t_j))$ as $n\to \infty$.
\end{lemma}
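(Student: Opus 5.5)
The idea is to realize $(N_n(t_i))_{i\le j}$ as a fixed function of the vector $\theta^n=(\theta^n_1,\ldots,\theta^n_M)$, and then apply the continuous mapping theorem together with Lemma \ref{lem:txhn-conv}.

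By Lemma \ref{lem:txhn-conv} applied to the finitely many grid points $(x_m,h_m)\in Q_\d^{(K)}$, we have $\theta^n\Longrightarrow\theta:=(\theta_1,\ldots,\theta_M)$ in $\R^M$. Define $\Phi:\R^M\to\{1,\ldots,M\}^j$ coordinatewise: $\Phi_i(v)$ is the smallest $m$ such that $v_m\le t_i$ and $v_{m'}\notin(v_m,t_i]$ for all $m'\ne m$. If no such $m$ exists, set $\Phi_i(v)=0$; this matches the definitions of $N_n$ and $N$, with the convention that $N(t)=0$ when no $\theta_m\le t$. Then $(N_n(t_i))_i=\Phi(\theta^n)$ and $(N(t_i))_i=\Phi(\theta)$.

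$\Phi$ is locally constant, hence continuous, on the open set
\[
G=\{v\in\R^M:\ v_m\neq v_{m'}\ \forall m\ne m',\ v_m\neq t_i\ \forall m,i\}.
\]
Indeed, on $G$ the relative order of the $v_m$ and the $t_i$ is strict, so it does not change under small perturbations, and $\Phi$ depends only on that ordering. By the continuous mapping theorem it therefore suffices to show $P(\theta\in G)=1$.

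Checking $P(\theta\in G)=1$ is the main step. Distinctness of the $\theta_m$ holds almost surely by Lemma \ref{lem_int_diff}. What remains is $P(\theta_m=t)=0$ for each fixed $m$ and $t>0$, that is, that $\int_\R\Lambda_{x,h}(y)\,dy$ has no atoms at positive values.

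The scaling property \eqref{Lxh-rescale} handles this. For $c>0$,
\[
\int\Lambda_{cx,c^\gamma h}=c^{1+\gamma}\int\Lambda_{x,h}\quad\text{in law}.
\]
Combined with continuity in law in $(x,h)$, this gives atomlessness as follows. Suppose $P(\int\Lambda_{x,h}=t)=p>0$. Then $\int\Lambda_{x/c,h/c^\gamma}$ has an atom of size $p$ at $t/c^{1+\gamma}$ for every $c$ in a neighborhood of $1$, so as $c$ varies we get a continuum of distinct atom locations. The family $(\Lambda_{x',h'})$ is continuous in law, so its integrals are uniformly tight nearby, and the map $(x',h')\mapsto\mathrm{Law}$ is weakly continuous. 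This alone does not immediately contradict the existence of these atoms, so the argument needs more care.

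A more direct route is to fall back on the ingredient behind Lemma \ref{lem_int_diff}. Assumption \ref{asm:Bessel} shows that, conditionally on the left part of the curve, the right part $\int_x^\infty\Lambda_{x,h}$ is a functional of a BESQ power started at $h$. By Brownian scaling in the driving noise, this functional has a continuous distribution when $h>0$ (or when $x\ne0$ and there is a reflecting segment). In the degenerate case $h=0$, $x=0$, the curve is identically zero and $\theta_m=0<t_i$, so it causes no problem. I would prove this atomlessness as a small auxiliary statement, either reusing whatever mechanism Lemma \ref{lem_int_diff} uses or conditioning on the left half and invoking the density of the integral of a BESQ power. I expect this to be the main obstacle. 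Once it is in place, $P(\theta\in G)=1$ and the lemma follows.
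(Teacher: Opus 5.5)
Your argument is correct and is essentially the paper's proof. The paper applies the Portmanteau theorem to the events $\{\theta^n_{\ell_i}\le t_i,\ \theta^n_{i'}\notin(\theta^n_{\ell_i},t_i]\ \forall i'\neq\ell_i\}$, whose boundaries are null by Lemma \ref{lem_int_diff} and $P(\theta_m=t)=0$; this is the same as your continuous-mapping argument on the open set $G$.

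The atomlessness you flag as the main obstacle is already Proposition \ref{prop_int_no_atoms}, so you can simply cite it. It is proved exactly by your suggested conditioning on the part of the curve left of $0$, combined with Proposition \ref{prop_no_atom_Bessel}. There the key step is scaling together with the Markov property at a small time, not scaling alone.
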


\begin{proof}
 Let $\ell_1,\ldots ,\ell_j\in\{1,\ldots ,M\}$. We have
 \[
P((N_n(t_1),\ldots ,N_n(t_j))=(\ell_1,\ldots ,\ell_j))=P\left(\bigcap_{i=1}^j \left\{\theta_{\ell_i}^n \leq t_i,\forall\, i' \neq \ell_i, \theta_{i'}^n\not\in (\theta_{\ell_i}^n,t_i]\right\}\right),
 \]
 and a similar equality holds for $P((N(t_1),\ldots ,N(t_j))=(\ell_1,\ldots ,\ell_j))$ with $\theta_{m}$ instead of $\theta_{m}^n$. Moreover, by Proposition \ref{prop_int_no_atoms}, for any $m\in\{1,\ldots ,M\}$, $t>0$, we have $P(\theta_{m}=t)=0$, and Lemma \ref{lem_int_diff} implies that for any $m \neq m'$ in $\{1,\ldots ,M\}$, we have $P(\theta_{m} = \theta_{m'})=0$. Therefore, the lemma follows from the Portmanteau Theorem and Lemma \ref{lem:txhn-conv}.
\end{proof}

With the above notation, for fixed $\d>0$, $K\geq 1$ and $n\geq 1$ we can define $\mathcal{X}_{\d,n}^{(K)}(t) = x_{N_n(t)}$. Our next lemma allows us to compare the finite dimensional distributions of $\mathcal{X}_n$ with those of $\mathcal{X}_{\d,n}^{(K)}$. To this end, for any $j\in\mathbb{N}$, we denote by $d_j$ the Prokhorov metric (see for example \cite[Ch.\,3, Section 1]{ekMP}) on the space of probability measures on
$(\R^j,\|\cdot\|_\infty)$. By a standard abuse of notation, we may
substitute random variables for their laws in the Prokhorov metric.

\begin{lemma}\label{lem_Prohorov}
 For any $j\in\mathbb{N}$, $0 < t_1 < \cdots < t_j$, and any $\varepsilon>0$, there exist $\delta>0$ and $K\geq 1$ such that $d_{j}((\mathcal{X}_n(t_i))_{1 \leq i \leq j},(\mathcal{X}_{\delta,n}^{(K)}(t_i))_{1 \leq i \leq j}) \leq \varepsilon$ when $n$ is large enough.
\end{lemma}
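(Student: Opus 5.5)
The plan is to couple the two vectors on the same probability space, since both are functions of the same walk. The Prokhorov distance is then bounded by $\varepsilon$ once
\[
P\Bigl(\max_{i\le j}|\mathcal{X}_n(t_i)-\mathcal{X}^{(K)}_{\d,n}(t_i)|>\varepsilon\Bigr)\le\varepsilon
\]
for large $n$. Set $T=t_j+1$.

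\textbf{Choice of $K$.} Using Lemma \ref{lem:RKn}, choose $K$ so that $P(R_{K,T,n}^c)\le \varepsilon/4$ for large $n$. Fix also $\eta>0$, to be chosen small; $\d$ will be chosen below in terms of $\eta$, and $\eta$ in terms of $\varepsilon$.

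\textbf{The key observation.} On $R_{K,T,n}$, fix $t=t_i\le T$ and let $k=\fl{t n g(n)}$, $z=X_k$. Then $|z|\le Kn$ and $L(k,z)\le K^{1+\gamma}g(n)$. Choose a grid point $x\in \d\Z$ with $|\fl{xn}-z|\le \d n+1$, and let $\ell$ be the integer with $\tau^n_{x,\ell\d^\gamma}\le k<\tau^n_{x,(\ell+1)\d^\gamma}$. If no such $\ell$ exists, $\fl{xn}$ has not been visited by time $k$; this case is handled like the case $\ell=0$ in the proof of Lemma \ref{lem:Lntight-time}.

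Since the walk is at $z$ at time $k$ and is nearest neighbour, the site $\fl{xn}$ is (to within $\d n$) visited soon before or after time $k$. The trouble is that it need not be visited shortly before $k$. I will not use that route directly. Instead I use the tightness modulus: by \eqref{Xtight-cond} choose $\d'$ with
\[
P\Bigl(\max_{a,b\le Tng(n),\,|a-b|\le \d' ng(n)}|X_a-X_b|>\varepsilon n/2\Bigr)\le\varepsilon/4
\]
for large $n$. It then suffices to show that, with high probability, $t_i-\theta^n_{N_n(t_i)}\le \d'$ for each $i$, i.e. some grid inverse local time lies in $[t_i-\d',t_i]$. On that event,
\[
|\mathcal{X}_n(t_i)-\mathcal{X}_n(\theta^n_{N_n(t_i)})|\le\varepsilon/2,
\qquad
\mathcal{X}_n(\theta^n_{N_n(t_i)})=\fl{x_{N_n(t_i)}n}/n,
\]
and the latter is within $1/n$ of $\mathcal{X}^{(K)}_{\d,n}(t_i)$, which gives the claim.

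\textbf{Main obstacle: a grid time in every window.} The hard step is to show that $\mathcal{T}^{(K)}_{\d,n}$ has no gap of length $\d'$ inside $[0,T]$, with high probability, once $\d$ is small relative to $\d'$. I will attack it by passing to the limit with Lemma \ref{lem:txhn-conv}. Since the window $[t_i-\d',t_i]$ is fixed, the probability that no $\theta_m^n$ lies in it converges to the probability that no $\theta_m=\int_\R\Lambda_{x_m,h_m}$ lies in it. The boundary points carry no mass by Proposition \ref{prop_int_no_atoms}, which justifies the Portmanteau step.

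For the limit, I would argue directly at level $n$ instead. During $[t_i-\d',t_i]$ the walk spends time $\d' ng(n)$ inside $[-Kn,Kn]$, so some site $z'$ in that range receives local time at least $\d' g(n)/(2K)$ there. Take $x\in\d\Z$ within $\d$ of $z'/n$. Lemma \ref{lem:Lntight-space} gives local time continuity in space, uniformly over $s\le T$, with high probability. For $\d$ small, this forces $\mathcal{L}_n(\cdot,x)$ to increase by more than $\d^\gamma$ over $[t_i-\d',t_i]$. An increase of more than $\d^\gamma$ means some level $\ell\d^\gamma$ is crossed, so some $\tau^n_{x,\ell\d^\gamma}/(ng(n))$ lies in the window. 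On $R_{K,T,n}$ we have $\ell\le K^{1+\gamma}/\d^\gamma$, so this grid point belongs to $Q_\d^{(K)}$. This argument uses only the tightness of the local times, so I expect it to be cleaner than the limit argument.

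Collecting the error terms $\varepsilon/4$ for each event proves the lemma.
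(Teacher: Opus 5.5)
Your argument is correct, but it takes a different route from the paper's.

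\textbf{The paper's route.} The paper never controls how much \emph{time} has elapsed since the last grid time before $t_i$. On $R_{K-1,t_j,n}$, every grid time up to $t_jng(n)$ comes from $Q^{(K)}_\d$. So, off $\bigcup_m A^{(K)}_{x_m,h_m,\d,\varepsilon,n}$, the walk cannot have moved more than $\varepsilon n/4$ away from $\fl{x_{N_n(t_i)}n}$ since the stopping time $\tau^n_{x_{N_n(t_i)},h_{N_n(t_i)}}$. The estimate \eqref{PAxh} makes that union small, and the proof uses nothing beyond Lemma \ref{lem:RKn} and \eqref{PAxh}.

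\textbf{Your route.} You first show that the grid inverse local times are $\d'$-dense in $[0,T]$ with high probability. This comes from a pigeonhole site whose local-time increment over the window is of order $\d'/K$, combined with the spatial equicontinuity of Lemma \ref{lem:Lntight-space}. You then convert this time bound into a space bound using the modulus \eqref{Xtight-cond}.

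\textbf{What each buys.} Your route yields a fact of some independent interest: gaps in $\mathcal{T}^{(K)}_{\d,n}\cap[0,T]$ are bounded above with high probability, complementing the lower bound of Lemma \ref{lem:gap}. It also makes the comparison with $\mathcal{X}^{(K)}_{\d,n}$ a soft consequence of tightness. However, it is heavier. It uses Proposition \ref{prop:Xtight}, which is itself built on \eqref{PAxh}. It also uses Lemma \ref{lem:Lntight-space}, which needs the BESQ tail bounds of Corollary \ref{cor:Lxh-dev}. Moreover, the paper's direct argument carries over essentially unchanged to Lemma \ref{lem_Prohorov2}. Your route would there first require a spatial regularity statement for $\tilde{\mathfrak{L}}$, which is not assumed.

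\textbf{Minor points.} The detour through Lemma \ref{lem:txhn-conv} and the abandoned first ``key observation'' are not needed. A grid point $x$ within $\d$ of $z'/n$ can fall just outside $[-K,K]$, so either work on $R_{K-1,T,n}$ as the paper does or round $x$ toward $0$. Also take $\d'<t_1$, and absorb the $O(1/(ng(n)))$ rounding of the window endpoints into the tightness scale.
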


\begin{proof}
  Given $\epsilon>0$ we shall use Lemma~\ref{lem:RKn} to choose
  $K\geq 1$ so that $P(R_{K-1,t_j,n}^c) \leq \frac{\varepsilon}{2}$
  for all large $n$. Then we use \eqref{PAxh} to choose $\delta>0$ so
  that
  $\sum_{m=1}^M P(A_{x_{m},h_{m},\delta,\varepsilon,n}^{(K)}) \leq
  \frac{\varepsilon}{2}$ for all sufficiently large
  $n>2/\epsilon$.
  It is enough to prove that if
  $R_{K-1,t_j,n}^c\cup\bigcup_{m=1}^M
  A_{x_{m},h_{m},\delta,\varepsilon,n}^{(K)}$ does not occur, then for all
  $1 \leq i \leq j$ we have
  $|\mathcal{X}_n(t_i)-\mathcal{X}_{\delta,n}^{(K)}(t_i)| <
  \varepsilon$.

  We will argue that on the event
    $R_{K-1,t_j,n} \cap (A_{x_{m},h_{m},\d,\e,n}^{(K)})^c$, if $\tau_{x_{m},h_{m}}^n \leq ng(n)t_j$, the walk does
    not move more than distance $\e n/4$ between the stopping time
    $\tau_{x_{m},h_{m}}^n$ and any other of the stopping times
    $\tau_{x,h}^n$ with $(x,h) \in Q_\d$. Suppose
  $R_{K-1,t_j,n}\cap\bigcap_{m=1}^M
  (A_{x_{m},h_{m},\delta,\varepsilon,n}^{(K)})^c$ occurs. We fix
  $i\in\{1,\ldots,j\}$ and, in order to lighten the notation,
  temporarily write $(x_{N_n(t_i)},h_{N_n(t_i)})$ as
  $(\tilde x,\tilde h)$. Since $\theta_{N_n(t_i)}^n \leq t_i$, or
  equivalently $\tau_{\tilde x,\tilde h}^{n} \leq ng(n)t_i$, and the event $R_{K-1,t_j,n}$ occurs, we can conclude
  that
  $\max_{k\in\mathbb{Z}}L(\tau_{\tilde x,\tilde h}^n,k)\leq
  K^{1+\gamma}g(n)$, so that, as
  $A_{\tilde{x},\tilde{h},\d,\e,n}^{(K)}$ does not occur, we have
\[
\left|X_{\lfloor t_ing(n)\rfloor}-X_{\tau_{\tilde x,\tilde h}^n}\right|\leq
    \frac{\varepsilon}{4}n\ \ \text{ or }\ \
    \mathcal{T}_{\delta,n}\cap \left(\frac{\tau_{\tilde x,\tilde
          h}^n}{ng(n)},\frac{\lfloor
        t_ing(n)\rfloor}{ng(n)}\right)\neq\emptyset.
\]
The definitions of $R_{K-1,t_j,n}$ and $N_n(t_i)$ imply that
  $\mathcal{T}_{\delta,n}\cap \left(\frac{\tau_{\tilde x,\tilde
        h}^n}{ng(n)},\frac{\lfloor
      t_ing(n)\rfloor}{ng(n)}\right)=\emptyset$, hence
$|X_{\lfloor t_ing(n)\rfloor}-X_{\tau_{\tilde x,\tilde h}^n}|\leq
  \frac{\varepsilon}{4}n$ and
\begin{align*}
 \left|\mathcal{X}_n(t_i)-\mathcal{X}_{\delta,n}^{(K)}(t_i)\right|
 =  \left| \frac{X_{\fl{t_i n g(n)}} }{n} - \frac{X_{\tau_{\tilde{x},\tilde{h}}^n}}{n} +  \frac{\fl{\tilde{x}n} }{n} - \tilde{x} \right|
 \leq \frac{\e}{4} + \frac{1}{n} < \e,
\end{align*}
as claimed.
\end{proof}

We can now finish the proof of Theorem~\ref{thm:main_X}. Let us first prove
the convergence in distribution of the finite-dimensional marginals of
$(\mathcal{X}_n(t))_{t \geq 0}$. Fix $j\in\mathbb{N}$ and
$0 < t_1 < \cdots < t_j$.
For ease of notation, define the following probability measures on $\R^j$ for any $\d>0$, $K\geq 1$, and $n\geq 1$.
\begin{enumerate}[--]
 \item $\mu_n$ is the distribution of $(\mathcal{X}_n(t_i))_{1\leq i\leq j}$.
 \item $\mu_{\d,n}^{(K)}$ is the distribution of  $(\mathcal{X}_{\d,n}^{(K)}(t_i))_{1\leq i\leq j}$.
 \item $\mu_{\d}^{(K)}$ is the distribution of $\left( x_{N(t_i)} \right)_{1\leq i\leq j}$.
\end{enumerate}
For any $\varepsilon>0$, Lemma \ref{lem_Prohorov} allows us to choose
$\delta>0$, $K \geq 1$ so that when $n$ is large enough,
$d_{j}(\mu_n,\mu_{\d,n}^{(K)} ) \leq
\frac{\varepsilon}{4}$. For this fixed $\d>0$ and $K\geq 1$,
Lemma \ref{lem_conv_indexes} implies that
$(\mathcal{X}_{\delta,n}^{(K)}(t_i))_{1 \leq i \leq j}$ converges in
distribution to $(x_{N(t_i)})_{1\leq i\leq j}$ as
$n\to\infty$, and since convergence in distribution is
equivalent to convergence under the Prokhorov metric, when $n$ is large enough we have
  $d_j(\mu_{\d,n}^{(K)}, \mu_\d^{(K)} ) \leq \frac{\e}{4}$.
Therefore, for any $\e>0$ we can choose $\d>0$ and $K\geq 1$ so that when $n,m$ are large enough,
\begin{align*}
 d_j( \mu_n,\mu_m)
 &\leq d_j\left(\mu_n, \mu_{\d, n}^{(K)} \right)
 + d_j\left(\mu_{\d, n}^{(K)}, \mu_{\d}^{(K)} \right)
 + d_j\left(\mu_{\d, m}^{(K)}, \mu_{\d}^{(K)} \right)
 + d_j\left(\mu_m, \mu_{\d, m}^{(K)} \right) \\
 &\leq \frac{\varepsilon}{4}+\frac{\varepsilon}{4}+\frac{\varepsilon}{4}+\frac{\varepsilon}{4}=\varepsilon.
\end{align*}
Consequently, $(\mu_n)_{n\in\mathbb{N}}$ is a Cauchy sequence and, since the space of probability measures on $\R^{j}$ with the Prokhorov metric is complete, the sequence $(\mu_n)_{n\in\mathbb{N}}$ converges under the Prokhorov metric when $n$ tends to $\infty$. This implies that $(\mathcal{X}_n(t_i))_{1 \leq i \leq j}$ converges in distribution as $n\to\infty$.

 By Proposition \ref{prop:Xtight}, the sequence $(\mathcal{X}_n)_{n \geq 1}$ is tight with respect to the Skorokhod $J_1$ topology in $D([0,+\infty))$, hence by Prokhorov's theorem it is relatively compact. This implies that any subsequence of $(\mathcal{X}_n)_{n \geq 1}$ admits a subsequence that converges in distribution to a continuous limit, thanks to Proposition \ref{prop:Xtight}. The finite-dimensional distributions of this limit are thus the limits of the finite-dimensional distributions of $(\mathcal{X}_n)_{n \geq 1}$, hence they are uniquely determined, so the law of the limit process is uniquely determined. Consequently, any subsequence of $(\mathcal{X}_n)_{n \geq 1}$ admits a subsequence that converges in distribution, and the limit is always the same. Therefore, $(\mathcal{X}_n)_{n \geq 1}$ converges in distribution, and its limit is continuous, which is Theorem \ref{thm:main_X}.
\end{proof}

\begin{proof}[Proof of Theorem~\ref{thm:main_XL}]
First of all, note that it follows from Theorem \ref{thm:main_X} and Corollary \ref{cor:Lntight} that the sequence of processes
  $(\mathcal{X}_n(t), {\cal L}_n(t,\cdot))_{t\ge 0}$ is relatively
  compact in $D([0,\infty)) \times D([0,\infty); D(\R))$.
We know that ${\cal X}_n\Rightarrow \mathfrak{X}$ in $D([0,\infty))$ and
that the limiting process is a.s.\ continuous. The continuous mapping
theorem and continuity of $\mathfrak{X}$ imply that for every $T>0$ as
$n\to\infty$
\[ \inf_{s\in [0,T]}{\cal X}_n(s)\Longrightarrow \inf_{s\in
    [0,T]}\mathfrak{X}(s)>-\infty\ \ \text{ and } \sup_{s\in[0,T]}{\cal
    X}_n(s)\Longrightarrow \sup_{s\in[0,T]}\mathfrak{X}(s)<\infty.\]
This fact and Proposition~\ref{prop:Lntight} imply that every limit point of
$({\cal L}_n)_{n\ge 1}$ is a.s.\ in $C([0,\infty);C_0(\R))$.
Let $(\mathfrak{X},\mathfrak{L}')$ be a limit point of
$({\cal X}_n,{\cal L}_n)_{n\ge 1}$, so that along a subsequence (not
relabeled)
$({\cal X}_n,{\cal L}_n)\Longrightarrow (\mathfrak{X},\mathfrak{L}')$. Note
that for every $f\in C_B(\R)$, $t\ge 0$, and all
$n\in\mathbb{N}$,
  \begin{equation}
    \label{otf1}
    \frac{1}{ng(n)}\sum_{k=0}^{\fl{ng(n)t}}f\left(\frac{X_k}{n}\right)=\sum_{z\in\Z}f\left(\frac{z}{n}\right)\frac{L(\fl{ng(n)t},z)}{g(n)}\,\frac1n.
  \end{equation}
  The equality \eqref{otf1} can be re-written in the integral form as follows:
\begin{equation}
  \label{otf2}
  \int_0^{t_n}f({\cal X}_n(s))\,ds=\int_{{\cal I}_n(t)} f\left(\frac{\fl{xn}}{n}\right){\cal L}_n(t,x)\,dx,
\end{equation}
where $t_n=\frac{\fl{ng(n)t}+1}{ng(n)}$ and ${\cal I}_n(t)$ is the
support of ${\cal L}_n(t,\cdot)$ --  a bounded interval. By the
Skorokhod representation, without loss of generality, we can assume
that $({\cal X}_n,{\cal L}_n)\to (\mathfrak{X},\mathfrak{L}')$ a.s.\  with
$(\mathfrak{X},\mathfrak{L}')\in C([0,\infty))\times
C([0,\infty);C_0(\R))$ a.s. Next we shall show that we can
let $n\to\infty$ in \eqref{otf2} and conclude that, a.s.,
\begin{equation}
  \label{otf3}
  \int_0^t f(\mathfrak{X}(s))\,ds=\int_{{\cal I}_0(t)} f\left(x\right)\mathfrak{L}'(t,x)\,dx,
\end{equation}
where ${\cal I}_0(t)$ is the support of $\mathfrak{L}'(t,\cdot)$ -- a
bounded interval.  

The convergence of the left-hand side of \eqref{otf2} to the left-hand
side of \eqref{otf3} is immediate, since $f\in C_B(\R)$ and
$t_n\to t$. Note that
${\cal I}_n(t)\subseteq[\inf_{s\in [0,t+1]}{\cal
  X}_n(s),\sup_{s\in[0,t+1]}{\cal X}_n(s)+1]$. Hence there is a bounded interval
${\cal J}(t)\supset \cup_{n=0}^\infty\, {\cal I}_n(t)$ on which for all $n$ the absolute
value of the difference of the integrands in the right-hand sides is bounded above by
\begin{align*}
  &\left|f\left(\frac{\fl{xn}}{n}\right)({\cal L}_n(t,x)-\mathfrak{L}'(t,x))\right|+\left|\mathfrak{L}'(t,x)\left(f\left(\frac{\fl{xn}}{n}\right)-f(x)\right)\right|\\ 
  &\le \sup_{x\in{\cal J}(t)}|f(x)|\cdot\sup_{x\in {\cal J}(t)}|{\cal L}_n(t,x)-\mathfrak{L}'(t,x)|+\sup_{x\in{\cal J}(t)}\mathfrak{L}'(t,x)\cdot \sup_{x\in {\cal J}(t)}\left|f\left(\frac{\fl{xn}}{n}\right)-f(x)\right|.
\end{align*}
Clearly, the integral over ${\cal J}(t)$ of last term vanishes as
$n\to\infty$. We recall that $\mathfrak{L}'\in C([0,\infty);C_0(\R))$, therefore the convergence of ${\cal L}_n$ to $\mathfrak{L}'$ is locally
  uniform. Hence, the integral of the first term over ${\cal J}(t)$
vanishes as well as $n\to\infty$.

With probability one, the above convergence holds for all rational
$t\in[0,\infty)$ and any countable set of functions $f\in
C_B(\R)$. Since $\mathfrak{L}'$ and $\mathfrak{X}$ are continuous,
a standard approximation of indicator functions by a countable
  set of continuous functions gives \eqref{otf} with $\mathfrak{L}'$
in place of $\mathfrak{L}$. Thus, every limit point $\mathfrak{L}'$
must satisfy \eqref{otf}. This gives us the uniqueness of the limit
point and completes the proof.
\end{proof}

\begin{proof}[Proof of Theorem~\ref{thm:main_JRK}]  
By the Ray-Knight theorem for the walks $(X_n)_{n\ge 0}$ and Theorem~\ref{thm:main_XL}, for each $x\in\R$, almost surely $\mathfrak{L}(t,x)\to\infty$ as $t\to\infty$. Hence, the inverse local times $(\mathfrak{t}_{x_i,h_i})_{1\le i\le N}$, are finite with probability 1\footnote{Alternatively, the finiteness of $(\mathfrak{t}_{x_i,h_i})_{1\le i\le N}$ can be seen from \eqref{eq_txh_finite}.}.
Since the curves $\mathfrak{L}(\mathfrak{t}_{x_i,h_i}, \cdot)$ are continuous, to show that $\mathfrak{X}$ has the claimed joint RKTs it is enough to show that the finite dimensional distributions are the same as those of $\Lambda_{x_i,h_i}(\cdot)$.
That is, we need to show that for any $(x_1,h_1),\ldots,(x_N,h_N) \in \R\times[0,\infty)$ and $y_1,y_2,\ldots,y_m \in \R$,
\begin{equation}\label{JRK-fdd}
 \left( \mathfrak{L}(\mathfrak{t}_{x_i,h_i}, y_j)  \right)_{i\leq N, \, j\leq m}
 \overset{\text{Law}}{=} \left( \Lambda_{x_i,h_i}(y_j) \right)_{i\leq N, \,   j\leq m} .
\end{equation}

For any $(x,h) \in \R\times[0,\infty)$ and $n\geq 1$, let  $\theta_{x,h}^n = \frac{\tau_{x,h}^n}{n g(n)}$.
Then it follows from Theorem \ref{thm:main_XL} and Lemma \ref{lem:txhn-conv}  that along some subsequence $n_k\to \infty$ we have the joint convergence
\[
 (\mathcal{L}_{n_k}, \theta_{x_1,h_1}^{n_k}, \theta_{x_2,h_2}^{n_k}, \ldots, \theta_{x_N,h_N}^{n_k})
 \underset{k\to\infty}{\Longrightarrow} 
 (\mathfrak{L}, \theta_{x_1,h_1}, \theta_{x_2,h_2}, \ldots, \theta_{x_N,h_N}).
\]
Since $t\mapsto \mathfrak{L}(t,\cdot)$
is almost surely continuous, 
then it follows from the continuous mapping theorem that
\[
 \left( \mathcal{L}_{n_k}(\theta_{x_i,h_i}^{n_k},\cdot) \right)_{i\leq N} \underset{k\to\infty}{\Longrightarrow} \left( \mathfrak{L}(\theta_{x_i,h_i},\cdot) \right)_{i\leq N}. 
\]
On the other hand, since
$\mathcal{L}_{n}(\theta_{x_i,h_i}^{n}, \cdot) = \Lambda_{x_i,h_i}^n(\cdot)$ then we must have that 
\begin{equation}\label{JRK-theta}
 \left( \mathfrak{L}(\theta_{x_i,h_i},\cdot) \right)_{i\leq N} \overset{\text{Law}}{=} 
 \left( \Lambda_{x_i,h_i}(\cdot) \right)_{i\leq N}. 
\end{equation}
An immediate consequence of this is that 
$\mathfrak{L}(\theta_{x_i,h_i},x_i) = h_i$ for $i\leq N$, almost surely, and then it follows from the definition of the inverse local times $\mathfrak{t}_{x,h}$ (see the statement of the theorem) that $\theta_{x_i,h_i} \leq \mathfrak{t}_{x_i,h_i}$, almost surely.

We can repeat the above argument with $h_i$ replaced by $h_i+\delta$
for fixed $\d>0$.  Again, taking a (possibly further) subsequence to
get joint convergence,
\[
 (\mathcal{L}_{n_k}, \theta_{x_1,h_1+\d}^{n_k}, \theta_{x_2,h_2+\d}^{n_k}, \ldots, \theta_{x_N,h_N+\d}^{n_k})
 \underset{k\to\infty}{\Longrightarrow} 
 (\mathfrak{L}, \theta_{x_1,h_1+\d}, \theta_{x_2,h_2+\d}, \ldots, \theta_{x_N,h_N+\d}),
\]
we can conclude that the joint distribution of the right hand side is such that
$\mathfrak{L}(\theta_{x_i,h_i+\d},x_i) = h_i + \d$, and thus 
\begin{equation}\label{eq_txh_finite}
 \forall\, i \in \{1,...,N\}, \quad \mathfrak{t}_{x_i,h_i} \leq \theta_{x_i,h_i+\d} < \infty \text{ a.s.}
\end{equation}
(the finiteness of the $\theta_{x_i,h_i+\d}$ comes from Assumption \ref{asm:Bessel}).

The above bounds on $\mathfrak{t}_{x_i,h_i}$ imply that
$ \mathfrak{L}(\theta_{x_i,h_i},\cdot) \leq \mathfrak{L}(\mathfrak{t}_{x_i,h_i},\cdot) \leq \mathfrak{L}(\theta_{x_i,h_i+\d},\cdot)$, a.s.
Therefore, for any $\e>0$ we have
\begin{align*}
 P\left( \max_{i\leq N, \, j\leq m} \left| \mathfrak{L}(\mathfrak{t}_{x_i,h_i}, y_j ) - \mathfrak{L}(\theta_{x_i,h_i}, y_j ) \right| > \e \right)
 &\leq  P\left( \max_{i\leq N, \, j\leq m} ( \mathfrak{L}(\theta_{x_i,h_i+\d}, y_j) - \mathfrak{L}(\theta_{x_i,h_i}, y_j) )  > \e \right) \\
 &=   P\left( \max_{i\leq N, \, j\leq m} ( \Lambda_{x_i,h_i+\d} (y_j) - \Lambda_{x_i,h_i} (y_j) ) > \e \right) \\
 &\leq \sum_{i\leq N, \, j\leq m} \frac{E[ \Lambda_{x_i,h_i+\d}(y_j)] - E[ \Lambda_{x_i,h_i}(y_j)]}{\e}. 
\end{align*}
Since it follows from Lemma \ref{lem:curvemean}
that $E[\Lambda_{x_i,h_i+\d}(y_j)] \to E[\Lambda_{x_i,h_i}(y_j)]$ as $\d\to 0$
then the above upper bound vanishes as $\d \to 0$ for $\e>0$ fixed.
Together with \eqref{JRK-theta} this completes the proof of \eqref{JRK-fdd}.
 
It remains to prove that $\mathfrak{X}$ is the unique process with these joint RKTs. 
Suppose now that $\tilde{\mathfrak{X}}$ is another continuous process with a local time $\tilde{\mathfrak{L}}$ satisfying the conditions in the statement of Theorem \ref{thm:main_JRK}.
That is, if $\tilde{\mathfrak{t}}_{x,h} = \inf\{t\geq 0: \tilde{\mathfrak{L}}(t,x) > h \}$ are the corresponding inverse local times, then we assume that for any $(x_1,h_1),\ldots,(x_N,h_N) \in \R \times [0,\infty)$,
\begin{equation}\label{tildeJRK}
 \big( \tilde{\mathfrak{L}}(\tilde{\mathfrak{t}}_{x_i,h_i}, y ) \big)_{y \in \R, 1\leq i \leq N}
 \overset{\text{Law}}{=} 
 \big(\Lambda_{x_i,h_i}(y) \big)_{y\in \R, 1\leq i \leq N}. 
\end{equation}
We need only to show that $\tilde{\mathfrak{X}}$ and $\mathfrak{X}$ have the same finite dimensional distributions.
Our proof will repeat many of the arguments that were used in proofs of tightness and convergence of finite dimensional distributions for $\mathcal{X}_n(\cdot)$. To this end, we note that the following facts can be easily proved from the joint Ray-Knight theorems and the properties of the family of limiting curves $\Lambda_{x_i,h_i}(\cdot)$.
\begin{enumerate}[--]
 \item For any $\d>0$, for any finite collection of points $(x_i,h_i)$, $i\leq N$, equation \eqref{tildeJRK} and the fact that $\tilde{\mathfrak{t}}_{x,h} = \int_\R \tilde{\mathfrak{L}}(\tilde{\mathfrak{t}}_{x,h}, y) \, dy$ a.s. yield $( \tilde{\mathfrak{t}}_{x_i,h_i} )_{i \leq N} \overset{\text{Law}}{=} ( \int_\R \Lambda_{x_i,h_i}(y) \, dy )_{i \leq N}$. Extending this to the countable set of points in $Q_\d= (\d \mathbb{Z}) \times (\d^\gamma \mathbb{Z}_+)$ by the Kolmogorov's extension theorem, we obtain
 \begin{equation}\label{tildetxh-law}
 \left( \tilde{\mathfrak{t}}_{x,h} \right)_{(x,h) \in Q_\d} \overset{\text{Law}}{=} \left( \int_\R \Lambda_{x,h}(y) \, dy \right)_{(x,h) \in Q_\d}.
\end{equation}
\item For any $K\geq 1$, $T<\infty$ let 
\[ 
 \tilde{R}_{K,T} = \left\{ 
 \sup_{t\leq T} |\tilde{\mathfrak{X}} (t)| \leq K  \right\} 
 \cap \left\{ \sup_{x \in \R} \tilde{\mathfrak{L}}(T,x) \leq K^{1+\gamma}
 \right\}.  
\]
Then, for $\e>0$ and $T<\infty$ fixed there exists $K\geq 1$ large enough so that $P( \tilde{R}_{K,T} ) \geq 1- \e$. 
Indeed, $\tilde{\mathfrak{X}}$ is continuous hence bounded on $[0,T]$, and by the Ray-Knight theorems, Assumption \ref{asm:Bessel} and the scaling property \eqref{Lxh-rescale} we get
\begin{align*}
 P\left( \sup_{x\in \R} \tilde{\mathfrak{L}}(T,x) > K^{1+\gamma} \right) 
 &\leq P\left( \tilde{\mathfrak{t}}_{0,K} < T \right) + P\left( \sup_{x\in \R} \tilde{\mathfrak{L}}(\tilde{\mathfrak{t}}_{0,K}, x) > K^{1+\gamma} \right) \\
 &= P\left( \int_\R \Lambda_{0,K}(x) \, dx < T \right) + P\left( \sup_{x\in \R} \Lambda_{0,K}(x) > K^{1+\gamma} \right) \\
 &= P\left( \int_\R \Lambda_{0,1}(x) \, dx < \frac{T}{K^{1+1/\gamma}} \right) + P\left( \sup_{x\in \R} \Lambda_{0,1}(x) > K^{\gamma} \right).
\end{align*}

 \item Let $\tilde{\mathfrak{T}}_\d = \{ \tilde{\mathfrak{t}}_{x,h}: (x,h) \in Q_\delta \}$, and for any $\e,\d>0$, $(x,h) \in Q_\d$ let
 \[
  \tilde{A}_{x,h,\e,\d} = 
  \left\{ 
  \exists\, t > \tilde{\mathfrak{t}}_{x,h}: |\tilde{\mathfrak{X}}(t) - x |
  > \e, \, \text{ and } \tilde{\mathfrak{T}}_\d \cap (\tilde{\mathfrak{t}}_{x,h}, t ) = \emptyset 
 \right\}. 
 \]
 Then, 
 \begin{equation}\label{PtildeA-ub}
  P\left( \tilde{A}_{x,h,\e,\d} \right) 
  \leq 2 (1-c_0)^{\fl{\frac{\e}{\d}}}, \quad \forall\, (x,h) \in Q_\d,
 \end{equation}
where $c_0$ is the constant from Assumption \ref{asm:ND}. The proof of \eqref{PtildeA-ub} is similar to the proof of \eqref{PAxh} but is simpler since we don't need to pass to a limit and therefore can avoid the need to work only with finitely many grid points. 
\end{enumerate}

Now, for any $\d>0$ we can construct a discrete approximation process $\tilde{\mathfrak{X}}_\d$ similar to what was done in the proof of Theorem \ref{thm:main_X}. 
That is, 
\[
 \tilde{\mathfrak{X}}_\d(t) = x \in \d \Z 
 \quad \iff \quad 
 \exists\, h \in \d^\gamma \Z_+ \text{ with } \tilde{\mathfrak{t}}_{x,h} \leq t \text{ and } \tilde{\mathfrak{T}}_\d \cap \left( \tilde{\mathfrak{t}}_{x,h}, t \right) = \emptyset.
\]
To prove that $\tilde{\mathfrak{X}}_\d$ is well-defined, we need the $\tilde{\mathfrak{t}}_{x,h}$, $(x,h)\in Q_\d$ to be all different, which holds almost surely thanks to Lemma \ref{lem_int_diff}, and that there is only a finite number of $\tilde{\mathfrak{t}}_{x,h}$, $(x,h)\in Q_\d$ in any bounded interval. To see the latter, we note that for any $T>0$, if the event $\tilde{R}_{K,T}$ occurs, then a.s. $\tilde{\mathfrak{t}}_{x,h} > T$ for all $(x,h)\in Q_\d$ so that $(x,h) \notin [-K,K] \times [0,K^{1+\gamma}]$, so there is only a finite number of $\tilde{\mathfrak{t}}_{x,h}$, $(x,h)\in Q_\d$ below $T$. We can now choose a sequence $T_n \underset{n \to \infty}{\longrightarrow} \infty$ and a sequence $K_n$ so that $P(\tilde R_{K_n,T_n}) \geq 1-1/2^n$, so almost surely an infinite number of $\tilde R_{K_n,T_n}$ occur, hence almost surely there is only a finite number of $\tilde{\mathfrak{t}}_{x,h}$, $(x,h)\in Q_\d$ in any bounded interval.

We have the following analog of Lemma \ref{lem_Prohorov} for $\tilde{\mathfrak{X}}_\d$.
\begin{lemma}\label{lem_Prohorov2}
 For any $j\in\mathbb{N}$, $0 < t_1 < \cdots < t_j$ and $\varepsilon>0$, when $\d>0$ is small enough,
 \begin{equation}\label{eq_Prohorov2}
   P\left( \max_{i\leq j} \left| \tilde{\mathfrak{X}}(t_i) - \tilde{\mathfrak{X}}_\d(t_i) \right| > \e \right) 
   < \e. 
 \end{equation}
\end{lemma}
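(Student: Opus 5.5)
The plan is to mimic the proof of Lemma \ref{lem_Prohorov}, working directly in the continuum: show that outside a small exceptional event, for each $i\le j$ the point $\tilde{\mathfrak{X}}(t_i)$ lies within $\e$ of the grid point $x$ where the last grid inverse local time before $t_i$ was recorded.

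Fix $j$, the times $t_1<\cdots<t_j$, and $\e>0$. First use the second bullet above to choose $K\ge 1$ with $P(\tilde R_{K,t_j}^c)<\e/2$. Next restrict attention to the finite grid $Q_\d^{(K+1)}$. On $\tilde R_{K,t_j}$, any $\tilde{\mathfrak{t}}_{x,h}\le t_j$ with $(x,h)\in Q_\d$ must satisfy $|x|\le K$ (the path stays in $[-K,K]$) and $h\le K^{1+\gamma}$ (the local time at time $t_j$ is at most $K^{1+\gamma}$). Hence only the points of $Q_\d^{(K)}$, whose number is of order $K^{2+\gamma}\d^{-1-\gamma}$, are relevant. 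Then apply \eqref{PtildeA-ub} with $\e$ in place of the deviation to get
\[
 P\Big(\bigcup_{(x,h)\in Q_\d^{(K)}}\tilde A_{x,h,\e,\d}\Big)\le C\frac{K^{2+\gamma}}{\d^{1+\gamma}}(1-c_0)^{\fl{\e/\d}},
\]
which is below $\e/2$ once $\d$ is small, since the exponential decay beats the polynomial factor.

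On the complement of $\tilde R_{K,t_j}^c\cup\bigcup\tilde A_{x,h,\e,\d}$, take $i\le j$. The first step is to check that some grid time precedes $t_i$. Since $\tilde{\mathfrak{X}}$ is continuous with $\tilde{\mathfrak{X}}(0)$ at the origin (this is forced by the joint RKTs, as $\Lambda_{0,0}$ is supported around $0$) and the local time is nondecreasing, $\tilde{\mathfrak{t}}_{0,0}=0$ a.s. Because there are finitely many grid times in $[0,t_i]$, we can let $(x,h)$ be the index of the last one. By the definition of $\tilde{\mathfrak{X}}_\d$, we have $\tilde{\mathfrak{X}}_\d(t_i)=x$. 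At time $\tilde{\mathfrak{t}}_{x,h}$ the path is at $x$, because the local time at $x$ increases only when the process is at $x$ (by the occupation density formula and continuity). Since $\tilde A_{x,h,\e,\d}$ fails and no grid time lies in $(\tilde{\mathfrak{t}}_{x,h},t_i)$, it follows that $|\tilde{\mathfrak{X}}(t_i)-x|\le\e$, which gives \eqref{eq_Prohorov2}.

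The main obstacle is justifying \eqref{PtildeA-ub} and the facts it uses. These are: $\tilde{\mathfrak{X}}(\tilde{\mathfrak{t}}_{x,h})=x$; the grid times are a.s. distinct (Lemma \ref{lem_int_diff} via \eqref{tildetxh-law}); and the snake argument of Proposition \ref{prop:Xtight}, using Assumptions \ref{asm:JMarkov} and \ref{asm:ND}, applied to the curves $\tilde{\mathfrak{L}}(\tilde{\mathfrak{t}}_{x,h},\cdot)$, whose joint law equals that of the $\Lambda$'s. I would take these as given, as the text preceding the lemma indicates, and handle the boundary case $t_i$ equal to a grid time by noting that this happens with probability zero by Proposition \ref{prop_int_no_atoms}.
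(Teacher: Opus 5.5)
Your proposal is correct and follows the paper's proof essentially verbatim: on $\tilde R_{K,t_j}$ only grid times indexed by $Q_\d^{(K)}$ can fall in $[0,t_j]$; take a union bound of \eqref{PtildeA-ub} over these $O(K^{2+\gamma}\d^{-1-\gamma})$ points; choose $K$ first and then $\d$ small, using Proposition \ref{prop_int_no_atoms} to rule out any $t_i$ coinciding with a grid time. (Small remarks: $\tilde{\mathfrak t}_{0,0}=0$ follows most directly from \eqref{tildetxh-law}, since $\Lambda_{0,0}\equiv 0$. You do not need $\tilde{\mathfrak X}(\tilde{\mathfrak t}_{x,h})=x$ here, because $\tilde A_{x,h,\e,\d}$ already controls $|\tilde{\mathfrak X}(t)-x|$ directly. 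Your stray mention of $Q_\d^{(K+1)}$ should read $Q_\d^{(K)}$.)
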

\begin{proof}
Note that if the event $\tilde{R}_{K,t_j}$ occurs, then $\tilde{\mathfrak{t}}_{x,h} > t_j$ for all $(x,h)\in Q_\d \backslash Q_\d^{(K)}$ (we recall $Q_\d^{(K)}$ was defined in \eqref{TdnK-def}). Then, since no $t_i$ is an atom of any $\tilde{\mathfrak{t}}_{x,h}$ (Proposition \ref{prop_int_no_atoms}) it follows easily from the definitions and \eqref{PtildeA-ub} that for $K\geq 1$ and $\d \in (0,1)$ we have
\begin{equation}\label{XXd-diff}
 P\left( \max_{i\leq j} \left| \tilde{\mathfrak{X}}(t_i) - \tilde{\mathfrak{X}}_\d(t_i) \right| > \e, \, \tilde R_{K,t_j} \right)
 \leq P\left( \bigcup_{(x,h) \in Q_\d^{(K)} } \tilde{A}_{x,h,\e,\d} \right) 
 \leq \frac{12 K^{2+\gamma}}{\d^{1+\gamma}} (1-c_0)^{\fl{\frac{\e}{\d}}}. 
\end{equation}
The proof is then completed first by choosing $K\geq 1$ large enough so that $P(\tilde R_{K,t_j} ) > 1 -  \frac{\e}{2}$ and then  choosing $\d \in (0,1)$ sufficiently small so that the right side of \eqref{XXd-diff} is less than $\frac{\e}{2}$.
\end{proof}

Finally, we are ready to prove that $\tilde{\mathfrak{X}}$ and $\mathfrak{X}$ must have the same finite dimensional distributions. 
Fix $j\in\mathbb{N}$, $0<t_1<t_2<\cdots<t_j$ and $\e>0$.
For any $\d>0$ we can construct a discrete approximation $\mathfrak{X}_\d(t)$ for $\mathfrak{X}(t)$ using the inverse local times $(\mathfrak{t}_{x,h})_{(x,h) \in Q_\d}$  in the same way as $\tilde{\mathfrak{X}}_\d(t)$ was constructed from $(\tilde{\mathfrak{t}}_{x,h})_{(x,h) \in Q_\d}$, and note that it follows from \eqref{tildetxh-law} that $\tilde{\mathfrak{X}}_\d(\cdot)$ and $\mathfrak{X}_\d(\cdot)$ have the same law.
Also, since the proof of Lemma \ref{lem_Prohorov2} only used the joint RKTs, the identical statement holds for $\mathfrak{X}$. We now choose $\d>0$ so that \eqref{eq_Prohorov2} holds for both $\mathfrak{X}$ and $\tilde{\mathfrak{X}}$.
Therefore, if we again let $d_j(\cdot)$ denote the Prokhorov metric on the space of probability measures on $\R^j$, then 
\begin{align*}
d_{j}\left( (\mathfrak{X}(t_i))_{1 \leq i \leq j},(\tilde{\mathfrak{X}}(t_i))_{1 \leq i \leq j} \right) 
&\leq d_{j}\left((\mathfrak{X}(t_i))_{1 \leq i \leq j},(\mathfrak{X}_\d(t_i))_{1 \leq i \leq j} \right) 
+ d_{j}\left( (\tilde{\mathfrak{X}}(t_i))_{1 \leq i \leq j},(\tilde{\mathfrak{X}}_\d(t_i))_{1 \leq i \leq j} \right) \\
&\leq 2\e.
\end{align*}
Since $\e>0$ was arbitrary, we conclude that $ (\mathfrak{X}(t_i))_{1 \leq i \leq j}$ and $(\tilde{\mathfrak{X}}(t_i))_{1 \leq i \leq j} $ have the same distribution.
\end{proof}

\begin{proof}[Proof of Corollary \ref{xh}]
 Note that $(\mathcal{X}_n, \mathcal{H}_n) = \phi(\mathcal{X}_n, \mathcal{L}_n)$ and $(\mathfrak{X},\mathfrak{H}) = \phi(\mathfrak{X},\mathfrak{L})$, where $\phi$ is given by $\phi(x(\cdot),\ell(\cdot,\cdot)) = (x(\cdot), \ell(\cdot, x(\cdot)))$. By Theorem \ref{thm:main_XL} and the Skorokhod Representation Theorem, it is enough to prove that if a sequence $(x_n,\ell_n)\in D([0,\infty)) \times D([0,\infty); D(\R) )$ converges to $(x,\ell)\in C([0,\infty)) \times C([0,\infty);C_0(\R))$ with $\ell(\cdot,x)$ nonnegative and nondecreasing for all $x\in\R$, then $\phi(x_n,\ell_n)$ converges to $\phi(x,\ell)$ in $D([0,\infty))^2$. This can be proven using the fact that the $\ell(t,\cdot)$ are uniformly equicontinuous for $t \in [0,T]$, for which we use the fact that their supports are contained in the one of $\ell(T,\cdot)$, which is bounded.
\end{proof}

\section{Polynomially self-repelling walks}\label{PSRcase}

In this section we shall prove Theorem~\ref{main_psr} by checking the
conditions of Theorems \ref{thm:main_X}--\ref{thm:main_JRK}. The key
result that we need is the augmented joint RKT for PSR walks, and we also need to verify
Assumptions~\ref{asm:Bessel}--\ref{asm:ND} for the family of curves in the joint RKT.

Recall that a polynomially self-repelling (PSR) walk is defined by
\eqref{sirw} with the weight function given by \eqref{w} for some
$\alpha>0$. Equation \eqref{lanxh} with $g(n)=n$ becomes
\begin{equation}\label{Ln-PSR}
 \Lambda_{x,h}^n(y) = n^{-1}L(\tau_{\fl{xn},\fl{hn}}, \fl{yn}).
\end{equation}
It follows from \cite[Theorem 1B]{tGRK} that
$ \Lambda_{x,h}^n\Longrightarrow \Lambda_{x,h}$  as
$n\to\infty$ where $(2\alpha+1)\Lambda_{x,h}\in\rm{BESQ}^1$ with
reflection and absorption properties exactly as specified in
Assumption~\ref{asm:Bessel}\footnote{The use of site local times in place of directed edge local times as in \cite{tGRK} is justified below in Section \ref{sec:PSRaug}.}.  Hence, Assumption~\ref{asm:Bessel} is
satisfied with $g(n)=n$, $\gamma=1$,
$\delta_{a,\pm}=\delta_{r,\pm}=1$, and
$C_{a,\pm}=C_{r,\pm}=(2\alpha+1)^{-1}$.

  In the next two subsections we shall check Assumptions
  \ref{asm:JMarkov} and \ref{asm:ND} and prove the
  augmented joint RKT. 
  These tasks are facilitated by results in
  \cite{Y25+}, where the author proved the joint RKT and also
  independently constructed a candidate limiting process for the rescaled
  PSR walks. We shall use the joint RKT from \cite{Y25+} and
  strengthen it to the desired augmented version. This will allow
  us to apply Theorems \ref{thm:main_X}--\ref{thm:main_JRK} and
  complete the proof of Theorem \ref{main_psr}.

  The work \cite{Y25+} constructs the entire family
  of curves $(\Lambda_{x,h}(\cdot))_{x\in\R, h\geq 0}$ driven by a
  single white noise. This family can be described as follows.  Let $\mathcal{W}(\cdot,\cdot)$ be a
  space-time white noise on the upper half-plane
  $\R \times [0,\infty)$.
  \begin{enumerate}[--]
     \item The forward curve $(\Lambda_{x,h}(x+t))_{t\geq 0}$ is the unique strong solution to
   \begin{equation}\label{WNrep}
   \Lambda_{x,h}(x+t) = h + \int_x^{x+t} \frac{2}{(\Lambda_{x,h}(u))^\alpha} \int_0^\infty s^\alpha \ind{s \leq \Lambda_{x,h}(u)} \mathcal{W}(du, ds) + \frac{t}{2\alpha+1}, \quad  t\geq 0,
\end{equation}
which is reflected at 0 for $t\in [0,\max\{0,-x\}]$ and absorbed at 0 for $t > \max\{0,-x\}$.
   \item The backward curve $(\Lambda_{x,h}(x-t))_{t\geq 0}$ is the unique strong solution to
   \begin{equation}\label{WNrep-back}
   \Lambda_{x,h}(x-t) = h - \int_{x-t}^{x} \frac{2}{(\Lambda_{x,h}(u))^\alpha} \int_0^\infty s^\alpha \ind{s \leq \Lambda_{x,h}(u)} \mathcal{W}(du, ds) + \frac{t}{2\alpha+1}, \quad  t\geq 0,
   \end{equation}
   which is reflected at 0 for $t \in [0,\max\{0,x\}]$ and absorbed at 0 for $t > \max\{0,x\}$.
 \end{enumerate}

\begin{remark}
  We note that the curves defined in \eqref{WNrep} and
  \eqref{WNrep-back} differ from those defined in \cite{Y25+} by a
  factor of 2.  This is because the Ray-Knight theorems in \cite{Y25+}
  (and also in \cite{tGRK}) were proved for directed edge local times
  whereas in this paper we work with local times at sites, and local
  times at sites are roughly double the directed edge local times.
  The curves also differ from those in \cite{tGRK} by an additional
  factor of $(2\alpha+1)^{-1}$. This reflects our choice to use
  the scaling function $g(n)=n$ rather than $g(n) = n/(2\alpha+1)$ as
  in \cite{tGRK}.
\end{remark}

\subsection{Verifying Assumptions \ref{asm:JMarkov} and \ref{asm:ND}}\label{sec:PSRasm}

To check Assumption \ref{asm:JMarkov} we use representation
  \eqref{WNrep}. The existence of unique strong solutions yields a
  measurable map $\Phi (x,h, \mathcal{W}|_{(x,\infty)}) $ so that for
  every $(x,h)\in\R\times[0,\infty)$
\begin{equation}\label{def_Phi}
\big(\Lambda_{x,h}(x+t) \big)_{t \geq 0} = \Phi \Big(x,h, \mathcal{W}|_{(x,\infty)} \Big) \qquad \text{a.s.}
 \end{equation}
It follows from strong uniqueness that for $x_0 \geq x$ necessarily
\[
  \big( \Lambda_{x,h}(x_0+t) \big)_{t \geq 0} = \Phi \Big( x_0, \Lambda_{x,h}(x_0),\, \mathcal{W}|_{(x_0,\infty)} \Big) \qquad \text{a.s.}
 \]

Let
\begin{align}
  \mathcal{G}_u &:= \sigma\Big( \mathcal{W}(A):\ A \subset (-\infty,u]\times[0,\infty)\ \text{Borel},\ |A|<\infty \Big),\label{def_G}\\\shortintertext{and} \nonumber
  \mathcal{G} ^ \prime _u &:= \sigma\Big( \mathcal{W}(A):\ A \subset (u, \infty)\times[0,\infty)\ \text{Borel},\ |A|<\infty \Big),\nonumber
\end{align}
where $|A|$ denotes the Lebesgue measure of $A$. It follows from
the definition of white noise that for every $u$ the sigma-algebras $\mathcal{G}_u $ and
$ \mathcal{G} ^ \prime _u $ are independent.
We have thus that for $(x_i,h_i)_{i \leq N} $ and $I = \{i_1,\ldots,i_k\}  \subset \{1,2, \cdots N\} $ as in Assumption \ref{asm:JMarkov}, that 
\[
 \sigma\Big( \big(\Lambda_{x_i,h_i}(x_i+t)\big)_{0\leq t\leq x_N-x_i},\ i\leq N \Big) \;\subset \sigma\Big( \mathcal{W}(A): A \subset (x_1,x_N]\times[0,\infty)\ \text{Borel},\ |A|<\infty \Big)   \;\subset\; \mathcal{G}_{x_N},
 \] and given $H_{i_\ell} := \Lambda_{x_{i_\ell},h_{i_\ell}}(x_N)$, $1\le \ell \le k$,
 \[
  \Big( \Phi\big({x_N}, H_{i_1},  \mathcal{W}|_{(x_N,\infty)} \big), \ldots, \Phi\big(x_N,H_{i_k}, \mathcal{W}|_{(x_N,\infty)} \big) \Big),
  \]
  are independent of $\mathcal{G}_{x_N}$. From this the desired Markov
  property of Assumption~\ref{asm:JMarkov} is immediate.

For checking Assumption \ref{asm:ND} (and since it may be of independent interest) we give a more explicit SDE representation of the joint distribution of the forward curves. By the Markov property, it is enough to give the joint distribution of forward curves starting at the same spatial point. Fix $x\in\R$ and $0\leq h_1 < h_2 < \ldots, h_N$, and let
$S_i(t) = \Lambda_{x,h_i}(x+t)$.
Then $(S_1(t),S_2(t),\ldots,S_N(t))_{t\geq 0}$ solves the system of SDEs
\begin{equation}\label{jointSDE}
  \begin{cases}
    dS_1(t)=  \frac{1}{2\alpha+1} \, dt
 + \frac{2}{\sqrt{2\alpha+1}}
 \sqrt{S_1(t)} \, dW_1(t)& \\[1mm]
 dS_j(t)
 = \frac{1}{2\alpha+1} \, dt
    + \frac{2}{\sqrt{2\alpha+1}}
 (S_j(t))^{-\alpha}\sum\limits_{i=1}^j \sqrt{S_i(t)^{2\alpha+1} - S_{i-1}(t)^{2\alpha+1} } \, dW_i(t), &
 2\leq j\leq N,
  \end{cases}
\end{equation}
where $W_1, W_2, \ldots, W_N$ are independent standard Brownian motions,
$S_0\equiv 0$, $S_i(0)=h_i$ for $i \in \{1,...,N\}$, and the coordinates merge when
they meet, reflect at zero for $t \leq \max\{0,-x\}$ and
get absorbed at zero for $t>\max\{0,-x\}$.

It remains to show that Assumption \ref{asm:ND} is satisfied for the family of curves in \eqref{WNrep} and \eqref{WNrep-back}.
Due to symmetry between the forward and backward curves, it is enough to  verify only the condition on the right merging locations in Assumption \ref{asm:ND}. That is, we need to check that there is a $c_0>0$ such that
\[
  \sup_{k \in \Z, \, \ell \geq 0} P\left( \mu_{(k,\ell),(k,\ell+1)}^+
    \geq k + \frac{1}{2} \right)= \sup_{k \in \{-1,0\}, \, \ell
      \geq 0} P\left( \mu_{(k,\ell),(k,\ell+1)}^+ \geq k + \frac{1}{2}
    \right) \leq 1-c_0.
\]
Note that reflecting the lower curve at zero only decreases the
merge time, thus we need only to consider $k=0$.  Moreover,
since all of the probabilities are less than 1 for any $\ell \geq 0$ (indeed, for all $\ell \geq 0$, there is a positive probability that $\Lambda_{0,\ell+1}$ is absorbed at 0 before time $\frac{1}{3}$, as can be seen via a comparison with a Brownian motion),
it is sufficient to argue that
\[
 \limsup_{\ell\to \infty} P\left(  \mu_{(0,\ell),(0,\ell+1)}^+ \geq  \frac{1}{2} \right) < 1.
\]
The inclusion
$\{\mu_{(0,\ell),(0,\ell+1)}^+ \ge \frac{1}{2}\}\subset
\{\Lambda_{0,\ell}\left( 1/3 \right) < \Lambda_{0,\ell+1}\left( 1/3
\right)\}$ and the following lemma complete the verification of
Assumption \ref{asm:ND}.

\begin{lemma}\label{c-base} For every $T>0$,
\[
 \lim_{\ell\to\infty}  P\left( \Lambda_{0,\ell}(T) < \Lambda_{0,\ell+1}(T) \right)
 = P( Z(T) > 0 \mid Z(0) = 1)= 1-e^{-1/(2T)},
\]
where $Z\in\rm{BESQ}^0$ (see \eqref{besq}) which is absorbed upon
hitting $0$.
\end{lemma}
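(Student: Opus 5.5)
We work with the pair $S_1(t)=\Lambda_{0,\ell}(t)$, $S_2(t)=\Lambda_{0,\ell+1}(t)$ from the system \eqref{jointSDE} with $N=2$, $x=0$. For $x=0$ both curves are absorbed at $0$ for all $t>0$, although absorption plays no role before time $T$ when $\ell$ is large. The idea is to study the gap $D=S_2-S_1$ and show that, after suitable normalization, it behaves like a BESQ$^0$ process started from $1$ on the time interval $[0,T]$, while $S_1$ and $S_2$ stay of order $\ell$.

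First, Itô's formula applied to \eqref{jointSDE} gives
\[
dD=\tfrac{2}{\sqrt{2\alpha+1}}\Big[(S_2^{-\alpha}-S_1^{-\alpha})\sqrt{S_1}\,dW_1+S_2^{-\alpha}\sqrt{S_2^{2\alpha+1}-S_1^{2\alpha+1}}\,dW_2\Big].
\]
The drifts cancel, so $D$ is a martingale up to merging, with $D(0)=1$. Its quadratic variation rate is
\[
\tfrac{4}{2\alpha+1}\Big[(S_2^{-\alpha}-S_1^{-\alpha})^2S_1+S_2^{-2\alpha}(S_2^{2\alpha+1}-S_1^{2\alpha+1})\Big].
\]
When $S_1\approx S_2\approx \ell$ and $D=O(1)$, expansion gives $S_2^{2\alpha+1}-S_1^{2\alpha+1}\approx(2\alpha+1)\ell^{2\alpha}D$. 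The first term is $O(\ell^{-1}D^2)$. Hence the rate is approximately $4D$, and $D$ approximately solves $dD=2\sqrt{D}\,d\beta$, i.e.\ $D\in$ BESQ$^0$ started at $1$. The coalescence of the curves corresponds exactly to $D$ hitting $0$ and staying there.

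Second, we make this rigorous.
\begin{itemize}
\item[(i)] By Corollary \ref{cor:Lxh-dev}, or directly from the BESQ$^1$ marginals (scaling by $\ell$), $\sup_{t\le T}|S_1(t)-\ell|\le \ell^{3/4}$ with probability tending to $1$.
\item[(ii)] On this event, and for $D\le M$, the quadratic variation rate equals $4D(1+o(1))$ uniformly, with $o(1)\to0$ as $\ell\to\infty$. Stop at the exit time of this good region. Since $D$ is a nonnegative martingale, $P(\sup_{t\le T}D>M)\le 1/M$, so the truncation costs little.
\item[(iii)] Write $D(t)=\beta(\langle D\rangle_t)$ by Dambis–Dubins–Schwarz. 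The time change $A(t)=\langle D\rangle_t$ satisfies $A'=4\beta(A)(1+o(1))$. Comparing with the BESQ$^0$ equation, we get convergence in law of $(D(t))_{t\le T}$ to $Z$ on $C[0,T]$. One way to do this is via martingale-problem tightness: the coefficients converge locally uniformly to those of BESQ$^0$, whose martingale problem is well posed, being a Feller diffusion.
\end{itemize}

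Third, we pass to the probability. The event $\{D(T)>0\}$ coincides with the event $\{$absorption time of $D$ exceeds $T\}$. Under the limit, the hitting time of $0$ has no atom at $T$. We also need continuity of the absorption time under the convergence. For BESQ$^0$, paths hit $0$ transversally in the relevant sense: once near $0$, the process hits $0$ quickly with high probability, since started at $\epsilon$ it is absorbed before time $\eta$ with probability $e^{-\epsilon/(2\eta)}\to1$. The same estimate holds for $D$ uniformly in large $\ell$, by the same time-change comparison near $0$. This gives $P(D(T)>0)\to P(Z(T)>0\mid Z(0)=1)$. The explicit value $1-e^{-1/(2T)}$ follows from the Laplace transform of BESQ$^0$: $E_z[e^{-\lambda Z(T)}]=\exp(-z\lambda/(1+2\lambda T))$, and letting $\lambda\to\infty$ gives $P(Z(T)=0)=e^{-z/(2T)}$.

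The main obstacle is the third step, namely the uniform control of the behavior near $0$. Weak convergence of $D$ alone does not control the probability that $D$ is positive but tiny at time $T$. I would handle it with the quantitative near-zero comparison described there, which yields rapid absorption of $D$ uniformly in $\ell$.
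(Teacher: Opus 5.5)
Your proposal is correct in outline but takes a heavier route than the paper. You prove process-level convergence of the gap $D$ to $\mathrm{BESQ}^0$ by a martingale-problem argument. You then repair the discontinuity of $\omega\mapsto\ind{\omega(T)>0}$ at the atom $\{Z(T)=0\}$ with a separate, uniform-in-$\ell$ near-zero absorption estimate.

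The paper instead proves the exact identity $Y_\ell(t)=Z_\ell(\Gamma_\ell(t))$ for $t<\sigma_0(S_\ell)$ (Lemma \ref{lem:BESQ-tc}). Here $Z_\ell\in\mathrm{BESQ}^0$ starts at $1$ and $\Gamma_\ell(t)=\int_0^t f_\alpha(R_\ell(s))\,ds$. The quantity $4Y_\ell f_\alpha(R_\ell)$ is exactly your quadratic-variation rate, and $\frac{1}{2\alpha+1}\le f_\alpha\le 1$ holds globally. Because $Z_\ell$ is absorbed, the event $\{Z_\ell(u)=0\}$ grows with $u$.
\begin{itemize}
\item Upper bound: $\Gamma_\ell(T)\le T$ gives $P(Y_\ell(T)=0,\ \sigma_0(S_\ell)>T)\le e^{-1/(2T)}$ for every $\ell$.
\item Lower bound: it only needs $\Gamma_\ell(T)\ge T-\epsilon$ with high probability. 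This follows because $\sup_{t\le T}Y_\ell(t)\le\sup_{u\le T}Z_\ell(u)$ has an $\ell$-independent law while $S_\ell\approx\ell$.
\end{itemize}
In this coupling the atom at $0$ causes no difficulty at all.

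Your near-zero step, made rigorous, needs precisely this time-change identity. You would start it from the hitting time of level $\epsilon$, using the Markov property of the pair and the lower bound on $f_\alpha$. Once you have the identity on all of $[0,T]$, the martingale-problem step is redundant. What your route buys is a stronger statement, namely convergence in law of the gap on $C[0,T]$. The price is tightness, well-posedness and Portmanteau arguments that the lemma does not need.

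One small slip: the $dW_1$ coefficient in $dD$ is $\frac{2}{\sqrt{2\alpha+1}}\bigl((S_1/S_2)^\alpha-1\bigr)\sqrt{S_1}$, so you dropped a factor $S_1^{\alpha}$. Your $O(\ell^{-1}D^2)$ bound is the correct one for the corrected term, and the leading rate $4D$ is unaffected.
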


\begin{proof}
Let
 \[
 S_\ell(t) = \Lambda_{0,\ell}(t),\ \
 \quad Y_\ell(t) = \Lambda_{0,\ell+1}(t) - \Lambda_{0,\ell}(t), \quad \text{and}\ \  R_\ell(t)=\frac{Y_\ell(t)}{S_\ell(t)}.
 \]
With this notation, we need to show that 
\begin{equation}\label{YlTzero}
 \lim_{\ell\to\infty} P\left( Y_\ell(T) = 0 \right) = P\left(Z(T) = 0 \mid Z(0)=1 \right) = e^{-1/(2T)}, \qquad \forall\, T>0.
\end{equation}
It is known that the Laplace transform of $Z \in \text{BESQ}^0$ is  $E[e^{-\lambda Z(T)} \mid Z(0)=1 ] = \exp\{ - \frac{\lambda}{1+2\lambda T} \}$ for $\lambda\geq 0$ (see \cite[Ch.\ XI, \S 1]{ryCMBM}); taking $\lambda\to \infty$ gives the second equality in \eqref{YlTzero}.
 
Hence, it suffices to prove that the limit on the left in
\eqref{YlTzero} also equals $e^{-1/(2T)}$. Denote
$\sigma_0 (S_\ell)= \inf\{t\geq 0: S_\ell(t) = 0\}$. It follows from
\eqref{jointSDE} applied to the pair
$(\Lambda_{0,\ell},\Lambda_{0,\ell+1})$, that there is a standard
Brownian motion $B$ such that on the interval $[0,\sigma_0 (S_\ell))$,
\begin{equation}
  dY_\ell(t)= \frac{2}{\sqrt{2\alpha+1}}
    \sqrt{Y_\ell(t) + 2S_\ell(t) \left( 1 - \left( \tfrac{S_\ell(t)}{S_\ell(t)+Y_\ell(t)} \right)^\alpha \right) } \, dB(t)=2
    \sqrt{Y_\ell(t)}
    \sqrt{f_\alpha(R_\ell(t))} \, dB(t), \label{eq:SDE}
\end{equation}
where the function $f_\alpha\in C^1([0,\infty))$, $\alpha>0$, is given by
\[
f_\alpha(z)
=
\begin{cases}
  \frac{1}{2\alpha+1}\left( 1 + \frac{2}{z} - \frac{2}{z(1+z)^\alpha}  \right),&\text{if }z>0;\\1,&\text{if }z=0.
\end{cases}
\]
We will use \eqref{eq:SDE} to show that $Y_\ell$ is a time change of a BESQ$^0$
process.

\begin{lemma}\label{lem:BESQ-tc}
 With notation as above, 
 let 
 \[
  \Gamma_\ell(t) = \int_0^t f_\alpha( R_\ell(s)) \, ds, \quad \text{for } 0 \leq t < \sigma_0 (S_\ell). 
 \]
 Then, on a possibly larger probability space there is a $Z_\ell \in \emph{BESQ}^0$ with $Z_\ell(0)=1$ such that, almost surely, 
 \[
  Y_\ell(t) = Z_\ell(\Gamma_\ell(t)), \qquad \forall\, t < \sigma_0 (S_\ell).
 \]
\end{lemma}

Postponing the proof of Lemma \ref{lem:BESQ-tc} for now, we show how to finish the proof of Lemma \ref{c-base}. 
First, note that since $f_\alpha$ is strictly decreasing\footnote{Indeed,
  $f'_\alpha(z) = \frac{2}{2\alpha+1}\left( \frac{1+(\alpha+1)z
      -(1+z)^{\alpha+1} }{z^2(1+z)^{\alpha+1} }\right) < 0 $, since
  $1+(\alpha+1)z < (1+z)^{\alpha+1}$ for all $z>0$.} and $\lim_{z\to\infty} f_\alpha(z) = \frac{1}{2\alpha+1}$,
\begin{equation}\label{eq:fbounds}
\tfrac{1}{2\alpha+1}\ \le\ f_\alpha(z)\ \le\ 1,\qquad z\ge 0,
\end{equation}
hence
$\frac{t}{2\alpha+1} \leq \Gamma_\ell(t) \leq t$ for all $t<\sigma_0 (S_\ell)$, and
\begin{align*}
 P\left( Y_\ell(T) = 0,\, \sigma_0 (S_\ell) > T \right) \leq P\left( Z_\ell(\Gamma_\ell(T)) = 0, \sigma_0 (S_\ell) > T \right) \leq P\left( Z_\ell(T) = 0 \right) = e^{-1/(2T)},
\end{align*}
where in the second inequality we used that $Z_\ell$ is absorbed
at zero. 
Since $S_\ell$, before hitting $0$, is a multiple of a BESQ$^1$ process started at $S_\ell(0) = \ell$, then $\lim_{\ell\to \infty} P(\sigma_0 (S_\ell) \leq T) = 0$ and we can conclude that
$\limsup_{\ell\to\infty} P( Y_\ell(T) = 0 ) \leq e^{-1/(2T)}$. 

To obtain a corresponding lower bound, first let
$G_{\ell,T} = \{ \sup_{0 \leq t \leq T} |S_\ell(t) - \ell| \leq
\ell^{3/4} \}$ and note that there exist constants $C,c>0$ independent
from $\ell$ such that $P(G_{\ell,T}^c) \leq C e^{-c \sqrt{\ell}/T}$
and also that $G_{\ell,T} \subset \{\sigma_0 (S_\ell) > T \}$.
We claim that on the event $G_{\ell,T}$ the time change $\Gamma_\ell$ is nearly linear in the sense that 
\begin{equation}\label{tc-linear}
 \lim_{\ell\to\infty} P\left( G_{\ell,T}, \, \Gamma_\ell(T) < T-\e \right) = 0, \quad \forall\, \e,T>0.
\end{equation}
Indeed, by the definition of $f_\alpha$ there exists a $\d>0$ such
that $f(z) \geq 1-\frac{\e}{T}$ for all $z\in[0,\d]$.  Therefore,
\begin{align*}
 &P\left( G_{\ell,T}, \, \Gamma_\ell(T) < T-\e \right) 
 \leq P\left( G_{\ell,T}, \, \sup_{0\leq t\leq T} R_\ell(t) > \d \right) \\
 &\leq P\left( T < \sigma_0 (S_\ell), \, \sup_{0\leq t\leq T} Y_\ell(t) > \d ( \ell-\ell^{3/4} )  \right)
 \leq P\left( T < \sigma_0 (S_\ell), \, \sup_{0\leq t\leq \Gamma_\ell(T)} Z_\ell(t) > \d ( \ell-\ell^{3/4} )  \right)\\
 &\leq P\left( \sup_{0\leq t\leq T} Z_\ell(t) > \d (\ell - \ell^{3/4}) \right), 
\end{align*}
and since the distribution of $\sup_{0 \leq t\leq T} Z_\ell(t)$ does not depend on $\ell$ (recall that $Z_\ell \in \text{BESQ}^0$ with $Z_\ell(0)=1$), 
this upper bound vanishes as $\ell \to\infty$. This proves \eqref{tc-linear}. 
Next, note that 
\begin{align*}
 P\left( Y_\ell(T) = 0 \right) 
 &\geq P\left( Z_\ell(\Gamma_\ell(T)) = 0, \, T < \sigma_0 (S_\ell) \right) \geq P\left( Z_\ell(T-\e) = 0,\, T < \sigma_0 (S_\ell), \, \Gamma_\ell(T) \geq T-\e \right) \\
 &\geq P\left( Z_\ell(T-\e) = 0,\, G_{\ell,T} , \, \Gamma_\ell(T) \geq T-\e \right) \\
 &\geq P\left( Z_\ell(T-\e) = 0 \right) - P\left( G_{\ell,T}^c \right) - P\left( G_{\ell,T}, \, \Gamma_\ell(T) < T-\e \right). 
\end{align*}
Since $P\left( Z_\ell(T-\e) = 0 \right) = e^{-1/(2(T-\e))}$ and the last two probabilities in the last line vanish as $\ell \to \infty$, it follows that 
$\liminf_{\ell\to\infty} P\left( Y_\ell(T) = 0 \right) 
\geq e^{-1/(2(T-\e))}$. Taking $\e\to 0$ completes the proof of \eqref{YlTzero}, pending the proof of Lemma \ref{lem:BESQ-tc}. 

\begin{proof}[Proof of Lemma \ref{lem:BESQ-tc}]
 Letting $A_\ell = \lim_{t\nearrow \sigma_0 (S_\ell)} \Gamma_\ell(t) \in (0,\infty]$ we have that $\Gamma_\ell^{-1}:[0,A_\ell) \to [0,\sigma_0 (S_\ell))$ is continuous and strictly increasing, and that $\Gamma_\ell^{-1}(u)$ is a stopping time for any $u < A_\ell$.
For $u<A_\ell$ put
\[
W(u)=\int_0^{\Gamma_\ell^{-1}(u)}\sqrt{f_\alpha(R_{\ell}(s))}\, dB(s).
\]
By the time-change for continuous local martingales and their stochastic integrals \cite[Ch.~V, \S1]{ryCMBM}, $W$ is a continuous local martingale on $[0,A_\ell)$ with quadratic variation
\[
\langle W\rangle_u=\int_0^{\Gamma_\ell^{-1}(u)}f_\alpha(R_{\ell}(s))\,ds=\Gamma_\ell\big(\Gamma_\ell^{-1}(u)\big)=u,
\]
hence a Brownian motion on $[0,A_\ell)$. Enlarging the probability space if necessary, we can extend it to a Brownian motion on $[0,\infty)$ by attaching an independent Brownian motion at time $A_\ell$. 
Moreover, since the drift in \eqref{eq:SDE} vanishes on $[0,\sigma_0 (S_\ell))$, the same time-change  gives
\begin{align}
  Y_{\ell}\big(\Gamma_\ell^{-1}(u)\big)
&=1+2\int_0^{\Gamma_\ell^{-1}(u)}\sqrt{Y_{\ell}(t)}\sqrt{f_\alpha(R_{\ell}(t))}\,dB(t) \nonumber \\
&=1+2\int_0^u \sqrt{Y_\ell\big(\Gamma_\ell^{-1}(v)\big)}\,dW(v),\quad u<A_\ell, \label{Yl-tc}
\end{align}
where the last equality follows from a change of variables $v=\Gamma_\ell(t)$ and the definition of $W$. 
If $Z_\ell$ is the unique strong solution on $[0,\infty)$ of $Z_\ell(u) = 1 + 2 \int_0^u \sqrt{ Z_\ell(v)} \, dW(v)$, then it follows from \eqref{Yl-tc} that $Y_\ell(\Gamma_{\ell}^{-1}(u)) = Z_\ell(u)$ for $u<A_\ell$, or equivalently $Y_\ell(t) = Z_\ell(\Gamma_\ell(t))$ for $t < \sigma_0 (S_\ell)$.
\end{proof}
\end{proof}

\subsection{The augmented joint RKT for PSR walks}\label{sec:PSRaug}
The main result of this section is the following theorem.
\begin{theorem}[Augmented joint RKT for PSR walks]\label{JointRKT}
  Consider a PSR walk with weight function $w$ as in \eqref{w} for
  some $\alpha>0$ and with rescaled local time profiles as in
  \eqref{Ln-PSR}.  For any $N\in\N$ and any choice of
  $(x_i,h_i)\in\R\times\R_+$, $1\le i\le N$, the joint distribution
  (in the natural product topology) of the processes
  $\Lambda^n_{x_i,h_i}(\cdot)$, the rescaled endpoints
  $\mu^{n,*}_{x_i,h_i}$, and merging points
  $\mu^{n,*}_{(x_i,h_i),(x_j,h_j)}$, $1\le i\le N$, $i<j\le N$,
  $*\in\{+,-\}$, converges as $n\to \infty$ to the joint distribution
  of $\Lambda_{x_i,h_i}(\cdot)$, $\mu^*_{x_i,h_i}$, and
  $\mu^*_{(x_i,h_i),(x_j,h_j)}$, $1\le i\le N$, $i<j\le N$,
  $*\in\{+,-\}$, where the joint distribution of the curves
    $(\Lambda_{x_i,h_i}(\cdot))_{i\leq N}$ is given by \eqref{WNrep}
    and \eqref{WNrep-back}.
\end{theorem}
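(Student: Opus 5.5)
We take as given the joint RKT for PSR walks from \cite{Y25+}. After converting directed edge local times to site local times, it gives weak convergence of $(\Lambda^n_{x_i,h_i}(\cdot))_{i\le N}$ to the curves defined by \eqref{WNrep}--\eqref{WNrep-back}. The first step is this conversion. The site local time $L(\tau,k)$ equals $\mathcal{E}^+(\tau,k)+\mathcal{E}^-(\tau,k)$ plus a correction of at most one, and on the edge $\{k,k+1\}$ the up- and down-crossing counts differ by at most one. Hence $n^{-1}L(\tau,\fl{yn})$ differs from twice the rescaled directed edge local time by $O(1/n)$ uniformly in $y$, and the curve convergence transfers immediately. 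This also accounts for the factor $2$ in the Remark.

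The real content is upgrading to joint convergence with the endpoints $\mu^{n,\pm}$ and merge points. By Skorokhod representation we may assume the curves converge almost surely, locally uniformly since the limits are continuous. The merge/absorption functional $m^\pm$ is lower-semicontinuous in the wrong direction. Uniform closeness gives $\limsup_n \mu^{n,+}_{(x_i,h_i),(x_j,h_j)} \ge$ nothing useful, but it does give \emph{one} inequality for free. Because the discrete curves coincide after $\mu^{n,+}$, any subsequential limit $\bar\mu$ of $\mu^{n,+}$ satisfies $\Lambda_i=\Lambda_j$ on $[\bar\mu,\infty)$, so $\bar\mu\ge\mu^+$. It therefore suffices to rule out $\liminf_n \mu^{n,+}<\mu^+$ failing the other way. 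Concretely, we must show that for every $\e>0$,
\[
\lim_{n\to\infty} P\big(\mu^{n,+}_{(x_i,h_i),(x_j,h_j)} > \mu^+_{(x_i,h_i),(x_j,h_j)}+\e\big)=0 .
\]
In words, the discrete curves cannot come within $o(1)$ of each other and then fail to actually merge for a macroscopic distance. The same issue arises for absorption at $0$, with the zero curve playing the role of the second curve. For the limit side we also need that the limiting curves, once close, truly merge soon. This follows from the structure in \eqref{jointSDE}: by Lemma \ref{lem:BESQ-tc}, the difference of two curves is a time-changed BESQ$^0$, which is absorbed quickly when started small. Moreover, the limit curves touching without merging is a null event.

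The main obstacle is this discrete ``close implies soon merged'' estimate. We would handle it with the Markov structure of the directed edge local time chains at the stopping times: the pair $(\mathcal{E}(\tau_1,\cdot),\mathcal{E}(\tau_2,\cdot))$ evolves in $y$ as a coupled Markov chain whose difference is a martingale-like process with variance proportional to the difference itself. The tool is a uniform-in-$n$ estimate: starting from a gap of size $\eta n$ at height $\le Kn$, the discrete gap hits $0$ within spatial distance $\e n$ with probability $\ge 1-\rho(\eta)$, where $\rho(\eta)\to0$ as $\eta\to0$. We would prove this by comparison with a critical Galton--Watson / branching-type chain, as in T\'oth's generalized Ray-Knight analysis, exploiting the fact that the gap chain is a nonnegative martingale absorbed at $0$ with quadratic variation at least $c\cdot$gap per step (the same lower bound $f_\alpha\ge (2\alpha+1)^{-1}$ as in \eqref{eq:fbounds}). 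Then a stopping-time argument at the first $y$ where the rescaled gap is below $\eta$ concludes the proof. Because $\Lambda^n_i-\Lambda^n_j\to\Lambda_i-\Lambda_j$ uniformly and the limit gap vanishes at $\mu^+$, the discrete gap goes below $\eta$ before $\mu^++\e/2$ with high probability. The left merge points, the endpoints, and the reflected regime between $0$ and $x$ are handled symmetrically. In the reflected region no absorption occurs, so only the gap between two curves matters there.
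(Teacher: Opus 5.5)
Your handling of the merge points is essentially the paper's device. You stop at the first point where the rescaled gap falls below a small level, then invoke a uniform discrete ``small gap implies soon merged'' estimate for the directed edge chains; the paper does this via $\overline{\mu}^{n,+,\d}$ and \eqref{absorption}--\eqref{merge-Z}.

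\textbf{First gap: the transfer to site curves.} Your first step is not immediate, and the justification you give does not work. The curves of \cite{Y25+} are $2n^{-1}\mathcal{E}^+(\overline{\tau}_{\fl{xn},\fl{hn/2}},\cdot)$, taken at \emph{edge} inverse local times \eqref{taue}. By contrast, $\Lambda^n_{x,h}$ is taken at the \emph{site} inverse local time $\tau_{\fl{xn},\fl{hn}}$. The identity \eqref{site-edge} only compares profiles at a common time. Even there it gives $\mathcal{E}^+(\cdot,k)+\mathcal{E}^+(\cdot,k-1)$, not $2\mathcal{E}^+(\cdot,k)$; you conflated $\mathcal{E}^-(\cdot,k)$ with $\mathcal{E}^+(\cdot,k)$, so the error is small only in probability. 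More importantly, at $\tau_{k,m}$ the count $\mathcal{E}^+(\tau_{k,m},k)$ is random with fluctuations of order $\sqrt{m}$. The two families are therefore different random curves, and no deterministic bound relates them.

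Closing this is the content of Lemma \ref{lem:Lxh-barLxh}, via the hybrid curves $\hat{\Lambda}^n$:
\begin{itemize}
\item Lemma \ref{lem:PSRbalance} places $\tau^n_{x,h}$ between $\overline{\tau}_{\fl{xn},\fl{(h-\e)n/2}}$ and $\overline{\tau}_{\fl{xn},\fl{(h+\e)n/2}}$ with high probability, which gives the squeeze \eqref{squeeze}.
\item One then needs $\sup_y(\Lambda_{x,h+\e}(y)-\Lambda_{x,h-\e}(y))\to 0$ in probability as $\e\to0$. The paper obtains this from the fact that these two curves merge within $o(1)$ of $x$ (Lemma \ref{lem:mu-cont}, resting on Lemma \ref{c-base}).
\item The case $h=0$ needs different comparisons: \eqref{squeeze0}, \eqref{hzero-right} and Lemma \ref{lem:leftend}.
\end{itemize}
This step, not the limit side of your merge-point argument, is where the time-changed BESQ$^0$ of Lemma \ref{lem:BESQ-tc} is needed. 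On the limit side, continuity of the limit curves alone makes the level-$\d$ hitting points increase to $\mu^+$.

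\textbf{Second gap: the discrete merging estimate.} This estimate is the right target, but the properties you ascribe to the gap chain hold for the limit, not the prelimit.
\begin{itemize}
\item $\Delta_i=Z'_i-Z_i$ is not a martingale. Lemma \ref{D-bounds}(i) only makes its drift small once $Z_i\ge z_1$.
\item A lower bound on conditional variance would not make $\sqrt{\Delta}$ drift downward, since that variance can come from upward jumps. The Taylor argument needs a probability bounded below of a \emph{downward} jump of size $c_2\sqrt{\Delta}$. The paper proves this (Lemma \ref{D-bounds}(ii)) only while $\Delta\le\d_2 Z$.
\item No prelimit analogue of $f_\alpha\ge(2\alpha+1)^{-1}$ over all ratios $\Delta/Z$ is available, and no Galton--Watson comparison for the pair. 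Either would itself need proof.
\end{itemize}
The paper therefore runs the $\sqrt{\Delta}$ supermartingale only up to the stopping time $\beta$ and treats the other regimes separately:
\begin{itemize}
\item If the lower chain becomes small before $\beta$, then so does the upper one. One then uses the single-chain absorption bound \eqref{absorption}, uniformly in $k$. This bound comes from \cite[(5.7.19)]{tGRK}, with a separate argument for chains passing through the origin from the reflected to the absorbed regime.
\item Exit through $\Delta>\d_2 Z$ while $Z$ is still of order $n$ is ruled out by Doob's inequality.
\end{itemize}
Without these ingredients, the uniform estimate that your final stopping-time argument relies on is asserted rather than proved.
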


We will prove Theorem~\ref{JointRKT} via
Proposition~\ref{prop:fJRK-JRK}, which reduces it to establishing
three statements: an augmented RKT, a ``forward'' augmented joint RKT,
and a ``backward'' augmented joint RKT (see
Definition~\ref{def:JRK-rl}). We first prove these statements for the
local times of directed edges rather than of sites, because edge local
times have a Markov property that site local times lack. Working with
edge local times also lets us use two known results for PSR walks,
both proved for edge local times: the augmented RKT
\cite[Theorem~1B]{tGRK} and the joint RKT 
\cite{Y25+}.
We then transfer these results to the rescaled site local
time profiles \eqref{Ln-PSR} via Lemma~\ref{lem:Lxh-barLxh}, which is
stated below and proved in Appendix~\ref{subsec:tech}.

We shall need notation for edge local time processes. Recall from
\eqref{upcr} that ${\mathcal E}^+(n,k)$ is the number of crossings of
the directed edge $(k,k+1)$ by time $n$.  For any $k\in \Z$ and
$m\geq 0$ let
 \begin{equation}\label{taue}
  \overline{\tau}_{k,m}:=
  \inf\{n \geq 0:\,\mathcal{E}^+(n+1,k)>m \}
  ,
 \end{equation}
 be the time of the last visit to $k$ before the $(m+1)$-st crossing
 of the directed edge $(k,k+1)$, so that
 $X_{\overline{\tau}_{k,m}} = k$ and
 $\mathcal{E}^+(\overline{\tau}_{k,m},k) = m$,\footnote{This
   definition differs by one unit of time from
   $\inf\{n \geq 1:\, \mathcal{E}^+(n,k)>m\}$ used in \cite{Y25+}.
   The two definitions give directed edge local time profiles that
   differ only at the site $k$, where they equal $m$ and $m+1$,
   respectively. Hence the corresponding rescaled curves
   \eqref{barLxh} differ by $2/n$ at a single site, their endpoints
   (as defined below) differ by at most $1/n$, and the Ray-Knight
   theorems proved in \cite{Y25+} hold for either definition. The
   advantage of \eqref{taue} is that the walk is at $k$ at time
   $\overline{\tau}_{k,m}$, which yields the coalescence property of
   the curves \eqref{barLxh}, see Lemma \ref{lem:coalesce}(i).}  and
 for $x\in \R$ and $h\geq 0$ let
 \begin{equation}\label{barLxh}
  \overline{\Lambda}_{x,h}^n(y) := 2n^{-1} \mathcal{E}^+(\overline{\tau}_{\fl{xn}, \fl{hn/2}} , \fl{yn} ), \quad y\in\R.
 \end{equation}
 Note that the multiplicative factor of $2$ in \eqref{barLxh} is needed to compare with the curves $\Lambda_{x,h}^n(\cdot)$ as defined in \eqref{Ln-PSR} because site local times are roughly double the directed edge local times, since
 \begin{equation}\label{site-edge}
  \left|L(n,k) - \mathcal{E}^+(n,k) - \mathcal{E}^+(n,k-1)\right| \leq 1.
 \end{equation}
 The results in \cite[Theorem 1B]{tGRK}\footnote{Even though the stopping times in \cite{tGRK} are not the
  same as in \eqref{taue}, the results of \cite{tGRK} can be
  transferred to $\overline{\Lambda}_{x,h}^n$ by separating the parts
  of the curve to the right and to the left of $x$ and using
  \eqref{absorption} and \eqref{merge-Z} to show that
  $\overline{\Lambda}_{x,h}^n$ is close to the processes in
  \cite{tGRK}. We note that the proofs of \eqref{absorption} and
  \eqref{merge-Z} rely only on \cite[(5.7.19), (5.8.3)]{tGRK} and not
  on Theorem 1B.} and
 \cite{Y25+}
 give augmented RKTs
 and joint RKTs for the curves $\overline{\Lambda}_{x,h}^n(\cdot)$,
 and below we will extend these joint RKTs to also include the joint
 convergence of the endpoints and merge points\footnote{We need to define the left endpoints and merge points slightly differently here: since we are working with right directed edge local times, we can only conclude that $\mathcal{E}^+(n_1,k) = \mathcal{E}^+(n_2,k)$ implies that $\mathcal{E}^+(n_1,\ell) = \mathcal{E}^+(n_2,\ell)$ for all $\ell \leq k$ if $k$ is strictly less than $X_{n_1}$ and $X_{n_2}$.}
 
 \begin{align*}
     \overline{\mu}_{x,h}^{n,+} &= m_{x,0}^+(\overline{\Lambda}_{x,h}^n,\mathbf{0}), 
  \qquad \overline{\mu}_{(x,h),(x',h')}^{n,+} = m_{x,x'}^+(\overline{\Lambda}_{x,h}^n,\overline{\Lambda}_{x',h'}^n),
\\
  \overline{\mu}_{x,h}^{n,-} &= \sup\left\{y< \frac{\fl{xn}}{n} \wedge 0: \, \overline{\Lambda}_{x,h}^n(y) = 0 \right\}, \quad \text{and}
\\
  \overline{\mu}_{(x,h),(x',h')}^{n,-} &= \sup\left\{ y < \frac{\fl{xn}}{n} \wedge \frac{\fl{x'n}}{n} : \, \overline{\Lambda}_{x,h}^n(y) = \overline{\Lambda}_{x',h'}^n(y) \right\} 
 \end{align*}
of the family of curves defined in \eqref{barLxh}. The following technical lemma is proven in Appendix \ref{subsec:tech}.

\begin{lemma}\label{lem:Lxh-barLxh}
  For any $(x,h),(x',h') \in \R\times [0,\infty)$ and any
    $\delta>0$ we have
 \begin{align}
 \lim_{n\to \infty} P\left( \sup_{y \in \R} \left| \Lambda_{x,h}^n(y) - \overline{\Lambda}^n_{x,h}(y) \right| > \d \right) &= 0, \label{barLxh-comp} \\
  \lim_{n\to\infty} 
  P\left( |\mu_{x,h}^{n,\pm} - \overline{\mu}_{x,h}^{n,\pm} | > \d \right) &= 0, \label{barmuxh-comp} \\
  \text{and} \quad\lim_{n\to\infty}
  P\left( |\mu_{(x,h),(x',h')}^{n,+} - \overline{\mu}_{(x,h),(x',h')}^{n,+} | > \d \right) &= 0. \label{barmerge-comp}
\end{align}
\end{lemma}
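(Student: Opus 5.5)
The plan is to compare both families of curves with a common sandwich, using monotonicity in time of local time profiles. First I pass from site to edge local times deterministically. Crossings of the edge $\{k,k+1\}$ alternate in direction, so $|\mathcal{E}^-(t,k+1)-\mathcal{E}^+(t,k)|\le 1$. Together with \eqref{site-edge} this gives, for every $t$ and $k$,
\[
L(t,k)=2\mathcal{E}^+(t,k)+\bigl(\mathcal{E}^+(t,k-1)-\mathcal{E}^+(t,k)\bigr)+O(1).
\]
So the first step is to show that the consecutive differences $D(t,k)=\mathcal{E}^+(t,k-1)-\mathcal{E}^+(t,k)$ are $o(n)$ uniformly in $|k|\le Kn$, at the relevant stopping times, with probability tending to one. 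For this I would use T\'oth's description of $k\mapsto\mathcal{E}^\pm(\overline{\tau}_{\cdot,\cdot},k)$ as a concatenation of Markov chains. In that description the difference between the left and right crossing counts at a site is driven by an auxiliary chain with uniformly exponential tails (the estimates \cite[(5.7.19)]{tGRK}). A union bound over $O(n)$ sites then gives $\max_{|k|\le Kn}|D(\overline{\tau}_{\fl{xn},m},k)|=O(\log n)$ with high probability. This yields
\[
\sup_y\bigl|\mathcal{L}\text{-profile at }\overline{\tau}_{\fl{xn},\fl{hn/2}}-\overline{\Lambda}^n_{x,h}(y)\bigr|\to 0
\]
in probability, where the site profile is rescaled by $n^{-1}$. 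Lemma \ref{lem:RKn} (or its edge analogue) localizes everything to $|k|\le Kn$.

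Next I compare the two stopping times. Applying the previous step at $k=\fl{xn}$, the site local time at $\fl{xn}$ at time $\overline{\tau}_{\fl{xn},\fl{hn/2}}$ equals $hn+o(n)$. Monotonicity of $t\mapsto L(t,\cdot)$ then gives, for fixed $\d'>0$ and with high probability,
\[
\overline{\tau}_{\fl{xn},\fl{(h-\d')n/2}}\le \tau_{\fl{xn},\fl{hn}}\le \overline{\tau}_{\fl{xn},\fl{(h+\d')n/2}}.
\]
Consequently $\Lambda^n_{x,h}$ is sandwiched, up to $o(1)$ uniformly, between $\overline{\Lambda}^n_{x,h-\d'}$ and $\overline{\Lambda}^n_{x,h+\d'}$. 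These two curves converge jointly by the RKT of \cite{Y25+} to $\Lambda_{x,h\mp\d'}$. Their uniform distance is controlled as $\d'\to0$ by \eqref{Lgap} of Corollary \ref{cor:Lxh-dev}, after rescaling via \eqref{Lxh-rescale}, together with compact support. This gives \eqref{barLxh-comp}.

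For the endpoints and merge points, uniform closeness of the curves is not enough, and I expect this to be the main obstacle. It is handled by two ``clean hitting'' estimates for the edge chains, which follow from \cite[(5.7.19), (5.8.3)]{tGRK}; these are what the paper calls \eqref{absorption} and \eqref{merge-Z}.
\begin{itemize}
\item[(a)] If $\overline{\Lambda}^n_{x,h}$ drops below $\eta$ at some point beyond $0$ (in the absorbing regime), then with probability $1-o_\eta(1)$, uniformly in $n$, it is absorbed within distance $\d$. This holds by comparison with an absorbed BESQ$^1$/random walk started at $\eta n$.
\item[(b)] If two curves $\overline{\Lambda}^n_{x,h}$ and $\overline{\Lambda}^n_{x',h'}$ come within $\eta$ of each other beyond $x\vee x'$, they coalesce within distance $\d$ with probability $1-o_\eta(1)$. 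Here I use that the difference chain behaves like a BESQ$^0$-type process (compare Lemma \ref{lem:BESQ-tc}) started near $0$.
\end{itemize}
Given (a), (b) and the sup-closeness just proved, the approach is as follows. First, $\Lambda^n_{x,h}$ and $\overline{\Lambda}^n_{x,h}$ are equal after they both vanish. Second, the first point where one curve is below $\eta$ is within $\d$ of the first point where the other vanishes. This gives \eqref{barmuxh-comp}, with the left endpoint treated symmetrically using the modified definition of $\overline{\mu}^{n,-}$.

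The argument for \eqref{barmerge-comp} is the same: take the pair of site curves and the pair of edge curves and apply (b). The site-curve pair inherits near-coalescence from the edge pair through the sandwich. The sandwich preserves coalescence exactly because, once the walk's profiles agree at a site strictly between both current positions, they agree beyond it.
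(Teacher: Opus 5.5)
The overall structure is essentially the paper's: you squeeze the site curve between edge curves at heights $h\pm\d'$, using monotonicity of local times in time. The step meant to close \eqref{barLxh-comp}, however, fails. You claim that $\sup_y\bigl(\Lambda_{x,h+\d'}(y)-\Lambda_{x,h-\d'}(y)\bigr)\to0$ in probability as $\d'\to0$ by \eqref{Lgap} and the rescaling \eqref{Lxh-rescale}. Rescaling the gap $2\d'$ to $1$ multiplies heights by $(2\d')^{-1}$ and lengths by $(2\d')^{-1/\gamma}$, and the exponent $L^{1/\gamma}/T$ in \eqref{Lgap} is invariant under exactly this change. On a window of fixed length $T_0$ around $x$, the best bound you get is of the form $Ce^{-c'_{\d/h}h^{1/\gamma}/T_0}$, which does not improve as $\d'\to0$. (In the tightness proofs, \eqref{Lgap} is only used on windows whose length shrinks with the mesh.) Covering the support by short windows does not help, since nothing controls the gap at the start of the later windows.

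The missing ingredient is coalescence. The curves $\Lambda_{x,h-\d'}$ and $\Lambda_{x,h+\d'}$ merge on both sides within a short distance of $x$, with probability tending to one as $\d'\to0$. This is Lemma \ref{lem:mu-cont}, which rests on the $\mathrm{BESQ}^0$ time change of the difference in Lemmas \ref{c-base} and \ref{lem:BESQ-tc}. Outside the merge interval the two curves coincide by Lemma \ref{lem:coalesce}(ii). On the remaining short window around $x$, continuity of the curves (or \eqref{Lgap} with small $T$) finishes the job. Your estimate (b), applied to the sandwich pair started $2\d'$ apart, would give the right-hand half of this at the discrete level. You would still need a leftward counterpart, which \eqref{absorption} and \eqref{merge-Z} do not provide, since both concern the chains running to the right of the starting site. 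The same remark applies to your treatment of left endpoints via (a).

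Second, the sandwich $\overline{\tau}_{\fl{xn},\fl{(h-\d')n/2}}\le\tau^n_{x,h}\le\overline{\tau}_{\fl{xn},\fl{(h+\d')n/2}}$ needs $h>0$. The case $h=0$ is included in the lemma and is used (e.g.\ for $\Lambda_{\pm K,0}$ in Lemma \ref{lem:RKn}), but you do not treat it. For $x>0$ the paper uses the different lower comparison $\overline{\tau}_{\fl{(x-\e)n},0}<\tau_{\fl{xn},0}\le\overline{\tau}_{\fl{xn},0}$, together with the fact that $\Lambda_{x,0}(x-\cdot)$ returns to $0$ arbitrarily close to $x$ (cf.\ Lemma \ref{lem:leftend}). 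For $x\le0$ it uses the exact identity \eqref{hzero-right}.

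A smaller point: for PSR walks the differences $D(t,k)$ are typically of order $\sqrt n$, not $O(\log n)$, because one-step increments of the edge local time chain at level $j$ live on scale $\sqrt j$ (cf.\ \eqref{Zstep-tail}). Also, \cite[(5.7.19)]{tGRK} is an absorption estimate, not an increment bound. Your conclusion $\max_k|D|=o(n)$ still holds and in fact comes for free: $2\max_k|D(\overline{\tau}_{\fl{xn},\fl{hn/2}},k)|/n=\sup_y\bigl|\overline\Lambda^n_{x,h}(y-\tfrac1n)-\overline\Lambda^n_{x,h}(y)\bigr|$. This vanishes in probability by \eqref{Yu-aRK} and continuity of the limit curve, as in the paper.
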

\begin{remark}
 Note that \eqref{barmerge-comp} only gives a comparison of the right merge locations for the two families of curves, and this is because the \emph{right} directed edge local times have a natural joint Markov property to the right (similar to the directional joint Markov property for the limiting curves in Assumption \ref{asm:JMarkov}). The comparison \eqref{barmerge-comp} will therefore only allow us to prove a \emph{forward} augmented joint RKT (see Definition \ref{def:JRK-rl} for a precise definition). By symmetry, a \emph{backward} augmented joint RKT can be proved by working with left directed edge local times instead, and by Proposition \ref{prop:fJRK-JRK} this will be sufficient for proving the augmented joint RKTs in Theorem \ref{JointRKT}.
\end{remark}

We will now state precisely the limit theorems that we will need for the curves $\overline{\Lambda}_{x,h}^n$.
 \begin{theorem}\label{thm:barLxh-JRK}
  For a PSR walk, let the family of curves $\overline{\Lambda}_{x,h}^n(\cdot)$ be defined as in \eqref{barLxh} and the family of curves $\Lambda_{x,h}(\cdot)$ be given by \eqref{WNrep}-\eqref{WNrep-back}. For $(x,h),(x',h') \in \R\times [0,\infty)$, let $\mu_{x,h}^{\pm} = m_{x,0}^\pm(\Lambda_{x,h},\mathbf{0})$ and $\mu_{(x,h),(x',h')}^{\pm} = m_{x,x'}^\pm(\Lambda_{x,h},\Lambda_{x',h'})$ be the endpoints and merge points of the curves in the latter family. 
  Then, we have the following. 
  \begin{enumerate}[--]
   \item \textbf{\em Augmented RKT (\cite{tGRK}).} For any $(x,h) \in \R\times [0,\infty)$, 
\begin{equation}\label{Yu-aRK}
   \left( \overline{\Lambda}^n_{x,h}(\cdot), \overline{\mu}_{x,h}^{n,-}, \overline{\mu}_{x,h}^{n,+} \right) 
  \Longrightarrow
  \left( \Lambda_{x,h}(\cdot), \mu_{x,h}^-, \mu_{x,h}^+ \right).
  \end{equation}
\item \textbf{\em Joint RKT (\cite{Y25+}).
}
  For any $(x_1,h_1), \ldots, (x_N,h_N) \in \R \times [0,\infty)$,
  \begin{equation}\label{Yu-JRK}
   \left( \overline{\Lambda}^n_{x_i,h_i}(\cdot) \right)_{i\leq N} 
  \Longrightarrow
  \left( \Lambda_{x_i,h_i}(\cdot) \right)_{i\leq N}.
  \end{equation}
 \item \textbf{\em Forward augmented joint RKT.} For any $(x_1,h_1), \ldots, (x_N,h_N) \in \R \times [0,\infty)$, 
 \begin{equation}\label{Yu-fJRK}
 \begin{split}
  &\left( (\overline{\Lambda}^n_{x_i,h_i}(\cdot))_{i\leq N}, \, ( \overline{\mu}_{x_i,h_i}^{n,+} )_{i\leq N}, \, (\overline{\mu}_{(x_i,h_i),(x_j,h_j)}^{n,+} )_{1\leq i < j \leq N} \right) \\
  &\quad \Longrightarrow
  \left( (\Lambda_{x_i,h_i}(\cdot))_{i\leq N}, \, ( \mu_{x_i,h_i}^{+} )_{i\leq N}, \, ( \mu_{(x_i,h_i),(x_j,h_j)}^{+} )_{1\leq i < j \leq N} \right).
 \end{split}
 \end{equation}
  \end{enumerate}
 \end{theorem}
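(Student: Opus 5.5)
The first two items are quoted from the literature, so the work is the forward augmented joint RKT \eqref{Yu-fJRK}. The plan is to upgrade the joint curve convergence \eqref{Yu-JRK} to include right endpoints and right merge points. I will show that, asymptotically, these points are continuous functionals of the curves, except for a localized degeneracy that can be controlled.

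First, by \eqref{Yu-JRK}, \eqref{Yu-aRK} and tightness of the real-valued coordinates, the full vector in \eqref{Yu-fJRK} is tight. Take a subsequential limit $((\Lambda_{x_i,h_i})_i, (\nu_i^+)_i, (\nu_{ij}^+)_{i<j})$. It suffices to show that, almost surely, $\nu_i^+=\mu_{x_i,h_i}^+$ and $\nu_{ij}^+=\mu^+_{(x_i,h_i),(x_j,h_j)}$. The endpoints are handled by \eqref{Yu-aRK}: the pair $(\Lambda_{x_i,h_i},\nu_i^+)$ has the law of $(\Lambda_{x_i,h_i},\mu_{x_i,h_i}^+)$. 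In the limit the inequality $\nu_i^+\ge\mu_{x_i,h_i}^+$ holds a.s., because $\overline\Lambda^n$ vanishes to the right of $\overline\mu^{n,+}$ and $\Lambda_{x_i,h_i}>0$ just before its right endpoint. Equality in law of the two marginals then gives a.s. equality.

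For merge points, the trivial direction is the same. Since $\overline\Lambda^n_{x_i,h_i}=\overline\Lambda^n_{x_j,h_j}$ to the right of $\overline\mu^{n,+}_{(i),(j)}$ (coalescence, Lemma \ref{lem:coalesce}(i)), the limits coincide on $[\nu_{ij}^+,\infty)$, hence $\nu_{ij}^+\ge\mu_{ij}^+$. The reverse inequality is the main obstacle. The curves could come within $o(1)$ of each other without merging at the discrete level, and then merge macroscopically later. I will exclude this with a Markov-property argument at the discrete level. Fix $\e>0$ and a rational $y>x_i\vee x_j$. On the event that $|\overline\Lambda^n_{x_i,h_i}(y)-\overline\Lambda^n_{x_j,h_j}(y)|\le\eta$ but the curves do not merge before $y+\e$, I use the Markov structure of right directed edge local times, i.e.\ the Tóth-type Markov chains. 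These chains, started from two values at distance $\le\eta n$ at site $\fl{yn}$, evolve jointly as the pair in \eqref{jointSDE}, whose difference is a time-changed BESQ$^0$ as in Lemma \ref{lem:BESQ-tc}. I will need a discrete uniform estimate: the probability that the two edge-local-time chains started at distance $\le \eta n$ with levels $\le Kn$ fail to coalesce within $\e n$ steps is at most $\phi(\eta)\to0$ as $\eta\to0$, uniformly in $n$ large. One can get this from the difference chain being approximately a martingale with variance proportional to its value, via the estimates \cite[(5.7.19), (5.8.3)]{tGRK} used for \eqref{absorption} and \eqref{merge-Z}. The same estimate also covers the case where the lower curve is absorbed at zero first. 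In the limit, $\mu^+_{ij}<y$ implies that the curves are equal at $y$, so the discrete curves are $\eta$-close at $y$ with high probability. Taking a union over rationals $y$ on a grid of mesh $\e$ then gives $P(\nu_{ij}^+>\mu_{ij}^++2\e)\le C\phi(\eta)+o(1)$, which yields $\nu_{ij}^+\le\mu_{ij}^+$ a.s.

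Finally, uniqueness of the limit gives \eqref{Yu-fJRK} along the full sequence. The expected difficulty lies entirely in the uniform discrete coalescence estimate. I would first try to prove it by a supermartingale or Lyapunov function $\sqrt{\text{difference}}$ for the difference chain, killed on merging.
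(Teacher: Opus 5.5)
Your proposal is correct, and its core is the paper's. Everything reduces, via the Markov property of the right directed edge local times, to the uniform discrete merging estimate \eqref{merge-Z}, which is proved with the $\sqrt{\Delta}$ supermartingale built from Lemma \ref{D-bounds}.

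Where you differ is in how the limit is identified. The paper never passes to subsequences. It replaces endpoints and merge points by the level-$\delta$ hitting sites $\sigma_{x\vee x',\delta}(\cdot)$, which are a.s.\ continuous functionals of the limiting curves for a.e.\ $\delta$, and gets their joint convergence from \eqref{Yu-JRK} by continuous mapping. It then concludes with \cite[Theorem 3.2]{bCOPM}, applying the Markov property at the random first site where the rescaled gap is at most $\delta$. So it needs only continuity of the limiting curves.

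Your two-sided sandwich avoids the continuity-of-hitting-functionals issue. However, your union bound over a fixed grid of sites $y$ uses the claim that $\mu^+_{ij}<y$ forces $\Lambda_{x_i,h_i}(y)=\Lambda_{x_j,h_j}(y)$. That is, the limiting curves must stay together after they first meet. This does not follow from \eqref{Yu-JRK} and discrete coalescence alone. If the limits could touch and then separate, the gap at the next grid point would be controlled only by a modulus of continuity, which does not vanish as $\eta\to0$ with the mesh fixed. You therefore need one of two fixes:
\begin{enumerate}
\item cite Lemma \ref{lem:coalesce}(ii), which derives coalescence from the white-noise representation via a strong Markov property; or
\item as the paper does, apply the Markov property at the first discrete site where the gap drops below $\eta n$, instead of at fixed grid sites.
\end{enumerate}

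Two smaller points.
\begin{itemize}
\item Handling the endpoints through \eqref{Yu-aRK} is legitimate as the statement is formulated. Note, though, that the paper's transfer of \cite{tGRK} to the stopping times \eqref{taue} itself uses \eqref{absorption} and \eqref{merge-Z}, and \eqref{absorption} is needed inside the proof of \eqref{merge-Z} anyway. So this shortcut saves no work.
\item The supermartingale must be stopped not only at merging but also when $Z$ drops below a fixed level or $\Delta$ exceeds $\delta_2 Z$, because the downward-jump bound of Lemma \ref{D-bounds}(ii) requires $\Delta\le\delta_2 Z$. These two exits are controlled by \eqref{absorption} and by Doob's maximal inequality for the stopped supermartingale, respectively.
\end{itemize}
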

The only new content of Theorem \ref{thm:barLxh-JRK} is the forward augmented joint RKT \eqref{Yu-fJRK}, which we will prove below. Postponing for the moment the proof of \eqref{Yu-fJRK}, we show how to complete the proof of Theorem \ref{JointRKT}.

\begin{proof}[Proof of Theorem \ref{JointRKT}]
  By the ``converging together'' theorem, \cite[Theorem 3.1]{bCOPM},
  the augmented Ray-Knight theorem follows from \eqref{barLxh-comp},
  \eqref{barmuxh-comp}, and \eqref{Yu-aRK}, and the forward augmented
  joint Ray-Knight theorem follows from
  \eqref{barLxh-comp}--\eqref{barmerge-comp}, and \eqref{Yu-fJRK}. By
  the symmetry of the PSR walk model, a backward augmented joint
  Ray-Knight theorem also holds\footnote{The
      symmetric version of the forward augmented joint Ray-Knight
      theorem is not exactly the backward augmented joint Ray-Knight
      theorem: since the integer part function is not symmetric, we get
      slightly different stopping times. However, we can deduce the
      needed backward theorem from the symmetric version of
      \eqref{Yu-fJRK} with the help of the arguments of Appendix
      \ref{subsec:tech}, \eqref{absorption} and
      \eqref{merge-Z}.}. Moreover, the walk is
    recurrent, which can be seen for example from Corollary 1B of
    \cite{tGRK}, and the limiting curves in the augmented Ray-Knight
    theorem satisfy Assumption \ref{asm:Bessel}. Consequently, an
  application of Proposition \ref{prop:fJRK-JRK}
  yields the augmented joint Ray-Knight
    theorem. Moreover, \eqref{barLxh-comp}, \eqref{Yu-JRK}, and
    \cite[Theorem 3.1]{bCOPM} show that the functional limit of
    $(\Lambda^n_{x_i,h_i}(\cdot))_{i\leq N}$ is given
    by \eqref{WNrep} and \eqref{WNrep-back}. This completes the proof
  of Theorem \ref{JointRKT}.
\end{proof}
 
 We now return to the proof of \eqref{Yu-fJRK} for which we
 will need the following notation.
 \begin{definition}\label{def:sig}
  For a stochastic process $(S(y))_{y\in \R}$, any $x\in \R$ and $\d\geq 0$ let
\[
  \sigma_{x,\delta}(S) := \inf\{y \geq x:\,|S(y)| \le \delta \text{ or }|S(y-)| \le \delta\},
  \quad \text{where } \inf\emptyset :=\infty,
\]
$S(y-)=\lim\limits_{z \to y-}S(z)$ if $y>x$ and $S(x-)=S(x)$. Similarly, for a stochastic process  $(S_i)_{i\in\Z_+}$ and any $\d\geq 0$ let
$\sigma_\delta(S) := \inf\{ i\in\Z_+: |S_i| \leq \delta \}$, where $\inf\emptyset:= \infty$.
 \end{definition}

 \begin{proof}[Proof of \eqref{Yu-fJRK}]
For $\d>0$, using the notation from Definition \ref{def:sig}, we define
\begin{alignat*}{2}
 \overline{\mu}_{x,h}^{n,+,\d} &= \sigma_{x\vee 0,\d}(\overline{\Lambda}_{x,h}^n ),\qquad  \overline{\mu}_{(x,h),(x',h')}^{n,+,\d} &&= \sigma_{x \vee x', \d}(\overline{\Lambda}_{x,h}^n - \overline{\Lambda}_{x',h'}^n); \\
\mu_{x,h}^{+,\d} &= \sigma_{x\vee 0,\d}(\Lambda_{x,h} ),  \qquad  \mu_{(x,h),(x',h')}^{+,\d} &&= \sigma_{x \vee x', \d}(\Lambda_{x,h} - \Lambda_{x',h'}).
\end{alignat*}
Observe that for fixed
$(x,h),(x',h') \in \R\times [0,\infty)$ and $\d>0$, the maps
$S\mapsto \sigma_{x\vee 0,\delta}(S)$ and
$(S_1,S_2)\mapsto \sigma_{x\vee x', \delta} (S_2-S_1)$ are a.s.\
continuous at $\Lambda_{x,h}(\cdot)$ and
$(\Lambda_{x,h}(\cdot), \Lambda_{x',h'}(\cdot))$,
respectively\footnote{
  By \cite[Ch.\,VI, Proposition 2.11]{JS} (applied with the function $e^{-|S|}$ and the level $a=e^{-\d}$), it suffices to check that, a.s., the processes $(\Lambda_{x,h}(y))_{y\geq x\vee 0}$ and $(|\Lambda_{x,h}(y)-\Lambda_{x',h'}(y)|)_{y\geq x\vee x'}$ cross the level $\d$ upon hitting it. \cite[Ch.\,VI, Lemma 2.10]{JS} implies that for any realization of the process, it is true for a.e. $\delta\in(0,1)$, hence for a.e. every $\delta\in(0,1)$ it is true a.s., which is actually enough for our purposes.
}.
Then, it follows from \eqref{Yu-JRK} and the continuous mapping theorem that
\begin{equation}\label{almost-aJRK}
 \begin{split}
   &\left( (\overline{\Lambda}^n_{x_i,h_i}(\cdot))_{i\leq N}, \, ( \overline{\mu}_{x_i,h_i}^{n,+,\d} )_{i\leq N}, \, (\overline{\mu}_{(x_i,h_i),(x_j,h_j)}^{n,+,\d} )_{1\leq i < j \leq N} \right) \\
  &\quad \Longrightarrow
  \left( (\Lambda_{x_i,h_i}(\cdot))_{i\leq N}, \, ( \mu_{x_i,h_i}^{+,\d} )_{i\leq N}, \, ( \mu_{(x_i,h_i),(x_j,h_j)}^{+,\d} )_{1\leq i < j \leq N} \right).
 \end{split}
\end{equation}
By \cite[Theorem 3.2]{bCOPM} the proof of \eqref{Yu-fJRK} will be complete if we can show that for any $(x,h),(x',h') \in \R \times [0,\infty)$ we have 
\begin{align}
&\lim_{\d\to 0} \mu_{x,h}^{+,\d} = \mu_{x,h}^+
 \quad \text{and}\quad 
 \lim_{\d\to 0} \mu_{(x,h),(x',h')}^{+,\d} = \mu_{(x,h),(x',h')}^+, \quad \text{a.s.}, \label{dnear-cont} \\
 &\lim_{\d\to 0} \limsup_{n\to\infty} P\left( |\overline{\mu}_{x,h}^{n,+,\d} - \overline{\mu}_{x,h}^{n,+} | > \e \right) = 0, \quad \forall\, \e>0, \label{dnear-end}\\
 \text{and}\quad 
 &\lim_{\d\to 0} \limsup_{n\to\infty} P\left( |\overline{\mu}_{(x,h),(x',h')}^{n,+,\d} - \overline{\mu}_{(x,h),(x',h')}^{n,+} | > \e \right) = 0, \quad \forall\, \e>0. \label{dnear-merge}
\end{align}
Since the curves $\Lambda_{x,h}(\cdot)$ are continuous a.s., \eqref{dnear-cont} holds. Thus, it remains only to prove \eqref{dnear-end} and \eqref{dnear-merge}.

  We will prove \eqref{dnear-end} and \eqref{dnear-merge} by working with the
  unscaled directed edge local times. For $k \in \Z$ and
  $0\leq m\leq m'$, let
  $(Z_i,Z'_i) = (\mathcal{E}^+(\overline{\tau}_{k,m},k+i),
  \mathcal{E}^+(\overline{\tau}_{k,m'},k+i))$, $i\ge
  0$, and note that $(Z_0,Z'_0)=(m,m')$ by \eqref{taue}.
  We denote the law of
  $(Z_i,Z'_i)_{i\geq 0}$ by $P_{k,m,m'}$, the law of $(Z_i)_{i\geq 0}$
  by $P_{k,m}$, and set $\Delta_i = Z'_i-Z_i$. We will use the
  following properties.
\begin{enumerate}[--]
\item {\em Markov property and homogeneity.} Under $P_{k,m,m'}$ the
  process $(Z_i,Z'_i)_{i\geq 0}$ is a Markov chain whose transition
  probabilities from time $i-1$ to time $i$ depend on $(k,i)$ only
  through the sign of $k+i$ (see Appendix \ref{sec:Polya}). In
  particular, for $k\geq 0$ the chain $(Z_i)_{i\geq 0}$ is
  homogeneous (the ``right'' chain); for $k<0$,
  $(Z_i)_{0\leq i<|k|}$ is homogeneous with law independent of $k$
  (the ``left'' chain), the single step from $|k|-1$ to $|k|$ is
  exceptional, and, conditionally on $Z_{|k|}=m_0$, the process
  $(Z_{|k|+i})_{i\geq 0}$ has law $P_{0,m_0}$.
\item {\em Monotonicity.} If $m\leq m'$ then
  $\overline{\tau}_{k,m}\leq \overline{\tau}_{k,m'}$, so that $Z_i\leq Z'_i$
  for all $i\geq 0$, $\sigma_0(\Delta)\leq \sigma_0(Z')$, and
  $m\mapsto P_{k,m}(\sigma_0(Z)>t)$ is non-decreasing.
\item {\em Tails of increments.} It follows from
  \cite[Lemma 3.1]{KMMP26} and the representation of the transition
  probabilities of $(Z_i)_{i\geq 0}$ in Appendix \ref{sec:Polya}
  that there are constants $C,c>0$ such that
  \begin{equation}\label{Zstep-tail}
   P_{k,m}\left( |Z_{i+1}-Z_i| \geq y \mid Z_i=j \right)
   \leq C e^{-c\frac{y^2}{y\vee j}},
   \quad \forall\, k\in\Z,\ m \geq 0,\ i\geq 0,\ j\geq 1,\ y>0.
  \end{equation}
\end{enumerate}
By \eqref{barLxh} and the Markov property, \eqref{dnear-end} reduces
to showing that a single process started at $m\leq \fl{\d n}$
reaches $0$ within $\fl{\e n}$ steps, and \eqref{dnear-merge} to
showing that two processes started within $\fl{\d n}$ of each other
merge within $2\fl{\e n}$ steps, in both cases with high
probability. That is, it is enough to prove that for any $\e,\eta>0$
there exists $\d>0$ such that for all sufficiently large $n$
\begin{equation}\label{absorption}
 \sup_{k\in\Z}\ \sup_{m\leq \d n} P_{k,m}\left( \sigma_0(Z) > \e n \right) < \eta,
\end{equation}
\begin{equation}\label{merge-Z}
 \sup_{k\in\Z}\ \sup_{0\leq m\leq m'\leq m+\d n} P_{k,m,m'}\left( \sigma_0(\Delta) > 2\e n \right) < 2\eta.
\end{equation}

We first prove \eqref{absorption}. It suffices to
prove the bound in \eqref{absorption} for each of three ranges of
$k$ with some range-dependent $\d>0$; then \eqref{absorption} holds
with the smallest of the three.
We will use the absorption estimate \cite[(5.7.19)]{tGRK} for the
homogeneous chains: for any $\e',\eta'>0$ there is a
$\bar{\delta}=\bar{\delta}(\e',\eta')>0$ such that for all sufficiently large $n$,
\begin{equation}\label{absorb-hom}
 \sup_{m\leq \bar{\delta} n} P^{\pm}_m\left( \sigma_0(Z) > \e' n \right) < \eta',
\end{equation}
where $P^{+}_m$ and $P^{-}_m$ are the laws of the right and left
chains started at $m$ (see Appendix \ref{sec:Polya}). The estimate
(5.7.19) is proved in \cite{tGRK} for the right chain; its proof
relies only on the asymptotics of \cite[Lemma 2B]{tGRK}, which are the
same for both chains, so \eqref{absorb-hom} holds for both.

Set $\d_+=\bar{\delta}(\e/2,\eta/6)$. For $k\geq 0$ we have $P_{k,m}=P^+_m$,
and \eqref{absorb-hom} gives
$\sup\limits_{k\geq 0}\sup\limits_{m\leq\d_+ n}P_{k,m}(\sigma_0(Z)>\e n)<\eta/6$
for all large $n$.
Next, fix $\e_1\in(0,\e/2)$ and $\d_0\in(0,\d_+)$, to be chosen
below, and let $-\e_1 n\leq k<0$ and $m\leq \d_0 n$. By the Markov
property at time $|k|$ and the inequality $\e n - |k| \geq \e n/2$,
\begin{align}\label{crossing-dec}
 P_{k,m}\left( \sigma_0(Z) > \e n \right)
 \leq P^{-}_m\Big( \max_{0\leq i < |k|} Z_i \geq \frac{\d_+ n}{2} \Big)
 &+ \sup_{j\leq \d_+ n/2} P_{k,m}\Big( Z_{|k|} > \d_+ n \,\Big|\, Z_{|k|-1} = j \Big) \\ \nonumber
 &+ \sup_{m_0\leq \d_+ n} P_{0,m_0}\Big( \sigma_0(Z) > \frac{\e n}{2} \Big).
\end{align}
The third term is less than $\eta/6$ by the case $k\geq 0$. The
second is at most $Ce^{-c\d_+ n/4}<\eta/6$ for large $n$ by
\eqref{Zstep-tail}. For the first, the rescaled left chain converges
to a multiple of a reflected $\mathrm{BESQ}^1$ process,
\cite[(5.8.3)]{tGRK}, so by Proposition \ref{prop:BESQdev} and
Portmanteau's theorem its limsup as $n\to\infty$ is bounded,
uniformly in $m\leq\d_0 n$, by
$C\exp\{-c(\d_+/2-\d_0)^2/(\d_+\e_1)\}<\eta/6$, provided that $\e_1$
and $\d_0$ are fixed small enough. Hence
$\sup\limits_{-\e_1 n\leq k<0}\sup\limits_{m\leq\d_0 n}
P_{k,m}(\sigma_0(Z)>\e n)<\eta/2$ for all large $n$.

Finally, for $k<-\e_1 n$ the process $(Z_i)_{0\leq i\leq \e_1 n}$ has
the law of the first $\fl{\e_1 n}$ steps of the left chain, so
\eqref{absorb-hom} with $(\e',\eta')=(\e_1,\eta/2)$ yields a $\d_->0$
with
$\sup\limits_{k<-\e_1 n}\sup\limits_{m\leq \d_- n}
P_{k,m}(\sigma_0(Z)>\e_1 n)<\eta/2$ when $n$ is large enough. Since $\e_1<\e$,
\eqref{absorption} holds for all $\d\leq\min\{\d_+,\d_0,\d_-\}$.

Returning now to the proof of \eqref{merge-Z}, let $\d'>0$ be
  such that the bound in \eqref{absorption} holds for all
  $\d \leq \d'$. Then \eqref{absorption} allows us to obtain the
estimates needed in \eqref{merge-Z} when $m \leq \d'n/2$. Indeed, in
this case if $\d < \d'/2$ then $m' \leq m+ \d n \leq \d' n$ and since
$\sigma_0(\Delta) \leq \sigma_0(Z')$ it follows that
$P_{k,m,m'}(\sigma_0(\Delta) > 2 \e n) \leq P_{k,m'}(\sigma_0(Z)
  > 2\e n) < \eta$ for $n$ large.  Therefore, to prove
\eqref{merge-Z} we need only to show that for any $\e,\eta>0$ and
$\d'$ as above there is a $\d \in (0,\d'/2)$ such that for all
$n$ large enough
\begin{equation}\label{merge-Z2}
 \sup_{k\in\Z}
 \sup_{\frac{\d'n}{2} < m \leq m' \leq m+\d n} P_{k,m,m'} \left( \sigma_0(\Delta) > 2\e n \right) < 2\eta.
\end{equation}
To prove \eqref{merge-Z2}
we will use that $(Z_i,Z'_i)$ is a joint Markov
process whose transition probabilities can be described by generalized
P\'olya urn processes (see Appendix \ref{sec:Polya}). While this
is a time-inhomogeneous Markov process, there are only three different
types of transition probabilities to consider (corresponding to three
different generalized P\'olya urns) and we will use estimates on the
transition probabilities that hold for all three of these cases.  In
particular, we will use the following facts:
\begin{enumerate}[--]
 \item By Lemma \ref{D-bounds}(i), for any $\d_1>0$ there exists a $z_1 \in \N$ such that
 \begin{equation}\label{Z-smalldrift}
  E_{k,m,m'}[ \Delta_1 - \Delta_0 ] < \d_1, \quad \forall\, k\in \Z, \,  z_1 \leq m \leq m'.
 \end{equation}
\item By Lemma \ref{D-bounds}(ii), there is a $\d_2 \in (0,1)$
 and $c_2>0$ such that
\begin{equation}\label{Zstepbound}
 P_{k,m,m'}\left( \Delta_1 - \Delta_0 \leq - c_2 \sqrt{\Delta_0} \right) > c_2, \quad \forall\, k \in \Z, \, m < m' \leq (1+\d_2) m.
\end{equation}
\end{enumerate}
Since all of these estimates are uniform over $k\in \Z$, for the
remainder of the proof we will suppress the dependence of the
processes on the spatial starting location $k$, writing $P_{m,m'}$
and $P_m$ instead of $P_{k,m,m'}$ and $P_{k,m}$, and we will assume
that $\frac{\d' n}{2} < m \leq m' \leq m+\d n$.

The main tool that we will use in our proof of \eqref{merge-Z2} is that $\sqrt{\Delta_i}$ is a supermartingale as long as both $Z$ and $Z'$ stay sufficiently large and the ratio $\frac{\Delta_i}{Z_i}$ is sufficiently small.
To be more precise, let $\d_2$ and $c_2$ be as in \eqref{Zstepbound} and choose $\d_1 < \frac{c_2^3}{8}$ and  $z_1 \in \N$ so that \eqref{Z-smalldrift} holds.
We will show that if
\[
 \beta = \sigma_0(\Delta) \wedge
 \inf\{i\geq 0: Z_i < z_1, \text{ or } Z'_i > (1+\delta_2)Z_i \},
\]
then $\sqrt{\Delta_{i \wedge \beta }}$ is a supermartingale with respect to $\mathcal{F}^{Z,Z'}_i = \sigma(Z_j, Z'_j, \, j\leq i),\ i\ge 0$.
To this end, note that
a second order Taylor expansion gives that
\[
 \sqrt{y} - \sqrt{x} =  \frac{1}{2\sqrt{x}} (y-x) - \frac{1}{8 z^{3/2}} (y-x)^2,
\]
for some $z$ between $x$ and $y$, and since $z^{-3/2}$ is decreasing in $z$ we have that if $i < \beta$ then \eqref{Z-smalldrift} and \eqref{Zstepbound} imply that
\begin{align*}
&E_{m,m'}\left[ \sqrt{\Delta_{i+1}} \mid \mathcal{F}^{Z,Z'}_i \right] - \sqrt{\Delta_i} \\
&\quad \leq \frac{1}{2\sqrt{\Delta_i}} E_{m,m'}\left[ \Delta_{i+1} - \Delta_i \mid \mathcal{F}^{Z,Z'}_i  \right]
- \frac{1}{8(\Delta_i)^{3/2}} E_{m,m'}\left[ (\Delta_{i+1} - \Delta_i )^2 \ind{\Delta_{i+1} - \Delta_i \leq -c_2\sqrt{\Delta_i}} \mid \mathcal{F}^{Z,Z'}_i \right] \\
&\quad \leq \frac{\d_1}{2\sqrt{\Delta_i}}
- \frac{c_2^2}{8\sqrt{\Delta_i}} P_{m,m'}\left( \Delta_{i+1} - \Delta_i \leq -c_2\sqrt{\Delta_i} \mid \mathcal{F}^{Z,Z'}_i \right)  \leq -\left( \frac{c_2^3}{8} - \frac{\d_1}{2} \right) \frac{1}{\sqrt{\Delta_i}}
\leq -\frac{c_2^3}{16} \frac{1}{\sqrt{\Delta_i}},
\end{align*}
where in the last inequality we used the choice of $\d_1 < \frac{c_2^3}{8}$.
In fact, the above argument shows that not only is $\sqrt{\Delta_{i\wedge \beta }}$ a supermartingale,  but also that $(M_i)_{i\geq 0}$ defined by
\[
 M_0 = \sqrt{\Delta_0}, \quad\text{and}\quad M_i = \sqrt{\Delta_{i\wedge \beta  } }
 + \frac{c_2^3}{16} \sum_{j=0}^{(i\wedge \beta ) - 1} \frac{1}{\sqrt{\Delta_j}}, \quad i\geq 1,
\]
is a supermartingale with respect to $\mathcal{F}^{Z,Z'}_i,\ i\ge 0$.

We will now use these non-negative supermartingales to
show that if $\Delta_0 \leq \d n$ then
$\beta \le \e n$ with high probability.  Indeed,
\begin{align}
 P_{m,m'}(\beta > \e n)
 &\leq P_{m,m'}\left( \sup_{i \leq \e n} \sqrt{\Delta_{i\wedge \beta}} > \sqrt{n} \right)
 + P_{m,m'}\left( \beta > \e n, \, \sup_{i\leq \e n} \sqrt{\Delta_i} \leq \sqrt{n} \right) \nonumber \\
 &\leq \frac{ E_{m,m'}[ \sqrt{\Delta_0} ]}{\sqrt{n} }
 + P_{m,m'} \left( M_{\fl{\e n}} \geq \frac{c_2^3 \fl{\e n} }{16 \sqrt{n}} \right) \nonumber \\
 &\leq \sqrt{\d} + E_{m,m'}\left[ M_{0} \right] \frac{16 \sqrt{n}}{c_2^3 \fl{\e n}  }
 \leq \sqrt{\d}\left( 1 + \frac{32}{c_2^3 \e}\right), \label{beta-large}
\end{align}
where the last inequality holds for $n$ sufficiently large (depending
only on $\e$).  Since this bound vanishes as $\d\to 0$,
we need only to 
obtain bounds on
$ P_{m,m'}(\sigma_0(\Delta) > 2\e n, \, \beta \leq \e n)$. 
To this end, note that
\begin{align}
 P_{m,m'}(\sigma_0(\Delta) > 2\e n, \, \beta \leq \e n)
 &\leq  P_{m,m'}\left( \sigma_0(\Delta) > 2\e n, \, \sigma_{\d' n/2}(Z) < \beta \leq \e n  \right) \label{sD1} \\
 &\quad + P_{m,m'}\left( \beta \leq \sigma_{\d' n/2}(Z) \wedge \e n  < \sigma_0(\Delta)  \right). \label{sD2}
\end{align}
To bound the probability on the right in \eqref{sD1}, note that the event $\{ \sigma_{\d' n/2}(Z) < \beta \leq \e n   \}$ implies that there is a $j\leq \e n$ with $Z_j \leq \frac{\d' n}{2}$ and $Z'_j\leq (1+\d_2) Z_j \leq (1+\d_2) \frac{\d'}{2} n < \d' n$.
Again, using that $\sigma_0(\Delta) \leq \sigma_0(Z')$ it follows from \eqref{absorption} that when $n$ is large enough,
\begin{align*}
 P_{m,m'}\left( \sigma_0(\Delta) > 2\e n, \, \sigma_{\d' n/2}(Z) < \beta \leq \e n  \right)
 \leq P_{m,m'}\left( \sigma_0(Z') > 2\e n, \sigma_{\d' n}(Z') \leq \e n  \right) < \eta.
\end{align*}
 To bound \eqref{sD2}, first note that
\begin{align}
  &P_{m,m'}\left( \beta \leq \sigma_{\d' n/2}(Z) \wedge \e n  < \sigma_0(\Delta)   \right)  \nonumber \\
  &\qquad \leq
 P_{m,m'}\left( \beta < \sigma_{\d'n/4}(Z) \wedge \sigma_0(\Delta) \right)
 + P_{m}\left( \sigma_{\d'n/4}(Z) = \sigma_{\d'n/2}(Z) \leq \e n \right).  \label{sD2a}
\end{align}
For the first probability in \eqref{sD2a}, for $n$ large enough, $\beta < \sigma_{\d' n/4}(Z) \wedge \sigma_0(\Delta)$
implies that we must have $Z_{\beta} \geq \d' n/4 \geq z_1$ and $Z'_{\beta} \geq (1+\d_2)Z_{\beta}$ so that
$\Delta_{\beta} \geq \d_2 Z_{\beta} \geq \frac{\d_2 \d'n}{4}$.
Then it follows from Doob's maximal inequality that
\begin{equation}\label{beta-sZ}
 P_{m,m'}\left( \beta < \sigma_{\d'n/4}(Z) \wedge \sigma_0(\Delta) \right)
 \leq P_{m,m'}\left( \sup_{i\geq 0} \sqrt{\Delta_{i\wedge \beta}} \geq \frac{\sqrt{\d_2 \d' n}}{2} \right)
 \leq
 2\sqrt{\frac{\d}{\d_2 \d'}}.
\end{equation}
For the second probability in \eqref{sD2a}, it follows from \eqref{Zstep-tail} that
\[
 P_{m}\left( \sigma_{\d'n/4}(Z) = \sigma_{\d'n/2}(Z) \leq \e n \right)
 \leq \fl{\e n} \sup_{\ell > \d' n/2} P_{\ell}\left( Z_1 \leq \frac{\d' n}{4} \right)
 \leq \fl{\e n} C e^{-c \d' n/16}.
\]
Combining the above estimates and choosing $\d$ small enough so that the right sides of \eqref{beta-large} and \eqref{beta-sZ} are both less than $\eta/4$, we have that \eqref{merge-Z2} holds for all $n$ sufficiently large.
 \end{proof} 

  \begin{proof}[Proof of Theorem~\ref{main_psr}]
Above, we verified Assumptions~\ref{asm:Bessel}--\ref{asm:ND} and proved the augmented joint RKT. The polynomially self-repelling random walk is recurrent, which can be seen for example from Corollary 1B of \cite{tGRK}. Hence, Theorems~\ref{thm:main_X}--\ref{thm:main_JRK} apply and yield all statements of Theorem~\ref{main_psr}.
\end{proof}

\subsection*{Acknowledgement}
This paper, including all of its novel ideas and arguments, was conceived and written by the authors. AI was used for some of the technical results in Appendices \ref{subsec:tech}, \ref{app:contprop},
and \ref{app:integrals} that should be standard but were not available in the literature in the form we needed. The literature search for these results with large language models (Gemini, Google; Claude, Anthropic) produced their initial proof drafts instead of references. We verified, corrected, and rewrote these proofs. 

\appendix

\section{Results about generalized P\'olya urn model}\label{sec:Polya}

In this section we shall recall relevant facts about generalized
P\'olya urns and their connection to the directed edge local times of
self-interacting random walks \eqref{sirw}. Our construction of
generalized P\'olya urns follows closely that of \cite{tGRK} and
\cite{KMP23}. Then we focus on PSR walks. The main result in
this section is Lemma~\ref{D-bounds} that we used in the proof of
Theorem \ref{JointRKT}.

Given sequences of positive numbers
  $(b(i))_{i\geq 0}$ and $(r(i))_{i\geq 0}$, the corresponding generalized
  P\'olya urn process $((\mathfrak{B}_j, \mathfrak{R}_j))_{j\geq 0}$ 
  is a Markov chain on $\Z_+^2$ started at
  $(\mathfrak{B}_0, \mathfrak{R}_0) = (0,0)$ with transition
  probabilities given by
\begin{align*}
 P\left( (\mathfrak{B}_{j+1}, \mathfrak{R}_{j+1}) = (i+1,m) \mid (\mathfrak{B}_j, \mathfrak{R}_j) = (i,m) \right) &= \frac{b(i)}{b(i)+r(m)},\ \text{and}\\
P\left( (\mathfrak{B}_{j+1}, \mathfrak{R}_{j+1}) = (i,m+1) \mid (\mathfrak{B}_j, \mathfrak{R}_j) = (i,m) \right) &= \frac{r(m)}{b(i)+r(m)},\ \ i,m\ge 0.
\end{align*}
If we consider this as being generated by drawing red/blue balls from an urn, then $\mathfrak{B}_j$ and $\mathfrak{R}_j$ will be the numbers of blue and red balls, respectively, drawn from the urn up to time $j$.

Self-interacting random walks \eqref{sirw} involve
three types of associated generalized P\'olya urn processes that we
now describe.  For $* \in \{-,+,0\}$ we will let
$((\mathfrak{B}^*_j, \mathfrak{R}^*_j))_{j\geq 0}$ be the generalized
P\'olya urn process corresponding to the sequences
$(b^*(i))_{i\geq 0}$ and $(r^*(i))_{i\geq 0}$
\[
\begin{array}{l}
 b^-(i) = w(2i) \\
 r^-(i) = w(2i+1)
\end{array},
\qquad
\begin{array}{l}
 b^+(i) = w(2i+1) \\
 r^+(i) = w(2i)
\end{array},
\quad\text{and}\quad
\begin{array}{l}
 b^0(i) = w(2i) \\
 r^0(i) = w(2i)
\end{array},
\quad \text{for } i\geq 0.
 \]
 The connection of the self-interacting random walk with these urn
 processes is as follows. At any fixed site $\ell\leq -1$ the sequence of
 left/right steps on successive visits to $\ell$ has the same
 distribution as the sequence of blue/red draws from the generalized
 P\'olya urn process
 $((\mathfrak{B}^-_j, \mathfrak{R}^-_j))_{j\geq 0}$. Similarly, the
 process $((\mathfrak{B}^+_j, \mathfrak{R}^+_j))_{j\geq 0}$
 corresponds to the sequence of left/right steps from a site
 $\ell\geq 1$ and $((\mathfrak{B}^0_j, \mathfrak{R}^0_j))_{j \geq 0}$
corresponds to the sequence of left/right steps from $\ell=0$.

When considering any of the urn models described above, we will let $\tau_k^\mathfrak{B^*}$ (or $\tau_k^\mathfrak{R^*}$) denote the number of trials until a blue (or red) ball is selected for the $k$-th time. More explicitly, letting $\tau_0^\mathfrak{B^*} = \tau_0^{\mathfrak{R^*}}=0$ we have for $k\geq 1$ that
\begin{equation}\label{taub}
\tau_k^\mathfrak{B^*} = \inf\{ j> \tau_{k-1}^\mathfrak{B^*}:  \mathfrak{B}^*_j = \mathfrak{B}^*_{j-1} + 1 \}
\  \text{ and } \
\tau_k^\mathfrak{R^*} = \inf\{ j> \tau_{k-1}^{\mathfrak{R}^*}:  \mathfrak{R}^*_j = \mathfrak{R}^*_{j-1} + 1 \}.
\end{equation}
We will also let $\D^*_j$ be the signed difference in the number of red balls and blue balls drawn in the first $j$ steps of the urn process. That is,
\begin{equation}\label{discr}
 \D^*_j = \mathfrak{R}^*_j - \mathfrak{B}^*_j.
\end{equation}

We use generalized P\'olya urn processes to describe the
  Markovian structure of the directed edge local times. For
  details specific to various models we refer to \cite{tTSAW,
    tTSAWGBR, tGRK, KMP23}.

  The rest of this section concerns PSRWs. We will need a number of
  results that were proved in \cite{KMP23} for the generalized
  P\'olya urns associated with PSR walks. However, the results in
  \cite{KMP23} were shown only for the specific choice of the weight
  function $w(j) = (j+1)^{-\alpha}$, and so in \cite{KMMP26} we
  extended these results to more general setting where $w$ has the
  form \eqref{w}.

First, we recall the notation \eqref{upcr} and
  \eqref{ltaukm} and prove the following lemma which was used to justify
  \eqref{E+initial}.
\begin{lemma}\label{lem:PSRbalance}
 There exist constants $C,c>0$ such that
 \[
  P\left( \left| \mathcal{E}^{+}(\tau_{k,m},k) - \frac{m}{2} \right| > y \right) \leq C e^{-c \frac{y^2}{y\vee m}}, \quad \forall\, k \in \Z, \, m\geq 0, \, y > 0.
 \]
\end{lemma}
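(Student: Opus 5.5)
The plan is to express $\mathcal{E}^+(\tau_{k,m},k)$ through the generalized P\'olya urn attached to the site $k$, and then use the concentration of the urn discrepancy $\D^*_j$ coming from \cite{KMMP26}.

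\textbf{Reduction to the urn.} By time $\tau_{k,m}$ the walk has made exactly $m+1$ visits to $k$, and $\tau_{k,m}$ is the $(m+1)$-st of them. The departures from $k$ recorded before that time are therefore the steps taken on the first $m$ visits (plus possibly one initial step if $k=0$, which only changes things by $O(1)$). By the correspondence described above, the sequence of left/right departures on successive visits to $k$ has the law of the blue/red draws of the urn $(\mathfrak{B}^*_j,\mathfrak{R}^*_j)$, where $*\in\{-,0,+\}$ is the sign of $k$. Consequently
\[
\mathcal{E}^+(\tau_{k,m},k)\overset{\text{Law}}{=}\mathfrak{R}^*_m=\frac{m+\D^*_m}{2},
\]
and it suffices to prove $P(|\D^*_m|>2y)\le Ce^{-cy^2/(y\vee m)}$ uniformly in $m$, for each of the three urns. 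Since only three urns occur, uniformity in $k$ is automatic.

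\textbf{Tail bound for the discrepancy.} For PSRW the weights satisfy $w(j)\sim j^{-\alpha}$ by \eqref{w}, which makes the urn self-correcting. If more red than blue balls have been drawn, $r^*(\mathfrak{R})<b^*(\mathfrak{B})$ and the next draw is biased toward blue, so $\D^*$ behaves like a mean-reverting walk. I expect \cite{KMMP26} (the extension of the \cite{KMP23} estimates to general $w$ of the form \eqref{w}) to contain the exponential tail bound $P(|\D^*_j|\ge y)\le Ce^{-cy^2/(y\vee j)}$, uniform in $j$ and valid for all three urns. This is the same kind of estimate that underlies \eqref{Zstep-tail}, which was quoted as \cite[Lemma 3.1]{KMMP26}. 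If that bound is available in this form, the lemma follows at once.

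\textbf{Fallback and main obstacle.} If that bound is not stated directly, I would derive it from the standard fact that $\D^*$ sampled at the blue-draw times $\tau^{\mathfrak{B}^*}_i$ has i.i.d.-like increments with exponential tails. More precisely, I would use the continuous-time embedding of the urn via independent exponential clocks (Rubin's construction), in which red and blue draws are governed by independent sums of exponentials with means $1/r(i)$ and $1/b(i)$. From this one gets a Bernstein-type bound at deterministic times $j$. The two regimes then give the stated form: Gaussian behaviour $e^{-cy^2/j}$ when $y\le j$, and exponential behaviour $e^{-cy}$ when $y>j$. The second regime is trivial anyway, because $|\D^*_j|\le j$.

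I expect the main difficulty to be passing from blue-draw times to the deterministic time $m$ while keeping the constants uniform. I would handle this by bracketing $m$ between $\tau^{\mathfrak{B}^*}_i$ at $i$ of order $m/2\pm y/4$ and applying a union bound over the at most $O(m)$ intermediate times, with the polynomial factor absorbed by adjusting $c$ when $y\ge m^{1/2+}$. For small $y$ the claimed bound becomes trivial once $C$ is chosen large enough.
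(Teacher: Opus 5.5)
Your reduction $\mathcal{E}^+(\tau_{k,m},k)\overset{\text{Law}}{=}\mathfrak{R}^*_m=(m+\D^*_m)/2$, followed by the tail bound for $\D^*_m$ from \cite[Lemma 3.1]{KMMP26}, is exactly the paper's proof, and the fallback via Rubin's construction is not needed. The parenthetical about a possible extra initial step at $k=0$ is unnecessary: $L(n,k)$ counts time $0$, so there are exactly $m$ departures from $k$ before $\tau_{k,m}$ for every $k$, including $k=0$.
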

\begin{proof}
 Since $\mathcal{E}^+(\tau_{k,m},k)$ is the number of steps from $k$ to $k+1$ in the first $m$ steps away from $k$, then it is easy to see that
 $\mathcal{E}^+(\tau_{k,m},k)
  \overset{\text{Law}}{=} \mathfrak{R}_m^* = (\mathfrak{D}_m^* + m)/2$,
where $*=+$ if $k\geq 1$, $*=-$ if $k\leq -1$ and $*=0$ if $k=0$.
Therefore, $P\left( \left| \mathcal{E}^+(\tau_{k,m},k) - m/2 \right| \geq y \right) =
P\left( |\mathfrak{D}_m^*| > 2y \right)$, and the conclusion of the lemma follows from the tail bounds for $\mathfrak{D}^*_m$ in \cite[Lemma 3.1]{KMMP26}.
\end{proof}

Next, we explain how the transition probabilities of the joint
process
$(Z_i,Z'_i) = (\mathcal{E}^+(\overline{\tau}_{k,m},k+i),
\mathcal{E}^+(\overline{\tau}_{k,m'},k+i))$, used in the proof of
Theorem \ref{thm:barLxh-JRK}, relate to generalized urn processes.
In particular, for $\ell > k$ we wish to describe the distribution
of
$\left( \mathcal{E}^+(\overline{\tau}_{k,m},\ell),
  \mathcal{E}^+(\overline{\tau}_{k,m'},\ell) \right)$ conditioned on
$\left( \mathcal{E}^+(\overline{\tau}_{k,m},\ell-1),
  \mathcal{E}^+(\overline{\tau}_{k,m'},\ell-1) \right) = (j,j')$.
The analysis depends on whether $\ell \leq -1$, $\ell=0$, or
$\ell\geq 1$. Consider first one coordinate and $\ell\leq -1$. If
$\mathcal{E}^+(\overline{\tau}_{k,m},\ell-1) = j$, then the walk has made
$j$ steps from $\ell-1$ to $\ell$ by time $\overline{\tau}_{k,m}$. Since
the walk starts at $0>\ell$ and at time $\overline{\tau}_{k,m}$ is at
$k<\ell$, it has made $j+1$ steps to the left from $\ell$, and its
last step from $\ell$ was to the left.
Therefore,
$\mathcal{E}^+(\overline{\tau}_{k,m},\ell)$ has the same distribution as
$\mathfrak{R}^-_{\tau_{j+1}^{\B^{-}}} = \D^-_{\tau_{j+1}^{\B^-}} +
(j+1)$. In particular, the ``left'' chain of
Section~\ref{sec:PSRaug}, which has transitions corresponding to $\ell \leq -1$, coincides with the Markov chain defined in
\cite[(5.1.1), (5.1.2)]{tGRK} (note that the colors of the balls are
interchanged). The cases $\ell=0$ and $\ell\geq 1$ are similar, and
the ``right'' chain, which has transitions corresponding to $\ell \geq 1$, coincides with the Markov chain defined in
\cite[(5.1.1), (5.1.3)]{tGRK}. This analysis carries over to the
joint case as well, and we get the following:
\begin{align*}
 & P\left( \mathcal{E}^+(\overline{\tau}_{k,m},\ell) = j + y, \, \mathcal{E}^+(\overline{\tau}_{k,m'},\ell) = j' + y'
 \mid \mathcal{E}^+(\overline{\tau}_{k,m},\ell-1) = j , \, \mathcal{E}^+(\overline{\tau}_{k,m'},\ell-1) = j' \right) \\
 &\quad
 =
 \begin{cases}
  P\left( \D^-_{\tau_{j+1}^{\B^-}} = y-1, \, \D^-_{\tau_{j'+1}^{\B^-}} = y'-1 \right) & \text{if } \ell \leq -1 \\
  P\left( \D^0_{\tau_{j+1}^{\B^0}} = y-1, \, \D^0_{\tau_{j'+1}^{\B^0}} = y'-1 \right) & \text{if } \ell = 0 \\
  P\left( \D^+_{\tau_{j}^{\B^+}} = y, \, \D^+_{\tau_{j'}^{\B^+}} = y' \right) & \text{if } \ell \geq 1.
 \end{cases}
\end{align*}
In particular, recalling the definitions of $(Z_i,Z'_i)$ and $\Delta_i = Z'_i-Z_i$, we have that
\begin{align*}
 P_{k,m,m'}\left( \Delta_i - \Delta_{i-1} = z \mid (Z_{i-1},Z'_{i-1}) = (j,j') \right)
 =
 \begin{cases}
  P\left( \D^-_{\tau_{j'+1}^{\B^-}} - \D^-_{\tau_{j+1}^{\B^-}} = z \right) & \text{if } k+i \leq -1 \\
  P\left( \D^0_{\tau_{j'+1}^{\B^0}} - \D^0_{\tau_{j+1}^{\B^0}} = z \right) & \text{if } k+i =0 \\
  P\left( \D^+_{\tau_{j'}^{\B^+}} - \D^+_{\tau_{j}^{\B^+}} = z \right) & \text{if } k+i \geq 1.
 \end{cases}
\end{align*}
In all cases, the distribution of $\Delta_i - \Delta_{i-1}$ can be described in the form $\mathfrak{D}^*_{\tau_m^{\B^*}} - \mathfrak{D}^*_{\tau_\ell^{\B^*}}$ for some $* \in \{-,+,0\}$ and $m\geq \ell\geq 0$.
Thus, the following lemma will be useful in controlling the distribution of the process $(\Delta_i)_{i\geq 0}$ in the proof of Theorem \ref{thm:barLxh-JRK}.
Since the results below hold for all three of the urn processes we will omit the $*$ from the notation in the rest of this section.

\begin{lemma}\label{D-bounds}
For an urn model associated with the PSR case the following holds.
\begin{itemize}
\item [(i)] For every $\delta_1>0$ there exists $z_1\in\N$ such that \[m\ge \ell\ge z_1\ \ \Longrightarrow\ \ \left|E\left(\D_{\tau^{\B}_m}-\D_{\tau^{\B}_\ell}\right)\right|<\delta_1.\]
\item [(ii)] There exist $\delta_2,c_2\in(0,1)$ and $z_2\in\N$ such that
\[
\delta_2\ell\ge m-\ell\ge 1  \ \Longrightarrow\ \ P\left( \D_{\tau^{\B}_m}-\D_{\tau^{\B}_\ell} \le - c_2 \sqrt{m-\ell} \right)>c_2.
\]
\end{itemize}
\end{lemma}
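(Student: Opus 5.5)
Both parts will come from the urn-level quantities behind \cite[Lemma 3.1]{KMMP26} and the asymptotics of \cite[Lemma 2B]{tGRK}. The key observation is that, by the strong Markov property of the urn at the blue-draw time $\tau^{\B}_\ell$, the increment $\D_{\tau^{\B}_m}-\D_{\tau^{\B}_\ell}$ is the signed number of red draws minus $(m-\ell)$ blue draws made while the blue count goes from $\ell$ to $m$. This quantity depends on the urn state at $\tau^{\B}_\ell$, namely $(\ell,\mathfrak R_{\tau^{\B}_\ell})$, equivalently on $\D_{\tau^{\B}_\ell}$. We will therefore condition on $\D_{\tau^{\B}_\ell}=d$ and analyse the urn started from $(\ell,\ell+d)$.

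The local drift of $\D$ at state $(i,i+d)$ is $\frac{r(i+d)-b(i)}{r(i+d)+b(i)}$. By \eqref{w}, $w(j)=j^{-\alpha}(1-2B/j+O(j^{-2}))$, so this drift equals $-\frac{\alpha d}{2i}+O(d^2/i^2)+O(1/i)$. Hence $\D$ seen at blue times behaves like a discrete Ornstein--Uhlenbeck process with restoring rate $\alpha/(2i)$ per blue draw, plus an $O(1/i)$ bias whose sign and size depend on which of $b^\pm,r^\pm,b^0,r^0$ we use.

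For (i), write $E[\D_{\tau^{\B}_m}-\D_{\tau^{\B}_\ell}]=\sum_{i=\ell}^{m-1}E[\D_{\tau^{\B}_{i+1}}-\D_{\tau^{\B}_i}]$. Then use the known asymptotics of $E[\D_{\tau^{\B}_i}]$ from \cite[Lemma 2B]{tGRK}, which were extended in \cite{KMMP26}: these state that $E[\D_{\tau^{\B}_i}]$ converges to a finite constant as $i\to\infty$. More precisely, I would show $E\D_{\tau^{\B}_{i+1}}-E\D_{\tau^{\B}_i}=-\frac{\alpha}{2i}(E\D_{\tau^{\B}_i}-c^*)+O(i^{-3/2})$, using the sub-Gaussian bound $E|\D_{\tau^{\B}_i}|^2\le C i$ from \cite[Lemma 3.1]{KMMP26} to control the quadratic error $E[\D^2]/i^2$. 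This makes $a_i:=E\D_{\tau^{\B}_i}-c^*$ satisfy $a_{i+1}=(1-\frac{\alpha}{2i})a_i+O(i^{-3/2})$, so $a_i\to0$. Then $|E(\D_{\tau^{\B}_m}-\D_{\tau^{\B}_\ell})|\le|a_m|+|a_\ell|<\d_1$ once $\ell\ge z_1$. If Lemma 2B already gives convergence of $E\D_{\tau^{\B}_i}$ directly, this step is immediate.

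For (ii), set $k=m-\ell\le\d_2\ell$. Over $k$ blue draws, the OU restoring effect changes $\D$ by at most about $\frac{\alpha k}{2\ell}|\D_{\tau^{\B}_\ell}|+O(k/\ell)$, and the martingale part has variance of order $k$, since each blue-to-blue cycle contributes a geometric-like number of red draws with variance bounded below uniformly. I would condition on the event $G=\{\D_{\tau^{\B}_\ell}\ge -A\sqrt\ell\}$. Since $\D_{\tau^{\B}_\ell}/\sqrt\ell$ is tight and, by symmetry in law around $O(1)$, has positive probability of being $\ge0$, restricting further to $\D_{\tau^{\B}_\ell}\ge0$ gives probability at least $c>0$. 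On that event the drift over the window is nonpositive up to $O(k/\ell)=O(\d_2)$, which is $\le \sqrt k$-negligible once $k\ge 1$ and $\d_2$ is small.

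It remains to show that the martingale part $M_k$ satisfies $P(M_k\le -2c_2\sqrt k)\ge 2c_2$. For large $k$ I would use the martingale CLT, or a Paley--Zygmund argument with second and fourth moments bounded via \eqref{Zstep-tail}-type tails. For small $k$, including $k=1$, it suffices to note that with probability bounded below the first blue draw after $\tau^{\B}_\ell$ is immediate, giving increment $-1\le -c_2\sqrt k$. This holds because $b(i)/(b(i)+r(j))$ is bounded below whenever $j\ge i$, which is exactly the event $\D\ge0$.

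The main obstacle is obtaining uniformity over $k$ between these two regimes in the lower bound for $M_k$. I would handle it with a Paley--Zygmund bound on $\max(0,-M_k)$, using $E M_k^2\ge ck$ and $E M_k^4\le Ck^2$ uniformly in $\ell$. This gives the bound for all $1\le k\le\d_2\ell$ at once, with $z_2$ absorbing small $\ell$.
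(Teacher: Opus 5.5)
\textbf{Gap in part (ii): controlling the drift.} Write $\D_{\tau^{\B}_{\ell+k}}-\D_{\tau^{\B}_\ell}=M_k+A_k$, with compensator $A_k=\sum_{j<k}E[\D_{\tau^{\B}_{\ell+j+1}}-\D_{\tau^{\B}_{\ell+j}}\mid\mathcal F_{\tau^{\B}_{\ell+j}}]$. Each summand is roughly $-\alpha\,\D_{\tau^{\B}_{\ell+j}}/\ell+O(1/\ell)$, so it depends on the \emph{current} value of $\D$, not on $\D_{\tau^{\B}_\ell}$.
\begin{itemize}
\item Conditioning on $\D_{\tau^{\B}_\ell}\ge 0$ therefore does not give $A_k\le O(k/\ell)$. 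If $\D_{\tau^{\B}_\ell}$ is near $0$, then with probability bounded below $\D$ drops to about $-\sqrt k$ inside the window, and there the drift is positive.
\item Your estimate $\frac{\alpha k}{2\ell}|\D_{\tau^{\B}_\ell}|$ has the same defect, since it uses only the starting value.
\end{itemize}
The missing ingredient is an a priori bound on $|\D|$ over the whole window. The paper obtains it from the event $B_\ell(R,\delta)=\{|\D_j|\le R\sqrt\ell\ \forall j\in[\tau^{\B}_\ell,\tau^{\B}_{\lfloor\ell(1+\delta)\rfloor}]\}$, whose probability is close to $1$ by \cite[Lemmas 3.1 and 3.5]{KMMP26}. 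Alternatively, the bound $E|\D_{\tau^{\B}_i}|\le C\sqrt i$ (from the sub-Gaussian tails) gives $E|A_k|\le Ck/\sqrt\ell$. Either way the accumulated drift is $O(k/\sqrt\ell)=O(\sqrt{\delta_2}\,\sqrt k)$, which is negligible against $c_2\sqrt k$ once $\delta_2$ is small. This, not the sign of the starting point, is where the smallness of $\delta_2$ enters, and the sign conditioning becomes superfluous.

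\textbf{Comparison with the paper, and part (i).} With this repair, your Paley--Zygmund argument for $M_k^-$ (using $EM_k=0$, $EM_k^2\ge ck$, $EM_k^4\le Ck^2$) is a valid alternative route. The paper instead works on $B_\ell(R,\delta)$ and dominates each blue-to-blue increment by $G_{\ell,k}-1$, with i.i.d.\ $G_{\ell,k}\sim\mathrm{Geom}(\frac12-\frac{\alpha R}{\sqrt\ell})$. It then uses Berry--Esseen for $m-\ell\ge z_2$, and a direct limit to centered $\mathrm{Geom}(\frac12)$ sums (through $\delta_2'>0$) for shorter windows. Your version covers all window lengths at once. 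However, your two moment bounds also need the control of $|\D|$:
\begin{itemize}
\item when $|\D|$ is comparable to $i$, a single cycle can contain order $i$ red draws, or have nearly degenerate variance, so ``variance bounded below uniformly'' is false as stated;
\item \eqref{Zstep-tail} controls the values of $\D$ at blue times, not the per-cycle increments.
\end{itemize}
For part (i), falling back on the convergence of $E[\D_{\tau^{\B}_i}]$ matches the paper, which simply cites \cite[Proposition 3.9]{KMMP26}. Your self-contained recursion does not close as written. The drift has a nonzero quadratic term in $\D/i$, and $E[\D_{\tau^{\B}_i}^2]/i^2$ is $O(1/i)$, not $O(i^{-3/2})$, so it is of the same order as the terms you keep. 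You would need $E[\D_{\tau^{\B}_i}^2]/i$ to converge so that this term can be absorbed into $c^*$; the bound $E[\D^2_{\tau^{\B}_i}]\le Ci$ alone only gives boundedness of $a_i$.
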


\begin{proof}
  Part (i) follows from \cite[Proposition 3.9]{KMMP26}, so we shall only
  prove (ii). Note that for $c_2\in (0,1)$ the probabilities in
  (ii) are positive for any $\ell < m$, and thus it is sufficient to
  prove (ii) only for all sufficiently large $\ell$. For
  definiteness, we consider urns to the right of $0$. The urns to the
  left of $0$ and at $0$ can be treated in the same way. To lighten
  the notation we shall drop the superscript $\B$.

  For $R,\delta>0$, $\ell\in\N$ let
  \[B_\ell(R,\delta)=\{|\D_j|\le R\sqrt{\ell},\ \forall\,
    j\in[\tau_\ell,\tau_{\fl{\ell(1+\delta)}}]\}\]
    and
    \[
     \delta_2' = \inf_{m\in\mathbb{N}}P\left(\sum_{k=1}^m(G_k-E[G_k]) < 0\right)\text{ with }G_1,G_2,...\text{ i.i.d. }\text{Geom}\left(\frac{1}{2}\right)\footnote{We write $G\sim \text{Geom}(p)$ if
    $P(G=j)=(1-p)^jp$, $j\in\Z_+$.}.
    \]
    Note that, by the Central Limit Theorem, $\delta_2'>0$. Let $\delta_2''=\frac{1}{4}
    \wedge\frac{1}{2}\delta_2'$. It follows from \cite[Lemma 3.1 and
    Lemma 3.5]{KMMP26} that there exist $R>0$ and $\delta_0\in(0,1)$
    such that for all $\delta\in(0,\delta_0)$ and all sufficiently
    large $\ell$, $P(B_\ell(R,\delta))> 1-\delta_2''$. On the event
    $B_\ell(R,\delta)$, we will show that for
    $1\le k\le\fl{\delta \ell}$, the difference
    $\D_{\tau_{\ell+k}}-\D_{\tau_{\ell+k-1}}$ can be stochastically
    bounded above by $G_{\ell,k}-1$, where $G_{\ell,k}$,
    $1\le k\le\fl{\delta \ell}$, are independent geometric random
    variables with parameters approaching 1/2 from below as
    $\ell \to \infty$. Indeed, first note that in our setting a
    ``failure'' on the $j$-th step corresponds to
    $\D_{j+1}-\D_j=1$. Therefore, to upper bound
    $\D_{\tau_{\ell+k}}-\D_{\tau_{\ell+k-1}}$, we need an upper bound
    on
    the probability that $\D_{j+1}-\D_j=1$ for $j \in [\tau_{\ell+k-1}, \tau_{\ell+k})$. To this end, letting $\mathcal{F}_j^{\mathfrak{B},\mathfrak{R}}=\sigma((\mathfrak{B}_i,\mathfrak{R}_i),i
  \leq j)$ we have that if $j \in [\tau_{\ell+k-1}, \tau_{\ell+k})$ then
\[
 P\left( \D_{j+1} - \D_j=1 \,\big\vert\, \mathcal{F}_j^{\B,\mathfrak{R}}  \right) = \frac{ w(2(\ell+k-1+\D_j))}{ w(2(\ell+k-1+\D_j)) + w(2(\ell+k-1)+1) } =: \frac{1}{1+a_j},
 \]
 where we denoted
 $\frac{w(2(\ell+k-1)+1)}{w(2(\ell+k-1+\D_j))}$ by
 $a_j$. First of all, observe that on the event $B_\ell(R,\delta)$,
 for $j\in[\tau_{\ell+k-1},\tau_{\ell+k})$,
 $1\le k\le \fl{\delta \ell}$, and all sufficiently large $\ell$,
 $\left|a_j-1\right|\le \frac{2\alpha R}{\sqrt{\ell}}\le 1$.
Then, under the same conditions, we have that
  \begin{align*}
    P( \D_{j+1} - \D_j = 1 \,\big\vert\, \mathcal{F}_j^{\B,\mathfrak{R}}   ) = \frac{1}{1+a_j}&=\frac12\left(1+\frac12\left(a_j-1\right)\right)^{-1}\\ &\le\frac12\left(1-\frac12\left|a_j-1\right|\right)^{-1}\le \frac12\left(1+\left|a_j-1\right|\right)\le\frac{1}{2} + \frac{\alpha R}{\sqrt{\ell}}.
  \end{align*}
Taking the complement probability as a ``success'' parameter and considering i.i.d.\ random variables $G_{\ell,k}\sim\text{Geom}\left(\frac12-\frac{\alpha R}{\sqrt{\ell}}\right)$, we get for $\d_2 \in (0,\d_0)$, $\ell < m \leq (1+\d_2)\ell$, and $\ell$ sufficiently large that
\begin{align}
 P\left( \D_{\tau_m}-\D_{\tau_\ell} > -c_2 \sqrt{m-\ell} \right)
 &\leq P(B_\ell(R,\d_2)^c) + P\left( \sum_{k=1}^{m-\ell} (G_{\ell,k} - 1) > -c_2 \sqrt{m-\ell} \right) \nonumber\\
 &\leq
 \delta_2'' + P\left( \sum_{k=1}^{m-\ell} ( G_{\ell,k} - E[G_{\ell,k}] ) > -c_2\sqrt{m-\ell} - \frac{4\alpha R(m-\ell)}{\sqrt{\ell}- 2\alpha R} \right) \nonumber\\
 &\leq \delta_2'' + P\left( \sum_{k=1}^{m-\ell} \frac{ G_{\ell,k} - E[G_{\ell,k}] }{\sqrt{m-\ell}}  > - c_2 - \frac{4\alpha R \sqrt{\d_2 \ell} }{\sqrt{\ell}-2 \label{appAlimthm}
   \alpha R} \right).
\end{align}
By the Berry-Esseen theorem, denoting by $\sigma_\ell^2$ the variance of $G_{\ell,1}$, there exists $C_2$ such that when $\ell$ is large enough, the last probability is smaller than
\[
P\left(\mathcal{N}(0,1) > - \frac{1}{\sigma_\ell}\left(c_2 + \frac{4\alpha R \sqrt{\d_2 \ell} }{\sqrt{\ell}-2
   \alpha R}\right) \right)+C_2\frac{E(|G_{\ell,1}-E(G_{\ell,1})|^3)}{\sigma_\ell^{3}\sqrt{m-\ell}}.
\]
Therefore, if $c_2$ and $\d_2$ are sufficiently small, then there
exist $z_2,\ell_0 \in \N$ such that for all $\ell \geq \ell_0$ and
$m-\ell \geq z_2$ the probability in \eqref{appAlimthm} is at most
$5/8$. On the other hand, for each fixed $1\leq m' < z_2$, if
$m=\ell+m'$ then
$\sum_{k=1}^{m-\ell} (G_{\ell,k} - E[G_{\ell,k}])
\underset{\ell\to\infty}{\Longrightarrow}
\sum_{k=1}^{m'}(G_k-E[G_k])$. Hence, when $c_2$ and $\d_2$ are
sufficiently small and $\ell$ is large, the probability in
\eqref{appAlimthm} is smaller than $1-\frac{2}{3}\delta_2'$ when
$m-\ell<z_2$. Hence, there are $c_2>0$ and $\delta_2 \in (0,1)$ such
that
$P\left( \D_{\tau_m}-\D_{\tau_\ell} \leq -c_2 \sqrt{m-\ell} \right)
\geq \min(\frac{1}{8},\frac{1}{6}\delta_2')>c_2$ for $\ell$
sufficiently large and $1\leq m-\ell \leq \d_2 \ell$.
\end{proof}

\section{Augmented JRKT: reductions and technical details}\label{sec:fJRK}

\subsection{Augmented JRKT from directional JRKT} The key assumption in
Theorems \ref{thm:main_X}--\ref{thm:main_JRK} is that the random walk
satisfies an augmented joint RKT.  For certain random walks (e.g.,
self-interacting random walks of the form \eqref{sirw}) it turns out
to be easier to prove directional augmented joint RKTs instead.  To
this end, let $\Lambda_{x,h}^{n,+}(\cdot)$ and
$\Lambda_{x,h}^{n,-}(\cdot)$ denote the rescaled local time profile
$\Lambda_{x,h}^n(\cdot)$ restricted to $[x,\infty)$ and $(-\infty,x]$,
respectively. We will call $\Lambda_{x,h}^{n,+}$ the ``forward''
curves and $\Lambda_{x,h}^{n,-}$ the ``backward'' curves. The topology
used on $D([x,\infty))$ is the one obtained by considering any
$f \in D([x,\infty))$ as $f(x+\cdot)\in D([0,\infty))$, while the
topology used on $D((-\infty,x])$ is the one obtained by considering
any $f \in D((-\infty,x])$ as the right-continuous version of
$f(x-\cdot)$, which is in $D([0,\infty))$.

\begin{definition}\label{def:JRK-rl}
  A recurrent random walk $(X_n)_{n\geq 0}$ satisfies a \textbf{forward augmented joint RKT} if there is a scaling function $g(n) = a_0 n^{\gamma}$ for some $a_0,\gamma>0$ such that for any $N \in \N$ and any finite set of points $(x_1,h_1),\ldots,(x_N,h_N) \in \R\times \R_+$ there are processes $\Lambda_{x_1,h_1}^+(\cdot),\ldots,\Lambda^+_{x_N,h_N}(\cdot)$ with $\Lambda_{x_i,h_i}^+(\cdot)$ defined on $[x_i,\infty)$ and such that the joint distribution of the forward curves $(\Lambda_{x_i,h_i}^{n,+}(\cdot))_{i\leq N}$ together with the right endpoints $(\mu_{x_i,h_i}^{n,+})_{i\leq N}$ and right merge points $(\mu_{(x_i,h_i),(x_j,h_j)}^{n,+})_{1\leq i<j\leq N}$ converges in the natural topology to the joint distribution of the curves $(\Lambda_{x_i,h_i}^+(\cdot))_{i\leq N}$ together with the right  endpoints $(\mu_{x_i,h_i}^+)_{i\leq N}$ and right merge points $(\mu_{(x_i,h_i),(x_j,h_j)}^+)_{1\leq i<j\leq N}$.
  
  Similarly, a random walk satisfies a \textbf{backward augmented joint RKT} if an analogous statement holds for the joint distribution of the backward curves $\Lambda_{x_i,h_i}^{n,-}(\cdot)$, the left endpoints $(\mu_{x_i,h_i}^{n,-})_{i\leq N}$, and left merge points $(\mu_{(x_i,h_i),(x_j,h_j)}^{n,-})_{1\leq i<j\leq N}$.
\end{definition}

The main result of this subsection is that 
an augmented joint Ray-Knight theorem can be deduced from an augmented Ray-Knight theorem (for a single curve) together with forward augmented and backward augmented joint Ray-Knight theorems.

\begin{proposition}\label{prop:fJRK-JRK} Suppose that a recurrent
  random walk satisfies an augmented RKT, a forward augmented joint
  RKT, and a backward augmented joint RKT with a common
  scaling function, and that the limiting curves
  in the augmented RKT satisfy Assumption \ref{asm:Bessel}. Then
  \begin{enumerate}[(i)]
  \item the random walk satisfies an augmented joint RKT with that
     scaling function;
\item the joint distribution of the curves in the joint RKT is
  uniquely determined by the joint distri\-bution of the forward curves.
\end{enumerate}
  \end{proposition}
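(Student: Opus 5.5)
The plan is to build the full joint convergence from the directional pieces by gluing forward and backward curves at their starting points, using the walk's structure to decouple them. First, tightness. Each full curve $\Lambda^n_{x_i,h_i}$ is the concatenation of its backward part on $(-\infty,\fl{x_in}/n]$ and its forward part on $[\fl{x_in}/n,\infty)$, and both parts take the value $(\fl{h_ig(n)}+1)/g(n)$ at the gluing point. Tightness of each of the two directional families is given by the forward and backward augmented joint RKTs. Hence the full vector
\[
\bigl((\Lambda^n_{x_i,h_i})_i,(\mu^{n,\pm}_{x_i,h_i})_i,(\mu^{n,\pm}_{(x_i,h_i),(x_j,h_j)})_{i<j}\bigr)
\]
is tight in the product topology. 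Here the Skorokhod topology on $D(\R)$ causes no trouble at the junction, because the limit curves are continuous by Assumption \ref{asm:Bessel}. It therefore suffices to identify every subsequential limit uniquely.

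For the identification, take a subsequential limit. Its forward marginals are the laws $\Lambda^+$ and its backward marginals are the laws $\Lambda^-$. The remaining task is to determine the joint law of the forward family together with the backward family. I would argue that, for the curves, this joint law is forced by the coalescence and non-crossing structure combined with a Markov decoupling at a common point.

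Order the points so that $x_1\le\dots\le x_N$. For the portion of the curves to the left of $x_1$, the backward parts of all curves are determined by their values at $x_1$. Whenever $x_i>x_1$, the value $\Lambda_{x_i,h_i}(x_1)$ is read off a backward curve. The key observation concerns the ordering of the inverse local times: the curve $\Lambda^n_{x_i,h_i}$ on $(-\infty,x_1]$ coincides with some curve $\Lambda^n_{x_1,h}$ for a random level $h=\Lambda^n_{x_i,h_i}(x_1)$. This holds because $\tau^n_{x_i,h_i}$ lies between consecutive inverse local times at $\fl{x_1n}$, so the left profile is a left profile from $x_1$. I would approximate the random level by a fine grid $h\in\d^\gamma\Z_+$ and sandwich between grid curves, using monotonicity in $h$ together with the estimate \eqref{Lgap} of Corollary \ref{cor:Lxh-dev}.

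With this approximation in hand, the full joint law reduces to three pieces:
\begin{enumerate}[(a)]
\item the joint law of curves restricted to $[x_1,\infty)$, which is forward data from $x_1$ together with backward data on $[x_1,x_N]$;
\item the backward curves from $x_1$ at grid levels;
\item the dependence between (a) and (b).
\end{enumerate}
I expect the main obstacle to be showing that, conditionally on the values at $x_1$, the backward family from $x_1$ is asymptotically independent of the information to the right of $x_1$. In the limit this independence is what Assumption \ref{asm:JMarkov} encodes, but it must be justified for limit points.

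The approach I would try first is to use the single-curve augmented RKT and Assumption \ref{asm:Bessel} for the mutual independence of the two halves of each curve, and to extend this to families by iterating: every interval $[x_k,x_{k+1}]$ is covered by both forward curves from $x_k$ and backward curves from $x_{k+1}$, and non-crossing identifies each backward curve with a forward curve from an appropriate grid level, and vice versa. This iteration would also give (ii), since it shows the whole family is a measurable function of forward grid families. Finally, convergence of the merge and end points comes from the directional statements. For points to the left of $\min(x_i,x_j)$, the left merge point is the backward merge point. By coalescence, for curves whose starting points differ, this is the point where the extended backward pieces meet; this is continuous at the limit because the limit curves cross rather than touch (Assumption \ref{asm:Bessel} and Assumption \ref{asm:JMarkov}). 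The argument then follows the $\d$-level hitting approximation used in the proof of \eqref{Yu-fJRK}.
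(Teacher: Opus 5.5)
There is a genuine gap in the identification step. Your plan rests on showing that, given the values at $x_1$, the backward family from $x_1$ is asymptotically independent of everything to the right of $x_1$. You note yourself that this is what Assumption~\ref{asm:JMarkov} would encode. However, Proposition~\ref{prop:fJRK-JRK} does not assume Assumption~\ref{asm:JMarkov}. Its hypotheses are only directional joint convergence for an arbitrary recurrent walk, plus Assumption~\ref{asm:Bessel} for single curves. The only independence these provide is between the two halves of a single limit curve, and you give no argument deriving the decoupling in (c) from them. The same objection applies to your appeal to Assumption~\ref{asm:JMarkov} for the merge points.

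The proposed iteration does not repair this. On $[x_k,x_{k+1}]$, a backward curve from $x_{k+1}$ does not coincide with a forward curve from $x_k$ at any level. Take the forward curve from $x_k$ at the level of the last visit to $\fl{x_kn}$ before $\tau^n_{x_{k+1},h}$: it agrees with the backward curve at $x_k$, but it is strictly smaller at $x_{k+1}$, because the walk keeps accumulating local time to the right of $x_k$ in between. So non-crossing alone identifies nothing. Moreover, (ii) requires the law to be determined by the joint law of the forward curves alone. Your decomposition would instead need the backward joint law and a conditional-independence structure as extra input.

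The missing idea is a pathwise duality of inverse local times, which needs no Markov structure. For sites $j\neq k$,
$L(\tau_{k,m},j)\le b \iff \tau_{j,b}>\tau_{k,m} \iff L(\tau_{j,b},k)>m$.
After rescaling, for $y<x$ the event $\{\Lambda^n_{x,h}(y)\le b\}$ is, up to integer parts, the event $\{\Lambda^n_{y,b}(x)>h\}$, which concerns only the forward curve from $y$. Take $b_{i,j}>0$, with $b_{i,j}\neq h_i$ when $y_{i,j}=x_i$. Then $P(\Lambda^n_{x_i,h_i}(y_{i,j})\le b_{i,j}\ \forall\, i,j)$ is the probability of an event about forward curves only, and its limit is determined by the forward joint convergence. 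Passing to the limit uses two facts:
\begin{enumerate}
\item[(1)] the limiting marginals have no atoms at positive levels;
\item[(2)] when $h_i=0$, the event becomes $\{\mu^{n,+}_{y,b}>x\}$, which is where the forward \emph{augmented} statement and the continuity of the law of $\mu^+_{y,b}$ are needed.
\end{enumerate}
These distribution functions determine the law of $(\Lambda_{x_i,h_i})_{i\le N}$, which gives (ii) and uniqueness of subsequential limits of the curves. In any subsequential limit, the endpoints and merge points are a.s.\ those of the limit curves by the directional augmented theorems, which gives (i). This is the argument of \cite[Corollary 3.5, (3.7)]{KP25+} that the paper invokes. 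Your tightness step is fine, but the gluing-plus-decoupling route should be replaced by this duality.
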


  \begin{proof}
    The proof builds on that of \cite[Corollary 3.5]{KP25+}. As
explained there, it suffices to show that, along any subsequence
such that the joint distribution of the local time curves,
endpoints, and merge points converges, the limiting distribution of
the curves $(\Lambda_{x_i,h_i})_{i \leq N}$ is unique. Let
$m_1,\ldots,m_N \in \N$, $(y_{i,j})_{i \leq N, j \leq m_i}$, and
$(b_{i,j})_{i \leq N, j \leq m_i}$ be such that $b_{i,j}>0$ and
$b_{i,j} \neq h_i$ whenever $y_{i,j}=x_i$. Since, by Assumption
\ref{asm:Bessel}, the limiting curves in the augmented RKT are
continuous and $P(\Lambda_{x_i,h_i}(y_{i,j}) = b_{i,j})=0$ for all
$i,j$, along the chosen subsequence we have
\[
 P\left(\Lambda_{x_i,h_i}(y_{i,j}) \leq b_{i,j},\ \forall\, i\leq N,\ j \leq m_i\right)
 = \lim_{n \to \infty}P\left(\Lambda_{x_i,h_i}^n(y_{i,j}) \leq b_{i,j},\ \forall\, i\leq N,\ j \leq m_i\right).
\]
Moreover, the arguments of \cite{KP25+}\footnote{These arguments
    use the continuity of the distribution of $\mu_{x,h}^+$,
    which can be seen, e.g., from the formula following Remark 2.1 in
    \cite{HamanaMatsumoto2013}.}  yield an analog of
\cite[(3.7)]{KP25+}: the limit on the right-hand side is determined by
the limiting joint distribution of the forward curves. Since the
probabilities on the left-hand side determine the distribution of
$(\Lambda_{x_i,h_i})_{i\leq N}$, this proves (ii) as well as the
uniqueness needed for (i).
\end{proof}
\subsection{Comparing site and directed edge local time profiles for PSR walks}\label{subsec:tech}

The goal of this subsection is to prove
Lemma~\ref{lem:Lxh-barLxh}. Recall that the limiting curves
  $\Lambda_{x,h}$ are given by \eqref{WNrep}-\eqref{WNrep-back}.
The proof uses a ``hybrid'' family of curves, defined as follows:
\begin{equation}\label{hatLxh}
  \hat{\Lambda}_{x,h}^n(y) = 2 n^{-1} \mathcal{E}^+( \tau_{x,h}^n, \fl{yn} ), \quad y \in \R. 
 \end{equation}
 These curves represent the rescaled directed edge local times but the
 ``snapshot'' is taken at the site inverse local times $\tau_{x,h}^n$
rather than at directed edge inverse local times
 $\overline{\tau}_{x,h}^n:=\overline{\tau}_{\fl{xn},\fl{hn/2}}$ as
 for the curves $\overline{\Lambda}_{x,h}^n$ (see \eqref{barLxh}).
We shall denote by $\hat{\mu}_{x,h}^{n,\pm}$ the right/left
absorption points and by $\hat{\mu}_{(x,h),(x',h')}^{n,+}$ the right
merge points of these ``hybrid'' curves, with one adjustment: we set
$\hat{\mu}_{x,h}^{n,-} = \sup\{y < 0\wedge \fl{xn}/n :\,
\hat{\Lambda}_{x,h}^n(y)=0\}$, so that $\hat{\mu}_{x,h}^{n,-}$ is
determined by the part of the curve to the left of $x$ and does not
equal $x$ simply because $\mathcal{E}^+(\tau_{x,h}^n,\fl{xn})=0$.

Note that, by the Markovian structure of the directed edge local times (Appendix \ref{sec:Polya}), the only effect of using $\tau_{x,h}^n$ in place of $\overline{\tau}_{x,h}^n$ is in the values of the curves at $x$:
$\overline{\Lambda}^n_{x,h}(x) =
2n^{-1}\fl{hn/2}$ is deterministic,
while
$\hat{\Lambda}^n_{x,h}(x) = 2 n^{-1}
\mathcal{E}^+(\tau_{x,h}^n,\fl{xn})$ is random. However, since Lemma
\ref{lem:PSRbalance} implies that
 \begin{equation}\label{E+initial}
  \lim_{n\to\infty} n^{-1}\mathcal{E}^+(\tau_{x,h}^n, \fl{x n}) = h/2 \quad \text{in probability,}
 \end{equation}
 with high probability $\hat{\Lambda}^n_{x,h}(x)$ and
   $\overline{\Lambda}^n_{x,h}(x)$ are close to each other. The main
   part of the proof of Lemma~\ref{lem:Lxh-barLxh} will be to show analogs of
   \eqref{barLxh-comp}--\eqref{barmerge-comp} with
   $\hat{\Lambda}_{x,h}^n(y)$, $\hat{\mu}_{x,h}^{n,\pm}$,
   $\hat{\mu}_{(x,h),(x',h')}^{n,+}$ in place of $\Lambda_{x,h}^n(y)$,
   $\mu_{x,h}^{n,\pm}$, $\mu_{(x,h),(x',h')}^{n,+}$. Once this is
   done, the original statements
   \eqref{barLxh-comp}--\eqref{barmerge-comp} will follow readily. Indeed, by the definitions of the curves and \eqref{site-edge},
 \begin{align*}
  &\left| 
  \Lambda_{x,h}^n(y) - \frac12\left( \hat{\Lambda}_{x,h}^n(y) + \hat{\Lambda}^n_{x,h}\left(y - \frac{1}{n}\right)\right)
  \right| \leq \frac{1}{n}, \\
  &\left| \mu_{x,h}^{n,\pm} - \hat{\mu}_{x,h}^{n,\pm} \right| \leq \frac{1}{n},
  \quad \text{and}\quad
  \left| \mu_{(x,h),(x',h')}^{n,+} - \hat{\mu}_{(x,h),(x',h')}^{n,+} \right| \leq \frac{1}{n}.
 \end{align*}
 Moreover, by the augmented Ray-Knight theorem (\cite[Theorem
 1B]{tGRK}, see also \eqref{Yu-aRK}), we have
 $( \overline{\Lambda}^n_{x,h}(\cdot), \overline{\mu}_{x,h}^{n,-},
 \overline{\mu}_{x,h}^{n,+}) \Longrightarrow( \Lambda_{x,h}(\cdot),
 \mu_{x,h}^-, \mu_{x,h}^+)$, with $\Lambda_{x,h}(\cdot)$ almost surely
 continuous with compact support.  Hence, for any $\d>0$,
 \[
  \lim_{n\to \infty}P\left(\sup_{y \in \R} \left| \overline{\Lambda}^n_{x,h}\left(y-\frac{1}{n}\right) - \overline{\Lambda}^n_{x,h}(y) \right| > \d\right)=0.
 \]

Therefore, all we have to prove is that for any
$(x,h),(x',h') \in \R\times [0,\infty)$ and any $\delta>0$,
 \begin{align}
 \lim_{n\to \infty} P\left( \sup_{y \in \R} \left| \hat{\Lambda}_{x,h}^n(y) - \overline{\Lambda}^n_{x,h}(y) \right| > \d \right) &= 0,  \label{hatbarLxh} \\
  \lim_{n\to\infty} 
  P\left( |\hat{\mu}_{x,h}^{n,\pm} - \overline{\mu}_{x,h}^{n,\pm} | > \d \right) &= 0,  \label{muxhhatbar} \\
  \text{and} \quad\lim_{n\to\infty}
  P\left( |\hat{\mu}_{(x,h),(x',h')}^{n,+} - \overline{\mu}_{(x,h),(x',h')}^{n,+} | > \d \right) &= 0. \label{merge-hatbar}
 \end{align}
 We will need four lemmas, which we state and prove at the end of this subsection. 
 
To establish \eqref{hatbarLxh}--\eqref{merge-hatbar} we will use different comparisons depending on whether (1) $h>0$, (2) $h=0$ and $x>0$, or (3) $h=0$ and $x\leq 0$. 
\begin{enumerate}[\tiny{$\bullet$}]
 \item \textbf{Case $h>0$.} Since $\mathcal{E}^+(\overline{\tau}_{k,m},k) = m$ by \eqref{taue}, it follows that for any $\e \in (0,h)$, and $n$ sufficiently large,
 \begin{align}
  \left\{ 
  \left| \mathcal{E}^+(\tau_{x,h}^n, \fl{xn}) - \frac{hn}{2} \right| \leq \frac{\e n}{4} 
  \right\} 
  &\subset 
  \left\{ \overline{\tau}_{\fl{xn}, \fl{(h-\e)n/2}} < \tau_{x,h}^n < \overline{\tau}_{\fl{xn}, \fl{(h+\e)n/2}} \right\} \nonumber \\
  &\subset \left\{ \overline{\Lambda}^n_{x,h-\e}(\cdot) \leq \hat{\Lambda}^n_{x,h}(\cdot) \leq \overline{\Lambda}^n_{x,h+\e}(\cdot) \label{squeeze} \right\}. 
 \end{align}
 Note also that \eqref{E+initial} implies that these events occur with probability tending to 1 as $n\to\infty$. 
 \item \textbf{Case $h=0$ and $x>0$.} If $0\leq k' < k$, then $\overline{\tau}_{k',0} < \tau_{k,0} \leq \overline{\tau}_{k,0}$. Thus, for any $\e \in (0,x)$, and $n$ sufficiently large we have
 \begin{equation}\label{squeeze0}
  \overline{\Lambda}^n_{x-\e,0}(\cdot) \leq \hat{\Lambda}^n_{x,0}(\cdot) \leq \overline{\Lambda}^n_{x,0}(\cdot). 
 \end{equation}
 \item \textbf{Case $h=0$ and $x\leq 0$.} If $k\leq 0$ then the walk stays in $(-\infty,k]$ between
   times $\tau_{k,0}$ and $\overline{\tau}_{k,0}$, so that
   $\mathcal{E}^+(\overline{\tau}_{k,0},\ell) =
   \mathcal{E}^+(\tau_{k,0},\ell)$ for all $\ell \geq k$, and, since $\mathcal{E}^+(\tau_{k,0},k') = 0$ for
   $k'\leq k$, we have that for
     $x\leq 0$
 \begin{equation}\label{hzero-right}
  \hat{\Lambda}_{x,0}^n(y)
  = \overline{\Lambda}_{x,0}^n(y) \ind{y \geq \fl{xn}/n }.
 \end{equation}
\end{enumerate}

We now use these comparisons to prove \eqref{hatbarLxh}--\eqref{merge-hatbar}; throughout, $n$ is assumed to be sufficiently large.

 \noindent\textbf{Proof of \eqref{hatbarLxh}.}
 If $h>0$, then it follows from \eqref{E+initial}, \eqref{squeeze},  the augmented Ray-Knight theorem \eqref{Yu-aRK} and the joint Ray-Knight theorem given in \eqref{Yu-JRK} that for any $\e \in (0,h)$,
\begin{align*}
 \limsup_{n\to \infty} P\left( \sup_{y \in \R} \left| \hat{\Lambda}_{x,h}^n(y) - \overline{\Lambda}^n_{x,h}(y) \right| > \d \right)
 &\leq P\left( \sup_{y \in \R} \left( \Lambda_{x,h+\e}(y) -  \Lambda_{x,h-\e}(y) \right) \geq \d \right). 
\end{align*}
We claim that the last probability can be made arbitrarily small by taking $\e\to 0$.
To see this, assume further that $\e < \d/4 < h$. By Lemma \ref{lem:coalesce}(ii), for any $\d'>0$ we have
\begin{multline*}
 P\left( \sup_{y \in \R} \left( \Lambda_{x,h+\e}(y) -  \Lambda_{x,h-\e}(y) \right) \geq \d \right)
 \leq P\left( \mu^+_{(x,h-\e),(x,h+\e)} > x+\d' \right) \\
 \qquad + P\left( \mu^-_{(x,h-\e),(x,h+\e)} < x-\d' \right) + P\left( \sup_{y \in [x-\d',x+\d']} \left(  \Lambda_{x,h+\d/4}(y) - \Lambda_{x,h-\d/4}(y) \right) \geq \d \right). 
\end{multline*}
 Given $\e'>0$, since the paths $\Lambda_{x,h\pm \d/4}(\cdot)$ are continuous with a gap of $\d/2$ at $y=x$, we can first choose $\d'>0$ sufficiently small so that the last probability is less than $\e'/3$, and then by Lemma \ref{lem:mu-cont} we can choose $\e$ small enough so that the first two probabilities on the right are both bounded by $\e'/3$.

When $h=0$ and $x>0$ it follows from \eqref{squeeze0} and the augmented/joint Ray-Knight theorems in \eqref{Yu-aRK}-\eqref{Yu-JRK} that for any $\e \in (0,x)$,
\begin{align*}
 \limsup_{n\to\infty}  P\left( \sup_{y \in \R} \left| \hat{\Lambda}_{x,0}^n(y) - \overline{\Lambda}^n_{x,0}(y) \right| > \d \right)
 &\leq P\left( \sup_{y \leq x} \left( \Lambda_{x,0}(y) - \Lambda_{x-\e,0}(y) \right) \geq \d \right) \\
 &\leq P\left( \sup_{0 \leq s \leq \sigma_{x,\e,0} } \Lambda_{x,0}(x-s) \geq \d  \right),
\end{align*}
where $\sigma_{x,\e,0} = \inf\{s\geq \e:\, \Lambda_{x,0}(x-s)= 0\}$ and the last inequality holds by Lemma \ref{lem:coalesce}(ii): the two curves coincide to the left of $x-\sigma_{x,\e,0}$.
Since $(\Lambda_{x,0}(x-s))_{s\in [0,x]}$ is a multiple of a square of a standard Brownian motion, $\sigma_{x,\e,0} \to 0$ as $\e\to 0$, almost surely, and thus this last probability vanishes as $\e \to 0$.

Finally, when $h=0$ and $x\leq 0$, it follows from \eqref{hzero-right} and the augmented Ray-Knight theorem in \eqref{Yu-aRK} that
\begin{align*}
 \limsup_{n\to \infty} P\left( \sup_{y \in \R} \left| \hat{\Lambda}_{x,0}^n(y) - \overline{\Lambda}^n_{x,0}(y) \right| > \d \right)
 &\leq P\left( \sup_{y\leq x} \Lambda_{x,0}(y) \geq \d \right) = 0. 
\end{align*}

 \noindent\textbf{Proof of \eqref{muxhhatbar}.}
If $h>0$ and $\e \in (0,h)$, then on the event \eqref{squeeze}
\[
 |\hat{\mu}_{x,h}^{n,\pm} - \overline{\mu}_{x,h}^{n,\pm} | \leq |\overline{\mu}_{x,h+\e}^{n,\pm} - \overline{\mu}_{x,h-\e}^{n,\pm}|.
\]
Since, by \eqref{E+initial}, the probability of this event tends to $1$, we have
\begin{equation}\label{mxhcompare}
 \limsup_{n\to\infty} P\left( |\hat{\mu}_{x,h}^{n,\pm} - \overline{\mu}_{x,h}^{n,\pm} | > \d \right) \leq \limsup_{n\to\infty} P\left( |\overline{\mu}_{x,h+\e}^{n,\pm} - \overline{\mu}_{x,h-\e}^{n,\pm}| > \d \right). 
\end{equation}
By \eqref{Yu-aRK} and Lemma \ref{lem:mu-cont},
$\overline{\mu}^{n,\pm}_{x,h+\e}
\underset{n\to\infty}{\Longrightarrow} \mu^\pm_{x,h+\e}
\underset{\e\to 0}{\Longrightarrow} \mu^\pm_{x,h}$ and
$\overline{\mu}^{n,\pm}_{x,h-\e}
\underset{n\to\infty}{\Longrightarrow} \mu^\pm_{x,h-\e}
\underset{\e\to 0}{\Longrightarrow} \mu^\pm_{x,h}$, and the
right side of \eqref{mxhcompare} vanishes as $\e\to 0$ by Lemma
  \ref{lem:cip}.

If $h=0$ and $x>0$ then $\hat{\mu}^{n,+}_{x,0} = \overline{\mu}^{n,+}_{x,0} = x$, since both curves vanish at the site $\fl{xn}$.
For the left endpoints note that for any $\e \in (0,x)$
it follows from \eqref{squeeze0} that $\overline{\mu}_{x,0}^{n,-} \leq \hat{\mu}_{x,0}^{n,-} \leq \overline{\mu}_{x-\e,0}^{n,-}$, and therefore
\begin{equation}\label{mx0compare}
 \limsup_{n\to\infty} P\left( |\hat{\mu}_{x,0}^{n,-} - \overline{\mu}_{x,0}^{n,-} | > \d \right) \leq \limsup_{n\to\infty} P\left( \overline{\mu}_{x-\e,0}^{n,-} - \overline{\mu}_{x,0}^{n,-} > \d \right). 
\end{equation}
By \eqref{Yu-aRK} and Lemma \ref{lem:leftend},
$\overline\mu_{x,0}^{n,-} \underset{n\to\infty}{\Longrightarrow}
\mu_{x,0}^-$ and
$\overline\mu_{x-\e,0}^{n,-} \underset{n\to\infty}{\Longrightarrow}
\mu_{x-\e,0}^- \underset{\e \to 0}{\Longrightarrow} \mu_{x,0}^-$,
and the right side of \eqref{mx0compare} vanishes as $\e \to 0$
by Lemma \ref{lem:cip}.

If $h=0$ and $x\leq 0$ then \eqref{hzero-right} implies that $\hat{\mu}_{x,0}^{n,+} = \overline{\mu}_{x,0}^{n,+}$, since the right endpoints are determined by the curves on $[0,\infty) \subset [\fl{xn}/n,\infty)$. For the left endpoints, note that $\hat{\mu}_{x,0}^{n,-} = \fl{xn}/n$ and the augmented Ray-Knight theorem in \eqref{Yu-aRK} implies that $\overline{\mu}_{x,0}^{n,-} \Longrightarrow \mu_{x,0}^-$.
Since $\mu_{x,0}^-=x$, almost surely, it follows that $\lim_{n\to\infty}P\left( |\hat{\mu}_{x,0}^{n,-} - \overline{\mu}_{x,0}^{n,-} | > \d \right) = 0$.
 
 \noindent\textbf{Proof of \eqref{merge-hatbar}.}
  By Lemma \ref{lem:coalesce}(i), each pair of curves stays merged
  from its right merge point on. It follows that
 \begin{align*}
 &\left\{ \hat{\Lambda}^n_{x,h}(y) = \overline{\Lambda}^n_{x,h}(y), \, \forall\, y \geq x+\d \right\}
 \cap 
 \left\{ \hat{\Lambda}^n_{x',h'}(y) = \overline{\Lambda}^n_{x',h'}(y), \, \forall\, y \geq x' + \d \right\}
 \\
 &\qquad  \subset 
 \left\{  | \hat{\mu}_{(x,h),(x',h')}^{n,+} - \overline{\mu}_{(x,h),(x',h')}^{n,+} | \leq \d \right\}, 
\end{align*}
and \eqref{merge-hatbar} will follow once we show that for
any $(x,h) \in \R \times [0,\infty)$,
\begin{equation}\label{Lxhcompare-right}
 \lim_{n\to\infty} P\left( \hat{\Lambda}^n_{x,h}(y) = \overline{\Lambda}^n_{x,h}(y), \, \forall\, y \geq x+\d \right) = 1, \quad \forall\, \d>0.
\end{equation}
\eqref{Lxhcompare-right} is trivial when $h=0$ because $\hat{\Lambda}^n_{x,0}(y) = \overline{\Lambda}^n_{x,0}(y)$ for all $y \geq \fl{xn}/n$ (for $x>0$ this follows as in \eqref{hzero-right}, since the walk stays in $(-\infty,k]$ between $\tau_{k,0}$ and $\overline{\tau}_{k,0}$).
On the other hand, if $h>0$ and $\e \in (0,h)$, then it follows from \eqref{squeeze} and Lemma \ref{lem:coalesce}(i) that for $n$ sufficiently large,
 \begin{align}
  &\left\{ 
  \left| \mathcal{E}^+(\tau_{x,h}^n, \fl{xn}) - \frac{hn}{2} \right| \leq \frac{\e n}{4} 
  \right\} \cap \left\{  \overline{\mu}^{n,+}_{(x,h-\e),(x,h+\e)} \leq x +\d \right\}
  \subset 
  \left\{ \hat{\Lambda}^n_{x,h}(y) = \overline{\Lambda}^n_{x,h}(y), \, \forall\, y \geq x+\d \right\}. \label{squeeze2}
 \end{align}
 Thus, \eqref{E+initial} and the forward augmented joint Ray-Knight
 theorem in \eqref{Yu-fJRK} imply that
 \begin{align*}
  \limsup_{n\to\infty}
  P\left( \hat{\Lambda}^n_{x,h}(y) \neq \overline{\Lambda}_{x,h}^n(y) \text{ for some } y \geq x+\d \right) 
  &\leq \limsup_{n\to\infty} P\left( \overline{\mu}^{n,+}_{(x,h-\e),(x,h+\e)} > x +\d \right) \\
  &\leq P\left( \mu_{(x,h-\e),(x,h+\e)}^+ \geq x+\d \right).
 \end{align*}
 Finally, this upper bound vanishes by taking $\e\to 0$ due to Lemma \ref{lem:mu-cont}. This completes the proof of Lemma~\ref{lem:Lxh-barLxh}
 modulo the four lemmas used above, which we now state and prove.

\begin{lemma}\label{lem:coalesce}
 \begin{enumerate}[(i)]
\item Let $0\leq n_1 \leq n_2$ be integers and $k \in \Z$ be such
  that $X_{n_1} \leq k$ and
  $\mathcal{E}^+(n_1,k) = \mathcal{E}^+(n_2,k)$. Then
  $\mathcal{E}^+(n_1,\ell) = \mathcal{E}^+(n_2,\ell)$ for all
  $\ell \geq k$. In particular, almost surely, for any
  $(x,h),(x',h') \in \R\times[0,\infty)$,
  \[
   \overline{\Lambda}^n_{x,h}(y) = \overline{\Lambda}^n_{x',h'}(y)
   \quad \text{for all } y \geq \overline{\mu}^{n,+}_{(x,h),(x',h')},
  \]
  and a similar statement holds for the curves $\hat{\Lambda}^n$ defined in
  \eqref{hatLxh}.
  \item Almost surely, the curves in \eqref{WNrep}-\eqref{WNrep-back}
    satisfy the non-crossing property \eqref{non-cross} (in
    particular, $\Lambda_{x,h'} \leq \Lambda_{x,h''}$ for
    $0\leq h' \leq h''$) and coalesce: for any
    $(x,h),(x',h') \in \R\times[0,\infty)$, 
    $\Lambda_{x,h}(y) = \Lambda_{x',h'}(y)$ for all
    $y \geq \mu^+_{(x,h),(x',h')}$ and all
    $y \leq \mu^-_{(x,h),(x',h')}$.
 \end{enumerate}
\end{lemma}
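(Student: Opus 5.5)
For part (i), I would argue directly from the nearest-neighbor structure of the walk. Suppose $X_{n_1}\le k$ and $\mathcal{E}^+(n_1,k)=\mathcal{E}^+(n_2,k)$. Then no step from $k$ to $k+1$ occurs during $[n_1,n_2)$. Since the walk starts the interval at a site $\le k$ and moves by $\pm1$, it can never reach any site $\ell\ge k+1$ in that interval. Hence it makes no step from $\ell$ to $\ell+1$ for any $\ell\ge k+1$, and none from $k$ either. This gives $\mathcal{E}^+(n_1,\ell)=\mathcal{E}^+(n_2,\ell)$ for all $\ell\ge k$.

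To get the statement about the curves, apply this with $n_1=\overline{\tau}_{\fl{xn},\fl{hn/2}}$ and $n_2=\overline{\tau}_{\fl{x'n},\fl{h'n/2}}$, ordered so that $n_1\le n_2$. Let $k=\fl{yn}$ with $y=\overline{\mu}^{n,+}_{(x,h),(x',h')}$; this $k$ is a site $\ge \fl{xn}\vee\fl{x'n}$ at which the two curves agree. The key point is that $X_{n_1}\in\{\fl{xn},\fl{x'n}\}\le k$, because by \eqref{taue} the walk sits at the base point at time $\overline{\tau}$. This is exactly why the definition \eqref{taue} was chosen.

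The case of the hybrid curves $\hat\Lambda^n$ is identical, using $X_{\tau_{k,m}}=k$. I also need to check that the times are ordered. For times $\tau$ at different sites, monotonicity in time is automatic once we pick the smaller one as $n_1$; the condition $X_{n_1}\le k$ holds either way, because both base points are $\le k$.

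For part (ii), I would use the white-noise representation \eqref{WNrep}--\eqref{WNrep-back}. The forward curves are strong solutions driven by the same noise, with a flow property: by strong uniqueness, $(\Lambda_{x,h}(x_0+t))_t=\Phi(x_0,\Lambda_{x,h}(x_0),\mathcal{W}|_{(x_0,\infty)})$, as already noted after \eqref{def_Phi}. The plan has three steps.
\begin{enumerate}
\item[(a)] Monotonicity: if $\Lambda_{x,h'}(x_0)\le\Lambda_{x,h''}(x_0)$, then this order persists. I would prove this through a comparison theorem for the system \eqref{jointSDE}. Alternatively, I would use the discrete approximation: the walk curves are ordered, and the joint RKT \eqref{Yu-JRK} of \cite{Y25+} transfers the order to the limit almost surely, at least at countably many points and then everywhere by continuity.
\item[(b)] Coalescence: at $y_0=\mu^+$ the two curves have equal value. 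By the flow property both continuations equal $\Phi(y_0,v,\cdot)$, so they coincide for all $y\ge y_0$. This requires applying strong uniqueness at a random point $y_0$, which is a stopping point for the filtration $\mathcal{G}_u$. I would handle that with the strong Markov property of the joint diffusion, or more simply by approximating $y_0$ by rational points from above and using uniqueness at rational starting points together with continuity.
\item[(c)] The backward statements follow by symmetry.
\end{enumerate}
Non-crossing for curves started at different $x$ then follows from (a) applied at the later starting point.

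The main obstacle I expect is making (b) rigorous. The SDE has a non-Lipschitz diffusion coefficient $s^\alpha\ind{s\le\Lambda}$, so strong uniqueness is taken from \cite{Y25+}, and it must be applied simultaneously for all starting points. My first attempt would be the discrete-to-continuum route: the walk curves coalesce exactly by part (i), the joint RKT gives almost sure limits along a Skorokhod coupling, and coalescence after the limiting merge point then follows once curves that become equal stay equal. I would secure that last property via the flow property at rational points, which gives an almost sure statement for countably many pairs at once.
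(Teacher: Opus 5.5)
Your part (i) is the paper's argument. The discrete route you offer in (a) is also how the paper proves non-crossing: each pair $\overline{\Lambda}^n_{x,h},\overline{\Lambda}^n_{x',h'}$ is pointwise ordered, and \eqref{non-cross} survives the limit \eqref{Yu-JRK} because the limit curves are continuous. (The SDE-comparison variant would not suffice: comparison ``at the later starting point'' says nothing on the interval between the two base points, where one curve evolves forward and the other backward.)

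The gap is in coalescence, in the form you say you would actually carry it out. The flow property at deterministic points gives the following: almost surely, for every rational $q\ge x\vee x'$, if the two curves agree at $q$ they agree on $[q,\infty)$. Now suppose coalescence fails, i.e.\ the curves differ at some $y>\mu^+_{(x,h),(x',h')}$. Then, by that same property, they differ at every rational point of $(\mu^+,y)$. So the curves touch at the random point $\mu^+$ and separate immediately. This is exactly the scenario to be excluded, and the rational flow property is silent about it because its hypothesis is never met there. Approximating $\mu^+$ pathwise by rationals from above fails for the same reason. Your discrete-to-continuum route with the plain joint RKT also does not help: it only shows the limit curves agree beyond $\limsup_n\overline{\mu}^{n,+}_{(x,h),(x',h')}$, which is $\ge\mu^+$ but may be strictly larger, so the same interval is left uncontrolled.

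What is needed is the strong Markov property at $\mu^+$, which is a stopping point for $(\mathcal{G}_u)$. You list this as an option but do not say how to obtain it. The paper upgrades the simple Markov property of Section~\ref{sec:PSRasm} using the continuity in law of Proposition~\ref{prop:fdd}:
\begin{enumerate}
\item discretize $\mu^+$ from above by dyadic stopping points;
\item apply the simple Markov property at each dyadic value;
\item pass to the limit using continuity of the joint law of $(\Phi(u,v_1,\tilde{\mathcal{W}}),\Phi(u,v_2,\tilde{\mathcal{W}}))$ in $(u,v_1,v_2)$.
\end{enumerate}
The outcome is a statement about conditional laws, not about rational points. Given $\mathcal{G}_{\mu^+}$, the two continuations are $\Phi$ evaluated at $\mu^+$ with a common initial value and a common independent noise, hence they are equal almost surely. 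The left side then follows by symmetry.

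Alternatively, your discrete route can be repaired by replacing \eqref{Yu-JRK} with the forward augmented joint RKT \eqref{Yu-fJRK}, whose proof in Section~\ref{sec:PSRaug} does not use this lemma. Under a Skorokhod coupling one then has $\overline{\mu}^{n,+}_{(x,h),(x',h')}\to\mu^+_{(x,h),(x',h')}$ almost surely, and part (i) passes directly to the limit for the right merge point (the left one by symmetry).
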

\begin{proof}
  (i) Since $\mathcal{E}^+(\cdot,k)$ is non-decreasing, the assumption
 implies that the walk makes no step from $k$ to $k+1$ at any time
 $j \in [n_1,n_2)$. Since $X_{n_1} \leq k$, it follows that
 $X_j \leq k$ for all $j \in [n_1,n_2]$ and, therefore, the walk
 makes no step from $\ell$ to $\ell+1$ with $\ell \geq k$ at any time
 $j \in [n_1,n_2)$. This proves the first claim. For the second claim
 we may assume that $\overline{\mu}^{n,+}_{(x,h),(x',h')} < \infty$.
 Let $n_1 = \overline{\tau}^n_{x,h} \wedge \overline{\tau}^n_{x',h'}$
 and $n_2 = \overline{\tau}^n_{x,h} \vee \overline{\tau}^n_{x',h'}$.
 By \eqref{taue}, $X_{n_1} \in \{\fl{xn},\fl{x'n}\}$, and the two directed edge local time profiles
 agree at some site $k \geq \fl{xn}\vee\fl{x'n} \geq X_{n_1}$, so the
 first claim applies. The statement for the curves $\hat{\Lambda}^n$
 follows in the same way, since $X_{\tau^n_{x,h}} = \fl{xn}$.

 (ii)
 For the non-crossing property, we notice that since
 $\mathcal{E}^+(\cdot\,,\ell)$ is non-decreasing for every $\ell$,
 for each $n$ the curves $\overline{\Lambda}^n_{x,h}$ and
 $\overline{\Lambda}^n_{x',h'}$ are ordered pointwise (in the same
 way as $\overline{\tau}^n_{x,h}$ and $\overline{\tau}^n_{x',h'}$),
 and, in particular, each pair of discrete curves satisfies
 \eqref{non-cross}. Since the limiting curves are continuous,
 \eqref{non-cross} is preserved under the joint convergence
 \eqref{Yu-JRK}. About the coalescence, recalling the definitions of $\Phi$ and $(\mathcal{G}_u)_{u \in \mathbb R}$ given in Equations \eqref{def_Phi}, \eqref{def_G} of Section \ref{sec:PSRasm}, the arguments in that section yield that for $x_0 \geq x \vee x'$, the law of $(\Lambda_{x,h}(x_0+\cdot),\Lambda_{x',h'}(x_0+\cdot))$ conditionally to $\mathcal{G}_{x_0}$ is that of $(\Phi(x_0,\Lambda_{x,h}(x_0),\tilde{\mathcal{W}}),\Phi(x_0,\Lambda_{x',h'}(x_0),\tilde{\mathcal{W}}))$, where $\tilde{\mathcal{W}}$ is an independent white noise on a quadrant. This simple Markov property can be strengthened into a strong Markov property thanks to the continuity in Proposition \ref{prop:fdd} (which holds since \eqref{Yu-JRK} holds). Therefore, conditionally to $\mathcal{G}_{\mu^+_{(x,h),(x',h')}}$, the curves $(\Lambda_{x,h}(\mu^+_{(x,h),(x',h')}+\cdot),\Lambda_{x',h'}(\mu^+_{(x,h),(x',h')}+\cdot))$ have the distribution of $(\Phi(\mu^+_{(x,h),(x',h')},\Lambda_{x,h}(\mu^+_{(x,h),(x',h')}),\tilde{\mathcal{W}}),\Phi(\mu^+_{(x,h),(x',h')},\Lambda_{x',h'}(\mu^+_{(x,h),(x',h')}),\tilde{\mathcal{W}}))$, thus they are almost surely equal. The equality for $y \leq \mu^-_{(x,h),(x',h')}$ can then be deduced since the law of the backward curves is symmetric.
\end{proof}

\begin{lemma}\label{lem:cip}
 Let $\{\xi_{\e,n}\}_{n\geq 1, \e>0}$,  $\{\zeta_{\e,n}\}_{n\geq 1, \e>0}$, $\{\xi_\e\}_{\e>0}$, $\{\zeta_\e\}_{\e>0}$, and $\chi$ be random variables with the following properties:
 \begin{enumerate}[(i)]
  \item $\xi_{\e,n} \leq \zeta_{\e,n}$, almost surely, for all $\e>0$ and $n\geq 1$; 
  \item for any $\e>0$, $\xi_{\e,n} \Longrightarrow \xi_\e$ and $\zeta_{\e,n} \Longrightarrow \zeta_\e$ as $n\to \infty$; 
  \item $\xi_\e \Longrightarrow \chi$ and $\zeta_\e \Longrightarrow \chi$ as $\e \to 0$. 
 \end{enumerate}
Then 
\[
 \lim_{\e \to 0} \limsup_{n\to \infty} P\left( \zeta_{\e,n} - \xi_{\e,n} \geq \delta \right) = 0, \quad \forall\, \d>0.
\]
\end{lemma}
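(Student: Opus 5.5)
The plan is to exploit the fact that the weak limits of $\xi_\e$ and $\zeta_\e$ coincide. Tightness then forces the gap $\zeta_{\e,n}-\xi_{\e,n}$ to be small in probability. Fix $\d>0$ and $\eta>0$; we want $\limsup_{n\to\infty} P(\zeta_{\e,n}-\xi_{\e,n}\geq \d)\le \eta$ for all small $\e$. Since $\chi$ is a single real random variable, we can choose points $a_0<a_1<\cdots<a_K$ with the following properties:
\begin{itemize}
\item[] spacing $a_{k}-a_{k-1}<\d/2$ for every $k$;
\item[] $P(\chi\le a_0)+P(\chi>a_K)<\eta/4$;
\item[] no atoms, i.e.\ $P(\chi=a_k)=0$ for all $k$.
\end{itemize}
This is possible because $\chi$ has at most countably many atoms.

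The key observation is deterministic. Since $\xi_{\e,n}\le\zeta_{\e,n}$, the event $\{\zeta_{\e,n}-\xi_{\e,n}\ge \d\}$ is contained in the union of three events. The first is $\{\xi_{\e,n}\le a_0\}$. The second is $\{\zeta_{\e,n}> a_K\}$. The third is the union over $k$ of $\{\xi_{\e,n}\le a_{k-1},\ \zeta_{\e,n}>a_k\}$. Indeed, if $\xi_{\e,n}\in(a_{k-1},a_k]$, the gap $\d>a_{k+1}-a_{k-1}$ forces $\zeta_{\e,n}>a_{k+1}$. Since $\xi\le\zeta$, each such event equals
\[
P(\xi_{\e,n}\le a_{k-1})-P(\zeta_{\e,n}\le a_{k-1})-P(\xi_{\e,n}\le a_{k-1},\,a_{k-1}<\zeta_{\e,n}\le a_k)\le P(\xi_{\e,n}\le a_{k-1})-P(\zeta_{\e,n}\le a_{k-1}).
\]
This bound is too crude, because summing over $k$ gives $K$ terms. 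I would instead bound the whole union by a single quantity. On the event $\{\zeta_{\e,n}-\xi_{\e,n}\ge\d\}$ there is some index $k$ with $\xi_{\e,n}\le a_{k-1}<a_k<\zeta_{\e,n}$. Because the events are nested and $\xi\le\zeta$, we get
\[
\mathbbm 1_{\{\zeta-\xi\ge\d\}}\le \mathbbm 1_{\{\xi\le a_0\}}+\mathbbm 1_{\{\zeta>a_K\}}+\sum_{k=1}^K\big(\mathbbm 1_{\{\xi\le a_k\}}-\mathbbm 1_{\{\zeta\le a_k\}}\big).
\]
Each summand is nonnegative, so taking expectations gives a sum of differences of distribution functions.

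Next we pass to the limit. Let $n\to\infty$ using (ii), evaluating at continuity points. Since the continuity points of $\xi_\e,\zeta_\e$ may not include the $a_k$, we perturb each $a_k$ slightly for each $\e$, or simply use Portmanteau with $\limsup P(\xi_{\e,n}\le a)\le P(\xi_\e\le a)$ and $\liminf P(\zeta_{\e,n}\le a)\ge P(\zeta_\e<a)$. This gives a bound by $\sum_k\big(P(\xi_\e\le a_k)-P(\zeta_\e<a_k)\big)$ plus the two tail terms. Then let $\e\to0$ using (iii). Because the $a_k$ are not atoms of $\chi$, both $P(\xi_\e\le a_k)$ and $P(\zeta_\e<a_k)$ converge to $P(\chi\le a_k)$, so every difference term tends to $0$. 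The tail terms tend to $P(\chi\le a_0)+P(\chi\ge a_K)<\eta/2$, up to a Portmanteau argument on closed/open sets.

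The main obstacle is controlling the number of summands. However, $K$ is fixed by $\d$ and $\eta$ alone, before taking the limits in $n$ and $\e$. So a finite sum of terms each tending to $0$ is harmless, and the conclusion follows since $\eta$ is arbitrary.
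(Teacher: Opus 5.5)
Your proof is correct, but it takes a different route from the paper's.

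The paper argues by compactness. It picks $\e_k\to0$ realizing $\limsup_{\e\to0}f(\e)$. For each $k$ it extracts a subsequence along which $(\xi_{\e_k,n},\zeta_{\e_k,n})$ converges jointly to an ordered pair $(\tilde\xi_{\e_k},\tilde\zeta_{\e_k})$, and applies Portmanteau to the closed set $\{b-a\ge\d\}$. It then takes a further joint limit $(\tilde\xi,\tilde\zeta)$ as $k\to\infty$. This limit is ordered and both marginals are the law of $\chi$, so $\tilde\xi=\tilde\zeta$ a.s.

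You instead make the principle ``ordered with equal laws implies equal'' quantitative. Your pointwise bound
\[
\mathbbm 1_{\{\zeta-\xi\ge\d\}}\le \mathbbm 1_{\{\xi\le a_0\}}+\mathbbm 1_{\{\zeta>a_K\}}+\sum_{k=1}^K \mathbbm 1_{\{\xi\le a_k<\zeta\}}
\]
is valid because the mesh is below $\d$. Combined with $P(\xi\le a<\zeta)=P(\xi\le a)-P(\zeta\le a)$ for $\xi\le\zeta$, it reduces everything to one-dimensional distribution functions at finitely many non-atoms of $\chi$. You control these by the closed/open-set Portmanteau inequalities as $n\to\infty$, then by convergence at continuity points as $\e\to0$.

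Your route needs no joint convergence, no tightness of pairs and no double subsequence extraction, and it gives an explicit bound for each fixed grid. The paper's route is shorter and rests only on the soft fact that an ordered pair with identical marginals is a.s.\ equal.

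Your aside that summing $K$ terms is ``too crude'' is unnecessary. As you note at the end, $K$ is fixed by $\d$ and $\eta$ before either limit is taken.
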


\begin{proof}
Let $f(\e) = \limsup_{n\to\infty} P(\zeta_{\e,n}-\xi_{\e,n} \geq \d)$, and let $\e_k \searrow 0$ be a sequence such that $\lim_{k\to\infty} f(\e_k) = \limsup_{\e\to 0} f(\e)$. 
 For each $k\geq 1$, take a subsequence $n_j\to\infty$ so that $f(\e_k) = \lim_{j\to\infty} P(\zeta_{\e_k, n_j} - \xi_{\e_k,n_j} \geq \d)$ and then take a further subsequence so that we have joint convergence in distribution of $(\xi_{\e_k,n_j}, \zeta_{\e_k,n_j})$ to some pair $(\tilde{\xi}_{\e_k}, \tilde{\zeta}_{\e_k})$. Note that our assumptions imply that $\tilde{\xi}_{\e_k} \overset{\text{Law}}{=} \xi_{\e_k}$, $\tilde{\zeta}_{\e_k} \overset{\text{Law}}{=} \zeta_{\e_k}$, and $\tilde{\xi}_{\e_k} \leq \tilde{\zeta}_{\e_k}$, almost surely. Then, it follows from the Portmanteau Theorem that $f(\e_k) \leq P(\tilde{\zeta}_{\e_k} - \tilde{\xi}_{\e_k} \geq \d )$. We can then take a further subsequence $\e_{k_\ell} \to 0$ so that we have joint convergence in distribution of $(\tilde{\xi}_{\e_{k_\ell}}, \tilde{\zeta}_{\e_{k_\ell}})$ to some pair $(\tilde{\xi},\tilde{\zeta})$. However, since our assumptions imply that we must have $\tilde{\xi} \overset{\text{Law}}{=} \chi$, $\tilde{\zeta} \overset{\text{Law}}{=} \chi$, and $\tilde{\xi} \leq \tilde{\zeta}$, almost surely, it follows that $\tilde{\xi} = \tilde{\zeta}$, almost surely so that in fact $\tilde{\zeta}_{\e_{k_\ell}} - \tilde{\xi}_{\e_{k_\ell}}$ converges to zero in probability, and thus
 \[
  \limsup_{\e\to 0} f(\e)
  = \lim_{k\to\infty} f(\e_k)
  = \lim_{\ell\to\infty} f(\e_{k_\ell}) 
  \leq \lim_{\ell \to \infty} P\left( \tilde{\zeta}_{\e_{k_\ell}} - \tilde{\xi}_{\e_{k_\ell}} \geq \d \right) = 0. 
 \]
\end{proof}

\begin{lemma}\label{lem:mu-cont}
 For the family of curves defined  in \eqref{WNrep}-\eqref{WNrep-back}, 
 for any $x \in \R$ and $h>0$ the following limits hold in probability 
 \[\lim_{\e\to 0} \mu_{(x,h-\e),(x,h+\e)}^\pm = x, 
  \quad\lim_{\e\to 0} \mu_{x,h+\e}^\pm = \mu_{x,h}^\pm, 
  \quad\lim_{\e\to 0} \mu_{x,h-\e}^\pm = \mu_{x,h}^\pm. 
\]
\end{lemma}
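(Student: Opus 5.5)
We use the monotonicity of the family in $h$ together with the Markov/SDE description of the curves. For the merge points, Lemma \ref{lem:coalesce}(ii) gives $\Lambda_{x,h-\e}\le\Lambda_{x,h+\e}$. By the Markov property of Section \ref{sec:PSRasm}, the forward pair $(\Lambda_{x,h-\e}(x+t),\Lambda_{x,h+\e}(x+t))_{t\ge0}$ solves \eqref{jointSDE} started from $(h-\e,h+\e)$.

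The plan for $\mu^+_{(x,h-\e),(x,h+\e)}\to x$ follows the proof of Lemma \ref{c-base}. Set $S=\Lambda_{x,h-\e}(x+\cdot)$ and $Y=\Lambda_{x,h+\e}(x+\cdot)-S$. By \eqref{eq:SDE} and Lemma \ref{lem:BESQ-tc}, on $[0,\sigma_0(S))$ we have $Y(t)=Z(\Gamma(t))$, where $Z\in{\rm BESQ}^0$ with $Z(0)=2\e$, and $\Gamma(t)\ge t/(2\alpha+1)$ by \eqref{eq:fbounds}. The drift $\frac{1}{2\alpha+1}dt$ cancels in the difference also across the reflection region, as long as $S>0$. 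For fixed $t_0>0$ we therefore get
\[
P\big(\mu^+_{(x,h-\e),(x,h+\e)}>x+t_0\big)\le P\big(\sigma_0(S)\le t_0\big)+P\big(Z(t_0/(2\alpha+1))>0\big).
\]
The second term equals $1-e^{-2\e(2\alpha+1)/(2t_0)}\to0$ as $\e\to0$. The first term is bounded by $P(\inf_{t\le t_0}\Lambda_{x,h/2}(x+t)=0)$ for $\e<h/2$, by monotonicity, and this tends to $0$ as $t_0\to0$ by continuity and $h>0$. Since $t_0$ is arbitrary, $\mu^+_{(x,h-\e),(x,h+\e)}\to x$ in probability. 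The left merge point follows by the symmetry between \eqref{WNrep} and \eqref{WNrep-back}.

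For the endpoints, monotonicity gives $\mu^+_{x,h-\e}\le\mu^+_{x,h}\le\mu^+_{x,h+\e}$, and the reverse inequalities for $\mu^-$. It therefore suffices to show $\mu^+_{x,h+\e}-\mu^+_{x,h-\e}\to0$ in probability. On the event that the two curves merge before $x+t_0$, with $t_0<\max(0,-x)$ or in general before $\mu^+_{x,h-\e}$, the curves coincide afterwards by Lemma \ref{lem:coalesce}(ii), so the endpoints agree. The remaining event is that $\Lambda_{x,h-\e}$ hits $0$ at its absorption point before merging. This is controlled by the first-step estimate above once we note $\mu^+_{x,h-\e}\ge\mu^+_{x,h/2}>x$, with $\mu^+_{x,h/2}>x$ a.s.

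The main obstacle is the degenerate case where the lower curve is absorbed before the merge, and the justification of the time change across the reflected region and the point $0$ when $x<0$. There the SDE \eqref{eq:SDE} involves the reflection local time of $S$. I would first try to argue that on the event that $S$ stays positive on $[0,t_0]$ the computation is unaffected. I would then handle the event that $S$ hits $0$ separately, by bounding it with the probability that $\Lambda_{x,h/2}$ hits $0$ near $x$, which is small.
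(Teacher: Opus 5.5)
Your proposal is correct. It reaches the first claim by a more direct route than the paper, and it handles the endpoints in essentially the same way.

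\textbf{First claim.} The paper does not redo the time change.
\begin{itemize}
\item It asserts that the merge distance $|\mu^\pm_{(x,h-\e),(x,h+\e)}-x|$ is stochastically largest at $x=0$, because reflection can only speed up merging.
\item This reduces the claim to $P(\Lambda_{0,h+\e}(\d)>\Lambda_{0,h-\e}(\d))$.
\item It then applies Lemma \ref{c-base} after rescaling by $2\e$ via \eqref{Lxh-rescale}. This gives $\ell=(h-\e)/(2\e)\to\infty$ and $T=\d/(2\e)\to\infty$, with monotonicity in $T$ implicit.
\end{itemize}
You instead work directly at base point $x$ with gap $2\e$, and use the exact BESQ$^0$ absorption probability together with $\Gamma(t)\ge t/(2\alpha+1)$.

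What your version buys is that it avoids both the scaling step and the comparison across boundary conditions, which the paper only sketches. On $[0,\sigma_0(S))$ neither curve is at $0$, since $S+Y\ge S>0$, so no reflection term appears in the SDE for $Y$. The complementary event $\{\sigma_0(S)\le t_0\}$ is then controlled by continuity of $\Lambda_{x,h/2}$. The only cost is re-running \eqref{eq:SDE} and Lemma \ref{lem:BESQ-tc} from \eqref{jointSDE} with a general base point $x$ and $Z(0)=2\e$; this is verbatim.

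Consequently, the ``main obstacle'' in your last paragraph is already disposed of by your own argument, and that hedge can be dropped.

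\textbf{Endpoints.} The paper uses the single inclusion
\[
\bigl\{|\mu^\pm_{(x,h-\e),(x,h+\e)}-x|\le\d\bigr\}\subset\bigl\{|\mu^\pm_{x,h+\e}-\mu^\pm_{x,h-\e}|\le\d\bigr\}.
\]
This covers the case where the lower curve is absorbed before merging automatically: then the upper endpoint equals the merge point, and the lower endpoint is at least $x$.

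Your split into the events $\{\mu^+_{(x,h-\e),(x,h+\e)}<\mu^+_{x,h-\e}\}$ and its complement is also valid. The second event is controlled through $\mu^+_{x,h-\e}\ge\mu^+_{x,h/2}>x$ a.s. Your route in fact gives the slightly stronger conclusion that $\mu^\pm_{x,h+\e}=\mu^\pm_{x,h-\e}$ with probability tending to $1$. The paper's inclusion is simply shorter.
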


\begin{proof}
 For the first claim, note that except for the boundary condition, the evolution of the pair $(\Lambda_{x,h-\e},\Lambda_{x,h+\e})$ does not depend on the spatial location, and reflecting the lower curve at $0$ can only make the curves merge sooner. Hence the merging distance $|\mu^{\pm}_{(x,h-\e),(x,h+\e)}-x|$ is stochastically largest for $x=0$, and for any $\e \in (0,h)$ we have
 \[
  P\left( |\mu_{(x,h-\e),(x,h+\e)}^{\pm} - x | > \d \right) 
  \leq  P\left( |\mu_{(0,h-\e),(0,h+\e)}^{\pm}| > \d \right)
  = P\left( \Lambda_{0,h+\e}(\d) > \Lambda_{0,h-\e}(\d) \right), 
 \]
where the last equality holds by Lemma \ref{lem:coalesce}(ii) together with the fact that, by \eqref{WNrep}-\eqref{WNrep-back}, the pairs $(\Lambda_{0,h-\e}(\cdot),\Lambda_{0,h+\e}(\cdot))$ and $(\Lambda_{0,h-\e}(-\ \cdot),\Lambda_{0,h+\e}(-\ \cdot))$ have the same law.
Then, it follows from Lemma \ref{c-base} and the rescaling property \eqref{Lxh-rescale} that this last probability vanishes as $\e\to 0$.
 
For the second and third claims in the lemma, note that, by Lemma
  \ref{lem:coalesce}(ii), $\mu_{x,h}^{\pm}$ is always between
$\mu_{x,h-\e}^{\pm}$ and $\mu_{x,h+\e}^{\pm}$, and
 \[
  \left\{ \left| \mu^\pm_{(x,h-\e),(x,h+\e)} - x \right| \leq \d \right\}
  \subset
  \left\{ \left| \mu^{\pm}_{x,h+\e} - \mu^{\pm}_{x,h-\e} \right| \leq \d \right\}.
 \]
 Hence, the second and third claims follow from the first.
\end{proof}

\begin{lemma}\label{lem:leftend}
  For the family of curves defined  in \eqref{WNrep}-\eqref{WNrep-back}, if $x>0$, then 
 \[
  \lim_{\e\to 0} P\left( \mu_{x-\e,0}^- = \mu_{x,0}^- \right)  = 1. 
 \]
\end{lemma}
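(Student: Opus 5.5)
The idea is to exploit the structure of the backward curves to the left of $x$. For $x>0$ and $\e\in(0,x)$, on $[0,x-\e]$ both $\Lambda_{x,0}(\cdot)$ and $\Lambda_{x-\e,0}(\cdot)$ are driven, via \eqref{WNrep-back}, by the same white noise. Both are reflected at $0$ on the spatial interval $[0,x-\e]$ and absorbed at $0$ to the left of the origin. By Lemma \ref{lem:coalesce}(ii), $\Lambda_{x-\e,0}\le \Lambda_{x,0}$ on $(-\infty,x-\e]$, and the two curves coincide to the left of their left merge point $\mu^-_{(x-\e,0),(x,0)}$. If this merge point is $\geq 0$, the curves agree on $(-\infty,0]$. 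Since $\mu^-_{x,0}\le 0$ is determined by the curve on $(-\infty,0]$, this gives $\mu^-_{x-\e,0}=\mu^-_{x,0}$. So it suffices to show
\[
\lim_{\e\to0} P\left(\mu^-_{(x-\e,0),(x,0)} < 0\right)=0 .
\]

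To prove this, I use that $\Lambda_{x,0}(x-s)$, $s\in[0,x]$, is a multiple of a reflected $\mathrm{BESQ}^1$ started at $0$, i.e.\ a multiple of $B(s)^2$. In particular, it returns to $0$ at times arbitrarily close to $s=\e$ on either side: the zero set of $B$ is perfect with no isolated points, and $s=\e$ is a.s.\ not a zero. Let $\sigma_{x,\e,0}=\inf\{s\ge\e:\Lambda_{x,0}(x-s)=0\}$, as in the proof of \eqref{hatbarLxh}. At the spatial point $y=x-\sigma_{x,\e,0}\le x-\e$ we have $0\le\Lambda_{x-\e,0}(y)\le\Lambda_{x,0}(y)=0$, so the two curves meet there. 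Hence $\mu^-_{(x-\e,0),(x,0)}\ge x-\sigma_{x,\e,0}$. The event $\mu^-_{(x-\e,0),(x,0)}<0$ therefore forces $\sigma_{x,\e,0}>x$, which means $\Lambda_{x,0}(x-s)>0$ for all $s\in[\e,x]$.

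It remains to note that $P(\Lambda_{x,0}(x-s)>0\ \forall s\in[\e,x])$ equals the probability that a Brownian motion started at $0$ has no zero in $[\e,x]$, which is $\frac{2}{\pi}\arcsin\sqrt{\e/x}\to0$ as $\e\to0$. More simply, this follows because $\sigma_{x,\e,0}\to0$ a.s. This step is exactly the argument already used after \eqref{squeeze0}.

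The main obstacle is the claim that coincidence on $(-\infty,0]$ implies equal left endpoints, together with a subtle point: reflection at $0$ on $[0,x-\e]$ means both curves may touch $0$ without being absorbed. I must therefore use the merge (coalescence) of Lemma \ref{lem:coalesce}(ii), not absorption, to say they agree from the meeting point leftward. Meeting at a common zero counts as meeting in the sense of \eqref{merge-def}, since $f_1(y)=f_2(y)=0$, so this is consistent with how $\mu^-$ is defined.
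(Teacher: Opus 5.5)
Your proof is correct and follows essentially the same route as the paper: the ordering $0\le\Lambda_{x-\e,0}\le\Lambda_{x,0}$ from non-crossing, then coalescence to the left of a common zero at $x-s$ with $s\in[\e,x]$, then the fact that a Brownian motion from $0$ has a zero in $[\e,x]$ with probability tending to $1$; the only difference is that you phrase the coalescence step through the merge point $\mu^-_{(x-\e,0),(x,0)}\ge 0$, while the paper says ``any common zero''. One slip, which the argument does not use: since $s=\e$ is a.s.\ not a zero and the zero set is closed, zeros do \emph{not} come arbitrarily close to $s=\e$ (they accumulate at $s=0$), so that sentence should be corrected or removed.
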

\begin{proof}
 For $\e \in (0,x)$, $\Lambda_{x,0}$ and $\Lambda_{x-\e,0}$ satisfy Assumption \ref{asm:Bessel}, hence $\Lambda_{x,0}(x-\e)>0$ almost surely and $\Lambda_{x-\e,0}(x-\e)=0$, thus by Lemma \ref{lem:coalesce}(ii), we have $0\leq \Lambda_{x-\e,0} \leq \Lambda_{x,0}$ almost surely, and the two curves coincide to the left of any common zero in $(-\infty,x-\e]$. Hence,
 \[
  P\left( \mu_{x-\e,0}^- = \mu_{x,0}^- \right)
  \geq P\left( \Lambda_{x,0}(x-s) = 0, \, \text{ for some } \e \leq s \leq x \right)
  = P(B(s) = 0 \text{ for some } s \in [\e,x]), 
 \]
where $B(\cdot)$ is a standard Brownian motion started at $B(0) = 0$. Taking $\e\to 0$, the lower bound converges to $P(B(s) = 0 \text{ for some } s \in (0,x]) = 1$.
\end{proof}

\section{Heuristic derivation of the driving mechanism}\label{psr_sde}

Below we give a non-rigorous derivation of the driving mechanism
  for the PSR motion in the bulk, i.e., away from the boundary of its
  range. This derivation is in the spirit of
  \cite[pp.\,449--451]{TW98}, where it was carried out for the true
  self-repelling motion assuming that it is a scaling limit of
  TSAW. Heuristics for the generalized TSAW, i.e., when
  $w(j)=e^{-\beta j^{\kappa}},\ \kappa\in(0,1)$, are similar, and we
  shall only give the resulting ``equation'' for the limiting process,
  generalized true self-repelling motion, construction of which still
  remains an open problem.

  Writing the driving mechanisms for these three self-repelling models
  side by side leads one to (speculatively) bridge the three cases by
  letting $\kappa \to 1$ or letting $\kappa\to 0$ and
  $\beta\to \infty$ in such a way that $\beta \kappa\to \alpha$. These
  links and the fact that the above limits, in fact, cannot be
  switched with the limit as $n\to\infty$ have been already pointed
  out and briefly discussed in \cite[Remarks,
  pp.\,24,\ 29]{tTSAWGBR}. Nevertheless, we find the picture
  compelling enough to include here.

  Let $(X_n)_{n\ge 0}$ be a self-interacting random walk with the
  weight function $w$. Let
  $\ell_n(k)={\cal E}^+(n,k)+{\cal E}^-(n,k+1)$ be the number of 
  crossings of the undirected bond $\{k,k+1\}$ up to time $n$ and
  ${\cal F}_n$, $n\ge 0$, be the natural filtration of the walk. 
  Then
  we can easily compute
\begin{align*}
  E\left(X_{n+1}-X_n\mid {\cal F}_n\right)&=\frac{w(\ell_n(X_n))-w(\ell_n(X_n-1))}{w(\ell_n(X_n))+w(\ell_n(X_n-1))};\\
  \Var\left(X_{n+1}-X_n\mid {\cal F}_n\right)&=\frac{4w(\ell_n(X_n))w(\ell_n(X_n-1))}{\left(w(\ell_n(X_n))+w(\ell_n(X_n-1))\right)^2},
\end{align*}
and conclude that the process \[M_0=0,\ \ M_n:=X_n-\sum_{j=0}^{n-1}\frac{w(\ell_j(X_j))-w(\ell_j(X_j-1))}{w(\ell_j(X_j))+w(\ell_j(X_j-1))},\quad n\in\N,\] is a martingale with quadratic variation \[\langle M\rangle_0=0,\ \ \langle M\rangle_n=\sum_{j=0}^{n-1}\frac{4w(\ell_j(X_j))w(\ell_j(X_j-1))}{\left(w(\ell_j(X_j))+w(\ell_j(X_j-1))\right)^2},\quad n\in\N.\]

We shall now focus on the case of polynomially self-repelling walks,
and for simplicity consider the specific weight function
$w(n)=(n+1)^{-\alpha}$ with $\alpha>0$.  If we let
$x^{(n)}(t) = X_{\fl{nt}}/\sqrt{n}$ and
$\ell^{(n)}(t,x) = \ell_{\fl{tn}}(\fl{x\sqrt{n}})/\sqrt{n}$ for
$t\ge 0,\ x\in\R$, then, just as in Theorem \ref{main_psr}, we
should have the joint convergence
$(x^{(n)}(t), \ell^{(n)}(t,\cdot) )_{t\geq 0} \Longrightarrow
(\mathfrak{X}_\alpha(t), \mathfrak{L}_\alpha(t,\cdot) )_{t\geq 0}$.
We will use this to derive a heuristic equation for the dynamics of
$\mathfrak{X}_\alpha$.

Using the basic Taylor approximation
\[\frac{(h+\Delta h)^\alpha-h^\alpha}{(h+\Delta
    h)^\alpha+h^\alpha}=\frac{(1+\Delta h/h)^\alpha-1}{(1+\Delta
    h/h)^\alpha+1}= \frac{\alpha}{2}\Delta h/h+O\left(\left(\Delta
      h/h\right)^2\right),\quad\text{as }\Delta h/h\to 0,\] we get
that in the bulk, i.e., on a subinterval of the range around the
origin where the local time is sufficiently large so that we can drop
the error term as $n\to\infty$,
\begin{align}
  \frac{M_{\fl{nt}}}{\sqrt{n}}&=\frac{X_{\fl{nt}}}{\sqrt{n}}-\frac1{\sqrt{n}}\sum_{j=0}^{\fl{nt}-1}\frac{(\ell_{j}(X_j-1)+1)^\alpha-(\ell_{j}(X_j)+1)^\alpha}{(\ell_{j}(X_j-1)+1)^\alpha+(\ell_{j}(X_j)+1)^\alpha} \nonumber \\ &= \frac{X_{\fl{nt}}}{\sqrt{n}}-\frac{\alpha}{2\sqrt{n}}\sum_{j=0}^{\fl{nt}-1}\left(\frac{\ell_{j}(X_j-1)-\ell_{j}(X_j)}{\ell_{j}(X_j)+1}+O\left(\frac{(\ell_{j}(X_j-1)-\ell_{j}(X_j))^2}{(\ell_{j}(X_j)+1)^2}\right)\right) \nonumber \\ &\approx x^{(n)}(t)+\frac{\alpha}{2}\sum_{j=0}^{\fl{nt}-1}\frac{\left(\ell^{(n)}(\frac{j}{n}, x^{(n)}(\frac{j}{n}))-\ell^{(n)}(\frac{j}{n},x^{(n)}(\frac{j}{n})-\frac1{\sqrt{n}})\right)\sqrt{n}}{\ell^{(n)}(\frac{j}{n}, x^{(n)}(\frac{j}{n}))}\,\frac1{n} \label{drift-approx} \\
   &= x^{(n)}(t) + \frac{\alpha}{2} \int_0^{\fl{nt}/n}  \frac{ \left( \ell^{(n)}(s, x^{(n)}(s)) - \ell^{(n)}(s, x^{(n)}(s)-\frac{1}{\sqrt{n}} ) \right)\sqrt{n} }{ \ell^{(n)}(s, x^{(n)}(s) ) } \, ds . \nonumber
\end{align}
For the quadratic variation, using the fact that
\[\frac{4h^\alpha(h+\Delta h)^\alpha}{(h^\alpha+(h+\Delta
    h)^\alpha)^2}=\frac{4(1+\Delta h/h)^\alpha}{(1+(1+\Delta
    h/h)^\alpha)^2}=1-\frac{\alpha^2}{4}(\Delta h/h)^2+O\left((\Delta
    h/h)^3\right),\] we get from similar computations that
$\frac{\langle M\rangle_{\fl{nt}}}{n}\to t$ as $n\to\infty$. Hence,
$\left(n^{-1/2}M_{\fl{nt}}\right)_{t\ge 0}\Longrightarrow (B(t))_{t\ge
  0}$, where $(B(t))_{t\ge 0}$ is a standard Brownian motion. Putting
these together and recalling that
$(x^{(n)}(t),\ell^{(n)}(t,\cdot))_{t\ge 0}\Longrightarrow
(\mathfrak{X}_{\alpha}(t),\mathfrak{L}_{\alpha}(t,\cdot))_{t\ge 0}$ as
$n\to\infty$, we arrive at the following ``equation'':
\[
 \text{``} \mathfrak{X}_{\alpha}(t) 
 = -\frac{\alpha}{2} \int_0^t \left[(\mathfrak{L}_{\alpha}(s,x))^{-1} \frac{ \partial\mathfrak{L}_{\alpha}(s,x) }{ \partial x} \right]_{x=\mathfrak{X}_\alpha(s)} \, ds + B(t) +  (\text{boundary effects}) \text{''}.
\]
Note that we need to add the ``boundary effects'' since the approximation of the drift in
\eqref{drift-approx} is good only when the local
time at the present location is sufficiently large; thus the equation describes the driving mechanism when the process is
away from its running minimum or maximum.  The quotes around the
equation indicate that it shouldn't be taken at face value, as the
local time process is certainly not differentiable in $x$. But for
$\alpha=1$ it gives a link to the rigorous work \cite{LST21}: compare
the above equation with \cite[(1.2)]{LST21}.

For the generalized TSAW, the heuristic derivation is similar but
  the scaling is different.  We set
$\ell^{(n)}(t,x)=
n^{-\frac{1}{\kappa+2}}\ell_{\fl{tn}}(\fl{xn^{\frac{\kappa+1}{\kappa+2}}})$
and $x^{(n)}=n^{-\frac{\kappa+1}{\kappa+2}}X_{\fl{nt}}$. If we assume
that there is a joint limit
$(x^{(n)}(t), \ell^{(n)}(t,\cdot) )_{t\geq 0} \Longrightarrow
(\mathfrak{X}(t), \mathfrak{L}(t,\cdot))_{t\geq 0}$ then computations similar to the above give us the following
``equation'' in the bulk:
\[
 \text{``} \mathfrak{X}(t) 
 = -\frac{\beta\kappa}{2} \int_0^t\left[(\mathfrak{L}(s,x))^{\kappa-1}\frac{\partial\mathfrak{L}(s,x)}{\partial x} \right]_{x=\mathfrak{X}(s)} \, ds  +  (\text{boundary effects}) \text{''}.
\] 
Note that in contrast to the PSR walk, but similar to the driving mechanism for
  the TSAW in \cite{TW98}, the martingale term vanishes, since
$n^{-\frac{2(\kappa+1)}{\kappa+2}}\langle M\rangle_{\fl{nt}}\sim
tn^{-\frac{\kappa}{\kappa+2}}\to 0$ as $n\to\infty$.

It is tempting to write a common ``equation'' for these models for all
$\beta\in(0,\infty)$ and $\kappa\in(0,1]$ which also includes
$(\kappa,\beta)=(0,\infty)$ by allowing $\kappa\to 0$ and
$\beta\to\infty$ in such a way that $\beta \kappa\to\alpha\ge 0$:
\[ \text{``} \mathfrak{X}(t) = -\frac{\beta\kappa}{2}
    \int_0^t\left[(\mathfrak{L}(s,x))^{\kappa-1}\frac{\partial\mathfrak{L}(s,x)}{\partial
        x} \right]_{x=\mathfrak{X}(s)} \, ds + B(t)
    \mathbbm{1}_{\{\kappa=0\}} + (\text{boundary effects}) \text{''}.
\]
The case $(\kappa,\beta)=(0,\infty)$ corresponds to the PSR case
  by setting $\beta \kappa=\alpha$ in the equation if $\alpha>0$ and
  to the asymptotically free case by setting $\beta \kappa=0$. The
  latter is rigorously understood, \cite{KMP23}: the limiting process
  is a BMPE \eqref{BMPE-def} which behaves as a
  Brownian motion away from its extrema.  We point out one more time
that these derivations are mere heuristics, so if and when rigorous
results become available, the constants in front of the terms in the
right-hand side might be different, \cite[Remarks, p.\,24]{tTSAWGBR},
\cite[p.\,451]{TW98}.

\begin{remark}
  Note that in \cite[(B.11)]{TW98}, which treated the TSAW, the
  coefficient in front of the ``drift'' term is $g$ ($g=\beta$,
  $\kappa=1$) while we get a half of it. This is explained by the fact
  that for the TSAW model, see \cite[(1.9)]{TW98}, there should be
  $g/2$ in place of $g$ in the equation (B.4); with this correction,
  the result of the formal derivation there matches the quoted one
  above.
\end{remark}

\section{Technical results on {\rm BESQ} processes}\label{app:Bessel}

\subsection{Tails of BESQ processes}
The main goal of this subsection is to prove tail
estimates for the supremum and infimum of (powers of) \rm{BESQ}
processes that are needed to establish
  Corollary~\ref{cor:Lxh-dev} in the proof of tightness of
  local time processes (Section \ref{tightness}).
Some of these estimates  can be derived from
known and more precise results on hitting times of Bessel
processes, \cite{ryCMBM}, \cite{gySGBP}, \cite{BL23}.
However, those
results do not cover all the cases we need, in particular lower
  bounds for processes of negative dimension. We
present here a simple argument that covers all cases and gives bounds that are easy to work with and
strong enough for our purposes.

\begin{proposition}\label{prop:BESQdev}
 If $(Y(t))_{t \geq 0}$ is a \rm{BESQ}$^{\dim}$ process which is reflected at $0$ if $\dim >0$ and absorbed at $0$ if $\dim \leq 0$, then there are constants $C,c>0$ depending only on the dimension $\dim \in \R$ such that
 \begin{align}\label{rightexit}
  &P\left( \sup_{t\leq T} Y(t) \geq v \mid Y(0) = u\right) \leq C e^{-c \frac{(v-u)^2}{v T}}, \quad \forall\, 0\leq u < v, \, T>0,\quad\text
 {and} \\
\label{leftexit}
  &P\left( \inf_{t \leq T} Y(t) \leq v \mid Y(0) = u \right)
  \leq C e^{-c \frac{(u-v)^2}{uT}}, \quad \forall\, 0\leq v < u, \, T>0.
 \end{align}
\end{proposition}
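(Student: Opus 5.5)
We prove both bounds with exponential martingales built from the SDE \eqref{besq}, $dY=\dim\,dt+2\sqrt{Y}\,dB$, stopped at the first exit from a suitable interval. On that interval the quadratic variation rate $4Y$ is bounded by a constant multiple of $v$ (in \eqref{rightexit}) or of $u$ (in \eqref{leftexit}). The drift $\dim\,dt$ is handled separately.

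For \eqref{rightexit}, let $\sigma=\inf\{t:Y(t)\ge v\}\wedge T$ and $M(t)=\int_0^{t\wedge\sigma}2\sqrt{Y}\,dB$. Up to $\sigma$ we have $Y\le v$, so $\langle M\rangle_\sigma\le 4vT$. If $\dim\le0$, the drift only decreases $Y$ (absorption at $0$ is harmless), so $\sup_{t\le T}Y\ge v$ forces $\sup_t M(t)\ge v-u$. The exponential martingale $\exp(\lambda M-\frac{\lambda^2}{2}\langle M\rangle)$ and Doob's inequality then give the bound $\exp(-(v-u)^2/(8vT))$. For $\dim>0$ the reflected process is the SDE solution itself, and the drift contributes at most $\dim T$. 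There are two cases.
\begin{itemize}
\item If $\dim T\le (v-u)/2$, the martingale must rise by at least $(v-u)/2$, which gives the bound $\exp(-(v-u)^2/(32vT))$.
\item If $\dim T>(v-u)/2$, then $(v-u)^2/(vT)\le (v-u)^2\cdot 2\dim/(v(v-u))\le 2\dim$. The exponent is therefore bounded, and choosing $C\ge e^{2c\dim}$ makes the bound trivially at least $1$.
\end{itemize}

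For \eqref{leftexit}, let $\sigma=\inf\{t:Y(t)\le v\}\wedge T$. Up to $\sigma$ we have $Y\le \sup Y$, which could exceed $u$, so we also stop at the exit above $2u$. The event $\{\inf_{t\le T}Y\le v\}$ is contained in the union of two events.
\begin{itemize}
\item The event $\{\sup_{t\le T}Y\ge 2u\}$ is controlled by \eqref{rightexit} with $v=2u$, which gives $Ce^{-cu/(2T)}$. Since $(u-v)^2/u\le u$, this is dominated by the target bound.
\item On the event that the path reaches $v$ before time $T$ without exceeding $2u$, we have $\langle M\rangle\le 8uT$, and the martingale must fall by at least $u-v-\dim_+T$. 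Here $\dim\ge0$ only helps, since the drift pushes $Y$ up. When $\dim<0$ the drift pushes $Y$ down by at most $|\dim|T$. Splitting into the cases $|\dim|T\le(u-v)/2$ and $|\dim|T>(u-v)/2$ exactly as above, the second case has a bounded exponent $(u-v)^2/(uT)\le 2|\dim|$ and is absorbed into $C$.
\end{itemize}
Absorption at $0$ for $\dim\le0$ is consistent with stopping, because $v\ge0$ and the process has already reached $v$ by the time it is absorbed.

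The main obstacle is making the stopped martingale argument legitimate for the reflected process with $0<\dim<2$, and for $\dim\ge 2$. In both cases the reflected process coincides with the strong solution of \eqref{besq}, since the paper defines it that way, so no local-time term appears. For $\dim<0$ we use the absorbed solution, and the stochastic integral is simply frozen after absorption. With these points checked, the constants $C,c$ depend only on $\dim$ (through the choice $C=\max(1,e^{2c|\dim|})$) and a universal $c$ such as $1/32$.
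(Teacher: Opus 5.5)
Your argument is correct, and it takes a genuinely different route from the paper's.

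\textbf{How the paper argues.} It goes case by case, using special structure of BESQ processes.
\begin{enumerate}
\item For $\dim=1$ it writes $Y=B^2$ and uses the Gaussian tail of $\sup B$.
\item It reduces $\dim<1$ (upper tail) and $\dim>1$ (lower tail) to $\dim=1$ by the Ikeda--Watanabe comparison theorem.
\item It uses additivity of BESQ processes: it splits $\dim>1$ into $\lceil\dim\rceil$ independent pieces for the upper tail, and adds an independent BESQ$^1$ started at $0$ for the lower tail when $\dim\in[0,1)$.
\item Only for $\dim<0$ does it use an exponential martingale. That martingale is built from the scale function $Y^{1-\dim/2}$, so no drift appears, and it is localized at the exit above $w=2u-v$.
\end{enumerate}

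\textbf{How you argue.} You use the single decomposition $Y=u+\dim t+M$ for every $\dim$. You correctly note that for $\dim>0$ the reflected process is the strong solution itself, with no local-time term. You bound $\langle M\rangle$ by localization: at level $v$ for the upper tail, and at level $2u$ together with the already proved upper tail for the lower tail (essentially the paper's $w=2u-v$). You then dispose of the drift by a dichotomy. Either $|\dim|T$ is at most half the required displacement, so the martingale must cover the rest. Or the exponent $(v-u)^2/(vT)$, respectively $(u-v)^2/(uT)$, is at most $2|\dim|$, and the bound is trivial once $C\ge e^{2c|\dim|}$.

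The observation that makes this work is that the estimate only has content on time scales where the drift is negligible. So neither a scale-function correction nor a comparison theorem is needed.

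\textbf{What each buys.} Your route is shorter and uniform in $\dim$. It gives a universal $c$ at the cost of $C$ growing like $e^{O(|\dim|)}$. The paper keeps $C$ small but lets $c$ deteriorate with $|\dim|$: for example $c=1/(8\lceil\dim\rceil)$ for the upper tail when $\dim>1$, and $c=2^{\dim-2}(2-\dim)^{-2}$ for the lower tail when $\dim<0$. Since only dependence on $\dim$ is required, both are fine.

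\textbf{Two cosmetic slips.}
\begin{enumerate}
\item In the lower-tail step the martingale must fall by at least $u-v-\dim_-T$ (the negative part of $\dim$), not $u-v-\dim_+T$. Your next sentences in fact say this.
\item With your choices the lower tail yields $c=1/64$, not $1/32$. The small-drift case gives $e^{-(u-v)^2/(64uT)}$ and the exit above $2u$ gives $Ce^{-u/(64T)}$. The union bound also adds a factor to $C$.
\end{enumerate}
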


\begin{remark}
If $\dim > 0$ and  $Y\in\rm{BESQ}^\dim$ is absorbed at $0$
then the estimates in Proposition \ref{prop:BESQdev} still hold. Indeed, the left tail bounds remain unchanged since the event $\{\inf_{t\leq T} Y(t) \leq v\}$ depends only on the behavior prior to hitting zero, and the probability of the event $\{\sup_{t\leq T} Y(t) \geq v\}$ only decreases if $Y$ is absorbed at zero. 
\end{remark}

\begin{proof}
 We will divide the proof of both the right and left exit probabilities into several cases depending on $\dim$. We begin by proving the right exit probabilities in \eqref{rightexit}. 
 
 \textbf{Case 1:} $\dim =1$. In this case, since $Y(t) = (B(t))^2$, where $B$ is a standard Brownian motion,
 we have\footnote{Of course, we can get better bounds
 using the reflection principle for Brownian motion, but these bounds are good enough for our purposes.}
\begin{align*}
 P\left( \sup_{t\leq T} Y(t) \geq v \mid Y(0) = u \right) 
 &= P\left( \sup_{t\leq T} |B(t)| \geq \sqrt{v} \mid B(0) = \sqrt{u} \right) \\
 &\leq 2 P\left( \sup_{t\leq T} B(t) \geq \sqrt{v} \mid B(0) = \sqrt{u} \right) \\
 &= 2 P\left( \sup_{t\leq T} B(t) \geq \sqrt{v}-\sqrt{u} \mid B(0) = 0 \right) \leq 2 e^{-\frac{(\sqrt{v}-\sqrt{u})^2}{2T}} \leq 2 e^{-\frac{(v-u)^2}{8vT}}.
\end{align*}

\textbf{Case 2:} $\dim <1$.  By the comparison
principle, \cite[Chapter~VI, Theorem~1.1]{IW89}, there is a coupling of $Y\in\rm{BESQ}^\dim$ with
$Z\in\rm{BESQ}^{1}$ 
with $Y(0) = Z(0)$ and such that
$Y(t) \leq Z(t)$ for all $t\geq 0$. From this, together with Case 1 we
have that
\[
 P\left( \sup_{t\leq T} Y(t) \geq v \mid Y(0)=u \right)
 \leq P\left( \sup_{t\leq T} Z(t) \geq v \mid Z(0)=u \right) \leq 2 \exp\left\{ - \frac{(v-u)^2}{8vT} \right\}.
\]

\textbf{Case 3:} $\dim > 1$. 
Suppose that $\dim \in (n-1,n]$ for some integer $n\geq 2$ (that is, $n=\lceil \dim \rceil$) and let $\dim' = \dim - (n-1) \in (0,1]$. Then we can represent 
$Y(t) = \sum_{k=0}^{n-1} Y^{(k)}(t)$ where the $(Y^{(k)})_{0\leq k\leq n-1}$ are all independent, $Y^{(0)}\in\rm{BESQ}^{\dim'}$, and  $Y^{(k)}\in\rm{BESQ}^{1}$ for $1\leq k \leq n-1$. 
Then, using the bounds from Cases 1 and 2 we get that 
\begin{align*}
 P\left( \sup_{t\leq T} Y(t) \geq v \mid Y(0)=u \right)
 &\leq \sum_{k=0}^{n-1} P\left( \sup_{t\leq T} Y^{(k)}(t) \geq \frac{v}{n} \mid Y^{(k)}(0) = \frac{u}{n} \right) \leq 2n \exp\left\{ - \frac{(v-u)^2}{8n v T} \right\}.
\end{align*}
Since $n= \lceil \dim \rceil$ this gives 
 \[
  P\left( \sup_{t\leq T} Y(t) \geq v \mid Y(0)=u \right)
  \leq 2\lceil \dim \rceil e^{-\frac{(v-u)^2}{8\lceil \dim \rceil v T}}. 
 \]
 
Now we give the proof of the left exit probabilities in \eqref{leftexit}, again dividing the proof into several cases. 

\textbf{Case 1:} $\dim=1$. In this case, since $Y(t) = (B(t))^2$,  we have
\begin{align*}
 P\left( \inf_{t\leq T} Y(t) \leq v \mid Y(0) = u \right)
 &= P\left( \inf_{t\leq T} |B(t)| \leq \sqrt{v} \mid B(0) = \sqrt{u} \right) \\
 &= P\left( \inf_{t\leq T} B(t) \leq \sqrt{v} \mid B(0) = \sqrt{u} \right) \\
 &= P\left( \sup_{t\leq T} B(t) \geq \sqrt{u}-\sqrt{v} \mid B(0) = 0 \right) 
 \leq e^{-\frac{(\sqrt{u}-\sqrt{v})^2}{2T} } \leq e^{-\frac{(u-v)^2}{8uT} }.
\end{align*}

\textbf{Case 2:} $\dim > 1$.  
By the Ikeda-Watanabe comparison principle, there is a coupling of $Y\in\rm{BESQ}^\dim$ with $Z\in\rm{BESQ}^1$ with $Y(0) = Z(0)$ and such that $Y(t) \geq Z(t)$ for all $t\geq 0$. Thus, it follows from Case 1 that 
\[
 P\left( \inf_{t\leq T} Y(t) \leq v \mid Y(0) = u \right)
 \leq 
 P\left( \inf_{t\leq T} Z(t) \leq v \mid Z(0) = u \right)
 \leq 
 e^{-\frac{(u-v)^2}{8uT} }.
\]

\textbf{Case 3:} $\dim \in [0,1)$. 
We can let $Z(t) = Y(t)+X(t)$ where $X\in\rm{BESQ}^1$ is independent from $Y$ and $X(0)=0$. Then $Z\in\rm{BESQ}^{(1+\dim)}$ with $Z(0)=u$ and 
\begin{align*}
 &P\left(\inf_{t\leq T} Y(t) \leq v \mid Y(0)=u \right) 
 = P\left( \inf_{t\leq T} (Z(t)-X(t)) \leq v \mid Z(0)=u, \, X(0) = 0 \right) \\
 &\hspace{1in} \leq P\left( \inf_{t\leq T} Z(t) \leq \frac{u+v}{2} \mid Z(0) = u \right) + P\left( \sup_{t\leq T} X(t) \geq \frac{u-v}{2} \mid X(0) = 0 \right) \\
 &\hspace{1in} \leq \exp\left\{ - \frac{\left( u - \frac{u+v}{2} \right)^2}{8 u T} \right\} + 2 \exp\left\{ - \frac{u-v}{4T} \right\}
 = e^{-\frac{(u-v)^2}{32 u T} } + 2 e^{-\frac{u-v}{4T}} \leq 3 e^{-\frac{(u-v)^2}{32 u T}}.
\end{align*}

\textbf{Case 4:} $\dim < 0$. For $x\ge 0$ define
$\sigma_x(Y)=\inf\{t\ge 0: Y(t)\le x\}$ and
$\tau_x(Y)=\inf\{t\ge 0: Y(t)\ge x\}$. Due to the absorbing
  boundary condition, $Y(t)=0$ on the set $\{\sigma_0(Y)\le t\}$.
Fix $0 \leq v < u < w$
(we will choose a particular value of $w$ later) 
and note that
\begin{align}
 P\left( \inf_{t\leq T} Y(t) \leq v \mid Y(0) = u \right) &= P\left( \sigma_v(Y) \leq T \mid Y(0) = u \right) \nonumber \\
 &\leq P\left( \tau_w(Y) \leq T \mid Y(0) = u \right) + P\left( \sigma_v(Y) \leq T \wedge \tau_w(Y) \mid Y(0) = u\right) \nonumber \\
 &\leq 2 e^{-\frac{(w-u)^2}{8wT}  } + P\left( \sigma_v(Y) \leq T \wedge \tau_w(Y) \mid Y(0) = u\right), \label{highlow}
\end{align}
where in the last inequality we used the upper bound for the right
tail estimates derived above. To obtain bounds on the probability in
the last line above, we will construct an exponential martingale
  and use Doob's maximal inequality. For $\theta < 0$, let
\[
 Z_\theta(t) = \exp\left(\theta (Y(t))^{1-\frac{\dim}{2}} - \frac{\theta^2(2-\dim)^2}{2} \int_0^t (Y(s))^{1-\dim} \, ds \right).
\]
The process $(Z_\theta(t))_{t\ge 0}$ is a bounded local
  martingale (relative to the natural filtration of $Y$), so it is a martingale.  Note that
  $Z_\theta(0)=\exp\left(\theta u^{1-\frac{\dim}{2}}\right)$ and on
the event $\sigma_v(Y) \leq  T\wedge \tau_w(Y)$ we have that
\[
 \sup_{t\le T}Z_\theta(t)\ge Z_\theta(\sigma_v(Y)) \ge
 \exp\left(\theta v^{1-\frac{\dim}{2}} - \frac{\theta^2(2-\dim)^2}{2} w^{1-\dim} T\right).
\] 
The above inclusion and Doob's maximal inequality imply that
\begin{multline*}
 P\left( \sigma_v(Y) \leq T \wedge \tau_w(Y) \mid Y(0) = u\right) 
 \leq P\left( \sup_{t \leq T} Z_\theta(t) \geq e^{\theta v^{1-\frac{\dim}{2}} - \frac{\theta^2(2-\dim)^2}{2} w^{1-\dim} T} \mid Y(0)=u \right) \\
 \leq e^{\frac{\theta^2}{2} (2-\dim)^2 w^{1-\dim}T - \theta v^{1-\frac{\dim}{2}} } E\left[Z_\theta(0) \mid Y(0)=u \right]= e^{\frac{\theta^2}{2} (2-\dim)^2 w^{1-\dim}T - \theta \left(  v^{1-\frac{\dim}{2}} - u^{1-\frac{\dim}{2}} \right)  }.
\end{multline*}

Optimizing the above bound by choosing $\theta = \frac{v^{1-\frac{\dim}{2}} - u^{1-\frac{\dim}{2}}}{(2-\dim)^2 w^{1-\dim}T}$ we get the estimate
\begin{align*}
 P\left( \sigma_v(Y) \leq T \wedge \tau_w(Y) \mid Y(0) = u\right) 
 \leq \exp\left\{ - \frac{( u^{1-\frac{\dim}{2}} - v^{1-\frac{\dim}{2}})^2 }{2(2-\dim)^2 w^{1-\dim} T}  \right\}
 \leq \exp\left\{ - \frac{ (u-v)^2}{2(2-\dim)^2 u^\dim w^{1-\dim} T}  \right\},
\end{align*}
where in the last inequality we used that
  $\frac{u^{1-\frac{\dim}{2}} - v^{1-\frac{\dim}{2}} }{u-v}$ is
  increasing in $v$, as $1-\frac{\dim}{2}>1$, so
$( u^{1-\frac{\dim}{2}} - v^{1-\frac{\dim}{2}})^2\geq (u-v)^2
u^{-\dim}$ for all $0\leq v<u$.  Combining this with \eqref{highlow}
we get the bound
\[
  P\left( \inf_{t\leq T} Y(t) \leq v \mid Y(0) = u \right)
 \leq 2 \exp\left\{-\frac{(w-u)^2}{8wT} \right\} + 
 \exp\left\{ - \frac{ (u-v)^2}{2(2-\dim)^2 u^\dim w^{1-\dim} T}  \right\},
\]
which holds for all $w> u$. We let $w = 2u-v$ and finally see that
\begin{align*}
  P\left( \inf_{t\leq T} Y(t) \leq v \mid Y(0) = u \right)
  &\leq 2 \exp\left\{ - \frac{(u-v)^2}{8(2u-v) T} \right\} + \exp\left\{ - \frac{ (u-v)^2}{2(2-\dim)^2 u^\dim (2u-v)^{1-\dim} T}  \right\} \\
  &\leq 2 \exp\left\{ - \frac{(u-v)^2}{16 u T} \right\} + \exp\left\{ - \frac{ (u-v)^2}{2^{2-\dim}(2-\dim)^2 u T}  \right\} \\
  &\leq 3 \exp\left\{ - \frac{ (u-v)^2}{2^{2-\dim}(2-\dim)^2 u T}  \right\}.
\end{align*}
The last inequality follows from the fact that $2^{2-\dim}(2-\dim)^2 > 16$ for all $\dim< 0$.
\end{proof} 

\begin{lemma}\label{cor:BESQpowerdev}
 Suppose that $Y\in\rm{BESQ}^\dim$ which is reflected at $0$ if $\dim >0$ and absorbed at $0$ if $\dim \leq 0$ and that $Z(t) = A (Y(t))^\gamma$ for some $A,\gamma>0$. Then, for any $r>0$ there exist constants $C>0$ depending only on $\dim$ and $c_r>0$ depending only on $\dim$, $A$, $\gamma$, and $r$, such that
 \[
  \sup_{z\leq L} P\left( \sup_{t\leq T} |Z(t) - z| \geq r L \mid Z(0) = z \right) \leq C e^{-c_r L^{\frac{1}{\gamma}}/T}, \quad \forall\, L,T>0.
 \]
\end{lemma}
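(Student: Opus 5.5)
The plan is to translate the event for $Z$ into upper- and lower-exit events for $Y$ and apply Proposition \ref{prop:BESQdev}. Write $y = (z/A)^{1/\gamma}$ for the starting point of $Y$, and $\ell = (L/A)^{1/\gamma}$, so $y\le \ell$. Then
\[
\{\sup_{t\le T}|Z(t)-z|\ge rL\}\subset\{\sup_{t\le T}Y(t)\ge v_+\}\cup\{\inf_{t\le T}Y(t)\le v_-\},
\]
where $v_+=((z+rL)/A)^{1/\gamma}$ and $v_-=((z-rL)/A)^{1/\gamma}$. The second event is only present when $z>rL$.

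For the upper exit we use \eqref{rightexit} with $u=y$ and $v=v_+$. Since $z\le L$, we have $v_+\le ((1+r)L/A)^{1/\gamma}=(1+r)^{1/\gamma}\ell$. We also have $v_+-y\ge \big((z+rL)^{1/\gamma}-z^{1/\gamma}\big)A^{-1/\gamma}$.

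The only real work is a uniform lower bound on this increment. The function $z\mapsto (z+rL)^{1/\gamma}-z^{1/\gamma}$ on $[0,L]$ is monotone: increasing if $\gamma<1$, decreasing if $\gamma\ge1$. Either way its minimum over $[0,L]$ is at an endpoint, so it is at least $c(r,\gamma)L^{1/\gamma}$ with
\[
c(r,\gamma)=\min\{r^{1/\gamma},(1+r)^{1/\gamma}-1\}>0,
\]
by homogeneity. Hence $(v_+-y)^2/(v_+T)\ge c'_r\,\ell/T$ with $c'_r$ depending on $r,\gamma,A$, and \eqref{rightexit} gives a bound $Ce^{-c_r L^{1/\gamma}/T}$, with $C$ depending only on $\dim$.

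For the lower exit, when $z>rL$, we use \eqref{leftexit} with $u=y\le \ell$. The same homogeneity argument applies: $z\mapsto z^{1/\gamma}-(z-rL)^{1/\gamma}$ on $[rL,L]$ is monotone, so it is bounded below by $c''(r,\gamma)L^{1/\gamma}>0$, again taking the minimum of the endpoint values. Then $(u-v_-)^2/(uT)\ge c\,\ell/T$, which again gives an exponential bound in $L^{1/\gamma}/T$.

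Summing the two bounds gives the claim, with $C$ equal to twice the constant of Proposition \ref{prop:BESQdev} (depending only on $\dim$) and $c_r$ depending on $\dim,A,\gamma,r$. No real obstacle is expected; the one point needing care is the endpoint minimization, which is what makes the constant uniform in $z\le L$.
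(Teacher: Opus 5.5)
Your proposal is correct and follows essentially the paper's argument: reduce to the upper and lower exit bounds of Proposition \ref{prop:BESQdev} for $Y$, then use homogeneity in $L$ to make the bound uniform over $z\le L$. The paper gets uniformity from an infimum over $u=z/L\in[0,1]$ of a single continuous positive function (the lower-exit term is that same function evaluated at $u-r$), whereas you use monotonicity and endpoint values. One small slip: the lower-exit event must be kept for $z\ge rL$, not only for $z>rL$, since at $z=rL$ it is $\{\inf_{t\le T}Y(t)\le 0\}$ with $v_-=0$; your monotonicity bound on the closed interval $[rL,L]$ already covers this case.
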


\begin{proof}
 Assume first that $z\geq rL$. Then, it follows from Proposition \ref{prop:BESQdev} that 
 \begin{align}
  &P\left( \sup_{t\leq T} |Z(t) - z| \geq r L \mid Z(0) = z \right) \nonumber \\
  &\qquad \leq P\left( \sup_{t\leq T} Z(t) \geq z+rL \mid Z(0) = z \right) 
  + P\left( \inf_{t\leq T} Z(t) \leq z-rL \mid Z(0) = z \right) \label{devupdown}  \\
  &\qquad \leq C \exp\left\{ -c  \frac{\left( (z+rL)^{\frac{1}{\gamma}} - z^{\frac{1}{\gamma}}  \right)^2}{A^{\frac{1}{\gamma}} (z+rL)^{\frac{1}{\gamma}} T } \right\}
  + C \exp\left\{ - c \frac{\left( z^{\frac{1}{\gamma}} - (z-rL)^{\frac{1}{\gamma}}\right)^2}{ A^{\frac{1}{\gamma}} z^{\frac{1}{\gamma}} T }  \right\} \nonumber \\
  &\qquad = C \exp\left\{ -c  \frac{\left( (\frac{z}{L}+r)^{\frac{1}{\gamma}} - \left( \frac{z}{L} \right)^{\frac{1}{\gamma}}  \right)^2}{A^{\frac{1}{\gamma}} (\frac{z}{L}+r)^{\frac{1}{\gamma}}  } \frac{L^{\frac{1}{\gamma}}}{T} \right\}
  + C \exp\left\{ - c \frac{\left( \left( \frac{z}{L} \right)^{\frac{1}{\gamma}} - (\frac{z}{L}-r)^{\frac{1}{\gamma}}\right)^2}{ A^{\frac{1}{\gamma}} \left( \frac{z}{L} \right)^{\frac{1}{\gamma}}  } \frac{L^{\frac{1}{\gamma}}}{T}  \right\} \nonumber \\
  &\qquad \leq 2C \exp\left\{ -c \inf_{u\in[0,1]} \frac{\left( (u+r)^{\frac{1}{\gamma}}-u^{\frac{1}{\gamma}}\right)^2}{A^{\frac{1}{\gamma}}(u+r)^{\frac{1}{\gamma}}}   \frac{L^{\frac{1}{\gamma}}}{T}  \right\}, \nonumber 
 \end{align}
where the last inequality holds when $z \leq L$. When $z < rL$ the same argument applies with the exception that the second probability in \eqref{devupdown} is equal to zero.
\end{proof}

\begin{proof}[Proof of Corollary~\ref{cor:Lxh-dev}]
 Recall that by Assumption \ref{asm:Bessel} the curve $\Lambda_{x,h}(\cdot)$ is a gluing together of multiples of BESQ processes of different dimensions raised to the power $\gamma$. Thus, the inequality in \eqref{Lxh-dev} almost follows immediately from Lemma \ref{cor:BESQpowerdev}, but we need to account for the gluing together of different processes at the origin when the interval $(x-T,x+T)$ contains the origin. To this end, suppose that $x \in (-T,0)$ (the case $x \in (0,T)$ is similar) and $h\leq L$ and note that
 \begin{align*}
  &P\left( \sup_{y \in [x,x+T]} |\Lambda_{x,h}(y)-h| \geq rL \right) \\
  &\qquad \leq  P\left( \sup_{y \in [x,0]} |\Lambda_{x,h}(y)-h| \geq \frac{rL}{2} \right) + 
  \sup_{h': |h'-h|\leq \frac{rL}{2}}  P\left( \sup_{y \in [0,x+T]} |\Lambda_{0,h'}(y)-h'| \geq \frac{rL}{2} \right).
 \end{align*}
For the second probability on the right, note that $|h'-h|\leq \frac{rL}{2}$ and $h\leq L$ implies that $h' \leq L + \frac{rL}{2}$ and re-write $\frac{rL}{2} = \frac{r}{2+r}(L+\frac{rL}{2})$. Then it follows from Lemma \ref{cor:BESQpowerdev} that 
\begin{align*}
 P\left( \sup_{y \in [x,x+T]} |\Lambda_{x,h}(y)-h| \geq rL \right)
 &\leq C \exp\left\{ - c_{r/2} \frac{L^{\frac{1}{\gamma}}}{|x|} \right\}
 + C \exp\left\{ -c_{r/(2+r)} \frac{(L+\frac{rL}{2} )^{\frac{1}{\gamma}} }{T-|x|} \right\} \\
 &\leq  C \exp\left\{ - c_{r/2} \frac{L^{\frac{1}{\gamma}}}{T} \right\}
 + C \exp\left\{ -c_{r/(2+r)} \left( \frac{2+r}{2} \right)^{\frac{1}{\gamma}} \frac{L^{\frac{1}{\gamma}} }{T} \right\}. 
\end{align*}
This completes the proof of \eqref{Lxh-dev}.

Next we will show how \eqref{Lgap} follows from \eqref{Lxh-dev}. We will prove this bound differently depending on whether $h \leq \frac{rL}{3}$ or $h > \frac{rL}{3}$. Recall that in \eqref{Lgap} we assume that $\frac{rL}{3}\geq 1$. 

\noindent\textbf{Case I: $h \leq \frac{rL}{3}$.} 
Note that in this case we have $h+1 \leq \frac{2rL}{3}$.  Since
$\Lambda_{x,h}(y) \geq 0$, in this case we have
\begin{align*}
 P\left( \sup_{|y-x| \leq T} ( \Lambda_{x,h+1}(y) - \Lambda_{x,h}(y) ) \geq rL \right)
 &\leq P\left( \sup_{|y-x| \leq T} \Lambda_{x,h+1}(y) \geq rL \right) \\
 &\leq P\left( \sup_{|y-x| \leq T} \left|\Lambda_{x,h+1}(y) - (h+1) \right| \geq \frac{rL}{3} \right) \\
 &\leq C \exp\left\{ -c'_{1/2} \frac{\left( \frac{2rL}{3} \right)^{\frac{1}{\gamma}} }{T} \right\}
 = C \exp\left\{ -c'_{1/2} \left( \frac{2r}{3} \right)^{\frac{1}{\gamma}} \frac{L^{\frac{1}{\gamma}}}{T} \right\}. 
\end{align*}

\noindent\textbf{Case II: $h > \frac{rL}{3}$.} In this case we first note that 
\begin{align*}
 &P\left( \sup_{|y-x| \leq T} ( \Lambda_{x,h+1}(y) - \Lambda_{x,h}(y) ) \geq rL \right) \\
 &\qquad \leq P\left( \sup_{|y-x|\leq T} \Lambda_{x,h+1}(y) \geq h + \frac{2rL}{3} \right) + P\left( \inf_{|y-x| \leq T} \Lambda_{x,h}(y) \leq h - \frac{rL}{3} \right) \\
 &\qquad \leq P\left( \sup_{|y-x|\leq T} \left| \Lambda_{x,h+1}(y) - (h+1) \right| \geq \frac{rL}{3} \right) + P\left( \sup_{|y-x| \leq T} \left| \Lambda_{x,h}(y) - h \right| \geq \frac{rL}{3} \right)
\end{align*}
To apply \eqref{Lxh-dev} to the first probability above, note that if $h \leq L$ and $\frac{Lr}{3}\geq 1$ then $h+1 \leq L + \frac{Lr}{3}$, and rewriting $\frac{rL}{3} = \frac{r}{3+r} (L+\frac{rL}{3})$ we have 
\begin{align*}
 P\left( \sup_{|y-x| \leq T} ( \Lambda_{x,h+1}(y) - \Lambda_{x,h}(y) ) \geq rL \right)
 &\leq C\exp\left\{ -c_{r/(3+r)}' \frac{(L+\frac{rL}{3})^{\frac{1}{\gamma}} }{T} \right\} + C\exp\left\{ -c_{r/3}' \frac{L^{\frac{1}{\gamma}}}{T} \right\}.
\end{align*}
\end{proof}

\subsection{Continuity in law properties}\label{app:contprop}

The goal of this subsection is to obtain properties of continuity in law for the random variables $\Lambda_{x,h}(y)$ with respect to $x$, $h$ and $y$. Throughout this subsection we assume that the joint Ray--Knight
theorem of Definition \ref{def:JRK} holds.  That is, for every finite family
$(x_1,h_1),\dots,(x_N,h_N)\in\R\times\R_{+}$ the
limiting curves are obtained as the weak limit
\[
   \bigl(\Lambda^{n}_{x_i,h_i}\bigr)_{i\le N}\ \Longrightarrow\ \bigl(\Lambda_{x_i,h_i}\bigr)_{i\le N}
   \qquad\text{as } n\to\infty
\]
of the rescaled local-time curves.  Two consequences of this
convergence are used below, both inherited from the prelimiting local
time curves. First, the joint laws of finite families of curves
$\Lambda_{x,h}$ are consistent. Second, these curves satisfy the
non-crossing property \eqref{non-cross}. Note that we only need the
joint Ray-Knight theorem, the stronger augmented version is not used. Moreover, the results in this section also hold if we replace the joint Ray--Knight
theorem of Definition \ref{def:JRK} by the ``edge'' joint Ray--Knight theorem \eqref{Yu-JRK}.
We will also assume that the limiting curves satisfy Assumption
\ref{asm:Bessel}.  The positive multiplicative constants $C_{r,\pm}$
and $C_{a,\pm}$ of Assumption~\ref{asm:Bessel} play no role in what
follows. We therefore set $C_{r,\pm}=C_{a,\pm}=1$, so that each part
of $\Lambda_{x,h}(\cdot)$ is simply a power $Y(\cdot)^{\gamma}$ of the
corresponding BESQ process.

\begin{remark}\label{rem:reduction}
  By Assumption~\ref{asm:Bessel} each curve is a concatenation of
  powers of BESQ processes. There is a symmetry between backward and
  forward curves. Hence, in the proofs below we treat only the case
  when the curve crosses the origin from left to right. All other
  cases are either symmetric or simpler.
\end{remark}

The next lemma collects some standard facts about BESQ processes. We refer the reader to \cite[Ch.~XI]{ryCMBM}, \cite{gySGBP}, \cite[Ch.~IV and VI]{IW89}.
\begin{lemma}\label{lem:besq}
Fix $\nu\in\R$ and $\gamma>0$ and let $\{Y^{(u)}\}_{u\ge0}$ be a family
of \textup{BESQ} processes of dimension $\nu$ with $Y^{(u)}(0)=u$, all driven by
the same Brownian motion and subject to the same boundary condition at
$0$.\footnote{For $0<\nu<2$ the boundary is reached, and the same condition, reflecting or
absorbing at $0$, is imposed for all $u\ge0$. For $\nu\ge2$ the boundary is inaccessible
from $u>0$ (and reflecting at $u=0$). For $\nu\le0$ the boundary is absorbing.} Then
\begin{enumerate}
\item[\textup{(i)}] for every $s\ge0$, $0 \leq u \leq u'$, almost surely $Y^{(u)}(s) \leq Y^{(u')}(s)$, and if $(u_n)_{n\in\mathbb{Z}^+}$ is such that $u_n \to u$ as $n \to \infty$, then $Y^{(u_n)}(s)\to Y^{(u)}(s)$ almost surely as $n \to \infty$;
\item[\textup{(ii)}] the map $(u,s)\mapsto E[(Y^{(u)}(s))^{\gamma}]$ is nondecreasing in
$u$ and continuous on $[0,\infty)\times[0,\infty)$.
\end{enumerate}
\end{lemma}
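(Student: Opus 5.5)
The plan is to use pathwise strong uniqueness and a comparison theorem for the SDE $dY = \nu\,dt + 2\sqrt{|Y|}\,dB$, and then dominated convergence together with moment bounds for the expectation statement in (ii).

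For (i), monotonicity comes from the Yamada--Watanabe comparison theorem \cite[Ch.~VI, Theorem~1.1]{IW89}. The diffusion coefficient $2\sqrt{|y|}$ is $1/2$-Hölder, and the drifts coincide. Hence, for the strong solutions driven by the same $B$, we get $Y^{(u)}(s)\le Y^{(u')}(s)$ for all $s$ almost surely whenever $u\le u'$. The boundary conditions need separate treatment.
\begin{itemize}
\item When $\nu\ge 0$, the reflecting and natural cases need no modification, since the solution of the unconstrained SDE is automatically nonnegative.
\item When $\nu\le 0$ and $0$ is absorbing, absorption at $\sigma_0$ preserves the order. The lower process is absorbed first, and after that time it equals $0\le Y^{(u')}$.
\item When $0<\nu<2$ and we impose absorption, the same first-hitting argument applies.
\end{itemize}

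For the continuity claim in (i), I will use monotonicity. If $u_n\downarrow u$, then $Y^{(u_n)}(s)$ decreases to a limit $\tilde Y(s)\ge Y^{(u)}(s)$. I then compare expectations: for $\nu\ge0$ without absorption, $E[Y^{(v)}(s)]=v+\nu s$ is continuous in $v$. Monotone convergence gives $E\tilde Y(s)=u+\nu s=EY^{(u)}(s)$, and since $\tilde Y(s)\ge Y^{(u)}(s)$ this forces equality almost surely. The case $u_n\uparrow u$ is symmetric.

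In the absorbed cases I will instead use $L^1$ stability of the unabsorbed SDE up to a stopping time, which is standard via the Yamada--Watanabe $L^1$ estimate $E|Y^{(v)}(t\wedge\rho)-Y^{(u)}(t\wedge\rho)|\le|v-u|$. This estimate follows from Itô--Tanaka, because the local time at $0$ of the difference vanishes for a $1/2$-Hölder coefficient. I also need continuity of the hitting times $\sigma_0(Y^{(u_n)})\to\sigma_0(Y^{(u)})$. This holds almost surely because BESQ paths with $\nu<2$ cross $0$ transversally in the relevant sense: upon hitting $0$ they become strictly negative for the unabsorbed extension when $\nu\le0$, and they hit $0$ at a.e. time level for $\nu\in(0,2)$. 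I expect this hitting-time continuity at the boundary to be the main obstacle. I would handle it by noting that $\sigma_0(Y^{(v)})$ is monotone in $v$, so the left and right limits exist, and then showing there are no jumps in law using the explicit hitting-time distribution, which is continuous in the starting point. For $\nu\le0$ the monotone limit process satisfies the absorbed SDE, so pathwise uniqueness identifies it.

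For (ii), monotonicity in $u$ is immediate from (i). For joint continuity, take $(u_n,s_n)\to(u,s)$. Path continuity of $Y^{(u)}$ gives $Y^{(u_n)}(s_n)\to Y^{(u)}(s)$ almost surely, once it is combined with uniform-in-time convergence on compacts. That uniform convergence follows from monotonicity plus pointwise convergence at rational times and Dini-type arguments on continuous monotone families. Uniform integrability of $(Y^{(u_n)}(s_n))^\gamma$ follows from domination by $Y^{(\bar u)}$ with $\bar u=\sup_n u_n$, together with the moment bound $E[\sup_{t\le S}(Y^{(\bar u)}(t))^{p}]<\infty$ for all $p$. This moment bound is a consequence of the tail estimate \eqref{rightexit} in Proposition~\ref{prop:BESQdev}. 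Vitali's theorem then gives continuity of the expectation.
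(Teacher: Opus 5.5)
The paper does not prove this lemma; it records (i)--(ii) as standard facts and refers to \cite{ryCMBM}, \cite{gySGBP} and \cite[Ch.~IV and VI]{IW89}. Your self-contained outline is therefore a more explicit route, and most of it is sound.
\begin{itemize}
\item The comparison theorem gives the ordering.
\item The identity $E[Y^{(v)}(s)]=v+\nu s$ for the unabsorbed equation, together with the ordering, forces a.s.\ convergence along monotone sequences.
\item In the absorbed cases, $\sigma_0(Y^{(v)})$ is monotone in $v$ and its law is continuous in $v$. Scaling alone gives this: $\sigma_0(Y^{(v)})\overset{\text{Law}}{=}v\,\sigma_0(Y^{(1)})$. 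Since two ordered random variables with the same law coincide a.s., the monotone limit of the hitting times is the hitting time of the limit.
\item The domination by $\sup_{t\le S}(Y^{(\bar u)}(t))^\gamma$, integrable by \eqref{rightexit}, is fine.
\end{itemize}
A few small points should be added or corrected. A general sequence $u_n\to u$ is handled by sandwiching between $\inf_{k\ge n}u_k$ and $\sup_{k\ge n}u_k$. At the fixed time $s$ you need $P(\sigma_0(Y^{(u)})=s)=0$. The ``transversal crossing'' remarks are inaccurate: for $\nu=0$ the unabsorbed solution stays at $0$. They are also unnecessary, since the hitting-time argument above does the job.

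The step that fails as written is the joint continuity in (ii). You claim uniform-in-time convergence of $Y^{(u_n)}$ to $Y^{(u)}$ on compacts from monotonicity, path continuity and convergence at rational times, via Dini. But Dini's theorem needs pointwise convergence at every point of the interval. Almost sure convergence at each fixed time says nothing about a random exceptional time. For example, $f_n(t)=\max\{0,1-n|t-t_0|\}$ with $t_0$ irrational is continuous and nonincreasing in $n$, and tends to $0$ at every rational, yet $f_n(t_0)=1$ for all $n$.

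The uniform statement is true, but it needs a probabilistic input. One option is the maximal inequality $P(\sup_t|Y^{(u_n)}(t)-Y^{(u)}(t)|\ge\varepsilon)\le|u_n-u|/\varepsilon$, valid because $|Y^{(u_n)}-Y^{(u)}|$ is a nonnegative local martingale for the unabsorbed equation; absorption would then need separate treatment. It is simpler to bypass uniform convergence altogether. Set $F(u,s)=E[(Y^{(u)}(s))^\gamma]$. You already know that $F(v,\cdot)$ is continuous for each $v$ (path continuity plus domination), that $F(\cdot,s)$ is continuous for each $s$ (part (i) plus domination), and that $F$ is nondecreasing in $u$. Hence for $n$ large
\[
F\bigl((u-\varepsilon)\vee 0,s_n\bigr)\le F(u_n,s_n)\le F(u+\varepsilon,s_n).
\]
Letting $n\to\infty$ and then $\varepsilon\to0$ gives joint continuity.
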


Lemma \ref{lem:besq} has the following consequence for the limiting curves from the Ray-Knight theorems.

\begin{lemma}\label{lem:curvemean}
The map $(x_0,h,y)\mapsto E[\Lambda_{x_0,h}(y)]$ is nondecreasing in $h\in[0,\infty)$ and is
continuous on $\R\times[0,\infty)\times\R$.
\end{lemma}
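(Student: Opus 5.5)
We prove monotonicity first and continuity second, working throughout with the representation of a single curve $\Lambda_{x_0,h}(\cdot)$ from Assumption~\ref{asm:Bessel}.

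\textbf{Monotonicity in $h$.} For $h\le h'$ the joint RKT gives $\Lambda^n_{x_0,h}\le\Lambda^n_{x_0,h'}$ pointwise. Indeed, $\tau^n_{x_0,h}\le\tau^n_{x_0,h'}$ and $L(\cdot,k)$ is nondecreasing in time. Since the limiting curves are continuous, this ordering passes to the limit, so $\Lambda_{x_0,h}(y)\le\Lambda_{x_0,h'}(y)$ almost surely; this is also the non-crossing property \eqref{non-cross} at $y=x_0$. Taking expectations yields monotonicity. Alternatively, it follows directly from Lemma~\ref{lem:besq}(ii), since the law of a single curve depends on $h$ only through the starting value of the BESQ pieces.

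\textbf{Continuity.} By Remark~\ref{rem:reduction} it suffices to treat $x_0<0<y$ for the curve crossing the origin left to right; the cases $y\in[x_0,0]$ and $y\ge x_0\ge0$ are simpler. Write $\Lambda_{x_0,h}(y)=\bigl(Y_a(y)\bigr)^\gamma$, where $Y_a$ is an absorbed BESQ$^{\d_{a,+}}$ started at time $0$ from the value $U^{1/\gamma}$, with $U=\Lambda_{x_0,h}(0)=(Y_r(-x_0))^\gamma$ and $Y_r$ a reflected BESQ$^{\d_{r,+}}$ started at $h^{1/\gamma}$. By the Markov property in Assumption~\ref{asm:Bessel},
\[
E[\Lambda_{x_0,h}(y)] = E\bigl[\phi_a\bigl(Y_r(-x_0),\,y\bigr)\bigr], \qquad \phi_a(u,s):=E\bigl[(Y^{(u)}_a(s))^\gamma\bigr],
\]
where $Y^{(u)}_a$ denotes the absorbed process started at $u$. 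Lemma~\ref{lem:besq}(ii) shows that $\phi_a$ is jointly continuous and nondecreasing in $u$.

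To handle the random starting point, we couple all initial values via a common driving Brownian motion. Lemma~\ref{lem:besq}(i) then gives $Y_r^{(h_n^{1/\gamma})}(-x_n)\to Y_r^{(h^{1/\gamma})}(-x_0)$ almost surely as $(x_n,h_n)\to(x_0,h)$. Combined with the continuity of $\phi_a$, the integrand converges almost surely, and it remains to justify passing the limit under the expectation.

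\textbf{Main obstacle: uniform integrability.} The domination step is the part we expect to be hardest. For $(h_n,x_n,y_n)$ in a compact set, monotonicity of $\phi_a$ in $u$ bounds the integrand by a single quantity $\phi_a(Y_r^{(H)}(S),y)$, where $H$ and $S$ are the upper endpoints of the compact set. We then bound this via the tail estimates of Proposition~\ref{prop:BESQdev} and Lemma~\ref{cor:BESQpowerdev}, together with the fact that $\phi_a(u,s)\le \mathrm{const}\,(1+u)^\gamma$. Those estimates give exponential moments for the supremum of a BESQ over a bounded time window, so all powers are integrable and dominated convergence applies.

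The case $y\le x_0$ uses the backward curve in the same way. At the seams $y=x_0$ and $y=0$, continuity follows because the value of the curve at the seam is continuous in law from both sides.
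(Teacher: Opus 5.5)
Your proposal is correct and is essentially the paper's proof. Monotonicity comes from non-crossing. For the forward curve with $x_0<0$, the mean is $m_r(h^{1/\gamma},y-x_0)$ on $[x_0,0]$ and $E\bigl[m_a\bigl(\Lambda_{x_0,h}(0)^{1/\gamma},y\bigr)\bigr]$ for $y\ge 0$; the integrand converges almost surely (your monotone coupling replaces the paper's Skorokhod representation), and Proposition~\ref{prop:BESQdev} justifies exchanging limit and expectation (your dominated convergence replaces the paper's uniform integrability via $L^{(\gamma+1)/\gamma}$ bounds).

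One small slip: $\phi_a\bigl(Y_r^{(H)}(S),y\bigr)$ does not dominate the integrand, because the time arguments $-x_n$ and $y_n$ vary and neither BESQ paths nor $\phi_a(u,\cdot)$ need be monotone in time. Your next sentence already supplies the correct dominating variable, $C\bigl(1+\sup_{t\le S}Y_r^{(H)}(t)\bigr)^\gamma$.
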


\begin{proof}
  Monotonicity in $h$ follows from the non-crossing property.  For
  continuity we use Assumption~\ref{asm:Bessel} and
  Lemma~\ref{lem:besq}. By Remark~\ref{rem:reduction} we treat only
  the forward curve with $x_0<0$. For $u,s \geq 0$, write
\[
   m_r(u,s):=E[(Y^{(u)}(s))^{\gamma}],\qquad
   m_a(u,s):=E[(\widetilde Y^{(u)}(s))^{\gamma}],
\]
where $Y^{(u)}$ is a reflecting $\mathrm{BESQ}^{\delta_{r,+}}$ process with $Y^{(u)}(0)=u$ and
$\widetilde Y^{(u)}$ an absorbing $\mathrm{BESQ}^{\delta_{a,+}}$ process with $\widetilde Y^{(u)}(0)=u$. By
Lemma~\ref{lem:besq}(ii), applied to the dimensions $\delta_{r,+}$ and
$\delta_{a,+}$, each of $m_r$ and $m_a$ is nondecreasing in $u$ and
jointly continuous in $(u,s)$ on $[0,\infty)\times[0,\infty)$.

\emph{Reflection at $0$: $x_0\le y\le 0$.} By Assumption \ref{asm:Bessel}, the law of $(\Lambda_{x_0,h}(x_0+s))_{0\le s\le|x_0|}$
does not depend on $x_0$, so
$E[\Lambda_{x_0,h}(y)]=m_r(h^{1/\gamma},\,y-x_0)$. Composing the
jointly continuous $m_r$ with the continuous maps
$h\mapsto h^{1/\gamma}$ and $(x_0,y)\mapsto y-x_0$ shows that
$E[\Lambda_{x_0,h}(y)]$ is jointly continuous in $(x_0,h,y)$ on this
piece.

\emph{Absorption at $0$: $y\ge0$.} By the Markov property of $\Lambda_{x_0,h}$, the curve to the right of $0$ is a power of a $\mathrm{BESQ}^{\delta_{a,+}}$ process
started at the random value $\Lambda_{x_0,h}(0)$ and absorbed at $0$. Conditioning on
$\Lambda_{x_0,h}(0)$ gives
\[
   E[\Lambda_{x_0,h}(y)]=E\Bigl[m_a\bigl((\Lambda_{x_0,h}(0))^{1/\gamma},\,y\bigr)\Bigr].
\]
By Assumption \ref{asm:Bessel},
$\Lambda_{x_0,h}(0)\overset{\mathrm{Law}}{=}(Y^{(h^{1/\gamma})}(|x_0|))^{\gamma}$, and its law converges
weakly as $(x_0',h')\to(x_0,h)$, by Lemma~\ref{lem:besq}(i) and the
continuity of $Y$ in time. To pass to the limit in the expectation above, we notice that by the Skorohod Representation Theorem we can assume the convergence of $\Lambda_{x_0',h'}(0)$ to $\Lambda_{x_0,h}(0)$ is almost sure, so the continuity of $m_a$ proven in Lemma~\ref{lem:besq}(ii) implies $m_a((\Lambda_{x_0',h'}(0))^{1/\gamma},y')$ converges almost surely to $m_a((\Lambda_{x_0,h}(0))^{1/\gamma},y)$ as $(x_0',h',y') \to (x_0,h,y)$. Moreover, Proposition \ref{prop:BESQdev} yields the $m_a((\Lambda_{x_0',h'}(0))^{1/\gamma},y')$ for $x_0-1 \leq x_0' \leq 0$, $0 \leq h' \leq h+1$, $0 \leq y' \leq y+1$ are bounded in $L^{(\gamma+1)/\gamma}$, hence uniformly integrable. This implies $E[m_a((\Lambda_{x_0',h'}(0))^{1/\gamma},y')]$ converges to $E[m_a((\Lambda_{x_0,h}(0))^{1/\gamma},y)]$ as $(x_0',h',y') \to (x_0,h,y)$, so
jointly continuous in $(x_0,h,y)$.
\end{proof}

We now prove that the finite-dimensional distributions of the curves
depend continuously on the base points and the evaluation points. This
is all that the rescaling identity \eqref{Lxh-rescale} requires:
\eqref{Lxh-rescale} is an equality of laws on $C(\R)^{k}$, and
a law on $C(\R)^{k}$ is determined by its finite-dimensional
distributions, so it suffices to upgrade the rational case to
arbitrary $c$ at the level of finite-dimensional distributions. The
ingredients are the non-crossing property, the almost sure continuity
of a single curve, and the continuity of the mean
(Lemma~\ref{lem:curvemean}), used via Markov
inequality.

\begin{proposition}\label{prop:fdd}
Suppose that the joint Ray--Knight theorem or \eqref{Yu-JRK} holds, and that the families of limiting curves satisfy Assumption~\textup{\ref{asm:Bessel}}.
Then for every $k,m\in\mathbb{N}$ the map
\[
   \bigl((x_1,h_1),\dots,(x_k,h_k);\,y_1,\dots,y_m\bigr)\ \longmapsto\
   \mathrm{Law}\Bigl(\bigl(\Lambda_{x_i,h_i}(y_j)\bigr)_{i\le k,\,j\le m}\Bigr)
\]
is continuous from $(\R\times\R_{+})^{k}\times\R^{m}$ to the space of
Borel probability measures on $\R^{km}$ with the topology of weak convergence.
\end{proposition}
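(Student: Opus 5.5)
Fix a sequence of parameters $\bigl((x_i^{(n)},h_i^{(n)})_{i\le k};(y_j^{(n)})_{j\le m}\bigr)\to\bigl((x_i,h_i)_{i\le k};(y_j)_{j\le m}\bigr)$. I will show that $(\Lambda_{x_i^{(n)},h_i^{(n)}}(y_j^{(n)}))_{i,j}\Rightarrow(\Lambda_{x_i,h_i}(y_j))_{i,j}$. The plan is a sandwich argument: trap each moving curve between two fixed-base curves that are close in mean, and move the evaluation point using almost sure continuity of a single curve. This uses only non-crossing, consistency of the joint laws, continuity of paths, and Lemma \ref{lem:curvemean}.

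\emph{Step 1 (moving $h$ with $x$ fixed).} For fixed $x$ and $h'\to h$, the non-crossing property gives $\Lambda_{x,h\wedge h'}\le\Lambda_{x,h'},\Lambda_{x,h}\le\Lambda_{x,h\vee h'}$ in a common joint law. Markov's inequality then yields
\[
P\bigl(|\Lambda_{x,h'}(y)-\Lambda_{x,h}(y)|>\e\bigr)\le \e^{-1}\bigl(E\Lambda_{x,h\vee h'}(y)-E\Lambda_{x,h\wedge h'}(y)\bigr),
\]
and the right side tends to $0$ by Lemma \ref{lem:curvemean}. The same estimate holds uniformly over $y$ in a neighborhood of a fixed point, using joint continuity of the mean.

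\emph{Step 2 (moving $x$).} For $x'$ near $x$, I will compare $\Lambda_{x',h'}$ with the curves $\Lambda_{x,\cdot}$ started at the fixed base $x$. Suppose $x'>x$ (the other case is symmetric). The value $\Lambda_{x,h''}(x')$ is close to $h''$ when $x'-x$ is small, with high probability, by \eqref{Lxh-dev}. Choose $h''_-<h'<h''_+$ with $h''_\pm$ within $\eta$ of $h$. Then with high probability $\Lambda_{x,h''_-}(x')<h'<\Lambda_{x,h''_+}(x')$, and these are local times at $x'$ at the corresponding stopping times. Non-crossing, which pins down the order of curves through their values at $x'$, then gives $\Lambda_{x,h''_-}\le\Lambda_{x',h'}\le\Lambda_{x,h''_+}$. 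This ordering holds for the prelimit curves and is inherited in the limit via the joint RKT applied to the finite family. The Markov bound of Step 1 then controls $\Lambda_{x',h'}(y)-\Lambda_{x,h}(y)$. The case $h=0$ (and bases at $0$) needs only the upper bound, since the curves are nonnegative, together with $E\Lambda_{x,\eta}(y)\to E\Lambda_{x,0}(y)$.

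\emph{Step 3 (moving $y$, and the joint conclusion).} For fixed base $(x,h)$, $\Lambda_{x,h}(y^{(n)})\to\Lambda_{x,h}(y)$ almost surely by continuity of paths (Assumption \ref{asm:Bessel}). Combining Steps 1–3 for all $i,j$ within the single joint law of the finite family $\{\Lambda_{x_i,h_i},\Lambda_{x_i,h''_\pm}\}$ together with the moving curves, the vector $(\Lambda_{x_i^{(n)},h_i^{(n)}}(y_j^{(n)}))$ is close in probability to $(\Lambda_{x_i,h_i}(y_j))$. Convergence in probability implies the asserted weak convergence.

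The main obstacle is Step 2: justifying the sandwich $\Lambda_{x,h''_-}\le\Lambda_{x',h'}\le\Lambda_{x,h''_+}$ purely from non-crossing in the limit. I would establish it by noting that non-crossing at the single point $x'$ forces the order everywhere, and that $\Lambda_{x',h'}(x')=h'$. I would also handle separately the configuration where the origin lies between $x$ and $x'$, where the reflection/absorption regime changes; there I would use \eqref{Lxh-dev}, which already accounts for that gluing.
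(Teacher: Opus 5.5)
\textbf{The case $h>0$.} Your argument here is correct and differs from the paper only mildly. The paper sandwiches both $\Lambda_{x',h'}$ and $\Lambda_{x,h}$ between the curves $\Lambda_{x',h\pm\eta}$ based at the \emph{moving} point $x'$, using $\Lambda_{x,h}(x')\to h$. That requires joint continuity of $(x_0,h,y)\mapsto E[\Lambda_{x_0,h}(y)]$. You sandwich at the fixed base $x$, using $\Lambda_{x,h\pm\eta}(x')\to h\pm\eta$, so you only need continuity of the mean in $(h,y)$. Two of your tools are unnecessary. You do not need \eqref{Lxh-dev}: a.s.\ continuity of the single curves $\Lambda_{x,h\pm\eta}$ already gives the strict inequalities at $x'$. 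You do not need a separate treatment when the origin lies between $x$ and $x'$ either. The global order then follows directly from \eqref{non-cross} for the limit family, so there is no need to pass through the prelimit curves.

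\textbf{The gap: $h=0$ with $x\neq 0$.} You say only the upper bound is needed ``since the curves are nonnegative''. That holds only where $\Lambda_{x,0}\equiv 0$, e.g.\ when $x=0$. For $x\neq0$, Assumption~\ref{asm:Bessel} makes $\Lambda_{x,0}$, on the segment between $x$ and $0$, a power of a \emph{reflected} BESQ started at $0$. Hence $\Lambda_{x,0}(y)>0$ a.s.\ and $E[\Lambda_{x,0}(y)]>0$ there. The bounds $0\le\Lambda_{x',h'}(y),\Lambda_{x,0}(y)\le\Lambda_{x,\eta}(y)$ then only give $|\Lambda_{x',h'}(y)-\Lambda_{x,0}(y)|\le\Lambda_{x,\eta}(y)$. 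The mean of the right side tends to $E[\Lambda_{x,0}(y)]\neq0$, not to $0$.

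The fixed-base sandwich also cannot be completed from below. Take $x'$ strictly between $0$ and $x$ and $h'=0$. Then every $\Lambda_{x,b}$ with $b\ge0$ satisfies $\Lambda_{x,b}(x')\ge\Lambda_{x,0}(x')>0=\Lambda_{x',0}(x')$, so it lies \emph{above} $\Lambda_{x',0}$, never below it.

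\textbf{How the paper closes it.} The paper splits
$\Lambda_{x',h'}-\Lambda_{x,0}=(\Lambda_{x',h'}-\Lambda_{x',0})+(\Lambda_{x',0}-\Lambda_{x,0})$.
\begin{itemize}
\item The first term is nonnegative. It is handled by Markov's inequality and continuity of the mean at the moving base $x'$.
\item For the second term, the two zero-height curves are a.s.\ ordered: the one whose base is farther from the origin lies on top, because it is strictly positive at the other base point. Markov's inequality and continuity of $x_0\mapsto E[\Lambda_{x_0,0}(y)]$ (Lemma~\ref{lem:curvemean}) then finish.
\end{itemize}

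\textbf{An alternative fix.} You could keep your one-sided bound and write, on the good event,
$|\Lambda_{x',h'}(y')-\Lambda_{x,0}(y')|\le(\Lambda_{x,\eta}(y')-\Lambda_{x',h'}(y'))+(\Lambda_{x,\eta}(y')-\Lambda_{x,0}(y'))$.
To control the first term you would then need continuity of the mean at the moving index $(x',h',y')$, and uniform integrability to discard the bad event. Your sketch contains neither ingredient.
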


\begin{proof}
Weak convergence on $\R^{km}$ is metrizable, so it suffices to fix sequences
$(x_i^{(n)},h_i^{(n)})\to(x_i,h_i)$ for $i\le k$ and $y_j^{(n)}\to y_j$ for $j\le m$, and to
show
\[
   \bigl(\Lambda_{x_i^{(n)},h_i^{(n)}}(y_j^{(n)})\bigr)_{i\le k,\,j\le m}
   \ \Longrightarrow\
   \bigl(\Lambda_{x_i,h_i}(y_j)\bigr)_{i\le k,\,j\le m}
   \quad \text{as }n\to\infty.
\]

\emph{Single probability space.} Fix a sequence $\eta_\ell\downarrow0$
in $(0,1]$.  Define a countable set $J\subset \R\times\R_+$ by
\[
   J:=\left\{(x_i,h_i),(x_i^{(n)},h_i^{(n)}), (x_i^{(n)},\,\max\{0,h_i-\eta_\ell\}),
   (x_i^{(n)},\,h_i+\eta_\ell)\right\}_{1\le i\le k;\, \ell,n\ge 1}.
\]
By the joint Ray--Knight theorem, or by \eqref{Yu-JRK}, the joint distributions of finite
subfamilies of $\{\Lambda_{x,h}\}_{(x,h)\in J}$ are consistent, so
Kolmogorov's extension theorem allows to realize
$(\Lambda_{x,h})_{(x,h)\in J}$ on a single probability space. On it
the non-crossing property holds for all pairs from $J$ at once.

\emph{Reduction to one coordinate.} Convergence in probability of a
random vector in $\R^{km}$ follows from convergence in
probability of each coordinate, so it suffices to prove, for each
fixed $i\le k$ and $j\le m$,
\[
   \Lambda_{x_i^{(n)},h_i^{(n)}}(y_j^{(n)})\ \xrightarrow[n\to\infty]{P}\ \Lambda_{x_i,h_i}(y_j).
\]
Fix such $i,j$ and abbreviate $(x,h):=(x_i,h_i)$, $y:=y_j$,
$(x',h'):=(x_i^{(n)},h_i^{(n)})$, $y':=y_j^{(n)}$; all curves below
are indexed by elements of $J$. Write
\[
   \Lambda_{x',h'}(y')-\Lambda_{x,h}(y)
   =\underbrace{\bigl(\Lambda_{x',h'}(y')-\Lambda_{x,h}(y')\bigr)}_{(\mathrm I)}
   +\underbrace{\bigl(\Lambda_{x,h}(y')-\Lambda_{x,h}(y)\bigr)}_{(\mathrm{II})}.
\]

Term $(\mathrm{II})$ goes to $0$ almost surely, by continuity of a
single curve $\Lambda_{x,h}$ (Assumption~\ref{asm:Bessel}), hence it is enough to prove that $(\mathrm{I})$ converges in probability to $0$.

\emph{Case $h>0$.} Fix $\ell$ large enough to have $h-\eta_\ell>0$ and write $\eta:=\eta_\ell$,
$b_-:=h-\eta$, $b_+:=h+\eta$; these are deterministic,
$b_+-b_- = 2\eta$, and the indices of curves $\Lambda_{x',b_\pm}$ belong to
$J$. Since $\Lambda_{x,h}$ is almost surely continuous with
$\Lambda_{x,h}(x)=h$ and $x'\to x$, we have $\Lambda_{x,h}(x')\to h$
almost surely, so
\[
   A^{(n)}:=\bigl\{\,|\Lambda_{x,h}(x')-h| < \eta\,\bigr\}\quad\text{satisfies}\quad
   P\bigl((A^{(n)})^{c}\bigr)\xrightarrow[n\to\infty]{}0 .
\]
Also $|h'-h| < \eta$ for large $n$. On $A^{(n)}\cap\{|h'-h| < \eta\}$
the three numbers $h$, $h'$, and $\Lambda_{x,h}(x')$ all lie in
$(b_-,b_+)$. Hence, at $x'$, both $\Lambda_{x',h'}$ and $\Lambda_{x,h}$
are sandwiched between $\Lambda_{x',b_-}$ and $\Lambda_{x',b_+}$. By
the non-crossing property, $\Lambda_{x',h'}(y')$ and $\Lambda_{x,h}(y')$
lie in $[\Lambda_{x',b_-}(y'),\Lambda_{x',b_+}(y')]$ for all $y'$.
Thus on
$A^{(n)}\cap\{|h'-h|\le\eta\}$,
\[
   |(\mathrm I)|=\bigl|\Lambda_{x',h'}(y')-\Lambda_{x,h}(y')\bigr|
   \ \le\ \Lambda_{x',b_+}(y')-\Lambda_{x',b_-}(y')\ =:\ G^{(n)}(y') .
\]
Note that $G^{(n)}(y')\ge 0$ and  $b_\pm$ are
deterministic, so by Markov inequality, for any $\e>0$,
\[
   P\bigl(G^{(n)}(y')>\varepsilon\bigr)\ \le\ \frac1\varepsilon\,E\bigl[G^{(n)}(y')\bigr]
   \ =\ \frac1\varepsilon\Bigl(E[\Lambda_{x',b_+}(y')]-E[\Lambda_{x',b_-}(y')]\Bigr).
\]
As $n\to\infty$, $x'\to x$ and $y'\to y$ while $b_\pm$ stay fixed, so by the joint continuity part of Lemma~\ref{lem:curvemean},
\[
  \limsup_{n\to\infty}P\bigl(G^{(n)}(y')>\varepsilon\bigr) \ \le\
  \frac1\varepsilon\Bigl(E[\Lambda_{x,b_+}(y)]-E[\Lambda_{x,b_-}(y)]\Bigr).
\]
We can now conclude for $h>0$. For every $\varepsilon>0$ and every $\ell$,
\[
   P\bigl(|(\mathrm I)|>\varepsilon\bigr)\ \le\ P\bigl((A^{(n)})^{c}\bigr)+
  \ind{|h'-h|\geq \eta}
   +P\bigl(G^{(n)}(y')>\varepsilon\bigr).
\]
Letting first $n\to\infty$, then $\ell\to\infty$ so that
$\eta_\ell\downarrow0$ and $b_\pm\to h$, we see that the right-hand side
tends to $0$ by continuity in $h$ (Lemma~\ref{lem:curvemean}). Hence
$(\mathrm I)\to0$ in probability, which is enough.

\emph{Case $h=0$.}
We notice that
\[
 (I) = \Lambda_{x',h'}(y')-\Lambda_{x',0}(y')+\Lambda_{x',0}(y')-\Lambda_{x,0}(y').
\]
By the non-crossing property, $\Lambda_{x',h'}(y')-\Lambda_{x',0}(y') \geq 0$, hence for any $\e>0$, the Markov inequality and Lemma \ref{lem:curvemean} yield
\[
 P\left(\Lambda_{x',h'}(y')-\Lambda_{x',0}(y')>\frac{\e}{2}\right) \leq \frac{2}{\e}\left(E[\Lambda_{x',h'}(y')]-E[\Lambda_{x',0}(y')]\right) \underset{n\to \infty}{\longrightarrow} 0.
\]
When $n$ is large enough, $x'$ has the sign of $x$; we consider such $n$. If $x=x'$ we have $\Lambda_{x',0}(y')=\Lambda_{x,0}(y')$. If $x \neq x'$ and $x'$ is closer to $0$ than $x$, then almost surely $\Lambda_{x,0}(x')>0=\Lambda_{x',0}(x')$, hence by the non-crossing property, $\Lambda_{x,0}(y')\geq\Lambda_{x',0}(y')$. Conversely, if $x$ is closer to $0$ than $x'$, then $\Lambda_{x,0}(y')\leq\Lambda_{x',0}(y')$ almost surely. In all cases, for any $\e>0$, the Markov inequality and Lemma \ref{lem:curvemean} imply
\[
 P\left(|\Lambda_{x,0}(y')-\Lambda_{x',0}(y')|>\frac{\e}{2}\right) \leq \frac{2}{\e}\left|E[\Lambda_{x,0}(y')]-E[\Lambda_{x',0}(y')]\right| \underset{n\to \infty}{\longrightarrow} 0.
\]
Consequently, $(\mathrm I)$ converges in probability to $0$, which is enough.
\end{proof}

\subsection{Integrals of powers of BESQ processes}\label{app:integrals}

In the proof of our main results, we use the fact that
distributions of $\int_\R \Lambda_{x,h}(y) \, dy$
have no atoms, and that if $(x,h) \neq (x',h')$, almost surely $\int_\R\Lambda_{x,h}(y)\,dy \neq \int_\R\Lambda_{x',h'}(y)\,dy$. We will prove these results in this subsection. We note that our proof only uses that the curves $\Lambda_{x,h}(\cdot)$ satisfy Assumption \ref{asm:Bessel} and have the non-crossing property \eqref{non-cross}. Since each $\Lambda_{x,h}(\cdot)$ is a concatenation of
\rm{BESQ} processes raised to the power $\gamma$, we will use the following proposition.

\begin{proposition}\label{prop_no_atom_Bessel}
  Let $Y=(Y(t))_{t\geq 0}$ be a \rm{BESQ}$^\dim$ process with
  $\dim < 2$ that is absorbed upon hitting $0$.  Then for any
  $\gamma>0$ and $x>0$, we have
 \[
  P\left( \int_0^\infty (Y(t))^{\gamma} \, dt = c \, \Bigl| \, Y(0) = x \right) = 0, \quad \forall\, c\geq 0.
 \]
\end{proposition}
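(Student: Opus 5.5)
The plan is to exploit the Markov property at a fixed small time and the strict monotonicity of the functional in the initial condition. Write $I(x)=\int_0^\infty (Y(t))^\gamma\,dt$ under $Y(0)=x$; it is a.s.\ finite since $\dim<2$ forces hitting $0$ in finite time. Realize all starting points on one probability space with a common driving Brownian motion, as in Lemma \ref{lem:besq}. Then by Lemma \ref{lem:besq}(i) the map $u\mapsto I(u)$ is nondecreasing. If $I(x)$ had an atom at $c$, we would like to contradict this using a continuous parameter.

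The cleanest route I would try first is Brownian scaling. If $Y$ is BESQ$^\dim$ from $x$, then $\lambda Y(\cdot/\lambda)$ is BESQ$^\dim$ from $\lambda x$. Hence $I(\lambda x)\overset{\text{Law}}{=}\lambda^{1+\gamma} I(x)$, and in particular $P(I(x)=c)=P(I(1)=c x^{-1-\gamma})$. So the atom function $a(c):=P(I(1)=c)$ determines all atoms.

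The key step is to show $a(c)=0$ for every $c$, and I would use the Markov property at a time $s>0$. Conditioning on $\mathcal F_s$ gives
\[
I(1)=\int_0^s Y(t)^\gamma dt+\tilde I(Y(s)).
\]
Here $\tilde I(y)$ has the law of $y^{1+\gamma}I(1)$ and is independent of $\mathcal F_s$. Thus
\[
a(c)=E\bigl[a\bigl((c-A_s)Y(s)^{-1-\gamma}\bigr);\, Y(s)>0\bigr]+P\bigl(A_\infty=c,\,Y(s)=0\bigr),
\]
where $A_s=\int_0^s Y^\gamma$. The last term is at most $P(\sigma_0\le s)\to 0$ as $s\to0$.

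Since a distribution has at most countably many atoms, let $c^*$ maximize $a$; a maximum exists because the atom sizes are summable. Suppose $a(c^*)=m>0$. The conditional argument $(c^*-A_s)Y(s)^{-1-\gamma}$ has a continuous (non-atomic) law for $s>0$. This is the main obstacle: $(A_s,Y(s))$ has a joint density, or at least $Y(s)$ given $A_s$ is non-atomic, since $Y$ is a diffusion with nondegenerate diffusion coefficient away from $0$. Granting that, it hits the countable atom set with probability $0$, so the first term vanishes. This gives $m\le P(\sigma_0\le s)$ for all small $s$, hence $m=0$.

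To justify non-atomicity, I would condition on $A_s$ and use that on $\{Y>0\}$ the law of $Y(s)$ given the path up to time $s/2$ is absolutely continuous. Indeed, the BESQ transition density from a positive point is absolutely continuous on $(0,\infty)$. Then write $(c-A_s)$ as $(c-A_{s/2})-\int_{s/2}^s Y^\gamma$, and argue jointly via the Markov property at $s/2$. Alternatively, I would avoid this by using Lemma \ref{lem:besq}(i) strict monotonicity: $I(u)<I(u')$ a.s.\ for $u<u'$, since the paths differ on a positive-length interval. Then $u\mapsto P(I(u)=c)$ is positive on at most a countable set of $u$ for each $c$. Scaling makes it constant in a sense that forces it to be zero: $P(I(u)=c)=a(cu^{-1-\gamma})$, and $\sum_{c}a(c)\le1$. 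I would combine these facts with the Markov decomposition to conclude.
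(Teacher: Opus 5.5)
Your setup (scaling $I(\lambda x)\overset{\text{Law}}{=}\lambda^{1+\gamma}I(x)$ and the Markov decomposition at time $s$) is correct, but there is a genuine gap at the step you flag yourself. You assert that $(c-A_s)Y(s)^{-1-\gamma}$ has a non-atomic law, i.e.\ that $P(A_s+vY(s)^{1+\gamma}=c,\,Y(s)>0)=0$ for every atom $v$. Nothing in your sketch proves this. Absolute continuity of $Y(s)$ given $\mathcal{F}_{s/2}$ does not suffice, because $\int_{s/2}^s Y(t)^\gamma\,dt$ is a functional of the same path segment as $Y(s)$, so $A_s+vY(s)^{1+\gamma}$ could a priori charge a point. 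What you would really need is absolute continuity of the pair $(A_s,Y(s))$. That is a hypoellipticity statement for the degenerate two-dimensional diffusion $(Y,A)$, which has noise in one coordinate only and a boundary at $0$, and it is at least as delicate as the proposition itself.

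Your fallback via the monotone coupling and scaling gives no contradiction either. $P(I(u)=c)=a(cu^{-1-\gamma})$ is positive only for the countably many $u$ with $cu^{-1-\gamma}$ an atom, which is perfectly consistent with $I(1)$ having countably many atoms. The choice of a maximal atom $c^*$ also plays no role in your argument.

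The missing idea is that you never need the first term to vanish. It is enough to expand it over the countable atom set $\{a_j\}$ and let $s\to 0$. Your decomposition gives $a(c)\le \sum_j a(a_j)\,P\bigl(A_s+a_jY(s)^{1+\gamma}=c\bigr)+P(Y(s)=0)$. Since $A_s+a_jY(s)^{1+\gamma}\to a_j$ almost surely, every term with $a_j\neq c$ tends to $0$, by dominated convergence over $j$. For the diagonal term, note $c>0$ because $I(1)>0$ a.s.; strict positivity of $A_s$ then forces $Y(s)<1$, so that term is at most $a(c)P(Y(s)<1)$. This tends to $a(c)/2$, because near time $0$ the process $\sqrt{Y}-1$ is a Brownian motion plus a bounded drift. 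Letting $s\to0$ gives $a(c)\le a(c)/2$, hence $a(c)=0$.

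This is exactly the paper's proof. It writes the argument with the constants $3/4$ and $1/4$, after choosing $s$ small enough that $P(Y(s)=0)\le \frac14 P(I^{(1)}=a_k)$. It is the only change your proposal needs.
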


\begin{proof}
  Throughout the proof $\gamma>0$ will be fixed, and for convenience
  of notation we will let
  $I^{(x)}=\int_0^\infty (Y(t))^{\gamma} \, dt $ conditioned on
  $Y(0) = x>0$.  If $Y(0) = x$, then
  $\tilde{Y}(t) = x^{-1}Y(xt)\in \rm{BESQ}^\dim$ with $\tilde{Y}(0) = 1$ and 
 \begin{equation}\label{Ix-scaling}
  I^{(x)} = \int_0^\infty (Y(t))^{\gamma} \, dt 
  = \int_0^\infty \left( x \tilde{Y}(x^{-1}t) \right)^{\gamma} \, dt
  = x^{1+\gamma} \int_0^\infty (\tilde{Y}(u))^{\gamma} \, du 
  \overset{\text{Law}}{=} x^{1+\gamma} I^{(1)}. 
 \end{equation}
 Thus,
 it will be enough to show that $I^{(1)}$ has a continuous
 distribution, and so for the remainder of the proof we will assume
 that $Y(0) = 1$.
 
 Assume that the distribution of $I^{(1)}$ has atoms, i.e., there is a
 non-empty countable set $\mathcal{A} = (a_k)_k$ such that
 $P(I^{(1)} = a) > 0$ if and only if $a \in \mathcal{A}$.  Now, for
 any $s>0$ since $(Y(s+u))_{u\geq 0}\in\rm{BESQ}^\dim$ started at
 $Y(s)$ and absorbed at $0$, then it follows from \eqref{Ix-scaling} (recall also that
 $Y(u)=0$ for all $u$ starting with the absorption time) that
\[
  I^{(1)} = \int_0^s (Y(t))^{\gamma} \, dt + \int_s^\infty
  (Y(t))^{\gamma} \, dt \overset{\text{Law}}{=} \int_0^s (Y(t))^{\gamma} \, dt
  + (Y(s))^{1+\gamma} \tilde{I}^{(1)},
\]
where $\tilde{I}^{(1)}$ has the same distribution as $I^{(1)}$ and is independent of $(Y(t))_{t\leq s}$. 
Fixing some $a_k \in \mathcal{A}$, we choose $\delta>0$ so that for any $0 \leq s \leq \delta$, $P(Y(s)=0) \leq \frac{1}{4}P(I^{(1)}=a_k)$.
For $0 < s \leq \delta$, conditioning on $\mathcal{F}_s = \sigma(Y(t), \, t\leq s)$, it follows from \eqref{Ix-scaling} that
\begin{align*}
 \frac{3}{4}P(I^{(1)}=a_k) \leq  P(I^{(1)}=a_k,\, Y(s)>0)
 &= P\left( \int_0^s (Y(t))^{\gamma} \, dt + (Y(s))^{1+\gamma} \tilde{I}^{(1)} = a_k, \, Y(s)>0 \right) \\
 &= E\left[ P\left( \tilde{I}^{(1)} = \frac{a_k - \int_0^s (Y(t))^{\gamma} \, dt}{(Y(s))^{1+\gamma}} \, \Bigl| \, \mathcal{F}_s \right) \ind{Y(s)>0} \right] \\
 &\leq E\left[ \sum_j P( \tilde{I}^{(1)} = a_j ) \ind{ a_k - \int_0^s (Y(t))^{\gamma} \, dt= a_j (Y(s))^{1+\gamma}} \right] \\
 &= \sum_j P(I^{(1)} = a_j) P\left( \int_0^s (Y(t))^{\gamma} \, dt + (Y(s))^{1+\gamma} a_j = a_k \right). 
\end{align*}
To control the last probability in the line above, if $j=k$ then since $\int_0^s (Y(t))^{\gamma}\, dt > 0$ it follows that 
$P\left( \int_0^s (Y(t))^{\gamma} \, dt + (Y(s))^{1+\gamma} a_k = a_k \right) \leq P( Y(s) < 1 )$.
Therefore, 
\begin{multline}\label{Ix-ineq}
 P( I^{(1)}=a_k )\left(\frac{3}{4}-P(Y(s) < 1)\right)
 \\\leq \sum_{j\neq k} P(I^{(1)} = a_j) P\left( \int_0^s (Y(t))^{\gamma} \, dt + (Y(s))^{1+\gamma} a_j = a_k \right). 
\end{multline}
Since almost surely $\lim_{s\to 0^+} \left( \int_0^s (Y(t))^{\gamma} \, dt + (Y(s))^{1+\gamma} a_j \right)  = a_j$ (recall that $Y(0) = 1$), it follows that all of the terms in the sum on the right vanish as $s\to 0^+$ and thus
the dominated convergence theorem implies that the right side of
\eqref{Ix-ineq} vanishes as $s\to 0^+$. On the other hand, it can be
shown that $\lim_{s\to 0^+} P(Y(s) < 1) = 1/2$. Indeed, for all
$s \leq \inf\{t \geq 0 : |Y(s)-1| \geq 1/2\}$, we have
$\sqrt{Y(s)}-1=W(s)+\frac12(\mathfrak{d}-1)\int_0^s 1/\sqrt{Y(t)}\,dt$
where $W$ is a Brownian motion. Hence
$W(s)+b_-s\leq \sqrt{Y_s}-1 \leq W_s+b_+s$ for some
$b_-,b_+ \in \mathbb{R}$. Thus, \eqref{Ix-ineq} implies that
$P(I^{(1)} = a_k)\leq 0$, contradicting our assumption that the
distribution of $I^{(1)}$ has a set of atoms $\mathcal{A} = \{a_k\}$.
\end{proof}

\begin{proposition}\label{prop_int_no_atoms}
 Under Assumption \ref{asm:Bessel}, for any $x\in\R$, $h \geq 0$ with $(x,h) \neq (0,0)$, $\int_\R\Lambda_{(x,h)}(y)dy$ has no atoms.
\end{proposition}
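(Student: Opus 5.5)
We reduce to Proposition \ref{prop_no_atom_Bessel} by conditioning on an appropriate piece of the curve and isolating a final absorbed BESQ segment. By Assumption \ref{asm:Bessel}, $\int_\R \Lambda_{x,h}(y)\,dy$ splits into a part to the right of $\max\{x,0\}$, a part to the left of $\min\{x,0\}$, and (if $x\neq 0$) a part over the interval between $0$ and $x$. The key point is that the rightmost piece $(\Lambda_{x,h}(y))_{y\ge \max\{x,0\}}$ is, conditionally on its starting value $H:=\Lambda_{x,h}(\max\{x,0\})$, equal in law to $C_{a,+}Y(\cdot)^\gamma$ with $Y\in\mathrm{BESQ}^{\d_{a,+}}$ absorbed at $0$ and $Y(0)=(H/C_{a,+})^{1/\gamma}$. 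We will use the Markov structure in Assumption \ref{asm:Bessel} together with the independence of the right and left curves from $x$.

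We first condition on $\mathcal{H}:=\sigma\big(\Lambda_{x,h}(y),\, y\le \max\{x,0\}\big)$. By the Markov property this rightmost piece is conditionally independent of everything to its left given $H$. So, for any $c\ge 0$,
\[
P\Big(\int_\R \Lambda_{x,h}(y)\,dy=c\Big)=E\Big[\,P\Big(\int_{\max\{x,0\}}^\infty \Lambda_{x,h}(y)\,dy = c-V \,\Big|\, \mathcal{H}\Big)\Big],
\]
where $V$ is the $\mathcal{H}$-measurable integral over $(-\infty,\max\{x,0\}]$. On the event $\{H>0\}$ the inner probability vanishes by Proposition \ref{prop_no_atom_Bessel}, applied with starting point $(H/C_{a,+})^{1/\gamma}>0$ and the constant $C_{a,+}$ absorbed into $c$. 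It therefore suffices to show $P(H>0)=1$, or else to handle the event $\{H=0\}$ by switching to the leftmost piece.

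The case analysis for $H>0$ goes as follows.
\begin{itemize}
\item If $x\ge 0$ and $h>0$, then $H=h>0$ deterministically.
\item If $x<0$, then $H=\Lambda_{x,h}(0)$ is $C_{r,+}Y(|x|)^\gamma$ with $Y$ a reflected $\mathrm{BESQ}^{\d_{r,+}}$, $\d_{r,+}>0$. Such a process has no atom at $0$ at a fixed positive time, since the zero set of a reflected BESQ with positive dimension has Lebesgue measure zero and the time $|x|>0$ is fixed. Hence $H>0$ a.s., including when $h=0$.
\item If $x>0$ and $h=0$, the right piece is identically zero. By symmetry we instead use the left piece, conditioning on $\sigma(\Lambda_{x,h}(y),\,y\ge 0)$; its starting value $\Lambda_{x,0}(0)$ is a reflected BESQ$^{\d_{r,-}}$ power at time $x>0$, which is again a.s. positive.
\end{itemize}
Only $(x,h)=(0,0)$ is excluded, and there the curve is identically zero.

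The main obstacle is the conditioning step, because Proposition \ref{prop_no_atom_Bessel} only gives that each fixed starting value yields a nonatomic law. We need a jointly measurable version of the conditional law of the final segment given $H$, which follows from Assumption \ref{asm:Bessel}: the BESQ family depends measurably on its initial point, for example via strong solutions driven by one Brownian motion as in Lemma \ref{lem:besq}. We also need conditional independence of that segment from $V$ given $H$. This follows from the Markov statement in Assumption \ref{asm:Bessel} for the right curve, together with independence of the right and left curves, which makes the pieces left of $x$ irrelevant. With these in hand, Fubini closes the argument.
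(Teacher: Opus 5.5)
Your proposal is correct and is essentially the paper's proof: condition on the curve up to the point where the final absorbed segment begins, note that the curve's value there is a.s.\ positive, and apply Proposition \ref{prop_no_atom_Bessel} (the paper reduces to $x\le 0$ by symmetry and always conditions at $0$, while you condition at $\max\{x,0\}$ and use symmetry only for $x>0$, $h=0$). The only soft spot is your justification that $\Lambda_{x,h}(0)>0$ a.s.\ for $x<0$. A Lebesgue-null zero set gives $P(Y(t)=0)=0$ only for a.e.\ $t$, so for the fixed time $|x|$ you should either cite the explicit marginal law of reflected $\mathrm{BESQ}^{\d}$ with $\d>0$, or combine your observation with the comparison $Y^{(u)}\ge Y^{(0)}$ and the scaling $Y^{(0)}(t)\overset{\text{Law}}{=}tY^{(0)}(1)$.
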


\begin{proof}
 Without loss of generality, we assume $x \leq 0$. If $x=0$, $\Lambda_{(x,h)}(0)=h>0$, while if $x<0$, $\Lambda_{(x,h)}(0)$ is atomless so almost surely $\Lambda_{(x,h)}(0)>0$. Moreover, for any $a\in\R$, \[P\left(\int_\R\Lambda_{(x,h)}(y)dy=a\right)=E\left[P\left(\int_0^{\infty}\Lambda_{(x,h)}(y)dy=a-\int_{-\infty}^{0}\Lambda_{(x,h)}(y)dy\,\Big|\,(\Lambda_{(x,h)}(y))_{y \leq 0}\right)\right]=0\] by Proposition \ref{prop_no_atom_Bessel}.
\end{proof}

\begin{lemma}\label{lem_int_diff}
 If the family of curves in the joint RKT satisfies Assumption \ref{asm:Bessel}, then for any $x,x'\in\R$, $h,h' \geq 0$ with $(x,h) \neq (x',h')$, we have $P(\int_\R\Lambda_{x,h}(y)\,dy = \int_\R\Lambda_{x',h'}(y)\,dy)=0$.
\end{lemma}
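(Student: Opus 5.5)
The plan is to reduce the statement to the claim that two distinct curves coincide identically with probability zero, and then to rule this out using the marginal laws from Assumption \ref{asm:Bessel}. All curves are realized jointly through the joint RKT, so that non-crossing \eqref{non-cross} holds almost surely for the pair $\Lambda_{x,h},\Lambda_{x',h'}$.

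\emph{Step 1: reduction to identical curves.} By \eqref{non-cross} the sign of $\Lambda_{x,h}(y)-\Lambda_{x',h'}(y)$ is constant in $y$, so one curve dominates the other everywhere. Both curves are continuous with compact support. Hence, if $\int_\R\Lambda_{x,h}=\int_\R\Lambda_{x',h'}$, the nonnegative continuous difference has zero integral and must vanish identically. So it suffices to show
\[
P\big(\Lambda_{x,h}(y)=\Lambda_{x',h'}(y)\ \forall y\in\R\big)=0 .
\]
On this event we have in particular $\Lambda_{x,h}(x')=h'$ and $\Lambda_{x',h'}(x)=h$. If $x=x'$ then $h\neq h'$ and the event is empty. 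Below we assume $x<x'$.

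\emph{Step 2: the case $h>0$ or $h'>0$.} Suppose $h'>0$. By Assumption \ref{asm:Bessel}, $\Lambda_{x,h}(x')$ is a positive multiple of a power of a BESQ process (reflected and/or absorbed, possibly concatenated at the origin via the Markov property) evaluated at the positive time $x'-x$. Its law restricted to $(0,\infty)$ has no atoms; this is standard, since BESQ transition kernels have densities on $(0,\infty)$ and concatenation preserves this by conditioning at $0$. Therefore $P(\Lambda_{x,h}(x')=h')=0$. If instead $h'=0<h$, the same argument applied to the backward curve $\Lambda_{x',0}(x)$ gives $P(\Lambda_{x',0}(x)=h)=0$.

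\emph{Step 3: the case $h=h'=0$, which I expect to be the main obstacle.} Here zero values occur with positive probability because of absorption, so Step 2 does not apply directly. I split according to the position of $x<x'$ relative to $0$.
\begin{enumerate}[(a)]
\item If $x<x'\le 0$: on $[x,x']$ the forward curve $\Lambda_{x,0}$ is a reflected BESQ$^{\d_{r,+}}$ power with $\d_{r,+}>0$. At the fixed positive time $x'-x$ this variable has no atom at $0$, so $P(\Lambda_{x,0}(x')=0)=0$.
\item If $0\le x<x'$: symmetrically, $\Lambda_{x',0}(x)$ is a reflected BESQ$^{\d_{r,-}}$ power at a positive time, so $P(\Lambda_{x',0}(x)=0)=0$.
\item If $x<0<x'$: identical curves would force the forward curve $\Lambda_{x,0}$ to vanish at $x'$. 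They would also force the backward curve $\Lambda_{x',0}$, which is a reflected BESQ$^{\d_{r,-}}$ power started at $0$ and instantaneously positive, to be positive on $(x'-\eta,x')$ for some $\eta>0$. Together these mean that $x'$ is exactly the absorption point $\mu^+_{x,0}$ of $\Lambda_{x,0}$. Conditioning on $\Lambda_{x,0}(0)$, which is a.s. positive, the remaining curve is an absorbed BESQ$^{\d_{a,+}}$ power with $\d_{a,+}<2$. Its hitting time of $0$ has a continuous law, so $P(\mu^+_{x,0}=x')=0$.
\end{enumerate}

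Combining Steps 1--3 proves the lemma. The delicate points are the atomlessness facts for BESQ marginals on $(0,\infty)$, for reflected BESQ at $0$, and for absorption times. I would justify these from the known transition densities and hitting-time laws of BESQ, \cite{ryCMBM,gySGBP}, together with the Markov property at the origin from Assumption \ref{asm:Bessel}.
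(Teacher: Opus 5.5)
Your argument is essentially the paper's. Non-crossing reduces the claim to the two curves differing somewhere, atomlessness of one-point marginals handles the easy cases, and the hard case combines atomlessness of the forward curve's absorption time with the reflecting behaviour of the backward curve $\Lambda_{x',0}$ near $x'$. The paper's hard case is $h'=0$, $x'>0$; yours is $h=h'=0$, $x<0<x'$. Your case split differs slightly: you dispose of $h'=0<h$ via the backward marginal $\Lambda_{x',0}(x)$, whereas the paper runs the absorption-time argument for every $h$. Either way is fine.

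One claim in Step 3(c) is false as stated and must be corrected. A reflected $\mathrm{BESQ}^{\delta}$ started at $0$ with $\delta\in(0,2)$ is \emph{not} positive on any interval $(0,\eta)$, because its zero set accumulates at the starting time. This covers $\delta_{r,-}=1$ in the PSR application, a squared Brownian motion. Hence $\Lambda_{x',0}$ is a.s.\ not positive on $(x'-\eta,x')$ for any $\eta>0$.

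What is true, and all your deduction needs, is weaker: a.s.\ $\Lambda_{x',0}$ does not vanish identically on $[x'-\eta,x']$ for any $\eta>0$, since a reflected BESQ of positive dimension spends zero Lebesgue time at $0$. This rules out absorption of $\Lambda_{x,0}$ strictly before $x'$. It therefore gives $\mu^+_{x,0}=x'$ on the event, and the rest of your argument goes through unchanged. This is exactly how the paper phrases that step.
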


\begin{proof}
  By the non-crossing property and continuity of the curves, it is
  enough to prove there almost surely exists $y\in\R$ so that
  $\Lambda_{x,h}(y)<\Lambda_{x',h'}(y)$ or
  $\Lambda_{x,h}(y)>\Lambda_{x',h'}(y)$. If $x=x'$, then
  $\Lambda_{x,h}(x)=h \neq h'=\Lambda_{x',h'}(x)$, which is enough. We
  now assume $x<x'$ (without loss of generality). If $x' \leq 0$ or
  $h' \neq 0$, the distribution of the curves implies
  $\Lambda_{x,h}(x')$ has no atom at $h'$, which is enough. If $x'>0$
  and $h'=0$, let us assume $\Lambda_{x,h}(x')=h'=0$ (otherwise we are
  done). The time of absorption at 0 of
  $(\Lambda_{x,h}(y))_{y \geq \max(0,x)}$ is either $\max(0,x)$ (if
  $\Lambda_{x,h}(\max(0,x))=0$) or is atomless (see for
  example
  just after Remark 2.1 in \cite{HamanaMatsumoto2013}), thus has no
  atom at $x'$, so almost surely there is a
  $\tilde x \in [\max(0,x),x')$ such that $\Lambda_{x,h}(y) = 0$ for
  all $y \in [\tilde{x}, x']$. But then, since
  $(\Lambda_{x',h'}(x'-s))_{0 \leq s \leq x'}$ is (a multiple of a
  power of) a reflected process, almost surely there exists
  $y\in[\tilde x,x']$ such that
  $\Lambda_{x',h'}(y)>0=\Lambda_{x,h}(y)$.
\end{proof}

\end{document}